\documentclass[11pt]{amsart}
\usepackage{amsmath, amssymb, amsthm,fullpage,graphicx}
\usepackage[all]{xypic}
\theoremstyle{definition}

\newtheorem{thm}[equation]{Theorem}
\newtheorem{prop}[equation]{Proposition}
\newtheorem{cor}[equation]{Corollary}
\newtheorem{lemma}[equation]{Lemma}
\newtheorem{conj}[equation]{Conjecture}
\newtheorem{ques}[equation]{Question}

\newtheorem{claim}[equation]{Claim}

\newtheorem{defn}[equation]{Definition}

\theoremstyle{remark}

\newtheorem{ntn}[equation]{Notation}

\newtheorem{rem}[equation]{Remark}

\makeatletter
\renewcommand{\subsection}{\@startsection{subsection}{2}{0pt}{-3ex
plus -1ex minus -0.2ex}{-2mm plus -0pt minus
-2pt}{\normalfont\bfseries}} 
\renewcommand{\subsubsection}{\@startsection{subsubsection}{2}{0pt}{-3ex
plus -1ex minus -0.2ex}{-2mm plus -0pt minus
-2pt}{\normalfont\bfseries}} \makeatother

\numberwithin{equation}{section}
\numberwithin{equation}{subsection}

\newdir^{ (}{{}*!/-5pt/@^{(}}

\renewcommand{\SS}{\S$\!$\S}
\newcommand{\Sat}{\operatorname{Sat}}
\newcommand{\Ind}{\operatorname{Ind}}

\newcommand{\fsf}{\mathsf{f}}
\newcommand{\Der}{\operatorname{Der}}

\newcommand{\Tor}{\operatorname{Tor}}

\newcommand{\tra}{\mathop{\rightarrow}}
\newcommand{\traa}{\mathop{\longrightarrow}}
\newcommand{\iso}{{\;\stackrel{_\sim}{\to}\;}}
\newcommand{\liso}{{\;\stackrel{_\sim}{\leftarrow}\;}}

\newcommand{\beq}{\begin{equation}\label}
\newcommand{\eeq}{\end{equation}}

\newcommand{\into}{\hookrightarrow}

\newcommand{\onto}{\twoheadrightarrow}

\newcommand{\pr}{\operatorname{pr}}
\newcommand{\ino}{\mathbf{1}_{\text{in}}}

\newcommand{\oX}{{\overline{X}}}

\newcommand{\ds}{\displaystyle}

\newcommand{\rep}{\operatorname{rep}}

\newcommand{\op}{\text{op}}
\newcommand{\gr}{\operatorname{gr}}
\newcommand{\Sym}{\operatorname{Sym}}

\def\O{\mathcal{O}}

\def\C{\mathbb{C}}
\def\k{\mathbf{k}}

\def\o{\otimes}

\def\br{\mathrm{br}}
\def\ad{\operatorname{ad}}

\def\End{\mathrm{End}}
\def\dq{{\overline{Q}}}
\def\dzq{{\overline{Q^0}}}
\def\dzqo{{\overline{Q^0_1}}}

\def\Id{\mathrm{Id}}
\def\cyc{\mathrm{cyc}}
\def\N{\mathbb{Z}_{\geq 0}}
\def\Q{\mathbb{Q}}
\def\F{\mathbb{F}}

\def\Span{\mathrm{Span}}

\def\dim{\operatorname{dim}}

\def\Z{{\mathbb Z}}

\def\Dis{\text{Dis}}
\def\1{\mathbf{1}}

\def\ldp{\mathopen{(\!(}} \def\rdp{\mathclose{)\!)}}

\def\ldb{\mathopen{\{\!\!\{}} \def\rdb{\mathclose{\}\!\!\}}}

\begin{document}
\title{Zeroth Hochschild homology of preprojective
algebras over the integers} 
\author{Travis Schedler} 
\maketitle

\begin{abstract}
  We determine the $\Z$-module structure of the preprojective algebra
  and its zeroth Hochschild homology, for any non-Dynkin quiver
  (and hence the structure working over any base commutative ring, of
  any characteristic).  This answers (and generalizes) a conjecture of
  Hesselholt and Rains, producing new $p$-torsion classes in degrees
  $2 p^\ell, \ell \geq 1$. We relate these classes by $p$-th power
  maps and interpret them in terms of the kernel of Verschiebung maps
  from noncommutative Witt theory. An important tool is a generalization of
  the Diamond Lemma to modules over commutative rings, which
  we give in the appendix.
\end{abstract}

\tableofcontents
\section{Introduction and Main Results} 
\label{is} This paper concerns the $\Z$-module $A_\cyc := A/[A,A] = A
/ \text{Span}(\{ab-ba \mid a,b \in A\})$ (also known as the zeroth Hochschild
homology, $HH_0(A)$) 
for certain graded algebras $A$ over $\Z$ related to quivers
and noncommutative Witt theory.  Here, the span means the integral
span.  For these algebras, a curious phenomenon emerges:

 (*) \emph{Over a characteristic $p$ field $\k$, the dimension of the
   $2p^\ell$-th graded part of $A_\cyc \o \k$ exceeds by one the
   dimension of the same part over a characteristic zero field, for all
   $\ell \geq 1$; in all other degrees, the two dimensions are the same.}

The simplest such algebra we study is
\begin{equation} \label{onevertex}
\Pi := \Z\langle x_1, x_2, \ldots, x_n, y_1, y_2, \ldots, y_n \rangle / (\sum_{i=1}^n [x_i, y_i]), \quad n \geq 2,
\end{equation}
where $\Z \langle t_1, \ldots, t_m \rangle$ is by definition the free
associative (noncommutative) algebra on $m$ generators $t_1, \ldots,
t_m$.  For this algebra $A = \Pi$, the phenomenon (*) was conjectured
by Hesselholt and Rains, motivated by noncommutative Witt theory.

Moreover, Hesselholt and Rains produced specific homogeneous classes
of $\Pi_\cyc$ that, if nonzero, are $p$-torsion, and conjectured that
they are all nonzero and generate the $p$-torsion.  These classes are
given as follows: One has the commutative diagram of quotients, where
$P := \Z \langle x_1, \ldots, x_n, y_1, \ldots, y_n \rangle$ is the
free algebra used above:
\begin{equation}
  \xymatrix{
    P \ar@{->>}[rr]^{\alpha} \ar@{->>}[d]^{\pi} & & \Pi \ar@{->>}[d]^{\pi} \\
    P_\cyc \ar@{->>}[rr]^{\overline{\alpha}} & & \Pi_\cyc.
  }
\end{equation}
Define
\begin{equation}
r := \sum_{i=1}^n [x_i, y_i].
\end{equation}
Then, the class $\pi(r^p)$ is a multiple of $p$ in
$P_\cyc$. Hence, one may consider the class
$r^{(p)} := \bar \alpha(\frac{1}{p} \pi(r^p)) \in \Pi_\cyc$, which evidently
satisfies $p r^{(p)} = 0$.  Similarly, $\pi(r^{p^\ell}) \in P_\cyc$
is a multiple of $p$ for all $\ell \geq 1$, and one may thus define
$r^{(p^\ell)} := \bar \alpha(\frac{1}{p} \pi(r^{p^\ell})) \in \Pi_\cyc$.
\begin{conj}[Hesselholt-Rains] \label{rh} For all primes $p \geq 2$,
  if $n \geq 2$, the classes $r^{(p^\ell)}$ are nonzero and generate the
  $p$-torsion of $\Pi_\cyc$.  There is no $p^2$-torsion in $\Pi_\cyc$.
\end{conj}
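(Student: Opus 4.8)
The plan is to derive the conjecture from an explicit description of $\Pi_\cyc$ as a $\Z$-module. Note first that $r^{p^\ell}\in PrP\subseteq\Ker(P\to\Pi)$, so $\pi(r^{p^\ell})\in\Ker\overline{\alpha}$ and hence
\[
p\,r^{(p^\ell)}\;=\;\overline{\alpha}\bigl(\pi(r^{p^\ell})\bigr)\;=\;0
\]
automatically; thus $r^{(p^\ell)}$ always lies in the torsion subgroup of $(\Pi_\cyc)_{2p^\ell}$. It therefore suffices to prove: \emph{(i)} $\Pi_\cyc$ is torsion-free in every degree except $2p^\ell$ ($\ell\ge1$, $p$ prime), where its torsion subgroup is cyclic of order exactly $p$; and \emph{(ii)} $r^{(p^\ell)}\neq0$ for all $\ell\ge1$. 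Indeed, (i) already gives the absence of $p^2$-torsion, while (i)$+$(ii) force $r^{(p^\ell)}$ to generate the order-$p$ torsion subgroup of $(\Pi_\cyc)_{2p^\ell}$; since the $p$-torsion of $\Pi_\cyc$ is the direct sum of these subgroups over $\ell\ge1$, this yields the conjecture.

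For the $\Z$-module structure of $\Pi$ itself, I would apply the Diamond Lemma over $\Z$ (Appendix): order the generators and use the degree-then-lexicographic order, so that the single defining relation becomes a reduction rule rewriting its leading monomial (for instance $y_nx_n\mapsto x_ny_n+\sum_{i<n}(x_iy_i-y_ix_i)$) as a $\Z$-combination of strictly smaller monomials of the same degree, with unit leading coefficient. This monomial has no self-overlap, so there are no ambiguities and the Diamond Lemma applies with no work: $\Pi$ is a \emph{free} $\Z$-module on the reduced words (those with no factor $y_nx_n$), recovering the classical Hilbert series $(1-2nt+t^2)^{-1}$ and, crucially, the flatness of $\Pi$ over $\Z$.

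Next I would pass to $\Pi_\cyc$. The identity $PrP+[P,P]=Pr+[P,P]$ (commutators move a factor of $r$ to one side) gives $\Pi_\cyc=P_\cyc/\overline{Pr}$, where $P_\cyc$ is the free $\Z$-module on necklaces and $\overline{Pr}$ is spanned by the classes $[mr]=\sum_i\bigl([mx_iy_i]-[my_ix_i]\bigr)$ over words $m$ --- equivalently, by the ``reduction moves'' of the \emph{cyclic} version of the rewriting system above. Since these moves are homogeneous and carry a definite sign on the leading term, one may Gaussian-eliminate all reducible necklaces and present $\Pi_\cyc$, in each degree, as the cokernel of the matrix of residual relations --- the ``ambiguities'' --- on the free module spanned by the reduced necklaces. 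The heart of the argument, and the step I expect to be the main obstacle, is the cyclic confluence analysis: in contrast with the associative case, the cyclic reduction is genuinely non-confluent (rewriting one occurrence of $y_nx_n$ can create or destroy another across the seam, and certain occurrences are ``unusable'' because the rewrite fixes the necklace), so these ambiguities are \emph{not} all resolvable over $\Z$. Organizing the length-$2m$ necklaces as orbits of the rotation action of $\Z/m$ on products of $m$ degree-$2$ generators and tracking which orbits force a residual relation, I would show that the relation matrix in degree $2m$ has all elementary divisors equal to $1$ unless $m=p^\ell$ for a prime $p$ and $\ell\ge1$, in which case exactly one elementary divisor equals $p$ and the rest equal $1$. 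This establishes (i) and exhibits the characteristic-zero Hilbert series of $HH_0(\Pi)$ as that of the free part of $\Pi_\cyc$.

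Finally, for (ii), I would track $\tfrac1p\,\pi(r^{p^\ell})$ through this reduction. That $\pi(r^{p^\ell})\in pP_\cyc$ follows by writing $r^{p^\ell}$ as a signed sum of products of $p^\ell$ degree-$2$ ``letters'' from $\{x_iy_i,y_ix_i\}$ and grouping the resulting necklaces into orbits of $\Z/p^\ell$: each non-diagonal orbit contributes a multiple of $p$ copies of a single necklace, while the diagonal contribution is $0$ when $p$ is odd and $2\sum_i[(x_iy_i)^{2^\ell}]$ when $p=2$. Computing the normal form of $\tfrac1p\,\pi(r^{p^\ell})$ and identifying it, up to a unit modulo $p$, with the torsion class produced by the ambiguity analysis in degree $2p^\ell$, yields $r^{(p^\ell)}\neq0$. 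I remark that representation theory cannot help with (ii): any representation $\rho$ of $\Pi$ over $\Z$ --- or over $\Z/p^k$ with $\rho(r)\equiv0\bmod p$ --- sends $r^{p^\ell}$ to a matrix divisible by $p^{p^\ell}$, whence $\tr\rho\bigl(\tfrac1p\,r^{p^\ell}\bigr)\equiv0\bmod p$, and the torsion classes are invisible to every finite-dimensional trace; the non-vanishing must come from the combinatorial normal-form computation.
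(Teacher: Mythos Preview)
Your overall architecture matches the paper's: Diamond Lemma for $\Pi$ as a $\Z$-module, then a separate analysis of the passage to cyclic classes. The observation that the associative rewriting $y_nx_n\mapsto x_ny_n+\sum_{i<n}(x_iy_i-y_ix_i)$ has no overlap ambiguities is correct and is exactly what the paper notes in Remark~3.3.

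The gap is in the middle step. You write that after Gaussian elimination you ``would show that the relation matrix in degree $2m$ has all elementary divisors equal to $1$ unless $m=p^\ell$'', but this is precisely the content of the conjecture and you give no mechanism for proving it. Organizing reduced necklaces by rotation-orbits is natural, but with $2n$ generators the residual relation matrix in each degree is large and there is no evident a~priori reason its Smith normal form should be this simple. Your own remark that the cyclic rewriting is ``genuinely non-confluent'' is the warning sign: without additional structure, the ambiguities are hard to control, and the step you flag as ``the main obstacle'' is left as an assertion.

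The paper's key move, which your plan does not contain, is to \emph{isolate a single pair} $(x,y):=(x_1,y_1)$ and package the remainder of the relation into one abstract letter $r':=\sum_{i\ge 2}[x_i,y_i]$. One then works in the three-generator ring $A=\Z\langle x,y,r'\rangle/(xy-yx+r')$. The cyclic analysis is carried out in two stages: first mod out only by commutators involving a factor of $r'$, obtaining a free module $V=A/[Ar'A,A]$ with explicit basis (monomials $z_{a,b}$ in $x,y$ together with cyclic words $[z_{a_1,b_1}r'\cdots z_{a_m,b_m}r']$); then show that the kernel of $V\twoheadrightarrow A/[A,A]$ is spanned by \emph{one} explicit element $W_{a,b}$ in each bidegree $(a,b)$, with coefficients $\gcd(a,b)/\mathrm{rep}(a_\cdot,b_\cdot)$. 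The gcd of these coefficients is visibly $1$; only after one further kills the classes $[(r')^m]$ does a factor of $p$ appear, and only when $a=b=p^\ell$. Finally, since $r'\in[B,B]$ for $B=\Z\langle x_2,\ldots,y_n\rangle$, one has $[(r')^{p^\ell}]\in p\,B_{\mathrm{cyc}}$, so substituting the concrete $r'$ back in produces exactly the predicted $\Z/p$ in degree $2p^\ell$ and nothing else (Corollary~3.2). The identification of $r^{(p^\ell)}$ with the image of $\tfrac1p W_{p^\ell,p^\ell}$ is then a short direct computation.

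In summary, the missing idea is organizational rather than analytic: collapse $2n$ generators to three, and interpose the intermediate quotient $A/[Ar'A,A]$ so that the residual relation in each bidegree is a single explicit vector whose divisibility can be read off, rather than a matrix whose elementary divisors must be computed.
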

The conjecture implies (*). We give an elementary proof of this conjecture
in \S \ref{hrcsec} below, which is the first main result of this paper.

In terms of noncommutative Witt theory, the conjecture in
particular implies that the noncommutative $p$-adic Verschiebung map
is not, in general, injective, and the ghost components of its kernel
are given by the nonvanishing $p$-torsion classes in $\Pi_\cyc$
stated in the conjecture. We will explain this in more detail in \S
\ref{wittsec}.

\begin{rem}\label{ppwrrem} If we work over $\F_p$, then
  the classes $r^{(p^\ell)} := r^{(p^\ell)} \otimes 1 \in \Pi_\cyc
  \o_\Z \F_p$ are related by $p$-th power maps: generally, for
  every associative $\F_p$-algebra $A$, we have a well defined $p$-th
  power map $A_\cyc \rightarrow A_\cyc$, $[a]_\cyc \mapsto
  [a^p]_\cyc$, which is well-defined since $(a+b)^p - a^p - b^p \in
  [A,A]$ (modulo $p$).\footnote{In fact, as observed by Jacobson in
    the 1940's, $(a+b)^p \equiv a^p + b^p$ modulo the Lie algebra
    generated by $a$ and $b$.}  Then, it is easy to verify explicitly
  that $r^{(p^\ell)} = (r^{(p^{\ell-1})})^p$. Hence, working over
  $\Z$, all the $p$-torsion of $\Pi_\cyc$ is generated from
  $r^{(p)}+p\Pi_\cyc$ by taking $p$-th powers (i.e., replacing a class
  $[f]$ by a class $[f^p]$, which is well defined modulo $p\Pi_\cyc$)
  and sums of resulting classes (since the images of the $p$-torsion
  modulo $p\Pi_\cyc$ are all generated from $r^{(p)}$ by $p$-th power
  maps and sums). Note that $p\Pi_\cyc$ itself contains no $p$-torsion
  since $\Pi_\cyc$ has no $p^2$-torsion.
\end{rem}
\subsection{Quiver generalization}
A \emph{quiver} is an oriented graph whose edges are called arrows. We
will always assume the quiver to be connected, i.e., its underlying
undirected graph is connected.  We are interested in the quiver
generalization of the preceding, in which $P$ is replaced by the
algebra of paths in a quiver, and the $n \geq 2$ condition is replaced
by a certain non-Dynkin condition.
The algebra $\Pi$ is replaced by the \emph{preprojective algebra of
  $Q$}, which was originally defined by Gelfand and Ponomarev
\cite{GP} in the study of quiver representations.


In detail, for any quiver $Q$, let $P_Q$ be the algebra over $\Z$
(i.e., the ring) generated by paths in $Q$, with concatenation as
multiplication, called the path algebra (later on we will also work
over a general commutative ring $\k$, in which case the path algebra
over $\k$ is $P_Q \otimes_{\Z} \k$).  Let $Q_1$ denote the set of
arrows in the quiver $Q$ and $Q_0$ the set of vertices.  We will equip
$P_Q$ with the grading by length of paths, with $(P_Q)_m$ the subspace
spanned by paths of length $m$; thus $Q_0$ forms a basis for $(P_Q)_0$
and $Q_1$ forms a basis for $(P_Q)_1$.

Define the \emph{double quiver}, $\dq$, to be the quiver obtained from
$Q$ by adding a reverse arrow $a^*$ for every $a \in Q_1$, with the
same endpoints but the opposite orientation, and keeping the same set
of vertices.  We replace $P$ above with $P_\dq$, and recover the $P$
of the previous subsection in the special case when $Q$ has only one
vertex and $n$ arrows.

Let us assume that $Q$ is finite (i.e., it has finitely many arrows,
and hence, by connectivity, finitely many vertices). Define
\begin{equation}
r = \sum_{a \in Q_1} (a a^* - a^* a), \quad \Pi_Q := P_{\dq} / (r).
\end{equation}
Our main object of study in this paper is $(\Pi_Q)_\cyc$, and as
such we define notation for it:
\begin{equation}\label{d:lq}
\Lambda_Q := (\Pi_Q)_\cyc.
\end{equation}
We may consider again the commutative diagram
\begin{equation}
\xymatrix{
P_\dq \ar@{->>}[rr]^{\alpha} \ar@{->>}[d]^{\pi} & & \Pi_Q \ar@{->>}[d]^{\pi} \\
(P_\dq)_\cyc \ar@{->>}[rr]^{\overline{\alpha}} & & \Lambda_Q
}
\end{equation}

Then the Hesselholt-Rains conjecture generalizes as follows.  We say
that a quiver $Q$ is (ADE) Dynkin if the underlying undirected graph
is Dynkin of type $A_n, D_n$, or $E_n$ (with $n=|Q_0|$ equal to the
number of vertices); in particular this means there are no loops and
at most one arrow between any pair of vertices.  We say a quiver $Q$
is extended Dynkin if its underlying undirected graph is the extended
Dynkin diagram of a type ADE Dynkin diagram (in particular, this
implies that it obtained from the latter by adding an additional
vertex and one or two arrows). We consider the diagram with one vertex
and one arrow (from the vertex to itself) to be extended Dynkin, and
call it type $\tilde A_0$. Thus, the algebras $P$ and $\Pi$ from the
previous subsection are path and preprojective algebras of an extended
Dynkin quiver of type $\tilde A_0$ in the case $n=1$ and of a Dynkin
quiver of type $A_1$ in the case $n=0$, whereas for $n \geq 2$ the
corresponding quiver is neither Dynkin nor extended Dynkin.
\begin{thm} \label{mt} For all primes $p \geq 2$, if $Q$ is non-Dynkin
  and non-extended Dynkin, the classes $r^{(p^\ell)} :=
  \bar \alpha(\frac{1}{p} \pi(r^{(p^\ell)}))$ are nonzero and generate the
  $p$-torsion of $\Lambda_Q$. There is no $p^2$-torsion.
\end{thm}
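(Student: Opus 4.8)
The plan is to reduce Theorem \ref{mt} to a computation of the $\Z$-module structure of $\Lambda_Q$ in each graded degree, and to identify exactly where a single extra $p$-torsion class appears. First I would set up the relevant generating function / Hilbert series machinery: over a field $\k$ of characteristic zero, the structure of $\Pi_Q$ and $\Lambda_Q$ is classical (Malkin--Ostrik--Vybornov, Etingof--Eu, and others), so the ranks of the graded pieces $(\Lambda_Q)_m$ are known. The key point is that $\Pi_Q$ is a \emph{free} $\Z$-module with an explicit homogeneous basis, independent of the base ring, and I would establish this using the Diamond Lemma over $\Z$ advertised in the appendix: one orders monomials in the arrows of $\dq$, uses the relation $r$ to rewrite, and checks that all ambiguities (overlap and inclusion ambiguities of the single relation $\sum_{a}(aa^* - a^*a)$) resolve. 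Because the leading coefficient of $r$ (in the chosen term order) is $\pm 1$, there is no obstruction from non-units, and the reduction-finiteness follows from the grading. This gives $\Pi_Q \otimes \k$ the same graded dimensions for every field $\k$, hence $\Pi_Q$ is $\Z$-free and graded-finite.

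Next I would analyze $\Lambda_Q = (\Pi_Q)_\cyc$ itself. Writing $\Pi_Q = \Z\text{-span of a monomial basis } B$, the commutator subspace $[\Pi_Q,\Pi_Q]$ is spanned by $b - b'$ for cyclic rearrangements $b \sim b'$ \emph{modulo} corrections coming from the relation $r$; the subtlety is that cyclic rotation does not preserve the reduced-monomial set, so $[\Pi_Q,\Pi_Q]$ is not simply the ``cyclic words differ by rotation'' subspace. I would handle this by working with a \emph{necklace basis}: choose representatives of cyclic-rotation orbits of reduced monomials, and compute the cokernel. The honest statement is that $\Lambda_Q$ fits in an exact sequence relating it to the cyclic homology of the free algebra $P_\dq$ and the ideal $(r)$; concretely, $[P_\dq, P_\dq] + (r) \cap \text{(cyclic part)}$ controls everything. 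Following the Hesselholt--Rains viewpoint, I expect $\Lambda_Q$ to be torsion-free \emph{except} in degrees $2p^\ell$, where exactly one copy of $\Z/p$ appears, generated by $r^{(p^\ell)}$; the power $r^{p^\ell}$ lands in the cyclic span of $(r)$, is divisible by $p$ after passing to $P_\dq{}_\cyc$ by a Frobenius-type congruence, and its image under $\bar\alpha$ is the torsion class.

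The three things to verify are then: (i) $r^{(p^\ell)} \neq 0$ in $\Lambda_Q$ — this I would do by exhibiting a $\Z/p$-valued functional on $(\Lambda_Q)_{2p^\ell}$ that does not vanish on it, e.g. a trace-type pairing built from a representation of $\Pi_Q$ over $\F_p$ (the non-Dynkin/non-extended-Dynkin hypothesis is exactly what guarantees enough representations, via the non-degeneracy of the Cartan/Euler form, so that the relevant character does not kill $r^{p^\ell}$); (ii) these classes generate \emph{all} $p$-torsion — here I would compare graded ranks: $\dim_{\F_p}(\Lambda_Q \otimes \F_p)_m - \text{rk}_\Z (\Lambda_Q)_m$ equals $1$ if $m = 2p^\ell$ and $0$ otherwise, so the $p$-torsion in degree $m$ is at most $\Z/p$, and since $r^{(p^\ell)}$ already accounts for one such class, it accounts for all of it; (iii) no $p^2$-torsion — this follows because the $p$-torsion is exactly $\Z/p$ (not $\Z/p^2$) in the one relevant degree, i.e. $pr^{(p^\ell)} = 0$ but $r^{(p^\ell)} \notin p\Lambda_Q$, which is part of (i)--(ii). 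The relation $r^{(p^\ell)} = (r^{(p^{\ell-1})})^p$ from Remark \ref{ppwrrem} reduces the nonvanishing in all degrees $2p^\ell$ to the single case $\ell = 1$, provided one knows the $p$-th power map on $(\Lambda_Q \otimes \F_p)$ is injective on the relevant class; I would get this from the module structure too.

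The main obstacle is step (i)/(ii): pinning down the \emph{exact} graded $\Z$-module structure of $\Lambda_Q$, i.e. proving the ``$+1$'' phenomenon (*) precisely and not just an inequality. The Diamond Lemma gives a basis of $\Pi_Q$ cleanly, but passing to the cyclic quotient $\Lambda_Q$ and controlling its torsion requires either a careful explicit basis of $\Lambda_Q$ over $\Z$ (a ``refined necklace basis'' that is simultaneously compatible with the grading and with division by $p$ in the right degrees) or a spectral-sequence / long-exact-sequence argument computing $HH_0$ and $HH_1$ of $\Pi_Q$ over $\Z$ and extracting the universal-coefficients $\mathrm{Tor}$ term. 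I anticipate the bulk of the work is constructing that $\Z$-basis of $\Lambda_Q$ and verifying that the only failure of freeness is the predicted $\Z/p$ in degree $2p^\ell$; the representation-theoretic nonvanishing in (i) is comparatively soft once the non-Dynkin hypothesis is invoked to supply a faithful enough family of finite-characteristic representations.
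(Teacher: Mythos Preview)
Your proposal correctly identifies that the crux is the $\Z$-module structure of $\Lambda_Q$, and that the Diamond Lemma handles $\Pi_Q$ cleanly. But the plan for $\Lambda_Q$ itself has a circularity, and it misses the structural move that the paper relies on.

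The circularity is in step (ii): the identity $\dim_{\F_p}(\Lambda_Q \otimes \F_p)_m - \operatorname{rk}_\Z (\Lambda_Q)_m = \delta_{m,2p^\ell}$ is not an input you have available; it is precisely the statement $(*)$ in the introduction, which is a \emph{consequence} of Theorem \ref{mt}, not a known fact. So the dimension comparison cannot be the mechanism that bounds the torsion; you need an independent computation of $\Lambda_Q$ over $\Z$. You acknowledge this in your final paragraph, but the two suggested routes (a direct necklace basis of $\Lambda_Q$ over $\Z$, or a spectral sequence) are left without a concrete lever.

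The paper's lever, which is absent from your plan, is to choose an \emph{extended Dynkin subquiver} $Q^0 \subsetneq Q$ and exploit the graded $\Z$-module isomorphism $\Pi_Q \cong \Pi_{Q^0} *_{\Z^{Q^0_0}} \Pi_{Q\setminus Q^0,\,Q^0_0}$ (Corollary \ref{nccicor}). This reduces the analysis of $\Lambda_Q = V/W$ (Proposition \ref{mip2}) to understanding commutators inside $\Pi_{Q^0}$ and the partial preprojective algebra, where one has real traction: for good primes via the McKay Morita equivalence $\Pi_{Q^0}\otimes \k \simeq \k[x,y]\rtimes\Gamma$ (\S\ref{gpsec}); for $Q^0$ of type $\tilde A$ or $\tilde D$ via explicit integral bases and an analogue of Lemma \ref{mcl} (Theorems \ref{ant}, \ref{dnt}); and for the residual cases $(\tilde E_n,\,p\leq 5)$ via a sharper statement (Theorem \ref{wdescpt}) that controls $W$ using $p$-th power maps and the zeroth Poisson homology of $i_0\Pi_{Q^0}i_0 \otimes \F_p$. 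Your ``refined necklace basis'' is morally the right object, but without the $Q^0$-decomposition there is no organizing principle for constructing it, and a direct attack on cyclic words in $\dq_1$ modulo the relation $r$ does not confluently reduce.

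Finally, your nonvanishing argument (i) via a trace pairing from $\F_p$-representations is plausible in spirit but not what the paper does, and it is unclear which representations would detect $r^{(p^\ell)}$ uniformly in $\ell$. The paper's nonvanishing (Proposition \ref{rpnz}) is instead a comparison argument: the relation module $W_Q$ is a quotient of a universal $W_{A'}$, and one compares with the quiver $Q'$ obtained from $Q^0$ by adjoining a loop at every vertex, where the $\tilde A_0$ case (Lemma \ref{mcl}) already gives nonvanishing.
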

We also present a much more general question (Question \ref{rciques}),
based on conversations with P. Etingof, that asks whether, for
finitely-presented graded algebras over $\Z$ (or $\Z^{Q_0}$), for primes
in which they are ``asymptotic representation complete intersections
(RCIs)'' (see \cite{EG}) the new torsion is generated by classes of
the above form, for relations $r$ which lie in the integral span of
commutators modulo $p$.

\begin{rem}As in Remark \ref{ppwrrem}, over $\F_p$ one has the
  statement $(r^{(p^\ell)}) = (r^{(p)})^{p^{\ell-1}}$ using the $p$-th
  power maps on $\Lambda_Q \o_\Z \F_p$, so all the $p$-torsion of
  $\Lambda_Q$ is generated, modulo $p$, by $r^{(p)}$ using $p$-th
  power maps and sums.
\end{rem}
\subsection{Strategy of proof of Theorem \ref{mt}}\label{stratpfsec}
The main strategy for the proof of the theorem is to exploit an
extended Dynkin subquiver $Q^0 \subsetneq Q$, i.e., a quiver such that
$Q^0_0:=(Q^0)_0 \subseteq Q_0$ and $Q^0_1:=(Q^0)_1 \subseteq Q_1$. (It
is well-known that such a quiver exists whenever $Q$ is neither Dynkin
or extended Dynkin, although there may be more than one choice of it.)
We then use and develop facts about extended Dynkin quivers, and the
relationship between $\Pi_Q$ and $\Pi_{Q^0}$.  Let $Q \setminus Q^0$
denote the quiver with the same vertex set as $Q$, and with arrows
$Q_1 \setminus Q^0_1$ (thus, $(Q \setminus Q^0)_0 := Q_0$).

We prove that there is an isomorphism as $\Z$-modules, 
$\Pi_Q \cong \Pi_{Q^0}
*_{\Z^{Q^0_0}} \Pi_{Q \setminus Q^0, Q^0_0}$, where $\Pi_{Q \setminus
  Q^0, Q^0_0}$ is a partial preprojective algebra (defined in
\cite{EE}; see \S \ref{ss:ppa} below), an algebra over $\Z^{Q_0}$
(which in turn we consider as a $\Z^{Q_0^0}$-algebra in the canonical way).
The proof is then divided into three somewhat overlapping cases:
\begin{enumerate}
\item The case of primes $p$ which are good for the extended Dynkin
  quiver $Q^0$ (i.e., not a factor of the size of the corresponding
  finite group $\Gamma$ under the McKay correspondence), given in
  Theorem \ref{mclgam}.  To prove this, we use the well-known Morita
  equivalence $\Pi_{Q^0}\otimes_\Z \k \simeq \k[x,y] \rtimes \Gamma$
  for $\k$ is an algebraically closed field of characteristic $p$.
  This induces a Morita equivalence $\Pi_{Q,Q^0_0} \otimes_\Z \k
  \simeq (\k\langle x,y\rangle \rtimes \Gamma) *_{\k[\Gamma]}
  (\k[\Gamma] \otimes_{\Z^{Q^0_0}} \Pi_{Q \setminus Q^0, Q^0_0})$.  We
  then prove the theorem in essentially the same manner as in the
  one-vertex case (where $\Gamma=\{1\}$). The latter case, which is
  essentially the original Hesselholt-Rains conjecture, is proved in
  \S \ref{hrcsec} below in an elementary fashion using (a mild
  generalization of) the Diamond Lemma.
\item The case when $Q^0$ is of type $\tilde A$ or $\tilde D$, given
  in Theorems \ref{ant} and \ref{dnt}.  To prove these theorems, we
  use explicit integral bases for $\Pi_{Q^0}, \Pi_Q$ and their zeroth
  Hochschild homology modulo torsion. We explicitly present these
  bases and verify that they are bases using the Diamond Lemma for
  modules over a commutative ring (which is discussed in Appendix
  \ref{dla}).
\item The remaining cases where $Q^0$ is of type $\tilde E_n$ (for $n
  \in \{6,7,8\}$) and $p \leq 5$: these are proved in \S
  \ref{wdescptpfs}.  Here we need to prove a more refined statement,
  given in Theorem \ref{wdescpt}, which relies on $p$-th power maps.
  For the cases at hand, the proof follows via Theorem \ref{dedz},
  which computes $\Lambda_Q$ in the type $E$ Dynkin and extended
  Dynkin cases via straightforward computation using Gr\"obner
  generating sets (cf.~Appendix \ref{gbs} and Proposition \ref{gbp}
  therein), and Proposition \ref{ezphp}, which computes the zeroth
  Poisson homology of the necklace Lie algebra structure on
  $\Lambda_Q$ \cite{G,BLB,CBEG} in these cases.
\end{enumerate}
Above and below, we use the term ``Gr\"obner generating set'' since we
work over arbitrary commutative rings (such as $\Z$) and it is
slightly inconvenient for us to define the minimal such sets that,
when working over fields, are customarily called Gr\"obner bases (and
we do not need minimality here).

In the process, we obtain integral, rational, and characteristic $p$
bases for $\Pi_Q$ and $\Lambda_Q$ modulo torsion.  This relies on
the $\Z$-module decomposition $\Pi_Q \cong \Pi_{Q^0} *_{\Z^{Q^0_0}} \Pi_{Q \setminus Q^0,
  Q^0_0}$.
For $\Lambda_Q$, we can then (essentially) write classes as cyclic
words in $\Lambda_{Q^0}$ and $(\Pi_{Q \setminus Q^0, Q^0_0})_{\cyc}$.
For details, see \S \ref{ean} for the case $Q^0 = \tilde A_n$, \S
\ref{edn} for the case $Q^0 = \tilde D_n$, Propositions \ref{mip2} and
\ref{wdescp} for the case where $p$ is good for $Q^0$ as above (i.e.,
not a factor of the size of the group given by the McKay
correspondence); and for the general case see Theorem \ref{wdescpt}
and the preceding (note that only $Q^0 = \tilde E_n$ and $p \leq 5$
are not covered by the preceding cases). As observed above, these cases are overlapping,
but note that the bases we obtain do \emph{not} coincide in
overlapping cases.

The bases should be interesting in their own right. As one simple
application of the bases for $\Pi_Q$, we may deduce that $\Pi_Q$ is
torsion-free (which was proved in \cite{EE} for non-Dynkin quivers
using Gelfand-Kirillov dimension).  (This follows from much less work
than is required for the proof of Theorem \ref{mt} itself, where the
essential difficulty is in finding the $\Z$-module structure of
$\Lambda_Q$, rather than merely $\Pi_Q$.)

In the Dynkin and extended Dynkin cases, we also compute explicit
bases for $\Pi_Q$ and $\Lambda_Q$ modulo torsion, and give an
explicit description of the torsion of $\Lambda_Q$. Here, it turns
out that the torsion is finite, and the nonzero classes
$r^{(p^{\ell})}$ only occur in ``stably bad primes'': none for $\tilde
A_n$; $p = 2$ for $\tilde D_n$, $p \in \{2,3\}$ for $\tilde E_6,
\tilde E_7$, and $p \in \{2,3,5\}$ for $\tilde E_8$. The precise
result is Theorem \ref{dedz} (which refines a result of \cite{MOV},
which established the cases in which $\Lambda_Q \o \F_p$ vanishes).
Note that there is good reason why this torsion must be as described:
indeed, our proof of Theorem \ref{mt} via the method of the proof of
Theorem \ref{wdescpt} in \S \ref{s:mt-fp} could also be used backwards
to deduce, assuming the statement of Theorem \ref{mt} (which does not
single out any primes), the precise torsion structure of
$\Lambda_Q$ without computing it directly.
\begin{rem}
  In fact, there are precise ways in which $\Pi_Q$ 
is well-behaved in
  \emph{all} primes, unlike $\k[x,y] \rtimes \Gamma$ when $\k$
is a field of characteristic dividing $|\Gamma|$: for
  instance, $\Pi_Q$ 
  is a Calabi-Yau algebra over the base ring $\Z^{Q_0}$, 
in the sense that $\Pi_Q$ has a
  self-dual finitely-generated projective bimodule resolution of
  length two (see, e.g., \cite[(9.2.2)]{CBEG}). In particular, this
  implies that $\Pi_Q \otimes_\Z \k$ has global dimension two for
  all fields $\k$, unlike $\k[x,y] \rtimes
  \Gamma$, which has infinite global dimension when $\k$ is a field
  of characteristic dividing $|\Gamma|$.
\end{rem}



\subsection{Outline of the paper}
First, in \S \ref{wittsec}, we explain the motivation and
interpretation using noncommutative Witt theory.

Then, in \S \ref{hrcsec}, we prove Hesselholt and Rains's
conjecture.

In \S \ref{gpsec} we prove its generalization, Theorem \ref{mt},
in the case of good primes (not dividing $|\Gamma|$ where
$\Gamma < SL_2(\C)$ is associated to an extended Dynkin subquiver).

Next, \S \ref{nccisec} recalls the notion of NCCI algebras and
proves a general result we will need about them.

In \S \ref{hsrcisec}, we explain in detail the (suggestive)
Hilbert series formulas resulting from Theorem \ref{mt}. We also
pose a more general question on asymptotic RCI algebras in positive
characteristic (Question \ref{rciques}).

In the next crucial section, \S \ref{refppsec}, we prove one direction
of Theorem \ref{mt}: that the classes $r^{(p^\ell)}$ are nonzero
(Proposition \ref{rpnz}). We then proceed to state a refinement of the
main theorem (Theorem \ref{wdescpt}), using prime powers, which
implies Theorem \ref{mt}. The goal of the
remainder of the paper will be to prove this refinement.

In \S \ref{lqs1}, we study some algebraic structures related to the
preprojective algebra that we will need. In particular, we define and
generalize the Lie bialgebra structure on $\Lambda_Q$, obtained as a
quotient of the necklace Lie algebra. We explain that it is actually a
Poisson algebra in the extended Dynkin case, modulo torsion (by
identifying $\Lambda_Q$ modulo torsion with the center of $\Pi_Q$,
which is a commutative algebra). We explain how $P_\dq$ is a ``free
product'' deformation of $\Pi_Q$, and prove that in the extended
Dynkin case it quantizes the Poisson bracket on the center coming from
the McKay correspondence.

In \SS \ref{ans} and \ref{dns}, we prove the main results in the cases
where there exists a subquiver $Q^0$ which is extended Dynkin of type
$\tilde A_n$ (Theorem \ref{ant}) or $\tilde D_n$ (Theorem \ref{dnt}).
These results imply Theorem \ref{wdescpt} and hence the main Theorem
\ref{mt} in these cases.

In preparation for the type $\tilde E_n$ cases, in \S \ref{ssqsec},
we prove some results on
preprojective algebras of star-shaped quivers.
Then, in \S \ref{ens}, we give explicit bases (via Gr\"obner generating sets)
for $\Pi_Q$ in the case of (extended) Dynkin quivers of
type $E$ (Proposition \ref{enprop}).  We also compute the Lie structure on
$\Lambda_Q$ (Proposition \ref{epbp}).  

Finally, we complete the proof of Theorem \ref{wdescpt} in \S \ref{s:mt-fp}.

In the appendix, we give a generalized version of the Diamond Lemma for modules
over commutative rings (which we use to compute bases).

\begin{rem}In this paper we make use of two structures on
  $(P_{\overline{Q}})_{\cyc} \otimes \F_p$ and $\Lambda_Q \otimes
  \F_p$: the $p$-th power maps $[f] \mapsto [f^p]$ discussed above,
  and the necklace Lie algebra structure of \cite{G,BLB} discussed in
  \S \ref{lqs1} below.  It is natural to ask if these structures are
  compatible in any way.  In particular, one might ask if they form a
  restricted Lie algebra. This is, however, not true, because in the
  latter case an axiom of restricted Lie algebras would be $\ad [f^p]
  = (\ad f)^p$, but the LHS is an operator of degree $p|f|-2$ whereas
  the RHS is an operator of degree $p(|f|-2)$, which are not equal.
  We could not find any compatibility axiom which these structures enjoy.
\end{rem}

\subsection{Notation and Definitions} \label{ns} We will always let
$\k$ denote a base commutative ring and will work with algebras and
their modules over $\k$.  When considering a quotient $A/B$ for $A$ an
algebra, we will usually only require $B$ to be a graded
$\k$-submodule rather than an ideal (so that the quotient is only a
graded $\k$-module).

To avoid confusion with $\k$-submodules whose definition requires
parentheses, the ideal generated by elements will henceforth be
denoted with double parentheses: $\ldp r \rdp=$ the ideal generated by
$r$.

Given a $\k$-module $M$ and a subset $S \subseteq M$, we will let
$\langle S \rangle$ denote the $\k$-linear span of the set $S$.  This
is a completely different use of $\langle - \rangle$ than that for the
free algebra $\k \langle x_1, \ldots, x_n \rangle$ on indeterminates
$x_1, \ldots, x_n$, and the usage will be clear from the context.

Given a set $S$, we let the free $\k$-module generated by $S$ be denoted by
$\k \cdot S$.

We will use square braces to indicate that the expression inside is
taken up to cyclic permutations: i.e., we write cyclic words as $[a_1
a_2 \cdots a_m] = [a_2 a_3 \cdots a_m a_1]$.  We will use this in many
cases where it is not strictly necessary (i.e., where the context already
guarantees that the expression is invariant under cyclic permutations).


As stated earlier, a quiver $Q$ is a finite, directed, connected
graph, allowing loops and multiple edges. The edges are called arrows
and the set of arrows is denoted by $Q_1$, while the set of vertices
is denoted by $Q_0$.  Connected here means the underlying undirected
graph is connected. We will maintain the definition of the path
algebra $P_Q$ and the preprojective algebra $\Pi_Q$ above,
as well as the double quiver $\dq$. Note that, in $P_Q$, the product
$p_1 p_2$ of two paths is zero if $p_1$ does not terminate at the same
vertex at which $p_2$ begins.

For an arrow $a \in Q_1$, the reverse arrow is denoted $a^*$, and we also
use the notation $(a^*)^* := a$.  If an arrow $a$ goes from vertex
$i \in Q_0$ to $j \in Q_0$, we say $a: i \rightarrow j$, and set
$a_s = i; a_t = j$ (``s''=``source'', ``t''=``target'').

Let $\k^{Q_0} = \bigoplus_{i \in Q_0} \k$ denote the ring which, as a
$\k$-module, is the free $\k$-module $\k \cdot Q_0$ with basis $Q_0$, and
which has product $ij = \delta_{ij} i$. Then, we consider $\k \cdot
Q_1$ to be a $\k^{Q_0}$-bimodule with multiplication $iaj = \delta_{i a_s}
\delta_{j a_t} e$. One has $P_Q \otimes_\Z \k = T_{\k^{Q_0}} (\k \cdot Q_1)$.

The ``length'' of a path in a quiver is the number of arrows in the
path.  Similarly, the length of a line segment of arrows is the number
of arrows in the segment. As above, $P_Q$ and $\Pi_Q$ are graded by
path length.

When $A$ is a graded $\k$-module, we let $A_m$ denote the degree-$m$
component of $A$, so that $A = \bigoplus_m A_m$. The gradings we will
need will be by path length (hence nonnegative).

We will let $\Z/n$ denote $\Z/n\Z$ throughout.


\subsection{Acknowledgements}
The author is very grateful to Pavel Etingof for communicating Rains
and Hesselholt's conjecture and for many useful discussions.  The
author also thanks Victor Ginzburg for discussions, Lars Hesselholt
for explaining patiently his work on Witt theory and much of \S
\ref{wittsec} in detail, Eric Rains and Pavel Etingof for assistance
with and access to Magma, and the anonymous referee for many helpful
suggestions and corrections and his/her great patience during
revisions. This work was partially supported by an NSF GRF and an AIM
five-year fellowship.

\section{Relations to Witt theory}\label{wittsec}
In \cite{H} (see also \cite{H2}), Hesselholt defined the abelian group
of $p$-typical Witt vectors $W(A)$ for any noncommutative ring $A$,
and the vectors of a given length $\ell$, $W_\ell(A)$.  In the case
where $A$ is commutative, this reduces to the usual Witt vectors
\cite{W} (which form a commutative ring in this case).  We recall
briefly their definition.

First, one defines the $W_{\ell}(A)$ and the restriction maps $R: W_{\ell+1}(A) \onto W_{\ell}(A)$, and then $W(A)$ will be the inverse limit of the $W_\ell(A)$. 
Each $W_{\ell}(A)$ is defined as a certain quotient of $A^{\ell}$, such that there
exist restriction maps $R$ completing the commutative diagrams
(where the left vertical arrow is the projection to the
first $\ell$ components):
\begin{equation} \label{wittcpts}
\xymatrix{
A^{\ell+1} \ar@{->>}[rr] \ar@{->>}[d] & & W_{\ell+1}(A) \ar@{->>}[d]^R \\
A^{\ell} \ar@{->>}[rr] & & W_\ell(A).
}
\end{equation}
We therefore represent elements of $W_\ell(A)$ (nonuniquely) by
coordinates $(a_0, \ldots, a_{\ell-1})$. 

In terms of these coordinates, the sum and difference operations on
$W_\ell(A)$ are expressed using noncommutative versions of the usual
Witt polynomials for these operations.  In more detail, define first
the noncommutative \emph{ghost map},
$w: A^{\ell} \rightarrow A_\cyc^{\ell}$,
$w(a_0, \ldots, a_{\ell-1}) = ([a_0], [a_0^p + p a_1], [a_0^{p^2} + p a_1^p
+ p^2 a_2], \ldots)$.
There exist (nonunique) noncommutative polynomials
$s_i, d_i, i = 0, 1, \ldots$ in infinitely many variables
$(x_0, x_1, x_2, \ldots)$ such that, for all $\ell$,
$s = (s_0, s_1, \ldots, s_{\ell-1})$ and $d = (d_0, d_1, \ldots,
d_{\ell-1})$
are transported under $w$ to the usual sum and difference in
$A_\cyc^{\ell}$. Then, the $W_\ell(A)$ are the unique quotients (of
sets) of the $A^{\ell}$, compatible with \eqref{wittcpts}, such that
$s$ and $d$ descend to each $W_\ell(A)$, and each composition
$(\{0\}^{\ell-1} \times A) \subset A^{\ell} \onto W_\ell(A)$ has zero
fiber $\{0\}^{\ell-1} \times [A,A]$ (i.e., one may inductively define a
noncanonical bijection of sets $A_\cyc^\ell \cong
W_\ell(A)$).
Hesselholt showed that $W_\ell(A)$ then is an abelian group under $s$
and $d$ and the operations are independent of the choice of
noncommutative polynomials $s_i, d_i$.

In addition to defining $W(A)$ for associative algebras $A$,
Hesselholt generalized the Frobenius ($F$) and Verschiebung ($V$)
operators on $W(A)$ to the noncommutative setting.  These
have the form
$F: W_\ell(A) \rightarrow W_{\ell-1}(A)$ and
$V: W_\ell(A) \rightarrow W_{\ell+1}(A)$.

In terms of coordinates, the Verschiebung operator continues to have
the form
\begin{equation}\label{vefla}
V(a_0, a_1, \ldots, a_{\ell-1}) = (0, a_0, a_1, \ldots, a_{\ell-1}),
\end{equation}
and the Frobenius operator is the unique functorial operation
whose expression in terms of ghost components
$w(a_0, \ldots, a_{\ell-1}) = (w_0,w_1, \ldots, w_{\ell-1}) \in
A_\cyc^{\ell}$
is
\begin{equation}\label{frfla}
w(F(a_0, \ldots, a_{\ell-1})) = (p w_1, pw_2, \ldots, p w_{\ell-2}).
\end{equation}

While, in the commutative case, the Verschiebung operator is always
injective, it turns out this may not be the case in the noncommutative
setting.  Hesselholt defined functorial exact sequences,
\begin{equation}
HH_1(A) \tra^{\delta_\ell} W_\ell(A) \tra^V W_{\ell+1}(A),
\end{equation}
using the polynomials $s_i, d_i$. 
In particular, if
$\sum_{i=1}^n \bar x_i \o \bar y_i \in A \o A$ is a Hochschild
one-cycle (i.e.~$\sum_{i=1}^n [\bar x_i, \bar y_i] = 0$) for some
$\bar x_i, \bar y_i \in A$, one may consider the map
$\phi: \Pi \rightarrow A$ mapping $x_i \mapsto \bar x_i, y_i \mapsto 
\bar y_i$ (for $\Pi$ as in the beginning of \S \ref{is} with parameter $n$); then
one has a commutative diagram with exact rows,
\begin{equation}
\xymatrix{
HH_1(\Pi) \ar[r]^{\delta_\ell} \ar[d]^\phi & W_\ell(\Pi) \ar[r]^V \ar[d]^\phi & 
W_{\ell+1}(\Pi) \ar[d]^\phi \\
HH_1(A) \ar[r]^{\delta_\ell} & W_\ell(A) \ar[r]^V & W_{\ell+1}(A),
}
\end{equation}
so that
$\phi \circ \delta_\ell[\sum_i x_i \o y_i] = \delta_\ell [\sum_i \bar
x_i \o \bar y_i]$.
Hence, the kernel of the Verschiebung map is generated by
 all classes
that can be obtained as the image of the classes
$\delta_\ell[\sum_i x_i \o y_i]$ under algebra maps
$\Pi \rightarrow A$ (for all choices of $n$).

Furthermore, one has $W_1(A) = A/[A,A]$, and Hesselholt computed that
$\delta_1(\sum_i x_i \o y_i) = r^{(p)}$.  More generally,
\begin{equation}
w(\delta_\ell(\sum_i x_i \o y_i)) = (r^{(p)}, r^{(p^2)}, \ldots, r^{(p^\ell)}).
\end{equation}
Hence, Conjecture \ref{rh} says that, not only is the kernel of $V$
nonzero in general, but the universal elements of the kernel,
$\delta_\infty(\sum_{i=1}^n x_i \o y_i) \in W(\Pi)$, have all nonzero
ghost components (for $n \geq 2$).

Note that, by \eqref{vefla},\eqref{frfla}, the ghost components of any
element of $\ker(V)$ are $p$-torsion in $A_\cyc$.\footnote{In fact,
  $F \circ V$ is multiplication by $p$ in $W(A)$, so $\ker(V)$ is
  $p$-torsion in $W(A)$ as well.}
Conjecture \ref{rh} implies that, in fact, the ghost components of the
universal element of $\ker(V)$ above form a $\F_p$-basis for the
$p$-torsion in $\Pi_\cyc$.
\begin{rem}The quiver generalization of the above should be as
  follows: Given any set of vertices $Q_0$, one may consider
  noncommutative algebras over $\Z^{Q_0}$ instead of over $\Z$.  The
  Witt group itself is unchanged, only the category of algebras is
  changed.  Now, the universal elements of $\ker(V)$ should instead be
  written as $\delta_\infty(\sum_{a \in Q_1} a \otimes a^* - a^*
  \otimes a) \in W(\Pi_Q)$, replacing the parameter $n$ by the quiver
  $Q$.  We then see that, for non-Dynkin, non-extended Dynkin quivers,
  not only is the corresponding universal element nonzero, but all its
  ghost components are nonzero, and they form a $\F_p$-basis for the
  $p$-torsion of $\Lambda_Q$.
\end{rem}

\section{Hesselholt and Rains' conjecture}\label{hrcsec}
The purpose of this section is to prove the following main
combinatorial result, which in particular implies Conjecture \ref{rh}.
The proof of this lemma will be generalized later, to compute bases of
$\Pi$ and $\Lambda$ modulo torsion for quivers containing $\tilde A_n$
or $\tilde D_n$:
\begin{defn} \label{d:rep}
For a sequence $(x_1,\ldots,x_k) \in X^k$ for any set $X$, let
 $\text{per}(x_\cdot)$ be the period (the least positive integer such
 that $x_i = x_{i+\text{per}(x_{\cdot})}$, with indices taken modulo
 $k$), and let $\rep(x_\cdot) := k/\text{per}(x_{\cdot})$ be the number
 of cyclic permutations which fix the sequence (the size of the
 stabilizer in $\Z/k$ of the sequence), which we can call the ``number
of times the sequence repeats itself'', hence ``rep''. 
\end{defn}
Let $A:=F/R$ where $F:=\Z\langle x,y,r'\rangle$ is the free algebra
and $R:=\ldp r \rdp$ with $r := xy-yx + r'$. Let $V := A/[Ar'A,A]$.
Let $\hat V$ be the $\Z$-module obtained from $V$ by adjoining $[\frac{1}{p}
(r')^{p^{\ell}}]$ for all $p$ prime and $\ell \geq 1$, i.e., 
the quotient of $V
\oplus \Z \cdot \{t_p\}_{p \text{ prime}, \ell \geq 1}$ by the
relations $p \cdot t_p = [(r')^{p^\ell}]$ for all $p$ prime and $\ell \geq 1$.
Similarly
let $\widehat{F/[F,F]}$
and
$\widehat{A/[A,A]}$
be the modules obtained from $F/[F,F]$ and $A/[A,A]$,
respectively, by adjoining $[\frac{1}{p}
(r')^{p^{\ell}}]$ for all $p$ prime and $\ell \geq 1$.

Henceforth, a ``monomial'' in $X$ refers to a noncommutative monomial,
i.e., an element of the form $x_1 \cdots x_k$ for $x_1,\ldots,x_k \in
X$ (with any of the $x_i$ allowed to be equal to each other), unless
we say ``cyclic monomial.'' A cyclic monomial $[x_1 \cdots x_k]$ is the same except where $[x_1 \cdots x_k] = [x_2 \cdots x_k x_1]$ by definition.
\begin{lemma}\label{mcl}
 \begin{enumerate}
\item \label{bapp} A basis for $A$ is given by monomials in $x, y,$ and $r'$
      such that the maximal submonomials in $x,y$ are of the type
      \begin{equation} \label{zabdefn}
      z_{a,b} := \begin{cases} (x y)^b x^{a-b}, & \text{if } a \geq b, \\
                         (y x)^a y^{b-a}, & \text{if } a < b, 
      \end{cases}
      \end{equation}
      i.e.~of the form $z_{a_1, b_1} r' z_{a_2, b_2} \cdots r' z_{a_{m+1},
      b_{m+1}}$ for $m, a_i, b_i \geq 0$;
\item \label{ampc} A basis for $A/[Ar'A,A]$ is given by
\begin{enumerate}
\item $z_{a,b}$; and
\item classes of the form $[z_{a_1, b_1} r' z_{a_2, b_2} \cdots z_{a_m, b_m}
      r']$ (up to simultaneously cyclically permuting the $a$ and $b$ indices).
\end{enumerate}
\item \label{bacpp} The canonical quotient map
 $V = A/[Ar'A,A] \onto A/[A,A]$
has kernel the free submodule
  $W \subset V$ with
  basis given by the classes (for $a > b \geq 1$):
\begin{gather} \label{wabdefn1}
W_{a,b} := \sum_{a_\cdot,b_\cdot} \frac{\gcd(a,b)}{\rep(a_\cdot,b_\cdot)}
[\prod_{\ell=1}^{|\{a_{\cdot}\}|} (-r')
z_{a_\ell,b_\ell}], \\
W_{b,a} := \sum_{a_\cdot,b_\cdot} \frac{\gcd(a,b)}{\rep(a_\cdot,b_\cdot)} \label{wabdefn2}
[\prod_{\ell=1}^{|\{a_\cdot\}|} r'
z_{b_\ell,a_\ell}], \\ \label{wabdefn3}
W_{a,a} = [(xy+r')^a] - [(xy)^a],
\end{gather}
where the sums $\sum_{a.,b.}$ are over all \textbf{distinct} pairs
$(a.,b.)$ of tuples of the same length, modulo simultaneous cyclic
permutations of the indices, such that $W_{c,d}$ always has bidegree
$(c,d)$ in $x,y$ (with $r'$ having bidegree $(1,1)$, so in
the summation $a = \sum
(a_i+1)$ and similarly for $b$) and such that $a_\ell > b_\ell$ for
all $\ell$.
The coefficient $\rep(i_\cdot,j_\cdot)$ is as defined in \eqref{d:rep}
above, viewing $(i.,j.)$ as a $k$-tuple of elements of $\N \times \N$.
Finally, the product is taken in left-to-right order, i.e., in
\eqref{wabdefn1} it expands as $[(-r')z_{a_1,b_1} (-r')z_{a_2,b_2}
\cdots (-r') z_{a_\ell,b_\ell}]$, and similarly for
\eqref{wabdefn2}. Note that the coefficients of $W_{a,b}, W_{b,a}$,
and $W_{a,a}$ above are integers.

\item \label{bactpp} None of the classes $W_{a,b}$, considered as
  elements of $V$, are multiples of any non-unit in $\Z$.  Under the
  quotient $V \onto V/\langle [(r')^m] \rangle_{m \geq 1}$, the images
  of only the classes $W_{p^{\ell},p^{\ell}}$ are multiples of a
  non-unit in $\Z$, the greatest of which is $p$.

       Then, the classes $\frac{1}{p} W_{p^\ell,p^\ell} \in \hat V$
      and $\frac{1}{p} [r^{p^\ell}] \in \widehat{F/[F,F]}$ have
      the same image in $\widehat{A/[A,A]}$, are nonzero, and
      generate the torsion of $\widehat{A/[A,A]}$ ($\Z/p$ in degrees
      $2 p^\ell$ and $0$ otherwise).
\end{enumerate}
\end{lemma}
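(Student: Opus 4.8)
The plan is to prove the four parts of Lemma~\ref{mcl} essentially in order, since each later part relies on the explicit bases produced in the earlier ones. For part~\eqref{bapp}, I would apply the Diamond Lemma for modules over a commutative ring (Appendix~\ref{dla}) to the presentation $A = F/\ldp r\rdp$. Choose a monomial order on words in $x,y,r'$ in which the leading term of $r = xy - yx + r'$ is $yx$; the single rewriting rule $yx \to xy + r'$ then has to be checked for confluence. The only overlap ambiguities come from the word $yyx$ and $yxx$ (and words obtained by padding with $r'$'s, which do not interact), so resolving these two overlaps is a finite computation; the resulting normal forms are exactly the monomials in which every maximal $x,y$-submonomial, read left to right, has the shape $z_{a,b}$ of \eqref{zabdefn}. (Concretely, $z_{a,b}$ is the normal form of the monomial with $b$ copies of $y$ interleaved among $a$ copies of $x$ in the appropriate way; one checks $z_{a,b}$ is irreducible and that these are all the irreducible monomials of given bidegree.) This gives \eqref{bapp}.

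For part~\eqref{ampc}, take cyclic words in the basis of \eqref{bapp}: a cyclic monomial $[z_{a_1,b_1} r' \cdots z_{a_{m+1},b_{m+1}} r']$ with $m\ge 0$, or a purely-$(x,y)$ cyclic word. The point is that modulo $[Ar'A,A]$ one can cyclically rotate so that the word either contains an $r'$ and ends in $r'$, or contains no $r'$; and a purely-$(x,y)$ cyclic word, being a cyclic word in the free algebra on $x,y$, reduces (using the rule $yx\to xy+r'$ cyclically, and then the relation modulo $[Ar'A,A]$ to push $r'$ to a standard spot) to a cyclic power $[(xy)^a]$ up to lower-order terms containing $r'$. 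The honest bookkeeping here is to run the Diamond Lemma for the cyclic-word module directly (as the paper indicates it will generalize this computation later), checking that the cyclically-reduced forms listed in~\eqref{ampc} are a spanning set and that no $\Z$-linear relation among them survives; linear independence follows because distinct listed classes have distinct sets of representative monomials in the basis of~\eqref{bapp}, after accounting for the stabilizer in $\Z/k$.

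Part~\eqref{bacpp} is the combinatorial heart. The map $V = A/[Ar'A,A] \onto A/[A,A]$ kills the extra relations $[Ar'A,A]$, i.e.\ it identifies cyclic words that differ by moving a letter past an $r'$-block. I would compute the kernel degree by degree and bidegree by bidegree in $(x,y)$. Fix bidegree $(a,b)$ with, say, $a>b$; the basis elements of $V$ in this bidegree not of the form $z_{a,b}$ are the cyclic words $[\prod (-r') z_{a_\ell,b_\ell}]$ (sign absorbed to match~\eqref{wabdefn1}); in $A/[A,A]$ all of these collapse because each such cyclic word equals (up to sign and up to pushing an $x$ or $y$ across an $r'$, which is free in $A/[A,A]$) a fixed cyclic word depending only on $(a,b)$ — essentially $[(yx - xy)^{m} \text{(rest)}]$ gets absorbed. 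The claim is that the kernel is spanned freely by the single class $W_{a,b}$ in each such bidegree with $a>b\ge 1$, plus $W_{a,a} = [(xy+r')^a] - [(xy)^a]$ in the equal bidegrees. To pin down the coefficients $\gcd(a,b)/\rep(a_\cdot,b_\cdot)$ I would think of a cyclic word of length $k$ in symbols $(a_\ell,b_\ell)$ as a necklace; the number of linear words in a given cyclic class is $k/\rep$, and matching the $\Z$-module structure (so that $W_{a,b}$ is primitive, i.e.\ not a proper multiple, which is asserted in~\eqref{bactpp}) forces exactly these coefficients — the $\gcd(a,b)$ appears because the total $x$-degree contributed, $a = \sum(a_\ell+1)$, together with $b=\sum(b_\ell+1)$, constrains $k \le \gcd(a,b)$ and the extreme necklace (all blocks equal) has $\rep = k$. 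I would verify that $W_{a,b}$ is in the kernel by a direct telescoping computation: expanding $[\prod(-r')z_{a_\ell,b_\ell}]$ in $A/[A,A]$ and using that $z_{a_\ell,b_\ell} \equiv$ (a monomial in $x,y$ up to commutators) shows the signed sum telescopes to zero. Surjectivity onto the kernel is a dimension count: in each bidegree, $\dim(\ker) = \dim_{(a,b)} V - \dim_{(a,b)} (A/[A,A])$, and the latter I can read off from the known basis of $(P_{\overline Q})_\cyc$ for the one-loop quiver (equivalently $\F\langle x,y\rangle_\cyc$ plus corrections), giving exactly $1$ in each relevant bidegree. The main obstacle I anticipate is exactly this step: getting the coefficient normalization $\gcd(a,b)/\rep$ right and proving simultaneously that $W_{a,b}$ generates the kernel and is $\Z$-primitive; this is where the necklace-counting and the Diamond-Lemma-over-$\Z$ machinery really have to be deployed carefully, and it is the part the paper flags as being reused for the $\tilde A_n$, $\tilde D_n$ cases.

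Finally, for part~\eqref{bactpp} itself: primitivity of each $W_{a,b}$ in $V$ follows from the normalization just discussed (the necklace with all blocks distinct, or the all-equal necklace, contributes a coefficient that is coprime to the rest, so no common prime divides all coefficients) — with the exception that when $(a,b) = (p^\ell, p^\ell)$ the element $W_{p^\ell,p^\ell} = [(xy+r')^{p^\ell}] - [(xy)^{p^\ell}]$ becomes, after killing the powers $[(r')^m]$, divisible by $p$: indeed modulo $[A,A]$ and modulo $\langle [(r')^m]\rangle$ one has $[(xy+r')^{p^\ell}] \equiv [(xy)^{p^\ell}] + [(r')^{p^\ell}] \pmod p$ by the Jacobson/freshman's-dream identity for the $p$-th power map on $A/[A,A]\otimes\F_p$ recalled in Remark~\ref{ppwrrem} (iterated $\ell$ times), and the cross terms all contain both $xy$ and $r'$ hence lie in lower $r'$-free-degree and are handled inductively — the upshot being that $W_{p^\ell,p^\ell} \equiv [(r')^{p^\ell}] \pmod{p,\ \langle[(r')^m]\rangle}$ and the $p$-divisibility is sharp (not divisible by $p^2$) because $[(r')^{p^\ell}]/p$ is a basis element adjoined in $\hat V$. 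Then $\tfrac1p W_{p^\ell,p^\ell}$ and $\tfrac1p[r^{p^\ell}]$ have the same image in $\widehat{A/[A,A]}$ since $r \equiv r' \pmod{[A,A]}$ after quotienting and $r \equiv xy+r' - (yx)$ so $[r^{p^\ell}] \equiv [(xy+r')^{p^\ell}] - [(xy)^{p^\ell}] = W_{p^\ell,p^\ell}$ in $A/[A,A]$ up to terms that vanish. Nonvanishing of these classes in $\widehat{A/[A,A]}$ and the fact that they generate the torsion ($\Z/p$ in degree $2p^\ell$, nothing else) is now immediate from parts~\eqref{ampc}–\eqref{bacpp}: the torsion of $A/[A,A]$ is exactly the quotient of the free module $W$ (the kernel $\ker(V\onto A/[A,A])$, pulled into $A/[A,A]$) by the image of $V$, and after adjoining the $\tfrac1p[(r')^{p^\ell}]$'s the only surviving torsion is the $\Z/p$ coming from $W_{p^\ell,p^\ell}$.
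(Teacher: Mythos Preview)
Your plan has two concrete gaps.

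\textbf{Part~\eqref{bapp}: the ordering does not produce $z_{a,b}$.} If you take the graded-lex ordering making $yx$ the leading term of $r$ and use the single reduction $yx \to xy + r'$, then an $x,y$-block is irreducible exactly when it contains no $yx$, i.e.\ when it is $x^a y^b$. But $z_{a,b}$ is not of this form: for instance $z_{2,1} = xyx$ contains $yx$ and would be reduced. The paper needs the $z_{a,b}$ normal forms precisely because they behave well for the cyclic computation in part~\eqref{bacpp} (see Remark~\ref{xyrem}), and to obtain them it does \emph{not} use a single monomial rule. Instead it introduces a ``disorder'' function $\Dis(M)$ counting the number of adjacent swaps needed to bring each maximal $x,y$-submonomial to its $z_{a,b}$ form, and a partial order in which $M_1 \prec M_2$ iff either $M_1$ has larger $r'$-order, or equal $r'$-order and smaller disorder. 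Reductions then go \emph{both} ways ($yx \to xy + r'$ or $xy \to yx - r'$, whichever decreases disorder), and confluence rests on the nontrivial observation that two swap-sites whose swaps both decrease disorder can never overlap (e.g.\ in $xyx$ one cannot have both $xy \to yx$ and $yx \to xy$ disorder-decreasing). Your overlap analysis of $yyx$ and $yxx$ is for the wrong ordering.

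\textbf{Part~\eqref{bacpp}: the ``dimension count'' is not an argument over $\Z$.} Knowing the kernel has rank one in each bidegree does not tell you that $W_{a,b}$ generates it, nor that $W_{a,b}$ is primitive, nor where the coefficients $\gcd(a,b)/\rep(a_\cdot,b_\cdot)$ come from. The paper's method is direct: since the $z_{a,b}$ span $A$, one has $[A,A] = [Ar'A,A] + \langle [z_{a,b-1},y],\, [z_{a-1,b},x]\rangle_{a,b\ge 1}$, so the kernel $W$ is spanned by the images $w_{a,b,y} := \eta[z_{a,b-1},y]$ and $w_{a,b,x} := \eta[z_{a-1,b},x]$. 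The key identity, which your plan is missing, is
\[
a\, w_{a,b,x} + b\, w_{a,b,y} = 0,
\]
obtained by summing $\eta[h_{i+1}\cdots h_{i-1},h_i]$ over the letters $h_i$ of $z_{a,b}$. This reduces to computing a single generator, and the paper then computes $w_{a,b,x}$ explicitly (for $a>b$) by commuting the rightmost $x$ in $(xy)^b x^{a-b}$ past each $y$, obtaining $\sum_{b'} [(xy)^{b'} x\, r'\, (xy)^{b-b'-1} x^{a-b-1}]$ and then reducing each summand; counting contributions to each cyclic basis element yields exactly $-\frac{b}{\gcd(a,b)} W_{a,b}$, which is both the source of the coefficients and the proof that $W_{a,b}$ spans the kernel in bidegree $(a,b)$. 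Primitivity in part~\eqref{bactpp} then follows because $\rep(a_\cdot,b_\cdot)$ ranges over all divisors of $\gcd(a,b)$, so the gcd of the coefficients $\gcd(a,b)/\rep$ is $1$. Your telescoping/necklace sketch does not supply this computation, and without it the integrality and primitivity claims are unproved.
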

We note that abstractly understanding $A/[A,A]$ is easy if we wanted
to use cyclic words in $x$ and $y$, but the point of finding bases of
the above form is to allow one to obtain bases in a further quotient
by a power of $r'$, or in an extension of $A$ (cf.~Corollary
\ref{mclcor}). In particular, to prove Conjecture \ref{rh}, we will
view $F$ as a subalgebra of the free algebra $P=\Z\langle x_1,
\ldots, x_n, y_1, \ldots, y_n \rangle$ by $x=x_1, y=y_1,$ and $r' =
\sum_{i=2}^n [x_i,y_i]$, in which case the image of $(r')^{\ell}$
in $\Lambda$ is zero for all
$\ell \geq 1$.
\begin{proof}
  \eqref{bapp} 
%
We use the Diamond Lemma for modules as formulated in Appendix \ref{dla}.
First, define the disorder $\Dis(M)$ of a
  monomial $M$ in $x,y$ to be the minimal number of swaps of adjacent
  letters in $M$ needed to bring it to the form $z_{a,b}$. For a
  monomial $M = M_1 r' M_2 \cdots r' M_{n+1}$ for $n \geq 1$, define the disorder to be
  $\Dis(M) := \Dis(M_1) + \ldots + \Dis(M_{n+1})$: that is, $\Dis(M)$
  is given by the sum of the above disorder over each maximal monomial
  in $x$'s and $y$'s.

  Let $O(M)$ be the maximal nonnegative integer such that $M \in
  (r')^{O(M)}$.  Then, we define the partial order on monomials 
  such that $M_1 \prec M_2$ if and only if either $O(M_1) > O(M_2)$ or
  $O(M_1) = O(M_2)$ and $\Dis(M_1) < \Dis(M_2)$.  Every relation $f
  (xy-yx+r') g$, where $f$ and $g$ are monomials in $x$, $y$, and
  $r'$, then has leading term equal to either $f xy g$ or $f yx g$
  (because of the $O$ condition), and thus can be viewed as a
  reduction $f yx g \mapsto f xy g + f r' g$ or $f xy g \mapsto f yx g
  - f r' g$ which reduces the disorder of the leading term.

  To prove that the reductions are confluent, we have to show that, if
  a monomial $f$ in $x, y$, and $r'$ can be reduced in two different
  ways, then their difference is integrally spanned by relations
  with leading monomials strictly less than $f$ in the partial
  ordering.  That is, we need to consider $f$ in which there are two
  distinct instances where $x$ and $y$ are adjacent and swapping the
  pair of $x$ and $y$ results in a smaller monomial in the partial
  ordering.

  We claim that these two adjacent instances of $x$ and $y$ do not
  overlap.  If not, then there must be either an instance of $xyx$ in
  $f$ such that the disorder decreases if $xyx$ is replaced by $yxx$
  as well as if $xyx$ is replaced by $xxy$, or alternatively an
  instance of $yxy$ in $f$ such that the disorder decreases if $yxy$
  is replaced by $yyx$ as well as if it is replaced by $xyy$.  But
  this is impossible.  Indeed, in the case that the disorder of a
  monomial $f_1 yxx f_2$ is less than that of $f_1 x y x f_2$, then it
  easily follows that $\Dis(f_1 x y x f_2) = \Dis(f_1 y x x f_2) + 1$
  and also $\Dis(f_1 x x y f_2) = \Dis(f_1 x y x f_2)+1$.

So, the two adjacent instances of $x$ and $y$ in $f$ in question do
not overlap.  Call the two reductions of $f$ obtained by the two
corresponding swaps $f'$ and $f''$.  Then $f'$ and $f''$ both admit a
common reduction, obtained by swapping the pair of $x$ and $y$ which
was not yet swapped (i.e., reducing $f'$ by swapping the pair of $x$
and $y$ which was swapped to obtain $f''$, and vice-versa). Since all
monomials appearing in $f'$ and $f''$ are strictly less than $f$, this
implies that $f'-f''$ is a linear combination of relations (defining
reductions) whose leading monomials are strictly less than $f$, which
yields the confluence property in this case.

With either approach, all reductions yield relations with leading
coefficient $1$, so by Proposition \ref{dl3}, the quotient $F/R$ is a
free bigraded $\Z$-module with the given basis (the elements not
appearing as leading coefficients of relations).

\eqref{ampc} Let $F' \subset F$ be integrally spanned by monomials not
containing $r'$ and let $V' := F' \oplus Fr'F/[Fr'F,F]$.  There is an
obvious surjection $\beta: V' \onto V := A/[Ar'A,A]$.  $V'$ has a basis
consisting of monomials in $x$ and $y$ and cyclic monomials in $x$,
$y$, and $r'$, i.e., monomials in $x$, $y$, and $r'$ up to cyclic
permutation.  On cyclic monomials, let $O(f)$ denote the number of
times that $r'$ occurs in $f$. Let us define a partial order on
monomials in $V'$: $f \prec g$ if either (1) $O(f) > O(g)$; or (2)
$O(f)=O(g)$ and there are fewer swaps $yx \leftrightarrow xy$ needed
to bring $f$ to normal form than for $g$.

Note that, in $V'$, monomials in $x, y$, and $r'$ are invariant under
cyclic permutations, which is why we place them in square braces (in
accordance with \S \ref{ns}).

 Then, it is not difficult to see that the set of elements
 $f(xy-yx)g + [fr'g], [h(xy-yx+r')] \in V'$ (for $f, g, h$ monomials,
 with $h \in \ldp r' \rdp$ and $f,g \notin \ldp r' \rdp$) form a
 confluent set $W'$, so that $V'/W' \cong A/[Ar'A,A]=V$, as desired: the
 normal-form basis of the quotient is given by the classes listed in
 the statement of \eqref{ampc}.  One makes the same arguments as
 before: the main point is that cyclic monomials that contain $r'$
 behave similarly to regular monomials in that one can compute the
 minimal number of swaps $xy \leftrightarrow yx$ needed to reduce to a
 normal-form element, and any two reductions can be performed in
 either order with the same result.

 In detail, to prove confluence, as before, it suffices to show that,
 if $f$ is either a monomial in $x$ and $y$, or a cyclic monomial in
 $x$, $y$, and $r'$, and if there are two adjacent instances of $x$
 and $y$ in $f$ such that swapping either pair decreases the disorder
 of $f$, then the corresponding reductions $f'$ and $f''$ admit a
 common reduction.  In the case of monomials in $x$ and $y$, this is
 exactly the same argument as before.  In the case of cyclic monomials
 in $x$, $y$, and $r'$, then as before, in this case the two adjacent
 instances of $x$ and $y$ must be disjoint, since $\Dis([f_1 yxx f_2])
 < \Dis([f_1 xyx f_2])$ (for $f_1$ and $f_2$ monomials in $x, y$, and
 $r'$ at least one of which contains $r'$) implies $\Dis([f_1 xyx
 f_2]) = \Dis([f_1 yxx f_2]) + 1$ and $\Dis([f_1 xxy f_2]) = \Dis([f_1
 xyx f_2]) + 1$.  Therefore $f'$ and $f''$ admit common reductions,
 obtained by swapping the other instance of $x$ and $y$ not yet
 swapped.  Thus $f'-f''$ is a linear combination of relations with
 smaller leading (cyclic) monomial than that of $f$, proving the
 confluence property.

 Again, the leading coefficients of the relations generating $W'$
 (corresponding to the reductions) all are one, so by Proposition
 \ref{dl3}, the desired set (which consist of the generators of $V'$
 which are not leading coefficients of any relations) forms a basis of
 $V$.

 \eqref{bacpp} We claim that
 $[A,A] = [Ar'A,A] + [\langle z_{a,b-1}\rangle_{a,b \geq 1},y] + [\langle z_{a-1,b}
 \rangle_{a,b \geq 1}, x]$.
 This follows immediately from the fact that the $z_{a,b}$ form a
basis of $A$ (Part \eqref{bapp}).  This means that $[A,A] = V/W$ where $W$ is 
integrally spanned by
 the relations
\begin{gather}
w_{a,b,y} := \eta([z_{a,b-1}, y]), \quad w_{a,b,x} := \eta([z_{a-1,b}, x]
 \in V,
\intertext{where}
 \eta: A \onto A/[Ar'A,A] \cong V \text{ is the quotient.}
\end{gather} 
It remains to show that
\begin{equation}
\langle w_{a,b,x}, w_{a,b,y} \rangle  = \langle W_{a,b}
\rangle.
\end{equation} 
This will complete the proof of \eqref{bacpp}.

We will prove the sharper result that
\begin{equation} \label{xycomms}
\frac{\gcd(a,b)}{b} w_{a,b,x} = -\frac{\gcd(a,b)}{a}
w_{a,b,y} = \pm W_{a,b}, \quad a,b\geq 1,
\end{equation}
using the positive choice of gcd,
where $\pm$ is plus if $a \leq b$ and minus if $a > b$. Here the division by $a$ or $b$ makes sense since $V$ is a free $\Z$-module.

First, we note that $a w_{a,b,x} + b w_{a,b,y} = 0$, since the left-hand
side is equivalent in $A/[Ar'A, A]$ to
\begin{equation}
\sum_{i=1}^{a+b} \eta([h_{i+1} h_{i+2} \cdots h_{a+b} h_1 h_2 \cdots h_{i-1}, h_i]), \quad h_1 \cdots h_{a+b} = z_{a,b}, \quad h_i \in \{x,y\}, \forall i.
\end{equation}
Thus, it remains to compute only one of $w_{a,b,x}$ and $w_{a,b,y}$
for all positive integers $a$ and $b$.  Let us suppose that $a > b$.
Then, it follows that $w_{a,b,x}$ is the result of successively
commuting the $x$ on the right of $(xy)^b x^{a-b}$ all the way to the
left, and subtracting the resulting $x(xy)^b x^{a-b-1}$.  So,
$w_{a,b,x} = \sum_{0 \leq b' \leq b-1} [(xy)^{b'} x r' (xy)^{b-b'-1}
x^{a-b-1}]$.  Each summand can then be reduced to a linear combination
of monomials in $r'$ and the monomials $z_{c,d}$ for all $c, d$.
Specifically, this is the sum of all possible terms of the form $[r'
z_{a_1,b_1} r' \cdots r' z_{a_k, b_k}]$ such that $b_1 \geq b - b'-1$,
and satisfying the conditions of \eqref{wabdefn1}: $a_\ell > b_\ell$
for all $\ell$, and $a_1 + \cdots + a_k + k = a$, $b_1 + \cdots + b_k
+k = b$.  So, when we add up all the contributions to $w_{a,b,x}$, we
get $-((b_1+1) + \cdots + (b_m+1) = b) / rep(a_\cdot, b_\cdot)$ copies
of each $[(-r') z_{a_1,b_1} (-r') \cdots (-r') z_{a_m, b_m}]$, i.e.,
$-\frac{b}{\gcd(a,b)} W_{a,b}$.

For the same reason, when $b > a$ we get
$w_{a,b,y} =-\frac{a}{\gcd(a,b)} W_{a,b}$.  So it remains to consider
$w_{b,b,x}$.  Here, we get $\eta((yx)^b-(xy)^b) =
[(xy+r')^b]-[(xy)^b]$, as desired.

It is clear that the classes $W_{a,b}$ are zero if either of $a$ or
$b$ is zero.  This completes the proof of part \eqref{bacpp}.

\eqref{bactpp} Since $V$ is a free $\Z$-module, and each relation
\eqref{wabdefn1}--\eqref{wabdefn3} lives in a different bigraded
degree, we find that the torsion part of $V/W$ is a direct sum of its
bigraded components, which are all cyclic (or trivial).  In bidegree
$(a,b)$, this module is $\Z/g_{a,b}$, where $g_{a,b}$ is the gcd of
all the coefficients $\ds \frac{\gcd(a,b)}{\rep(a_\cdot, b_\cdot)}$
that appear in \eqref{wabdefn1}--\eqref{wabdefn3}.  To compute this,
we claim first that the numbers $\rep(a_\cdot, b_\cdot)$ range over
all positive factors of $\gcd(a,b)$: it's clear that any cyclic
monomial of the form $[f^\ell]$ with bidegree $(a,b)$ must have $\ell$
be a factor of $a$ and $b$; on the other hand, for any such factor, we
can form the cyclic monomial $[f^\ell]$ where $f = r'
x^{\frac{a}{\ell} - 1} y^{\frac{b}{\ell} - 1}$.  So, the gcd of the
coefficients $\ds \frac{\gcd(a,b)}{\rep(a_\cdot, b_\cdot)}$ is $1$ for
all $a,b$.  When $a \neq b$, the same is true if we restrict to terms
not of the form $[(r')^m]$, since such terms cannot have bidegree
$(a,b)$.  On the other hand, in the case $a=b$, we see that,
if we restrict to the terms other than $[(r')^m]$, then the gcd is
equal to $p$ in the case $m=p^\ell$ where $p$ is prime (but still one
in any other case). To see this, first note that the term $[(r')^m]$
is the unique one that can have coefficient equal to $1 =
\frac{\gcd(a,a)}{a}$, since $\rep(a_\cdot, b_\cdot)=a=b$ if and only
if $a_\cdot=b_\cdot=(0,0,\ldots,0)$ (tuples of length $a=b$ with all
zeros).  Next, the gcd of all positive factors of $m$ other than one
equals $p$ when $m=p^\ell$ and equals one otherwise.  
This proves the claims of the first paragraph.

Finally, we need to show that the classes
$\frac{1}{p} W_{p^\ell,p^\ell} \in \hat V$ and
$\frac{1}{p} [r^{p^\ell}] \in \widehat{F/[F,F]}$ have the same
image in $\widehat{A/[A,A]}$, are nonzero, and generate its torsion.
First, we have already seen that the images of the classes
$\frac{1}{p} W_{p^\ell,p^\ell} \in \hat V$ are nonzero and 
generate the torsion of
$\widehat{A/[A,A]}$, since $\widehat{A/[A,A]}$ is obtained by modding
by the classes $W_{a,b}$, which now have greatest integer factor
$= p$ if $a=b=p^\ell$ and greatest integer factor $=1$ otherwise.

We can write the image of the class $\frac{1}{p} [r^{p^\ell}]$ in $A/[A,A]$ as follows:
\begin{equation}
\frac{1}{p} [r^{p^\ell} - (xy)^{p^\ell} - (-yx)^{p^\ell}] + 
\frac{1}{p} (1 + (-1)^{p^\ell})[(xy)^{p^\ell}],
\end{equation}
and since the first term in square braces is actually $p$ times an integral
combination of cyclic words, we can replace terms $(yx)$ by $(xy+r')$ (without destroying the ability to divide by $p$), and obtain
\begin{equation}
\frac{1}{p} [-(xy)^{p^\ell} - (-xy-r')^{p^\ell}] + \frac{1}{p} (1 + (-1)^{p^\ell})[(xy)^{p^\ell}] \equiv \frac{1}{p}\bigl([(xy+r')^{p^\ell}] - [(xy)^{p^\ell}]\bigr) \pmod p,
\end{equation}
which is the image of $\frac{1}{p} W_{p^\ell,p^\ell}$ under
$\widehat{F/[F,F]} \onto \widehat{A/[A,A]}$, as desired.
\end{proof}
From the proof, we deduce the more general
\begin{cor} \label{mclcor} Let $B = B_+ \oplus \k$ be any (graded)
  augmented $\k$-algebra such that $B_+$ is a free $\k$-module.
Let $r' \in
  B\setminus\k$ (of degree 2). Let $r := xy-yx + r'$, and set $F' =
  \Z \langle x,y\rangle * B$ (with $\Z \langle x,y \rangle$ the free
  algebra) and $A' = F' / \ldp r \rdp$.  Then
\begin{equation}
A'_{\cyc} \liso (\Z[x,y] * B)_{\cyc} / W,
\end{equation}
where $W$ is as defined in Lemma \ref{mcl}.  In particular,
\begin{equation}
  \text{torsion}(A'_{\cyc}) \liso \text{torsion}(B_{\cyc}) \oplus \bigoplus_{p,\ell}
\langle r^{(p^\ell)} \rangle,
\end{equation}
where the sum is over all primes $p$ and all $\ell \geq 1$ such that
$(r')^{p^\ell} \in pB + [B,B]$, and $\langle r^{(p^\ell)} \rangle$
denotes the subgroup of $A'_{\cyc}$ generated by $r^{(p^\ell)}$.  The isomorphisms
are obtained from $F' \onto A'$ by picking a $\Z$-module section
$\Z[x,y] \into F'$ of the submodule $\Z[x,y] \subseteq A'$.  The
classes $r^{(p^\ell)}$ are all nonzero.
\end{cor}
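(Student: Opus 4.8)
The plan is to mimic the proof of Lemma \ref{mcl} almost verbatim, replacing the free generator $r'$ of $F$ by the fixed degree-$2$ element $r'\in B_+$, ``powers of $r'$'' by a fixed $\k$-basis $\mathcal{B}$ of $B_+$, and ``monomials in $x,y,r'$'' by the standard $\k$-basis of $F'=\Z\langle x,y\rangle*B$: the alternating words $w_0\beta_1 w_1\cdots\beta_k w_k$ with $\beta_i\in\mathcal{B}$ and $w_i$ monomials in $x,y$ (nonempty for $0<i<k$). Although $F'$ need not be bigraded by $x$- and $y$-degree, the relation $r=xy-yx+r'$ is homogeneous for total degree and for the ``$xy$-weight'' ($x$-degree $-$ $y$-degree, with all of $B$ in weight $0$), so $A'$ and every quotient below inherit this bigrading, in which the classes $W_{c,d}$ of Lemma \ref{mcl} occupy pairwise distinct bidegrees $(c+d,\,c-d)$. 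Now run the Diamond Lemma (Proposition \ref{dl3}): grade $F'$ by path length (so $r'$ has degree $2$), let $\Dis(M)$ be the sum of the disorders of the maximal $x,y$-submonomials of $M$, and let $O(M)$ be the total degree contributed by the $B$-letters of $M$, ordering words by $M_1\prec M_2$ iff $O(M_1)>O(M_2)$, or $O(M_1)=O(M_2)$ and $\Dis(M_1)<\Dis(M_2)$. Since $O(fr'g)=O(fxyg)+2$ regardless of how $r'$ merges into adjacent $B$-letters, each relation $f(xy-yx+r')g$ has leading term $fxyg$ or $fyxg$ with leading coefficient $\pm1$, and the confluence check for the two necessarily non-overlapping swappable $xy/yx$ instances is word-for-word as in Lemma \ref{mcl}. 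Hence $A'$ is a free graded $\k$-module with basis the alternating words whose $x,y$-blocks are of the form $z_{a,b}$; via $z_{a,b}\leftrightarrow x^ay^b$ (recall $z_{a,b}=x^ay^b$ in a commutative ring) this is exactly the $\k$-module isomorphism $\sigma\colon\Z[x,y]*B\iso A'$ induced by the section $\Z[x,y]\into F'$, $x^ay^b\mapsto z_{a,b}$, and the identity on $B$.

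Next, mimicking parts \eqref{ampc} and \eqref{bacpp}: from the basis one has the splitting $A'=\langle z_{a,b}\rangle_{a,b\geq0}\oplus A'B_+A'$, whence $[A',A']=[A'B_+A',A']+[\langle z_{a,b}\rangle,\langle x,y\rangle]$, and the telescoping identity $[u,v_1v_2]=[uv_1,v_2]+[v_2u,v_1]$ reduces the last summand, modulo $[A'B_+A',A']$, to the span of $w_{a,b,x}:=\eta([z_{a-1,b},x])$ and $w_{a,b,y}:=\eta([z_{a,b-1},y])$, where $\eta\colon A'\onto V':=A'/[A'B_+A',A']$. The identities $aw_{a,b,x}+bw_{a,b,y}=0$ and $\tfrac{\gcd(a,b)}{b}w_{a,b,x}=-\tfrac{\gcd(a,b)}{a}w_{a,b,y}=\pm W_{a,b}$ hold for exactly the reasons of \eqref{xycomms}, whose computations manipulate $r'$ purely formally. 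A Diamond Lemma argument as in part \eqref{ampc} identifies $V'$, via $\sigma$, with $\Z[x,y]\oplus B_\cyc\oplus M$, where $M$ is spanned by the reduced ``mixed'' cyclic words (those with at least one $x,y$-letter and at least one $B_+$-letter). Hence $A'_\cyc=V'/\langle W_{a,b}\rangle_{a,b\geq1}\cong(\Z[x,y]*B)_\cyc/W$, the isomorphism being induced by $\sigma$; since every generator of $W$ contains an $r'$, it lies in $B_\cyc\oplus M$ and the $\Z[x,y]$-summand splits off untouched.

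For the torsion, work inside the free $\k$-module $\Z[x,y]\oplus B_\cyc\oplus M$, where $\Z[x,y]$ and $M$ are torsion-free and $B_\cyc$ contributes $\torn(B_\cyc)$, disposing of the generators of $W$ (one per bidegree) in turn. For $c\neq d$, $W_{c,d}$ lies in $M$ with coefficient-gcd $1$, creating no torsion. For $a\geq2$ not a prime power, $W_{a,a}=[(r')^a]_{B_\cyc}+\omega_a$ (the bracketed term the image of $(r')^a\in B$) has $M$-component $\omega_a$ of coefficient-gcd $1$, so $W_{a,a}$ is primitive and modding it out only re-expresses $[(r')^a]_{B_\cyc}$ inside $M$; the case $a=1$ gives $W_{1,1}=[r']_{B_\cyc}$, which vanishes in the intended applications (where $r'$ is a sum of commutators). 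For $a=p^\ell$ the gcd computation of part \eqref{bactpp} (insensitive to the passage from a free $r'$ to $r'\in B_+$) gives $\omega_{p^\ell}=p\mu_{p^\ell}$ with $\mu_{p^\ell}$ primitive, so when moreover $(r')^{p^\ell}\in pB+[B,B]$, say $(r')^{p^\ell}=p\xi+[b,b']$, we get $W_{p^\ell,p^\ell}=p\bigl([\xi]_{B_\cyc}+\mu_{p^\ell}\bigr)=p\,r^{(p^\ell)}$ with $r^{(p^\ell)}$ primitive, so modding out produces exactly one new $\Z/p$, generated by $r^{(p^\ell)}$; since the bidegree $(2p^\ell,0)$ meets no other generator of $W$, this class survives and is nonzero. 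This yields $\torn(A'_\cyc)=\torn(B_\cyc)\oplus\bigoplus_{p,\ell}\langle r^{(p^\ell)}\rangle$ over the stated index set. That $r^{(p^\ell)}=\tfrac1pW_{p^\ell,p^\ell}$ agrees with the image in $A'_\cyc$ of $\tfrac1p[r^{p^\ell}]\in\widehat{F'/[F',F']}$ follows, just as at the end of the proof of Lemma \ref{mcl}, from $[r^{p^\ell}]\equiv[(r')^{p^\ell}]\pmod p$ in $F'/[F',F']$ --- here $[(xy-yx)^{p^\ell}]\equiv(1+(-1)^{p^\ell})[(xy)^{p^\ell}]\equiv0$ by the $p$-th power map of Remark \ref{ppwrrem} --- together with the substitution $(yx)\mapsto(xy+r')$.

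The step I expect to be the main obstacle is the torsion bookkeeping just sketched: one must check that the relations $W_{a,a}$ for $a$ not a prime power (whose $B_\cyc$-part $[(r')^a]_{B_\cyc}$ may itself be torsion or divisible), and --- in cases outside the sum --- the $W_{p^\ell,p^\ell}$ with $(r')^{p^\ell}\notin pB+[B,B]$, neither create nor destroy torsion beyond what is asserted. The two facts that make the argument go through are the gcd computation of part \eqref{bactpp} (the $M$-component of $W_{a,a}$ is $p$-divisible exactly when $a$ is a prime power) and the torsion-freeness of $M$: unlike a cyclic word of $B_\cyc$, a $B$-arc inside a mixed cyclic word is pinned between $x,y$-letters and so cannot be cyclically rotated into the identifications that produce the torsion of $B_\cyc$.
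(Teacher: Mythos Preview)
Your proposal is correct and follows essentially the same approach as the paper: rerun the Diamond Lemma argument of Lemma~\ref{mcl}, replacing the single free generator $r'$ by a basis $\mathcal{B}$ of $B_+$ and tracking the same disorder/partial-order machinery. The paper's own proof is in fact much terser than yours---it simply says ``apply the argument of the lemma\ldots the rest of the proof goes through in the same manner''---so your expanded treatment of the torsion bookkeeping (the separation into $\Z[x,y]\oplus B_{\cyc}\oplus M$, the bidegree separation of the $W_{a,b}$, and the primitivity analysis of $\omega_a$) fills in exactly the details the paper omits.
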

In the corollary, the class $r^{(p^\ell)}$ is defined as follows:
Let $f(x,y,r')$ be a noncommutative polynomial in $x,y,r'$ such that
$f(x,y,r')+(r')^p \equiv (xy-yx+r')^p$ modulo commutators, and such
that all the coefficients of $f(x,y,r')$ are divisible by $p$ (such an
$f$ exists by the construction of $r^{(p^\ell)} \in \Lambda_Q$).  Let
$g \in B$ be an arbitrary element such that $p g - (r')^p \in 
[B,B]$.  Then we can define $r^{(p^\ell)}$ as the image of
$\frac{1}{p} f(x,y,r') + g$ in $A'_{\cyc}$.
\begin{proof}
  We apply the argument of the lemma. Namely, we apply the Diamond
  Lemma for free modules (as found in Appendix \ref{dla}) now to
  $F'/[F',F'B_+F']$ (note that our $F'$ now replaces what was $F$
  before, and there is no notation for what was $F'$ in the proof of
  the lemma).  Since $B_+$ is free as a $\k$-module, we can do this.
  Our partial ordering only concerns, as before, the $x$ and $y$
  variables.  Namely, we can write $F'/[F',F'B_+F']$ as the direct sum
  of $(B_+)_{\cyc}$ and a free $\k$-module whose basis consists of
  noncommutative words in $x$ and $y$ together with cyclic
  noncommutative words in $x$, $y$, and a basis for $B_+$, with no two
  elements of $B_+$ appearing next to each other (a cyclic
  noncommutative word is, as before, a noncommutative word modulo
  cyclic permutations).  Our partial ordering is based as before on
  the degree in $x$ and $y$ and on the disorder function, giving the
  number of swaps of $x$ and $y$ needed to yield an element which is a
  word in the $z_{a,b}$ and the basis of $B$.  Then, the rest of the
  proof goes through in the same manner.
\end{proof}
\begin{rem}
  The corollary immediately implies Conjecture \ref{rh}, setting $B :=
  \Z\langle x_2, y_2, \ldots, x_g, y_g \rangle$ to be the free
  algebra, and defining $r' = \sum_{j=2}^g [x_j, y_j]$.  (In fact, it
  implies Theorem \ref{mt} for arbitrary quivers $Q$ containing a
  loop, i.e., $Q \supsetneq Q^0 \cong \tilde A_0$. See Remark
  \ref{mclmtr}.)
\end{rem}

\begin{rem}\label{xyrem} The proof of parts (1)--(2) of Lemma \ref{mcl}
 and some of (3) is slightly shorter if we used 
  $x^a y^b$ instead of $z_{a,b}$; in this case we have
\begin{equation}
W_{a,b} = \sum_{a_\cdot, b_\cdot} \frac{\gcd(a,b)}{\rep(a_\cdot,
    b_\cdot)}
  [\prod_{\ell=1}^k (r' x^{a_\ell} y^{b_\ell})],
\end{equation}
and the Diamond Lemma argument is a bit simpler.  However, the
disadvantage is that the given formula for $W_{a,a}$ 
does not follow explicitly from the computation; more importantly,
 the chosen
convention resembles more closely the $Q \supsetneq \tilde D_n$ case
(however, the elements $x^a y^b$ do have an analogue for the $Q
\supsetneq \tilde A_{n-1}$ case).

In more detail,  replacing $z_{a,b}$ by $x^ay^b$,
we could avoid using our version of the Diamond Lemma altogether for
part (1) above, using instead the Gr\"obner generating set
$(xy-yx+r')$ with respect to the graded lexicographical ordering with
$|r'|=|x|=|y|=1$ and $r'<x<y$ (see Appendix \ref{gbs} and Proposition
\ref{gbp} therein).  To prove part (3) from parts (1) and (2),
 we could use that the normal form of $z_{a,b}$ with respect to the
  basis of monomials in $x$ and $y$
 has leading term $x^a y^b$.
\end{rem}

\section{Proof of Theorem \ref{mt} for good primes: a $\Gamma$-equivariant version} \label{gpsec}
In this section, we will prove a ``$\Gamma$-equivariant'' version of
Lemma \ref{mcl}, which will allow us to prove the main Theorem
\ref{mt} for good primes.  

\subsection{Partial preprojective algebras}\label{ss:ppa}
We will need the notion of \emph{partial preprojective algebra},
denoted $\Pi_{Q,J}$, which depends on a subset of vertices $J
\subseteq Q_0$, and is defined as a quotient of $P_\dq$ by the
relations $iri$ only at vertices $i \in Q_0 \setminus J$,
called \emph{black} vertices.  In particular, this includes
$P_\dq=\Pi_{Q,Q_0}$ as well as $\Pi_Q = \Pi_{Q,\emptyset}$. We define
this in detail below.

We remark that there is an analogy between $\Pi_Q$ and closed Riemann
surfaces, in which $\Pi_{Q,J}$ is obtained by adding
\emph{punctures} at the points corresponding to $J$: for more details,
see \S \ref{ssqsec}.

\begin{ntn} \label{1i0ntn}
If $Q_0' \subset Q_0$, and $\k$ is any commutative ring,
then let $\1_{Q_0'} \in \k^{Q_0}$ denote the matrix which is
$1$ in entries $(i,i), \forall i \in Q_0'$, and zero elsewhere.
In terms of vertex idempotents,  $\1_{Q_0'} = \sum_{i \in Q_0'} i$.
\end{ntn}
\begin{defn} \cite{EE} Given any quiver $Q$, and any subset $J
  \subset Q_0$ of vertices (called the \textbf{white} vertices, with
  $Q_0 \setminus J$ the set of \textbf{black} vertices),
  define the \textbf{partial preprojective algebra} $\Pi_{Q,J}$
  by:
\begin{equation}
\Pi_{Q,J} := P_\dq / \ldp \1_{Q_0 \setminus J} r \1_{Q_0 \setminus J} \rdp.
\end{equation}
\end{defn}
\begin{defn} $\Lambda_{Q, J} := (\Pi_{Q,J})_{\cyc}$.
\end{defn}

\subsection{Proof of Theorem \ref{mt} for good primes}
Let $Q^0$ be an extended Dynkin quiver, and $\Gamma \subset SL_2(\C)$
the corresponding finite subgroup under the McKay correspondence. 
 Identify $x$ and $y$ with the standard basis of $\C^2$, and hence
let $SL_2(\C)$ and therefore $\Gamma$ act on $\C \cdot \{x,y\}$.
By \cite{CBH}, \S 3, for $\k = \C$, we know that there
are Morita equivalences
\begin{gather} \label{morp}
P_{\dzq} \o \k \simeq \k\langle x,y \rangle \rtimes \Gamma, \\
\Pi_{Q^0} \o \k \simeq \k[x,y] \rtimes \Gamma, \label{morpi}
\end{gather}
where, as before, $\k \langle x,y \rangle$ is the free algebra,
and for any algebra $A$ with an action by a group $\Gamma$, 
$A \rtimes \Gamma$ denotes the skew group algebra. 
These Morita equivalences are given by
\begin{equation} \label{piq0m}
P_{\dzq} \o \k \iso \fsf (\k\langle x,y \rangle \rtimes \Gamma) \fsf
\end{equation}
where $\fsf \in \k[\Gamma]$ is a sum of primitive idempotents, one for
each irreducible representation of $\Gamma$.  In more detail, $\fsf =
\sum_{i \in Q^0_0} \fsf_i$, where the vertices $Q^0_0$ of the extended
Dynkin quiver $Q^0$ also label the irreducible representations $U_i$
of $\Gamma$, and $\fsf_i$ is chosen so that $\k[\Gamma] \fsf_i \cong
U_i$.  Then, one may choose elements $\tilde a \in \k\langle  x,y
\rangle \rtimes \Gamma$ for each arrow $a \in \dq_1$ so that the map $a
\mapsto \fsf \tilde a \fsf, i \mapsto \fsf_i$ gives the isomorphism
\eqref{piq0m}, and furthermore that the element $xy - yx \in \k
\langle x,y \rangle \rtimes \Gamma$ maps to the element $r \in
P_{\dzq}$, yielding \eqref{morpi} (specifically, \eqref{piq0m}
descends to $\Pi_{Q^0} \o \k \iso \fsf (\k[x,y] \rtimes \Gamma)
\fsf$).

Then, the idea of the proof of Theorem \ref{mt} for good primes is to
repeat the arguments of the previous section, generalizing to a
``$\Gamma$-equivariant'' version.  This involves replacing $A =
\k\langle x,y,r' \rangle/\ldp xy-yx+r' \rdp \cong \k \langle x,y
\rangle$ by $A = (\k \langle x,y,r' \rangle/\ldp xy - yx + r' \rdp)
\rtimes \Gamma \cong \k \langle x,y \rangle \rtimes \Gamma$.
Here, we
set the $\Gamma$-action on $r'$ to be trivial.

There are a few problems with this. First, in the previous section, we
worked over $\Z$; we cannot get any torsion information setting $\k =
\C$ as above.  We resolve this problem by restricting to ``good
characteristic'': primes that do not divide $|\Gamma|$, where all of
the above easily generalizes (as is well-known).  As a slight
modification, rather than working over an algebraically closed field
in good characteristic, we will set $\k :=
\Z[\frac{1}{|\Gamma|},e^{\frac{2\pi i}{|\Gamma|}}]$, since this allows
us to see all of the torsion (except in bad primes)
simultaneously. Moreover, for this choice of $\k$, up to conjugation,
we can assume that $\Gamma < SL_2(\k)$.  (Note that
$\Z[\frac{1}{|\Gamma|},e^{\frac{2\pi i}{|\Gamma|}}]$ is not a
principal ideal domain, but this will not cause us any difficulties,
as we did not and will not need to assume that we obtain a direct sum
of cyclic modules; in any case, the torsion information we obtain
could equivalently be obtained by working over finite fields of all
good characteristics, and the modules we are actually interested in are defined
over $\Z$.)

Second, we will need a way to get from $A$ above to actual
preprojective algebras of quivers properly containing $Q^0$. This will
follow from a generalization of Corollary \ref{mclcor} and some
general arguments about (partial) preprojective algebras, further
developing some of the ideas of \cite{EE, EG}.

Third, to understand (a presentation of) $A/[A,A]$, we first need to
understand $F/[F,F]$ where $F = \k\langle x,y,r'\rangle \rtimes
\Gamma$.  For this, we use the general (known) Hochschild theory for
skew group algebras, as follows:\footnote{Thanks to P. Etingof for
  explaining this (known) proposition and proof.}
\begin{prop} \label{hcsgap} Let $A$ be an associative algebra over a
  commutative ring $\k$ and $\Gamma$ a finite group acting on
  $A$. Assume that $\k$ contains $\frac{1}{|\Gamma|}$ (in particular,
  the characteristic of $\k$ does not divide $\Gamma$). For any
  $\gamma \in \Gamma$, let $\Gamma_\gamma := \{\gamma' \mid \gamma'
  \gamma = \gamma \gamma'\}$ denote the centralizer of $\gamma$. Then,
  for any $A \rtimes \Gamma$-bimodule $M$, one has
\begin{gather}
HH^\bullet(A \rtimes \Gamma, M) \cong HH^\bullet(A,M)^\Gamma, \quad
HH^\bullet(A \rtimes \Gamma) \cong \bigoplus_{\gamma}
HH^\bullet(A,A\gamma)^{\Gamma_\gamma}; \\ \label{sgahom}
HH_\bullet(A \rtimes \Gamma, M) \cong HH_\bullet(A, M)_\Gamma, \quad
HH_\bullet(A \rtimes \Gamma) \cong \bigoplus_\gamma
HH_\bullet(A, A\gamma)_{\Gamma_\gamma},
\end{gather}
where we sum over a collection of representatives of the conjugacy
classes of $\Gamma$ (one for each conjugacy class).
\end{prop}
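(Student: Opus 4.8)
The plan is to prove Proposition \ref{hcsgap} by the standard averaging/resolution argument for skew group algebras, which goes back to work on Hochschild (co)homology of crossed products. First I would reduce the bimodule statements to the algebra statements, and concentrate on cohomology (the homology case being entirely dual). The key observation is that $A \rtimes \Gamma$ is free as a left and as a right $A$-module (with basis $\Gamma$), so any projective resolution of $A$ as an $A$-bimodule induces, via $-\otimes_A (A\rtimes \Gamma)$ on one side or an analogous construction, a resolution of $A\rtimes\Gamma$ by $A\rtimes\Gamma$-bimodules that are relatively projective over $A$-bimodules; since $\frac{1}{|\Gamma|}\in\k$, relative projectivity over the subalgebra $A^e = A\otimes A^{\op}$ inside $(A\rtimes\Gamma)^e$ suffices to compute $HH^\bullet$ via averaging over $\Gamma$. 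Concretely, I would take the bar resolution (or any projective $A$-bimodule resolution $P_\bullet \to A$), apply $\Hom_{(A\rtimes\Gamma)^e}(P_\bullet \otimes \k[\Gamma], M)$, and identify this with $\Hom_{A^e}(P_\bullet, M)^\Gamma$ using adjunction together with the fact that $\Gamma$ acts on everything compatibly.

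The second and more substantive step is the decomposition over conjugacy classes. Here I would write $A\rtimes\Gamma = \bigoplus_{\gamma\in\Gamma} A\gamma$ as an $A$-bimodule (where $a\cdot(b\gamma)\cdot c = (a b \gamma(c))\gamma$), so that $HH^\bullet(A\rtimes\Gamma) = HH^\bullet(A\rtimes\Gamma, A\rtimes\Gamma) \cong HH^\bullet(A, A\rtimes\Gamma)^\Gamma = \bigl(\bigoplus_\gamma HH^\bullet(A, A\gamma)\bigr)^\Gamma$ by the first part. The conjugation action of $\Gamma$ on $A\rtimes\Gamma$ permutes the summands $A\gamma$ according to the conjugation action on $\Gamma$, so taking $\Gamma$-invariants of the direct sum over all of $\Gamma$ collapses to a direct sum over conjugacy-class representatives, with the invariants under the full $\Gamma$ on an orbit identified (by restricting to the stabilizer of the chosen representative $\gamma$) with the invariants of $HH^\bullet(A, A\gamma)$ under the centralizer $\Gamma_\gamma$. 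This is exactly the asserted formula; the homology case \eqref{sgahom} is the same computation with $\Hom$ replaced by $\otimes$ and invariants replaced by coinvariants, again legitimate because $\frac{1}{|\Gamma|}\in\k$ makes invariants and coinvariants coincide and makes all the relevant averaging idempotents available.

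I would expect the main obstacle to be the bookkeeping in the second step: carefully tracking how the conjugation action of $\Gamma$ intertwines the identification $HH^\bullet(A, A\gamma) \cong HH^\bullet(A, A\gamma')$ for conjugate $\gamma, \gamma'$, and checking that the induced action of a stabilizing element $\gamma'' \in \Gamma_\gamma$ on $HH^\bullet(A, A\gamma)$ is the natural one (coming from $\gamma''$ acting on $A$ and by conjugation on the coefficient $A\gamma$), so that the invariants really are $HH^\bullet(A, A\gamma)^{\Gamma_\gamma}$ and not something twisted. All of this is standard and is stated in, e.g., the literature on orbifold cohomology of crossed products, so I would cite the relevant source (as the footnote already indicates this is known, attributed to Etingof) and present the proof at the level of indicating the resolution, the adjunction, and the orbit-decomposition, leaving the routine verification of $\Gamma$-equivariance of the identifications to the reader.
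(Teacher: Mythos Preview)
Your proposal is correct and follows essentially the same standard argument as the paper. The paper presents the homology case via Shapiro's lemma for the inclusion $\Gamma_\Delta \hookrightarrow \Gamma \times \Gamma^{\op}$ and then commutes the derived functors $LH^{A \otimes A^{\op}}$ and $LH^{\k[\Gamma_\Delta]}$ using that taking $\Gamma$-coinvariants is exact, whereas you phrase the same reduction via explicit resolutions and averaging; the conjugacy-class decomposition step is identical in both.
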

\begin{proof}
We prove the second result (for Hochschild homology) since we will use that
one more heavily; the first follows from the co-version of the proof.  

We write
\begin{equation}
  HH_\bullet(A \rtimes \Gamma, M) = \Tor^{(A \o A^{\op}) \rtimes 
(\Gamma \times \Gamma^{\op})}_\bullet (A \rtimes \Gamma, M).
\end{equation}

For any algebra $B$, let $LH^B$ denote the left-derived functor of $B-Bimod \rightarrow \k-mod$, given by 
$M \mapsto M/\langle mb - bm \rangle_{m \in M, b \in B}$: that is, the derived functor which yields the Hochschild homology of $B$ with coefficients in the
given $B$-bimodule $M$.
Let $\Gamma_\Delta := \{(g,g^{-1}) \in \Gamma \times \Gamma^{\op}\}$.
By Shapiro's lemma, since $\Ind_{\Gamma_\Delta}^{\Gamma \times \Gamma^{\op}}(A) = A \o \k[\Gamma] \cong A \rtimes \Gamma$ as a $\Gamma \times \Gamma^{\op}$-module, where
$A$ (and $M$) are $\Gamma_\Delta \cong \Gamma$-modules by conjugation,
\begin{multline}
\Tor^{(A \o A^{\op}) \rtimes 
(\Gamma \times \Gamma^{\op})}_\bullet (A \rtimes \Gamma, M) =
H_\bullet(LH^{A \o A^{\op}}
(LH^{\k[\Gamma] \o \k[\Gamma]^{\op}}((A \rtimes \Gamma) \o M))) \\\cong
H_\bullet(LH^{A \o A^{\op}}(LH^{\k[\Gamma_\Delta]}(A \o M))) =
H_\bullet(LH^{\k[\Gamma_\Delta]}(LH^{A \o A^{\op}}(A \o M))).
\end{multline}
Now, by our assumption on $\k$, taking $\Gamma$-coinvariants
(or invariants) is exact, so the RHS is 
\begin{equation}
\Tor_\bullet^{A \o A^{\op}}(A \o M)_{\Gamma} = HH_\bullet(A,M)_{\Gamma}.
\end{equation}
Finally, specializing to $M := A \rtimes \Gamma$, we note that
\begin{equation}
\k[\Gamma] = \bigoplus_{C} k[\Gamma,C], \quad
\k[\Gamma,C] := \bigoplus_{\gamma \in C}\langle \gamma \rangle,
\end{equation}
where $C$ ranges over the conjugacy classes of $\Gamma$.
Since $\k[\Gamma,C]$ is stable under the $\k[\Gamma]$-action (given by
the conjugation action of $\Gamma$), we end up with the
second formula in \eqref{sgahom}.
\end{proof}

Let us introduce the notation (for $a,b \in A$ and $\gamma \in \Gamma$)
\begin{equation}
[a,b]_\gamma := a(\gamma \cdot b) - ba,
\end{equation}
where $\cdot$ denotes the action of $\Gamma$ on $A$ (so
$\gamma \cdot b = \gamma b \gamma^{-1} \in A \rtimes \Gamma$). Then,
in the case of degree zero, we may rewrite \eqref{sgahom} as
\begin{equation} \label{hh0sga}
(A \rtimes \Gamma)_{\cyc} \cong \bigoplus_{C} 
(A/[A,A]_{\gamma_C})_{\Gamma_{\gamma_C}},
\end{equation}
where $C$ ranges over the conjugacy classes of $\Gamma$, 
$\gamma_C \in C$ is a fixed choice of representative for each $C$, and
$\Gamma_{\gamma} \subset \Gamma$ denotes the centralizer of $\gamma$ in
$\Gamma$.
Note that this formula
may also be obtained directly without using any homological algebra,
but only the definition $B_{\cyc} := B/[B,B] = HH_0(B)$ and the decomposition of
$\k[\Gamma]$ into a direct sum of conjugacy classes. However, we will
use the above formulas later on, and felt it is better to explain the
general result.

We now state our
\begin{thm}\label{mclgam} 
  Let $\k := \Z[\frac{1}{|\Gamma|},e^{\frac{2\pi i}{|\Gamma|}}]$ and
  $A = \k\langle x,y,r' \rangle / \ldp xy-yx+r' \rdp$, and let $\Gamma
  \subset SL_2(\k)$ be a finite subgroup, which acts on $x$ and $y$ by
  the tautological action on $\k \cdot \{x,y\} \cong \k^2$ (the free
  $\k$-module generated by $x$ and $y$), and fixes $r'$.

Then, the $\k$-module $(A \rtimes \Gamma)_{\cyc}$ has a canonical decomposition along
conjugacy classes $C$ of $\Gamma$,
\begin{gather}
(A \rtimes \Gamma)_{\cyc} = \bigoplus_{C} (A \rtimes \Gamma)_{\cyc,C},
\end{gather}
presented as follows:
\begin{enumerate}
\item[(i)] For $C \neq \{1\}$ and any choice $\gamma_C \in C$,
$(A \rtimes \Gamma)_{\cyc,C}$ is torsion-free (and in fact $\k$-projective).

\item[(ii)] For $C = \{1\}$, there is an isomorphism
 $V_1/W_1 \iso
(A \rtimes \Gamma)_{\cyc,\{1\}}$, where $V_1$ is the direct sum of
$\k[x,y]^\Gamma$ and the $\k$-module of $\Gamma$-invariant classes of
elements of the form 
\begin{equation} \label{cycpolys}
  \sum_{a_\cdot,b_\cdot} \alpha_{a_\cdot,b_\cdot}  z_{a_1, b_1} r' \cdots z_{a_m,b_m} r',
\end{equation}
modulo cyclic permutations, with $\alpha_{a\cdot,b_\cdot} \in \k$, and
$W_1$ is $\k$-linearly spanned by the $\Gamma$-invariant classes in
$W$ of Lemma \ref{mcl}, part (3)
(\eqref{wabdefn1}--\eqref{wabdefn3}). This map is induced by the
obvious morphism $V_1 \to (A \rtimes \Gamma)_{\cyc,\{1\}}$.
\item[(iii)] The projection $W_1 \rightarrow [\ldp r' \rdp]^\Gamma / [\ldp r' \rdp^2]^\Gamma
\cong (\langle r' \rangle \o \k[x,y]^\Gamma)$ 
 is a monomorphism and becomes an isomorphism after tensoring by $\C$.
The 
image is 
\begin{equation} \label{immodr2}
\bigl( \bigoplus_{a,b} \langle \gcd(a,b) r' \o x^{a-1} y^{b-1}\rangle\bigr)^\Gamma.
\end{equation}
\item[(iv)] $(A \rtimes \Gamma)_{\cyc,\{1\}}$ is torsion-free (in fact,
  projective over $\k$), but its quotient by the image of $\langle
  [(r')^\ell] \rangle_{\ell \geq 1}$ has $\k/\ldp p \rdp$-torsion in each degree
  $2p^\ell$ for $p$ prime and $p \nmid |\Gamma|$.
  The torsion then appears in $V_1/(W_1 + \langle [(r')^\ell]
  \rangle_{\ell \geq 1}$ and is $\k$-linearly spanned by the image of
  $r^{(p^\ell)} = \frac{1}{p}[(xy-yx+r')^{p^\ell}]$ under the map
  $(\k \langle x,y,r' \rangle)_{\cyc} \onto (A \rtimes
  \Gamma)_{\cyc}/\langle [(r')^\ell] \rangle_{\ell \geq 1}$.
\end{enumerate}
\end{thm}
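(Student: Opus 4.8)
The plan is to reduce all four parts to Lemma~\ref{mcl} via the skew-group decomposition \eqref{hh0sga}, using that $|\Gamma|$ is invertible in $\k$. First, eliminating $r'$ through $r'=yx-xy$ identifies $A\cong\k\langle x,y\rangle$ as a $\Gamma$-algebra; since $\det\gamma=1$, the element $xy-yx$ spans the trivial subrepresentation $\Lambda^2(\k\cdot\{x,y\})$, so $r'$ is $\Gamma$-fixed, consistent with the hypothesis. Applying \eqref{hh0sga} to $A\rtimes\Gamma$ yields the asserted decomposition $(A\rtimes\Gamma)_\cyc=\bigoplus_C(A\rtimes\Gamma)_{\cyc,C}$ with $(A\rtimes\Gamma)_{\cyc,C}\cong(A/[A,A]_{\gamma_C})_{\Gamma_{\gamma_C}}$. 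For each $\gamma$ the order of $\Gamma_\gamma$ divides $|\Gamma|$, so $(-)_{\Gamma_\gamma}=(-)^{\Gamma_\gamma}$ is exact and carries free (resp.\ projective) $\k$-modules to direct summands of such; I will use this throughout to pass between statements about $A\rtimes\Gamma$ and $\Gamma$-equivariant statements about $A$. The torsion-free (indeed $\k$-projective) assertions of (i) and of the first sentence of (iv) then follow at once from Morita invariance of $HH_0$: by \eqref{morp}, $A\rtimes\Gamma\cong\k\langle x,y\rangle\rtimes\Gamma\simeq P_{\dzq}\otimes\k$, and $(P_{\dzq}\otimes\k)_\cyc=(P_{\dzq})_\cyc\otimes\k$ is a free $\k$-module (spanned by cyclic paths), so each $(A\rtimes\Gamma)_{\cyc,C}$ is a summand of a free $\k$-module. (For $C\neq\{1\}$ one can argue directly instead: a nontrivial finite-order $\gamma\in SL_2(\k)$ is diagonalizable with eigenvalues $\zeta,\zeta^{-1}$, $\zeta\neq1$, and in an eigenbasis the twisted-commutator relations force $(1-\zeta^{a-b})w\equiv0$ on every monomial of bidegree $(a,b)$, where $1-\zeta^{a-b}$ is either $0$ or, when $\zeta^{a-b}\neq1$, an algebraic integer all of whose rational-prime divisors divide $|\Gamma|$ and hence a unit of $\k$; the surviving pieces are free, and $\gamma=-I$ is handled the same way using $2\in\k^\times$.)

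For (ii), take $\gamma=1$ in \eqref{hh0sga}: $(A\rtimes\Gamma)_{\cyc,\{1\}}=(A/[A,A])_\Gamma=(A/[A,A])^\Gamma$. The proof of Lemma~\ref{mcl} via the Diamond Lemma for modules over a commutative ring (Appendix~\ref{dla}) applies verbatim over $\k$ — alternatively one base-changes, $\k$ being flat over $\Z$ — and identifies $A/[A,A]$ with $V_\k/W_\k$, where $V_\k=A/[Ar'A,A]$ and $W_\k$ is the span of the classes \eqref{wabdefn1}--\eqref{wabdefn3}. Crucially, $W$ is the kernel of the $\Gamma$-equivariant quotient $V_\k\onto A/[A,A]$, hence is $\Gamma$-stable, so no $\Gamma$-equivariant refinement of the Diamond Lemma is needed; exactness of $(-)^\Gamma$ then gives $(A/[A,A])^\Gamma=V_\k^\Gamma/W_\k^\Gamma$. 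Finally $V_\k^\Gamma$ is the stated $V_1$: the $\Gamma$-equivariant surjection $V_\k\onto A/\ldp r'\rdp=\k[x,y]$ splits equivariantly (average any $\k$-splitting), so $V_\k^\Gamma=\k[x,y]^\Gamma\oplus(\ldp r'\rdp/[Ar'A,A])^\Gamma$, and by Lemma~\ref{mcl}(2) the second summand is exactly the module of $\Gamma$-invariant classes of the cyclic polynomials \eqref{cycpolys}; likewise $W_\k^\Gamma$ is spanned by the $\Gamma$-invariant combinations of the $W_{a,b}$. The asserted map is the composite $V_1=V_\k^\Gamma\into V_\k\onto A/[A,A]$.

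For (iii), this is a leading-term computation in the $r'$-adic filtration on cyclic words. Modulo $\ldp r'\rdp^2$ one may rewrite $yx\equiv xy+r'$, so every cyclic monomial with exactly one $r'$ equals $[r'm]$ for a monomial $m$ in $x,y$ depending only on the class of $m$ in $A/\ldp r'\rdp$; hence $[\ldp r'\rdp]/[\ldp r'\rdp^2]\cong\langle r'\rangle\o(A/\ldp r'\rdp)=\langle r'\rangle\o\k[x,y]$. Reading off the unique $r'$-linear term of each basis vector of $W_\k$ gives $W_{a,b}\mapsto\gcd(a,b)\,r'\o x^{a-1}y^{b-1}$ for $a>b$ (the single-factor tuple has $\rep=1$, and $z_{a-1,b-1}\equiv x^{a-1}y^{b-1}$ in $\k[x,y]$), similarly for $W_{b,a}$, and $W_{a,a}\mapsto a\,r'\o x^{a-1}y^{a-1}$ (the $a$ cyclically equal linear terms of $[(yx)^a]$). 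Since by Lemma~\ref{mcl}(3) the $W_{a,b}$ form a $\k$-basis of $W_\k$, with images of nonzero coefficient in pairwise distinct bidegrees, the induced map $W_\k\to[\ldp r'\rdp]/[\ldp r'\rdp^2]$ is a monomorphism with image $\bigoplus_{a,b\geq1}\langle\gcd(a,b)\,r'\o x^{a-1}y^{b-1}\rangle$; passing to the ($\Gamma$-stable) invariants — using exactness of $(-)^\Gamma$ and flatness of $\C$ over the Dedekind domain $\k$ — gives the claimed monomorphism $W_1\to[\ldp r'\rdp]^\Gamma/[\ldp r'\rdp^2]^\Gamma$ with image \eqref{immodr2}, which becomes surjective, hence an isomorphism, after $\o\,\C$, as then each $\gcd(a,b)$ is a unit.

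For the second half of (iv): the submodule $N:=\langle[(r')^\ell]\rangle_{\ell\geq1}$ lies in the $\{1\}$-summand and is $\Gamma$-fixed, so $(A\rtimes\Gamma)_{\cyc,\{1\}}/\mathrm{im}(N)=(V_\k/(W_\k+N_\k))^\Gamma$. Lemma~\ref{mcl}(4) says that over $\Z$ the torsion of $V/(W+N)$ is exactly $\Z/p$ in each degree $2p^\ell$ ($p$ prime, $\ell\geq1$), generated by the image of $\tfrac1p W_{p^\ell,p^\ell}$, equivalently by $r^{(p^\ell)}=\tfrac1p[(xy-yx+r')^{p^\ell}]$; base-changing along the flat map $\Z\to\k$ turns this into $\k/\ldp p\rdp$, which vanishes exactly when $p\mid|\Gamma|$, and taking $(-)^\Gamma$ (exact) preserves it, since $r=xy-yx+r'$ is $\Gamma$-fixed and so the torsion line $\langle r^{(p^\ell)}\rangle$ is a trivial $\Gamma$-module. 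This is (iv). The step demanding the most care is (ii) — making the Diamond-Lemma computation of Lemma~\ref{mcl} interact correctly with $\Gamma$-invariants; the single observation that renders this routine, rather than requiring a $\Gamma$-equivariant Diamond Lemma, is that $W$ is the \emph{intrinsic} kernel of a $\Gamma$-equivariant surjection and hence automatically $\Gamma$-stable. Once that is in hand, every part reduces to exactness of $(-)^\Gamma$ and flat base change applied to Lemma~\ref{mcl} and \eqref{hh0sga}.
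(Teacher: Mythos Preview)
Your proof is correct, and for parts (ii)--(iv) and the $C=\{1\}$ summand it follows essentially the paper's own argument: reduce to Lemma~\ref{mcl} over $\k$ via flat base change, observe that $W$ is the kernel of a $\Gamma$-equivariant map and hence $\Gamma$-stable, and use exactness of $(-)^\Gamma$ to pass to invariants; the leading-term computation for (iii) and the torsion identification in (iv) are likewise the same as the paper's.

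The genuine difference is in part (i). The paper proves torsion-freeness/projectivity of $(A\rtimes\Gamma)_{\cyc,C}$ for $C\neq\{1\}$ by a detailed twisted-commutator analysis: it picks an eigenbasis $x_C,y_C$ for $\gamma_C$, runs a $\gamma_C$-twisted version of the Diamond Lemma to compute $A^{\langle\gamma_C\rangle}/([A,Ar'A]_{\gamma_C})^{\langle\gamma_C\rangle}$ explicitly, and then shows that the remaining relations $[z^C_{a-1,b},x_C]_{\gamma_C}$ reduce modulo $\ldp r'\rdp$ to $(\lambda_C-1)z^C_{a,b}$ with $\lambda_C-1\in\k^\times$, so the non-$r'$ part is killed and what survives is free. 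Your main argument bypasses all of this: $A\cong\k\langle x,y\rangle$, so by the Morita equivalence \eqref{morp} one has $(A\rtimes\Gamma)_\cyc\cong(P_{\dzq})_\cyc\otimes\k$, which is free; each conjugacy-class summand is then a direct summand of a free module. This is shorter and suffices for the theorem as stated (indeed, the paper's own remark following the theorem uses a variant of this to upgrade projectivity to freeness). What the paper's longer route buys is an explicit basis of each $C\neq\{1\}$ summand, which is not needed for the statement but underlies the generalization in Corollary~\ref{mclgamcor}. Your parenthetical direct sketch for $C\neq\{1\}$ is a bit too compressed---the claim ``$(1-\zeta^{a-b})w\equiv0$'' does not follow immediately from the twisted-commutator relations without the intermediate reduction modulo $\ldp r'\rdp$ that the paper carries out---but since your Morita argument already establishes (i), this does not affect correctness.
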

The proof of the theorem will be given at the end of the section. First, we give consequences and comments.
\begin{rem}
  In fact, one can deduce from the theorem that $(A \rtimes
  \Gamma)_{\cyc}$ itself is not merely $\k$-projective but free over $\k$.
  Namely,
  with $\fsf$ as in \eqref{piq0m}, we have $\fsf (A \rtimes \Gamma)
  \fsf = A' \otimes_{\Z} \k$, where $A := (P_{\dzq} *_{\Z^{Q^0_0}}
  \Z[r'])/ \ldp r_0 + r' \rdp$, $r_0 \in P_{\dzq}$ is the standard
  element $r_0 = \sum_{a \in Q_1^0} [a,a^*]$, the path algebra is
  defined over the integers, and the $\Z^{Q^0_0}$-bimodule structure
  on $\Z[r']$ is defined by the augmentation $\Z^{Q^0_0} \onto \Z$.
  Therefore $(A \rtimes \Gamma)_{\cyc} = (\fsf (A \rtimes \Gamma)
  \fsf)_{\cyc} = A'_{\cyc} \otimes_\Z \k$.  On the other hand, $A'_{\cyc}
  \otimes_\Z \Z[\frac{1}{|\Gamma|}]$ is torsion-free, and hence free
  (since $\Z[\frac{1}{|\Gamma|}]$ is a principal ideal domain).  Thus
  $(A \rtimes \Gamma)_{\cyc}$ itself is free over $\k$.  In the proof
  below, we will give a simple reason why the summands $(A \rtimes
  \Gamma)_{\cyc,C}$ are projective, which does not immediately explain why
  their direct sum, $(A \rtimes \Gamma)_{\cyc}$, is free.
\end{rem}
The analogue of Corollary \ref{mclcor} is then
\begin{cor} \label{mclgamcor}
Let $B$ be any nonnegatively graded $\k \oplus \k[\Gamma]$-algebra, whose
graded components are finitely-generated free $\k$-modules. Let 
$r' \in B\setminus\k$, of degree two, be such that $(0 \oplus \1_{\k[\Gamma]}) r' (0 \oplus \1_{\k[\Gamma]}) = r'$.
Set $F' = (\k \langle x,y\rangle \rtimes \Gamma) *_{\k[\Gamma]} B$ where
$\k[\Gamma]$ acts on $B$ by inclusion into the right summand (so does not contain the unit in $\k \oplus \k[\Gamma]$).
Let $r := xy-yx+r'$, and set
$A' := F' / \ldp r \rdp$. 
Then one has isomorphisms
\begin{equation}
A'_{\cyc} \liso ((\k[x,y] \rtimes \Gamma) *_{\k[\Gamma]} B)_{\cyc} / W_1,
\end{equation}
where $W_1$ is as described in Theorem \ref{mclgam} (in terms of
$x,y,$ and $r'$).  In particular,
\begin{equation}
  \text{torsion}(A'_{\cyc}) \liso \text{torsion}(B_{\cyc}) \oplus \bigoplus_{p,\ell}
\langle r^{(p^\ell)} \rangle,
\end{equation}
where $p$ ranges over all primes and $\ell \geq 1$ ranges over
positive integers such that $(r')^{p^\ell} \in pB + [B,B]$.  The
isomorphisms are obtained from $F' \onto A'$ by picking a section of
$(\k[x,y] \rtimes \Gamma)$ into $F'$. All of the classes
$r^{(p^\ell)}$ are nonzero.
\end{cor}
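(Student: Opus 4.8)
The plan is to deduce Corollary \ref{mclgamcor} from Theorem \ref{mclgam} exactly as Corollary \ref{mclcor} was deduced from Lemma \ref{mcl}, but now carrying along the $\Gamma$-action throughout. First I would set up the $\Gamma$-equivariant Diamond Lemma reduction: working over $\k$, consider $F'/[F', F' (0 \oplus \1_{\k[\Gamma]}) F']$ (the analogue of $F'/[F',F'B_+F']$ from the proof of Corollary \ref{mclcor}, where the role of $B_+$ is played by the augmentation ideal of $B$ sitting inside via the right summand). Since each graded component of $B$ is a finitely-generated free $\k$-module, we may choose a homogeneous $\k$-basis of $B$ and regard $F'$ as spanned by noncommutative words in $x$, $y$, the group elements $\gamma \in \Gamma$, and that basis of $B$, with relations coming from $\k[\Gamma]$ being identified in both factors and from the skew-group structure $\gamma t = (\gamma \cdot t)\gamma$. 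The partial order is, as before, based only on the $x,y$-bidegree and on the disorder function (number of $xy \leftrightarrow yx$ swaps needed to reach a word in the $z_{a,b}$'s); the $\Gamma$-elements and $B$-basis elements are merely spectators in the ordering, so the confluence computation is verbatim that of Lemma \ref{mcl}(1)--(2), and the reduction $fyxg \mapsto fxyg + fr'g$ has leading coefficient $1$. This identifies $A'_{\cyc}$ with $((\k[x,y]\rtimes\Gamma) *_{\k[\Gamma]} B)_{\cyc}/W_1$, the first claimed isomorphism, with the section $(\k[x,y]\rtimes\Gamma) \hookrightarrow F'$ providing the map.

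Next I would extract the torsion statement. Arguing as in Theorem \ref{mclgam}(iv) and Corollary \ref{mclcor}, the relations $W_1$ decompose by bidegree in $x,y$, and in each bidegree the torsion contributed is governed by gcd's of integer coefficients: the only new torsion beyond $\mathrm{torsion}(B_{\cyc})$ occurs where a power $(r')^m$ of $r'$ can appear, i.e.\ in the diagonal bidegrees, and there the relevant coefficient gcd equals $p$ precisely when $m = p^\ell$ \emph{and} $(r')^{p^\ell}$ is divisible by $p$ modulo $[B,B]$, i.e.\ $(r')^{p^\ell} \in pB + [B,B]$; this is exactly the condition governing whether $r^{(p^\ell)}$ makes sense and is nonzero. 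The class $r^{(p^\ell)} = \frac{1}{p} f(x,y,r') + g$ is identified, via the same manipulation as at the end of the proof of Lemma \ref{mcl} (replacing $yx$ by $xy + r'$ inside a class that is already $p$ times an integral cyclic combination, then collecting), with $\frac{1}{p} W_{p^\ell,p^\ell}$; its image generates the new torsion summand $\langle r^{(p^\ell)}\rangle$, and the splitting $\mathrm{torsion}(A'_{\cyc}) \cong \mathrm{torsion}(B_{\cyc}) \oplus \bigoplus_{p,\ell} \langle r^{(p^\ell)}\rangle$ follows because these live in disjoint bidegrees. Nonvanishing of each $r^{(p^\ell)}$ follows because, over $\k$, $(\k[x,y]\rtimes\Gamma)*_{\k[\Gamma]}B$ is a free $\k$-module (its bidegree components are finitely-generated free, given the hypothesis on $B$ and on $\k[x,y]\rtimes\Gamma$) and the generator $W_{p^\ell,p^\ell}$ has integer content exactly $p$ in its bidegree — just as in Theorem \ref{mclgam}(iv).

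The main obstacle, I expect, is bookkeeping rather than conceptual difficulty: one must verify that the $\Gamma$-equivariance does not interfere with the Diamond Lemma's confluence or with the content computation. The key points to check are (a) that the skew-group relations $\gamma t = (\gamma\cdot t)\gamma$ and the identification of $\k[\Gamma]$ in both factors of the free product are compatible with the chosen $\k$-basis and do not create overlap ambiguities that change the leading terms (they do not, since $\Gamma$ and $B$ play no role in the $x,y$-disorder ordering, exactly as in the proof of Corollary \ref{mclcor}); and (b) that passing to $\Gamma$-invariants/coinvariants — which is exact since $\frac{1}{|\Gamma|} \in \k$ — commutes with the torsion computation, so that the new torsion classes are themselves $\Gamma$-invariant (they are, as $r'$ is $\Gamma$-fixed and the manipulation producing $r^{(p^\ell)}$ only involves $x,y,r'$ symmetrically enough, cf.\ the $\{1\}$-summand analysis in Theorem \ref{mclgam}(ii)--(iv)). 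Granting these, the proof is, as the paper says, an application of the argument of Theorem \ref{mclgam} in the same way that Corollary \ref{mclcor} applied the argument of Lemma \ref{mcl}.
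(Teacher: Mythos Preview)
Your proposal is correct and matches the paper's intended approach: the paper explicitly omits the proof, stating only that it is ``similar to that of Corollary \ref{mclcor},'' and your outline does precisely this --- carry the Diamond Lemma argument of Lemma \ref{mcl}/Corollary \ref{mclcor} through with the $\Gamma$-action as a spectator in the ordering, then invoke the conjugacy-class decomposition and $\{1\}$-summand analysis of Theorem \ref{mclgam} for the torsion computation. Your identification of the two bookkeeping checks (compatibility of the skew-group relations with the $x,y$-disorder ordering, and exactness of $\Gamma$-(co)invariants since $|\Gamma|^{-1} \in \k$) is exactly what is needed.
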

(We omit the proof, which is similar to that of Corollary
\ref{mclcor}. Note that $r^{(p^\ell)}$ is also defined in exactly the
same way as is done there.) This corollary immediately gives us
Theorem \ref{mt} for good primes:
\begin{proof}[Proof of Theorem \ref{mt} for good primes] 
  Assume $Q \supsetneq Q^0$.  We will prove the theorem for this
  quiver $Q$.

  We will use the fact that $\Pi_{Q \setminus Q^0,Q^0_0}$ and
  $(\Pi_{Q \setminus Q^0, Q^0_0})_{\cyc}$ are torsion-free (see Proposition
  \ref{bpp}, whose proof does not use any other results from this
  paper aside from the Diamond Lemma as formulated in Appendix
  \ref{dla}).  To prove the theorem, we claim that it suffices to work
  over $\k := \Z[\frac{1}{|\Gamma|},e^{\frac{2 \pi i}{|\Gamma|}}]$,
  i.e., to replace $\Pi_Q$ and $\Pi_{Q\setminus Q^0, Q^0_0}$ by $\Pi_Q
  \o \k$ and $\Pi_{Q \setminus Q^0, Q^0_0} \o \k$.  This is because
  $\Lambda_Q$ is a direct sum of cyclic modules, and the number of
  copies of $\Z/p$, for $p \nmid |\Gamma|$, is still detected after
  tensoring by $\k$ (it becomes the number of copies of $\k / \ldp p
  \rdp$), as is the fact that $r^{(p^\ell)}$ is nonzero.  So, it
  suffices to prove that $\Lambda_Q \otimes \k$ has torsion in
  degrees $2 p^\ell$ for $p \nmid |\Gamma|$ and $\ell \geq 1$, and
  that that torsion is a copy of $\k/\ldp p \rdp$ which is
  $\k$-linearly spanned by $r^{(p^\ell)}$.

  For readability, we will omit the $\o \k$ and work over $\k$ in the
  proof. We need to show that the torsion is $\k$-linearly spanned by
  nonzero classes $r^{(p^\ell)}$ for $p \nmid |\Gamma|$.

  Viewing $\k^{Q^0_0}$ as the center of
  $\k[\Gamma]$, set $B = \k[\Gamma] \otimes_{\k^{Q^0_0}} \Pi_{Q
    \setminus Q^0,Q^0_0}$, which is a $\k[\Gamma] \oplus \k^{Q_0
    \setminus Q^0_0}$-algebra.  Using the map $\k \into \k^{Q_0
    \setminus Q^0_0}$ sending $1$ to $\1_{Q_0\setminus Q^0_0}$, we view
  $B$ as a $\k \oplus \k[\Gamma]$-algebra. 

Let the element $r' \in B$ be given by
$r' := \1_{Q^0_0} r_{Q \setminus Q^0} \1_{Q^0_0}$.  Now,
$F' = (\k\langle x,y \rangle \rtimes \k[\Gamma]) *_{\k[\Gamma]} B$.
By definition and the precise version of Morita equivalence from
\cite{CBH} outlined above, we see that $\fsf F' \fsf \iso
\Pi_{Q,Q^0_0}$,
and that $\fsf r \fsf = \fsf (xy-yx+r') \fsf$ maps to the element $r$
of $\Pi_{Q,Q^0_0}$ under the isomorphism.

Then, by the corollary, $\Lambda_Q$, viewing
$\Pi_Q$ now as $\Pi_{Q,Q^0_0} / \ldp r \rdp$,
has torsion $\k$-linearly spanned by the nonzero classes
$r^{(p^\ell)}$ for $\ell \geq 1$ and primes $p \nmid |\Gamma|$.
\end{proof}
\begin{rem} \label{mclmtr} Using the above argument with
  $\Gamma=\{1\}$, we obtain Theorem \ref{mt} in the case of quivers
  containing a loop, $\tilde A_0$, for which all primes are good. This
  only requires Corollary \ref{mclcor}, and not the $\Gamma$-generalization
  (Corollary \ref{mclgamcor}).
\end{rem}

\begin{proof}[Proof of Theorem \ref{mclgam}]
Specializing \eqref{hh0sga} to our case, 
\begin{equation} \label{hh0a1} (A \rtimes \Gamma)_{\cyc} \cong
  \bigoplus_{C} \bigl(A / ([A,Ar'A]_{\gamma_C} + \langle[z_{a,b},
  x]_{\gamma_C},[z_{a,b},y]_{\gamma_C}\rangle_{a,b \geq
    1})\bigr)_{\Gamma_{\gamma_C}}.
\end{equation}

Next, fix $C$, and let $\lambda_{C}, \lambda_{C}^{-1} \in \{e^{2\pi i
  k/|\Gamma|}\}_{1 \leq k \leq |\Gamma|}$ be the eigenvalues of
$\gamma_C$ (which has determinant one). Let us choose an eigenbasis
$x_C,y_C$ of $\gamma_C$ acting on $\k \cdot \{ x,y \} \cong \k^2$,
which we may obtain using the projections $\frac{1}{\lambda_{C}^{\pm
    1} - \lambda_{C}^{\mp 1}}(\gamma_C - \lambda_{C}^{\mp 1})$, unless
$\gamma_C = \pm 1$ (which is equivalent to $\lambda_C=\pm 1$ since in
the latter case $\lambda_C=\lambda_C^{-1}$).  If $\gamma_C=\pm 1$, we
set $x_C = x, y_C = y$.  Let us assume $\gamma_C \cdot x_C =
\lambda_{C} x_C$ and $\gamma_C \cdot y_C = \lambda_C^{-1} y_C$.  Since
$[x_C,y_C]$ must be a unit multiple of $[x,y] = -r'$, we may further
assume that $[x_C,y_C]=[x,y]$, by rescaling.  Then, for $g \in \langle
x,y \rangle = \langle x_C, y_C \rangle$,
\begin{equation} \label{gxcyc}
 [g, x_C]_{\gamma_C} = [g, x_C] + (\lambda_{C} - 1) g x_C, \quad [g, y_C]_{\gamma_C} = [g, y_C] + (\lambda_{C}^{-1}-1) g y_C.
\end{equation}
Let us also define $z_{a,b}^C$ as in \eqref{zabdefn}, but replacing
$x$ and $y$ by $x_C$ and $y_C$.

There now remain two steps: (1) to understand
$(A/[A,Ar'A]_{\gamma_C})_{\Gamma_{\gamma_C}}$, and (2) to compute the
needed relations analogous to \eqref{wabdefn1}--\eqref{wabdefn3}. Both
use the $C$-versions of $x,y,$ and $z$ and $\gamma_C$-commutators.

We begin with (1), which is the easier step.  
Since $|\Gamma|$ is invertible in $\k$, we may replace coinvariants by
invariants in \eqref{hh0a1}. Then, the RHS of \eqref{hh0a1} is
isomorphic to
\begin{equation} \label{hh0a2}
\bigl(A^{\langle \gamma_C \rangle} / (([A,Ar'A]_{\gamma_C})^{\langle \gamma_C \rangle} + (\langle[z_{a,b}, x]_{\gamma_C},[z_{a,b},y]_{\gamma_C}\rangle_{a,b \geq 1})^{\langle \gamma_C \rangle})\bigr)^{\Gamma_{\gamma_C}},
\end{equation}
where $\langle \gamma_C \rangle < \Gamma_{\gamma_C}$ is the cyclic subgroup 
generated by $\gamma_C$.  Invariants under this subgroup are those
polynomials in $x_C$ and $y_C$ such that the bidegree $(a,b)$ satisfies
$\lambda_C^{a-b} = 1$ (in other words, letting $|\gamma_C|$ denote the order
of $\gamma_C$,  $|\gamma_C| \mid (a-b)$).  

Next, we generalize the argument of Lemma \ref{mcl} to prove the
following claim:
\begin{equation} \label{agcf}
A^{\langle \gamma_C \rangle}/([A,Ar'A]_{\gamma_C})^{\langle \gamma_C \rangle}
\end{equation}
is a free graded $\k$-module with basis the classes
\begin{enumerate}
\item[(a)] $z^C_{a,b}$ for $a,b \geq 0$ and $\lambda_C^{a-b} = 1$,
\item[(b)] $[z^C_{a_1,b_1} r' \cdots z^C_{a_m,b_m} r']$, for $m \geq
  1$ and $a_i,b_i \geq 0$, \textbf{except} for those classes of the
  form
\begin{equation} [f^\ell], \text{ where $f$ has bidegree $(a,b)$ with
    $\lambda_C^{a-b} \neq 1$ (i.e.,
    $\gamma_C \cdot f = \lambda_C^{a-b} f \neq f$)}.
\end{equation}
\end{enumerate} We now prove the claim.  Set $F:=\k\langle x, y,
r'\rangle$. Then $F^{\langle \gamma_C \rangle}$ itself has a basis
consisting of noncommutative monomials in $x, y$, and $r'$ of
bidegrees $(a,b)$ satisfying $\lambda_C^{a-b}$. In particular, part
(i) of Lemma \ref{mcl} generalizes, with the same proof, to yield that
$A^{\langle \gamma_c \rangle}$ itself has a basis consisting of
elements of the form
\[
z_{a_1,b_1}^C r' \cdots z_{a_m,b_m}^C r' z_{a_{m+1},b_{m+1}}^C, \quad
\text{s.t. } \lambda_C^{\sum_i (a_i-b_i)} = 1.
\]
To do so, we apply again the Diamond Lemma with the same partial
ordering as before.

Next, note that $F^{\langle \gamma_C \rangle} / [F, Fr'F]^{\langle
  \gamma_C \rangle}$ has a basis consisting of (1) noncommutative
monomials in $x$ and $y$ of bidegrees $(a,b)$ such that
$\lambda_C^{a-b} = 1$, and (2) cyclic words in $x$, $y$, and $r'$, of
bidegrees $(a,b)$ such that $\lambda_C^{a-b} = 1$, and such that, if
the cyclic word repeats itself $m$ times (i.e., it is periodic of
length $\frac{1}{m}$ times the length of the word), then it remains
true that $\lambda_C^{(a-b)/m} = 1$.  In particular, $F^{\langle
  \gamma_C \rangle} / [F, Fr'F]^{\langle \gamma_C \rangle}$ is a free
$\k$-module.

Now, we can use the same partial ordering defined in the proof of
Lemma \ref{mcl} on $F^{\langle \gamma_C \rangle} / [F, Fr'F]^{\langle
  \gamma_C \rangle}$ to deduce the claim.

(2) Taking invariants under $\Gamma_{\gamma_C}$ preserves the
property of being torsion-free, and in fact yields a projective module
(since it passes to a summand, namely the image of the symmetrizer
idempotent $\frac{1}{|\Gamma_{\gamma_C}|}\sum_{g \in \Gamma_{\gamma_C}}
g$). By \eqref{hh0a2}, it remains only to compute the remaining
relations $(\langle[z_{a,b}, x]_{\gamma_C}$,
$[z_{a,b},y]_{\gamma_C}\rangle_{a,b \geq 1})^{\langle \gamma_C
  \rangle}$, so that modding \eqref{agcf} by these, and taking
$\Gamma_{\gamma_C}$-invariants, yields $(A \rtimes \Gamma)_{\cyc}$.

We claim that
\begin{equation}
\langle [z^C_{a,b}, x_C]_{\gamma_C}, [z^C_{a,b}, y_C]_{\gamma_C} 
\rangle_{a,b \geq 0} = 
\langle [z^C_{a,b}, x_C]_{\gamma_C}, [z^C_{a,b}, x_C y_C]_{\gamma_C},
[y_C^{b}, y_C] \rangle_{a,b \geq 0}.
\end{equation}
This follows from the general formula
$[f,gh]_{\gamma_C} = [f(\gamma_C \cdot g),h]_{\gamma_C} + [hf,
g]_{\gamma_C}$.  

Thus, we will consider commutators of the form
\begin{equation} \label{zgcs}
[z^C_{a-1,b}, x_C]_{\gamma_C} (a \geq 1), \quad [z^C_{a-1,b-1}, x_C y_C]_{\gamma_C} (a,b \geq 1), \quad [y_C^{b-1}, y_C]_{\gamma_C} (b \geq 1).
\end{equation}
We would like to eliminate the second commutator above similarly to
\eqref{xycomms}. There are two complications: the first is that we are
now using $\gamma_C$-commutators; the second is that we are working in
the $\k$-module of $\gamma_C$-twisted cyclic words
($A^{\langle \gamma \rangle}/([A,Ar'A]_{\gamma_C})^{\langle \gamma_C
  \rangle}$).

Let us assume $C \neq \{1\}$, since otherwise all our work is done in
Lemma \ref{mcl}.  For $a > b$, let us consider the equation
\begin{equation} \label{zagb1}
\sum_{i=0}^{a-b-1} [x_C^i (x_C y_C)^b x_C^{a-b-i-1}, x_C]  + \sum_{i=0}^{b-1} [(x_Cy_C)^i
x_C^{a-b} (x_Cy_C)^{b-i-1}, x_Cy_C] = 0.
\end{equation}
To turn this into an identity involving the $\gamma_C$-commutators in
\eqref{zgcs}, we first note the following: if $f \in A$ is an eigenvector
for the action of $\gamma_C$ (e.g., if $f$ is monomial in $x_C,
y_C$)
with eigenvalue $\lambda_C^\ell$, and $g \in A$ is arbitrary, then
\begin{equation}
[f g, f] = f [g,f] \equiv \lambda_C^\ell [g,f] f = \lambda_C^\ell [gf, f] \pmod{([A, Ar'A]_{\gamma_C})^{\langle \gamma_C \rangle}}.
\end{equation}
In particular, for $a > b$ (requiring $\lambda_C^{a-b} = 1$), 
\begin{gather} \label{zagb2}
[x_C^i (x_Cy_C)^b x_C^{a-b-i-1}, x_C] \equiv \lambda_C^{i} [z^C_{a-1,b}, x_C] \pmod{([A,Ar'A]_{\gamma_C})^{\langle \gamma_C \rangle}}, \\ \label{zagb3}
[(x_Cy_C)^i x_C^{a-b} (x_Cy_C)^{b-i-1}, x_Cy_C] \equiv [z^C_{a-1,b-1}, x_Cy_C] \pmod{([A,Ar'A]_{\gamma_C})^{\langle \gamma_C \rangle}}.
\end{gather}
Now, we combine  \eqref{gxcyc} with \eqref{zagb1},\eqref{zagb2}, \eqref{zagb3}
to obtain
\begin{equation}
  \frac{1 - \lambda_C^{a-b}}{1-\lambda_C} ([z^C_{a-1,b}, x_C]_{\gamma_C} + (1 - \lambda_C) z^C_{a,b}) + b [z^C_{a-1,b-1}, x_C y_C] \in ([A, Ar'A]_{\gamma_C})^{\langle \gamma_C \rangle}.
\end{equation}
Since $\lambda_C^{a-b} = 1$ and $([A, Ar'A]_{\gamma_C})^{\langle \gamma_C \rangle}$ is saturated in $A^{\langle \gamma_C \rangle}$, this says that
\begin{equation} [z^C_{a-1,b-1}, x_C y_C]_{\gamma_C} \in ([A,
  Ar'A]_{\gamma_C})^{\langle \gamma_C \rangle}.
\end{equation} 
It should be possible to verify this explicitly without using \eqref{zagb1},
but it seems more difficult.

Also, 
\begin{equation}
[(x_C y_C)^{b-1}, x_C y_C]_{\gamma_C} = 0, \quad [y_C^{b-1}, y_C]_{\gamma_C} = (\lambda_C^{-1}-1) y_C^b.
\end{equation}
Thus, we conclude that, for $C \neq \{1\}$,
\begin{equation}
(\langle[z^C_{a-1,b}, x_C]_{\gamma_C}, [z^C_{a,b-1},y_C]_{\gamma_C}\rangle_{a,b
  \geq 1})^{\langle
  \gamma_C \rangle} = \langle[z^C_{a-1,b}, x_C]_{\gamma_C}, (\lambda_C^{-1} - 1)y_C^b\rangle_{a,b \geq 1; \lambda_C^{a-b}=1}.
\end{equation}
Also, we easily see that
\begin{equation} [z^C_{a-1,b}, x]_{\gamma_C} \equiv (\lambda_C - 1)
  z^C_{a,b} \pmod{\ldp r' \rdp}.
\end{equation}
Finally, note that $\lambda_C-1$ is invertible in $\k$, as it is a
factor of $|\gamma_C|$ (by plugging $t=1$ into
$\frac{1-t^{|\gamma_C|}}{1-t}$), which is a factor of $|\Gamma|$.
Thus, for $C \neq \{1\}$, the image of the classes $(\langle[z_{a,b},
x]_{\gamma_C}, [z_{a,b},y]_{\gamma_C}\rangle_{a,b \geq 1})^{\langle
  \gamma_C \rangle}$ in $A^{\langle \gamma_C
  \rangle}/([A,Ar'A]_{\gamma_C} + \ldp r' \rdp)^{\langle \gamma_C
  \rangle}$ has the same $\k$-linear span as the image of the elements
$\{z^C_{a,b}\}_{a,b \geq 0}$.

We conclude that the summand of \eqref{hh0a2} corresponding to $C$ is
torsion-free.
More precisely, this summand is the $\Gamma_{\gamma_C}$-invariant part
of the free $\k$-module with basis given by the classes (b) from our
list above (in part (1) of the proof), and hence $\mathbf{k}$-projective.

In the case $C = \{1\}$, on the other hand, we are back in the
situation of Lemma \ref{mcl}, except that we have to take invariants
at the end. So we get the summand
$(V/W)^{\Gamma} \cong V^\Gamma / W^\Gamma$ where $V,W$ are given in
Lemma \ref{mcl} (by construction, $W$ is a $\k[\Gamma]$-module in this
case). Furthermore, the projection of $W$ modulo $\ldp r \rdp^2$ to
$\bigoplus_{a,b} \langle [r' z_{a,b}] \rangle$ is an monomorphism of graded free $\k$-modules
and a $\Gamma$-morphism, hence gives an embedding of
$\k[\Gamma]$-modules (which is a $\C$-isomorphism).  
Thus, the quotient-mod-$\ldp r \rdp^2$ map gives a monomorphism
\begin{equation}
W^\Gamma \into (\langle r' \rangle \o \k[x,y]^\Gamma),
\end{equation}
with image equal to \eqref{immodr2}.
Explicitly, $W^\Gamma$ can be obtained by the inverse of \eqref{immodr2} by pulling back
$\sum \alpha_{a,b} x^a y^b \in \k[x,y]^\Gamma$ with
$\gcd(a,b) \mid \alpha_{a,b}$ to
$\sum \frac{\alpha_{a,b}}{\gcd(a,b)} W_{a,b}$, which must be in
$W^\Gamma$ by the isomorphism.

Since $V/W$ is torsion-free, so is $(V/W)^\Gamma$.  If we considered
instead $V/(W + \langle[(r')^\ell]\rangle_{\ell \geq 1})$, then $V/W$
has a single copy of $\k/\ldp p \rdp$ in each degree $2p^\ell$ for $p$
prime and $p \nmid |\Gamma|$, and $\ell \geq 1$.  To conclude that
$(V/W)^\Gamma$ has the same torsion, we need only check that each
summand of $\k/\ldp p \rdp$ is $\Gamma$-invariant.  This follows
because it is generated by the image of $\frac{1}{p}[r^{p^\ell}] \in
F/[F,F]$, which is $\Gamma$-invariant.

The rest of the theorem follows as in Lemma \ref{mcl}. For the
statement about projectivity of $(A \rtimes \Gamma)_{\cyc,\{1\}}$ in (iv), this
follows because it is the $\Gamma$-invariant submodule of the free
$\k$-module $V/W$ constructed in Lemma \ref{mcl}, so it is a direct
summand of a free module (as in step (2) above).
\end{proof}

\section{Background on NCCI algebras}\label{nccisec}
In order to state and prove many of our results, we will make
essential use of the fact that the preprojective algebra 
of a non-Dynkin quiver is a
noncommutative complete intersection (NCCI) (cf.~e.g.,
\cite[Definition 1.1.1]{EG}). This essentially follows from \cite{EE},
although in the course of our proof we will recover this fact without
needing to refer to it (the arguments needed are similar, however).

In this section, we briefly recall the definitions of NCCI and
formulate and prove properties of them that we will need. We begin
with recollections from \cite{EG}, then proceed to give somewhat
different properties and characterizations (e.g., Proposition
\ref{ournccipr}).

\subsection{Recollections}
Let $A$ be a finitely-presented algebra $A = T_R V / J$.  Assume for a
moment $R$ is a field. Then, in \cite{EG}, $A$ is called an NCCI if $J
/ J^2$ is a projective $A$-bimodule, or equivalently, $A$ has a
projective bimodule resolution of length $2$. In the general case of
$R$ a commutative ring, we also require that $A$ is a
projective $R$-module.

We will additionally assume the following:
\begin{itemize}
\item  $R=\k^I$ for some commutative ring $\k$ and some finite set $I$;
\item $V$ is a finitely-generated, 
$\Z_+$-graded (by ``weight'') $R$-bimodule, which is free over $\k$;
\item $J=\ldp L \rdp$
where $L \subset T_R V$ is a finitely-generated, homogeneous (with
respect to total degree), positive-weight $R$-subbimodule;
\item $L$ is a
minimal generating $R$-subbimodule of $\ldp L \rdp$;
\item For all $i, j \in I$ and all $m \geq 0$, 
 $(iAj)_m$ is free over $\k$ (although this
  will be automatic in our usual context of $\k = \Z$ or a field).
\end{itemize}
Recall above that $A_m$ denotes the $m$-th graded part of $A$.
In particular, we can define the graded matrix Hilbert series of $A$,
which is the following power series in $t$ with coefficients in
$I$-by-$I$ matrices with nonnegative entries:
\begin{equation}
  h(A;t) := \sum_{m \geq 0} [\text{rk } (i A j)_m]_{i,j \in I} t^m 
  \subset \N^{I \times I}[[t]];
\end{equation}
similarly, we can define the same for $J$, or any other graded module
$M$ whose components $(i M j)_m$ are free over $\k$.

Recall the following characterizations of NCCI (the first of which is
\eqref{egflagen} discussed earlier).
\begin{prop} \label{nccipr} \cite{An1}, cf.~\cite[Theorem 3.2.4]{EG}
The following are equivalent:
\begin{enumerate}
\item $A$ is an NCCI ($=\ldp L \rdp / \ldp L \rdp^2$ is a projective
  $T_R V$-bimodule);
\item \label{nphs} $h(A;t) = (1 - h(V;t) + h(L;t))^{-1}$;
\item The Koszul complex of $A$-modules,
\begin{equation} \label{kr}
0 \rightarrow L \o_R A \rightarrow V \o_R A \rightarrow A \onto R \rightarrow 0,
\end{equation}
is exact, where the first nontrivial map is given by restriction of the map
\begin{equation}
T_R V \o_R A \rightarrow V \o_R A,  \quad (v_1 \cdots v_n) \o a \mapsto v_1 \o (v_2 \cdots v_n a),
\end{equation}
and the second nontrivial map is given by multiplication;
\item Anick's $A$-bimodule complex \cite{An2},
\begin{equation} \label{ar}
0 \rightarrow A \o_R L \o_R A \rightarrow A \o_R V \o_R A \rightarrow
A \o_R A \onto A \rightarrow 0,
\end{equation}
is exact,
where the first nontrivial map is given by restriction of the map 
\begin{equation}
A \o_R T_R V \o_R A \rightarrow A \o_R V \o_R A, \quad a \o (v_1 \cdots v_n) \o b \mapsto
\sum_{i=1}^n (a v_1 \cdots v_{i-1}) \o v_i \o (v_{i+1} \cdots v_n b), 
\end{equation}
and the second nontrivial map is given by 
\begin{equation}
a \o v \o b \mapsto av \o b - a \o vb.
\end{equation}
\end{enumerate}
\end{prop}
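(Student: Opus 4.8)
The plan is to isolate the one substantive input — that the complexes \eqref{kr} and \eqref{ar} are \emph{automatically} exact everywhere except possibly at their leftmost terms — and then read off all four equivalences by a Hilbert-series Euler-characteristic computation plus routine syzygy bookkeeping. First I would establish this ``automatic exactness''. For \eqref{kr} one checks that consecutive maps compose to zero, that $\ker(A\onto R)=A_+=V\cdot A=\operatorname{im}(V\o_R A\to A)$, and — the only real point — that under the canonical identification $(T_R V)_+\cong V\o_R T_R V$ the image of the ideal $\ldp L\rdp=(T_RV)L(T_RV)$ inside $V\o_R A$ coincides with the image of $L\o_R A\to V\o_R A$; hence $\ker(V\o_R A\to A)=\operatorname{im}(L\o_R A\to V\o_R A)$, so \eqref{kr} is exact iff its leftmost map is injective. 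For \eqref{ar} I would use that the tensor algebra $T_R V$ is formally smooth (quasi-free) over $R$, so that the conormal sequence
\begin{equation*}
0\to \ldp L\rdp/\ldp L\rdp^2 \to A\o_R V\o_R A \to A\o_R A \to A \to 0
\end{equation*}
is exact; tracing through it identifies $\ker(A\o_R V\o_R A\to A\o_R A)$ with $\ldp L\rdp/\ldp L\rdp^2$ and, since $L$ generates $\ldp L\rdp$, with $\operatorname{im}(A\o_R L\o_R A\to A\o_R V\o_R A)$, so \eqref{ar} too is exact iff its leftmost map is injective.

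Next I would derive $(2)\Leftrightarrow(3)$ and $(2)\Leftrightarrow(4)$. All maps in \eqref{kr} and \eqref{ar} preserve total degree, all terms have $\k$-free graded components (by the standing assumptions, so $h(M\o_R N;t)=h(M;t)\,h(N;t)$), and a graded module with $\k$-free components vanishes iff its Hilbert series does. Writing $K$ and $K'$ for the (only possibly nonzero) homology of \eqref{kr} and \eqref{ar}, the Euler characteristic of Hilbert series gives
\begin{equation*}
1+h(K;t)=h(A;t)\bigl(1-h(V;t)+h(L;t)\bigr),\qquad h(K';t)=h(A;t)\Bigl(\bigl(1-h(V;t)+h(L;t)\bigr)h(A;t)-1\Bigr).
\end{equation*}
Since $h(A;t)$ has constant term the identity matrix (as $A_0=R$), it is an invertible matrix power series, as is $1-h(V;t)+h(L;t)$; hence $K=0$, resp.\ $K'=0$, holds exactly when $h(A;t)=(1-h(V;t)+h(L;t))^{-1}$, i.e.\ condition \eqref{nphs}. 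Combined with the automatic exactness of the previous step this yields $(2)\Leftrightarrow(3)\Leftrightarrow(4)$.

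Finally I would prove $(1)\Leftrightarrow(4)$, closing the loop. Because $L$ and $V$ are $\k$-free, $A\o_R L\o_R A$ and $A\o_R V\o_R A$ are projective $A$-bimodules, so when \eqref{ar} is exact it is a projective bimodule resolution of $A$ of length two, and in particular $\ldp L\rdp/\ldp L\rdp^2\cong A\o_R L\o_R A$ is projective, giving (1). Conversely, if $\ldp L\rdp/\ldp L\rdp^2$ is projective: the conormal sequence above exhibits $\ldp L\rdp/\ldp L\rdp^2$ as the second syzygy of $A$ as a bimodule, onto which the projective bimodule $A\o_R L\o_R A$ surjects, and this surjection is a projective cover because $L$ is a \emph{minimal} generating subbimodule; hence it is an isomorphism, which is exactly the injectivity of the leftmost map of \eqref{ar}, i.e.\ (4). (Alternatively, $(1)\Leftrightarrow(4)$ may be quoted directly from \cite[Theorem 3.2.4]{EG}.)

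The main obstacle is concentrated entirely in the first step — the assertion that \eqref{kr} and \eqref{ar} are exact off their leftmost terms — which is Anick's resolution theorem \cite{An1,An2} together with formal smoothness of the tensor algebra over $R$; everything afterwards is formal. The one point requiring care in the second step is that the graded matrix Hilbert series are noncommutative, so the two factorizations above must keep $h(A;t)$ in the correct (left, resp.\ right) position, and one should record that the invertibility of $h(A;t)$ and of $1-h(V;t)+h(L;t)$ (both with constant term the identity) is precisely what lets one pass freely between $h(A;t)\bigl(1-h(V;t)+h(L;t)\bigr)=1$ and $h(A;t)=(1-h(V;t)+h(L;t))^{-1}$.
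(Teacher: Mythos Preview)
Your proposal is correct and follows the same strategy the paper sketches. The paper's own argument is a single sentence: ``It is easy to prove the equivalence of parts (2), (3), and (4): this follows from the Euler--Poincar\'e principle (that the alternating sum of Hilbert series of an exact sequence is zero),'' with the link to (1) left to the citations \cite{An1} and \cite[Theorem 3.2.4]{EG}. You have fleshed this out by making explicit the ``automatic exactness'' of \eqref{kr} and \eqref{ar} away from the leftmost term (which is what allows Euler--Poincar\'e to give an equivalence rather than merely an implication), and by supplying the projective-cover argument for $(1)\Leftrightarrow(4)$ via minimality of $L$ and graded Nakayama---all of which the paper leaves to the references.
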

It is easy to prove the equivalence of parts (2), (3), and (4): this
follows from the Euler-Poincar\'e principle (that the alternating sum
of Hilbert series of an exact sequence is zero).

\begin{rem}\label{r:koszul}
When $\k$ is a field and $L \subseteq V \otimes_R V$
is quadratic, it is easy to deduce from the above that, if $A$ is an
NCCI, it is also Koszul; cf.~\cite[Theorem 2.3.4]{EE}.
\end{rem}
In \cite{EE}, it is proved that, in the special case where $A$ is a
preprojective algebra of a non-Dynkin quiver or $A$ is a partial
preprojective algebra with at least one white vertex (and $\k$ is any
field), then $A$ is Koszul; but actually the proof there shows $A$ is
an NCCI (the definition of NCCI came slightly later in
\cite{EG}). Moreover, it is well-known that, when $A$ is the
preprojective algebra of a Dynkin quiver, then $A$ is not NCCI; in
fact, the minimal bimodule resolution of $A$ is periodic (in
particular infinite), as is the Koszul complex.  Thus one concludes
from \cite{EE} the following, over $\Z$ and hence over an arbitrary
commutative ring $\k$:
\begin{prop}\label{p:prep-ncci}
  A (partial) preprojective algebra $\Pi_{Q,Q_0'}$ is an NCCI if and only
  if either $Q$ is not Dynkin or $Q_0'$ is nonempty.
\end{prop}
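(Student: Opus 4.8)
The plan is to reduce the statement to the version over fields, which is contained in the proof of \cite{EE} (as the excerpt observes, that proof in fact establishes the NCCI property, not merely Koszulness), and then to descend to $\Z$ — and hence to an arbitrary base $\k$ — by a base-change argument, disposing of the Dynkin case separately via an elementary Hilbert-series obstruction.

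For the ``if'' direction ($Q$ non-Dynkin or $Q_0'$ nonempty), I would first check that the NCCI formalism of Proposition~\ref{nccipr} genuinely applies to $A := \Pi_{Q,Q_0'}$ over $R := \Z^{Q_0}$: here $V = \Z\cdot\dq_1$ is graded in weight one, finitely generated, and $\Z$-free; the submodule $L := \bigoplus_{i \in Q_0 \setminus Q_0'} \Z\cdot (iri)$ of relations is homogeneous of weight two, $\Z$-free, and a minimal generating $R$-subbimodule of $\ldp L \rdp$ (the relations lie in degree two and are $\Z$-linearly independent); and $\Pi_{Q,Q_0'}$ is torsion-free over $\Z$ by \cite{EE}, hence $R$-free with each $(iAj)_m$ free over $\Z$. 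Granting this, being an NCCI over $\Z$ amounts, by Proposition~\ref{nccipr}, to exactness of Anick's complex \eqref{ar} for $A$ over $\Z$, which would then exhibit a length-two resolution of $A$ by bimodules projective on each side. Write $C_\bullet$ for the complex \eqref{ar} over $\Z$ and $C_\bullet^{(m)}$ for its weight-$m$ summand; since the defining relations are defined over $\Z$, for any commutative ring $\k'$ the complex \eqref{ar} over $\k'$ is $C_\bullet\o_\Z\k'$, formed termwise. Each $C_\bullet^{(m)}$ is a bounded complex of finitely generated free $\Z$-modules, so its homology groups $H_i$ are finitely generated; exactness over $\Q$ (the field case) forces $H_i\o\Q=0$, so $H_i$ is torsion, while exactness over every $\F_p$ together with the universal-coefficient exact sequence $0 \to H_i\o\F_p \to H_i(C^{(m)}\o\F_p) \to \Tor_1^\Z(H_{i-1},\F_p) \to 0$ forces $H_i\o\F_p=0$ for all $i$ and all primes $p$; a finitely generated torsion $\Z$-module killed by $-\o\F_p$ for every $p$ is zero. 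Hence $C_\bullet$ is exact, so $A$ is an NCCI over $\Z^{Q_0}$; and since a bounded exact complex of free $\Z$-modules splits, $-\o_\Z\k$ preserves its exactness, giving the NCCI property over $\k^{Q_0}$ for every commutative ring $\k$.

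For the ``only if'' direction, suppose $Q$ is Dynkin and $Q_0' = \emptyset$, so $A = \Pi_Q$; I claim $A\o\k$ fails to be an NCCI for any field $\k$, which by the base-change step above already precludes the NCCI property over $\Z$. Were $A\o\k$ an NCCI, Proposition~\ref{nccipr} (condition~\eqref{nphs}) would give $h(\Pi_Q;t) = (1 - Ct + It^2)^{-1}$, where $C$ is the adjacency matrix of the Dynkin graph of $Q$ (so $h(V;t) = Ct$, as $\dq$ has one arrow in each direction over each edge of $Q$) and $h(L;t) = It^2$ (one weight-two relation per vertex). But $\Pi_Q\o\k$ is finite-dimensional for $Q$ Dynkin (classical), so $h(\Pi_Q;t)$ is a matrix of polynomials, whereas $(1 - Ct + It^2)^{-1}$ is not: $\det(1 - Ct + It^2)$ is a polynomial of degree $2|Q_0|\geq 2$ with constant term and leading coefficient both equal to $1$, hence not a nonzero constant, so $1 - Ct + It^2$ is not invertible over $\k[t]$ and its inverse over $\k[[t]]$ has infinitely many nonzero coefficients — a contradiction. (Alternatively, one may invoke the classical fact that the minimal bimodule resolution of a Dynkin preprojective algebra is eventually periodic, hence infinite.) Together with the previous paragraph this proves the proposition.

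I expect the only real friction to lie in the descent step: one must carefully verify that $L$ is a minimal, $\Z$-free generating subbimodule and that $\Pi_{Q,Q_0'}$ is $\Z$-free, so that Anick's complex \eqref{ar} and all of Proposition~\ref{nccipr} are meaningful over $\Z$ and compatible with arbitrary base change. Once that is in place, the field-case input of \cite{EE} together with the universal-coefficient argument settles the ``if'' direction, and the Hilbert-series comparison settles the Dynkin case.
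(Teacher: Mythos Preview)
Your proposal is correct and follows essentially the same strategy the paper sketches in the paragraph preceding the proposition: cite \cite{EE} over fields for the ``if'' direction and invoke a standard obstruction in the Dynkin case for the ``only if'' direction. You supply more detail than the paper does---in particular, you make the descent from fields to $\Z$ explicit via a universal-coefficients argument on the Anick complex, and you give a direct Hilbert-series contradiction for the Dynkin case where the paper simply appeals to the well-known periodicity of the minimal bimodule resolution---but the underlying route is the same.
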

As we mentioned at the beginning of the section, we will not need to
use this proposition.

\subsection{General results on NCCIs}
We will use the following alternative
characterization of NCCIs:
\begin{prop} \label{ournccipr} The following are equivalent for $A$ as above:
\begin{enumerate}
\item $A$ is an NCCI.
\item \label{onpag} Equip $T_R V$ with the descending filtration by
  powers of
  $\ldp L \rdp$. Then the canonical map $A *_R T_R L \to \gr T_R V$ is
 an isomorphism.
\item For any section $A \into T_R V$ of the quotient map, the induced
  map $A *_R T_R L \to T_R V$ is an isomorphism.
\end{enumerate}
\end{prop}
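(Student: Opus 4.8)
The plan is to show $(1)\Rightarrow(3)\Rightarrow(2)\Rightarrow(1)$, using the Hilbert series criterion of Proposition \ref{nccipr}(\ref{nphs}) as the pivot. For $(1)\Rightarrow(3)$: fix a graded $R$-bimodule section $\sigma: A \into T_R V$ of the quotient $q: T_R V \onto A$. Since $L \subset T_R V$ generates $\ldp L \rdp$ minimally and $V$ generates $T_R V$, the multiplication map $A *_R T_R L \to T_R V$ (defined on the generators by $\sigma$ on the $A$-part and by the inclusion $L \into T_R V$ on the $T_R L$-part) is well-defined; it is surjective because its image contains $\sigma(A)$ and $L$, which together generate $T_R V$ as an algebra (any monomial in $V$ can be rewritten, using $\sigma$, as a word alternating between $\sigma(A)$-blocks and $L$-letters). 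Both sides are graded and free over $\k$ with components free of finite rank in each degree, so surjectivity is an isomorphism once the graded matrix Hilbert series agree. But $h(A *_R T_R L; t) = h(A;t)\cdot\bigl(1 - h(L;t)\bigr)^{-1}$ (the Hilbert series of a free product over $R$ multiplies the inverses: $h(A*_R B;t)^{-1} = h(A;t)^{-1} + h(B;t)^{-1} - h(R;t)^{-1}$, and $h(T_R L;t)^{-1} = 1 - h(L;t)$), and the NCCI hypothesis gives $h(A;t) = \bigl(1 - h(V;t) + h(L;t)\bigr)^{-1}$, while $h(T_R V;t) = \bigl(1 - h(V;t)\bigr)^{-1}$; a direct manipulation shows $h(A;t)\cdot\bigl(1-h(L;t)\bigr)^{-1} = h(T_R V;t)$. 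Hence $(3)$ holds.

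For $(3)\Rightarrow(2)$: the isomorphism $A *_R T_R L \iso T_R V$ of $(3)$ is filtered, where the left side carries the filtration in which $A$ sits in filtration degree $0$ and $T_R L$ is filtered by word-length in $L$, and the right side carries the $\ldp L \rdp$-adic filtration. The section $\sigma$ lands in filtration degree $0$ and $L$ lands in filtration degree $1$, so the map is filtered; and it is strict because the isomorphism identifies the filtration-$n$ part on the left (spanned by words with $\geq n$ letters from $L$) with $\ldp L \rdp^n$ on the right. Passing to associated graded modules and noting that $\gr$ of the left side is again $A *_R T_R L$ (the filtration on $A *_R T_R L$ is already "graded" in this sense — there are no relations mixing filtration degrees), we obtain the canonical isomorphism $A *_R T_R L \iso \gr_{\ldp L \rdp} T_R V$ of $(2)$. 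One must check that the map in $(2)$ is canonical, i.e.\ independent of $\sigma$: on $\gr$, the $A$-part of $A *_R T_R L$ maps to $\gr^0 T_R V = T_R V / \ldp L \rdp = A$ via the quotient, which does not see $\sigma$, and the $T_R L$-part maps to $\gr^1 \oplus \gr^2 \oplus \cdots$ via $L \mapsto \ldp L \rdp / \ldp L \rdp^2$, also canonical.

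For $(2)\Rightarrow(1)$: from $\gr_{\ldp L \rdp} T_R V \cong A *_R T_R L$ we read off $h(T_R V;t) = h(A;t)\cdot\bigl(1 - h(L;t)\bigr)^{-1}$ exactly as above, hence $h(A;t) = h(T_R V;t)\cdot\bigl(1 - h(L;t)\bigr) = \bigl(1 - h(V;t) + h(L;t)\bigr)^{-1}$, which is criterion (\ref{nphs}) of Proposition \ref{nccipr}, so $A$ is an NCCI. The main obstacle is the bookkeeping in $(1)\Rightarrow(3)$: one must verify carefully that the map $A *_R T_R L \to T_R V$ built from an arbitrary section is genuinely surjective (this uses that $L$ generates the ideal, so every element of $\ldp L \rdp$ is an $A$-combination — via $\sigma$ — of products of $L$-elements) and that freeness over $\k$ in each graded degree is preserved on both sides so that equality of Hilbert series upgrades surjectivity to bijectivity; the hypotheses on $R = \k^I$, finiteness, and $\k$-freeness of all the relevant components are exactly what make this work, and keeping the matrix-valued (rather than scalar) Hilbert series throughout is essential since $R$ is not connected.
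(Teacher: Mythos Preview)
Your proof is correct and takes the Hilbert-series route that the paper itself flags as an alternative immediately after its own argument (the paragraph citing \eqref{eelem} and Lemma~2.2.5 of \cite{EE}). The paper instead argues directly with tensors: it shows $(2)\Leftrightarrow(3)$ trivially from freeness of $A$, and then proves that $(2)$ is equivalent to exactness of the Koszul complex \eqref{kr} by reducing to the identity $(L\cdot T_RV)\cap(V\cdot\ldp L\rdp)=L\cdot\ldp L\rdp$ and running an induction on $L$-degree to identify each $\gr^L_i T_RV$ with the degree-$i$ piece of $A*_R T_RL$. Your approach trades this hands-on induction for a global Hilbert-series match plus a surjectivity check; it is cleaner bookkeeping but relies on the numerical criterion \eqref{nphs}, whereas the paper's tensor argument yields the structural identification $\gr^L_i T_RV \cong (A*_R T_RL)^L_i$ step by step and makes the role of \eqref{refainj} explicit.

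One small slip worth fixing: your displayed formula $h(A*_RT_RL;t)=h(A;t)\cdot(1-h(L;t))^{-1}$ is not literally correct for matrix-valued series, since the factors need not commute. The formula you state in parentheses, $h(A*_RB;t)^{-1}=h(A;t)^{-1}+h(B;t)^{-1}-1$, is the right one, and plugging in $h(T_RL)^{-1}=1-h(L)$ gives $h(A*_RT_RL)^{-1}=h(A)^{-1}-h(L)$, which with the NCCI identity $h(A)^{-1}=1-h(V)+h(L)$ indeed yields $h(T_RV)^{-1}$. Your conclusion is unaffected. Also, in $(3)\Rightarrow(2)$ the strictness claim deserves one more line: it follows because the inductive surjectivity argument actually shows $\phi(F^n)=\ldp L\rdp^n$ on the nose, and then injectivity of $\phi$ forces $\phi^{-1}(\ldp L\rdp^n)=F^n$.
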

\begin{proof}
It is clear that parts (2) and (3) are equivalent since $A$ is a 
free $\k$-module.
We show that (2) is equivalent to exactness of \eqref{kr} ((3) of Proposition
\ref{nccipr}). It
is easy to see that exactness is equivalent to injectivity of $L \o_R A \rightarrow  V \o_R A$ (by definition of $A$).  The latter is equivalent to the following
formula in $T_R V$:
\begin{equation} \label{refainj}
(L \cdot T_R V) \cap (V \cdot \ldp L \rdp) = L \cdot \ldp L \rdp.
\end{equation}
This last equation obviously follows from (2) or (3). Conversely, we
may proceed by induction on $L$-degree of the filtered vector space
$T_R V$.  Let $\gr^L_i T_R V := \ldp L \rdp^i / \ldp L \rdp^{i-1}$.
As the base case, it is clear that $A \cong \gr^L_0 T_R V$.
Inductively, assume that $\gr^L_i T_R V = (A*_R T_R L)^L_i$. 
Then, \eqref{refainj} shows that
\[
\gr^L_{i+1}(L T_R V) \cong  L \cdot \ldp L \rdp^{i} / L \cdot \ldp L \rdp^{i+1}.
\]
Since the multiplication
$L \o_R T_RV \rightarrow T_R V$ is injective, the RHS identifies with
$L \o_R \gr^L_i T_R V \cong L \o_R (A *_R T_R L)^L_{i}$. Similarly,
 for all $j \geq 1$,
\[
\gr^L_{i+1} (V^{j} L \ldp L \rdp^i) = A_j \o_R L \o_R (A *_R T_R L)^L_{i}. \qedhere
\]
\end{proof}
Note that, using the formula
(Lemma 2.2.5 of \cite{EE})
\begin{equation} \label{eelem} h(A) = \frac{1}{1-\alpha}, h(B) =
  \frac{1}{1-\beta} \Rightarrow h(A *_R B) = \frac{1}{1-\alpha-\beta},
  \quad \text{if }\alpha, \beta \in t \Z[[t]] \otimes \End(R),
\end{equation}
one obtains an alternative proof of the fact that \eqref{onpag} of
Proposition \ref{ournccipr} is equivalent to \eqref{nphs} of
Proposition \ref{nccipr}. (Note that \eqref{eelem} is $T_R V *_R
T_R W \cong T_R (V \oplus W)$ in the case that $\alpha, \beta$ are
positive, or for arbitrary $\alpha, \beta$ if one allows $V$ and $W$
to be graded super-vector spaces.)  We preferred a proof using tensors
rather than Hilbert series.

We do not actually need to say much about NCCIs in general, but we
mention these to explain the meaning of conditions (2), (3) of
Proposition \ref{ournccipr}, which we will use heavily. 

Proposition \ref{ournccipr} easily implies the following analogue
of Lemma 5.1.1 of \cite{EG} (which was for RCI algebras):
\begin{prop} \label{starnccip} Let $A$ and $B$ be finitely-presented
  as in Proposition \ref{ournccipr}.
\begin{enumerate}
\item[(i)] $A *_R B$ is an NCCI if and only if
$A$ and $B$ are NCCIs. More generally, consider a commutative diagram
\begin{equation}
\xymatrix{
B \ar@{^{ (}->}[r] \ar@{=}[d] & C \ar@{->>}[r]   & A
\ar@{=}[d] \\
B \ar@{^{ (}->}[r] & A *_R B \ar@{->>}[r] \ar[u]_{\sim,R}^{gr} & A
}
\end{equation}
whose rows are graded $R$-algebra morphisms, and whose middle vertical
arrow is an isomorphism of graded $R$-bimodules (but not necessarily
an algebra homomorphism). Suppose that $C$ is generated as a
$B$-algebra by a positively-graded $R$-bimodule section $\tilde A
\subset C$ of $A$ intersecting $B$ only in $\k$, and that the
middle vertical arrow is obtained by taking the associated graded with
respect to the descending filtration generated by the grading on $B$
and by $|\tilde A \setminus \{0\}|=0$.  Then, $C$ is an NCCI if and
only if $A$ and $B$ are.
\item[(ii)] Assume $L \supset L'$ are finitely-generated,
  positively-graded homogeneous bimodules, which minimally generate
  the ideals $\ldp L \rdp, \ldp L' \rdp$.  If $T_R V / \ldp L \rdp$ is
  an NCCI, then $T_R V / \ldp L' \rdp$ is an NCCI.
\end{enumerate}
\end{prop}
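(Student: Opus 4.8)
The plan is to deduce everything from the two characterizations of the NCCI property in Propositions~\ref{nccipr} and~\ref{ournccipr}, proving the free product statement (the first sentence of (i)) first and then bootstrapping. Write $A = T_R V/\ldp L \rdp$ and $B = T_R W/\ldp M \rdp$, so that $A *_R B = T_R(V\oplus W)/\ldp L\oplus M\rdp$ with $L\subseteq T_R V$ and $M\subseteq T_R W$. For the ``if'' direction I would argue purely with Hilbert series: by \eqref{nphs} of Proposition~\ref{nccipr} the NCCI hypotheses give $h(A) = (1-h(V)+h(L))^{-1}$ and $h(B) = (1-h(W)+h(M))^{-1}$, and since these have invertible constant term the free product identity \eqref{eelem} (which needs no NCCI hypothesis) yields $h(A*_R B) = (1-h(V)-h(W)+h(L)+h(M))^{-1} = (1-h(V\oplus W)+h(L\oplus M))^{-1}$, i.e.\ exactly \eqref{nphs} for $A*_R B$. (Equivalently, one combines $R$-bimodule sections $\sigma_A\colon A\into T_R V$ and $\sigma_B\colon B\into T_R W$ into a section of $A*_R B$ and identifies the induced map of Proposition~\ref{ournccipr}(3), after the canonical reassociations $(A*_R B)*_R T_R(L\oplus M)\cong (A*_R T_R L)*_R(B*_R T_R M)$ and $T_R(V\oplus W)\cong T_R V *_R T_R W$, with the free product of two isomorphisms.)

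For the ``only if'' direction of the free product case I would use the formula \eqref{refainj} from the proof of Proposition~\ref{ournccipr}: an algebra $T_R U/\ldp N\rdp$ is an NCCI iff $(N\cdot T_R U)\cap(U\cdot\ldp N\rdp) = N\cdot\ldp N\rdp$ inside $T_R U$. Assuming this for $A*_R B$ (so $U = V\oplus W$, $N = L\oplus M$), I would apply the algebra retraction $\rho\colon T_R(V\oplus W)\onto T_R V$ killing $W$. One has $\rho(L\oplus M) = L$ (as $L\subseteq T_R V$ and $M$ is positively graded in $T_R W$), hence $\rho(\ldp L\oplus M\rdp) = \ldp L\rdp$, and each of the three $R$-subbimodules $(L\oplus M)T_R(V\oplus W)$, $(V\oplus W)\ldp L\oplus M\rdp$, $(L\oplus M)\ldp L\oplus M\rdp$ is $\rho$-stable with image $L\cdot T_R V$, $V\cdot\ldp L\rdp$, $L\cdot\ldp L\rdp$ respectively. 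Since intersecting a $\rho$-stable submodule with $T_R V = \operatorname{im}\rho$ just returns its $\rho$-image, the identity for $A*_R B$ restricts to $(L\cdot T_R V)\cap(V\cdot\ldp L\rdp) = L\cdot\ldp L\rdp$, which is \eqref{refainj} for $A$; hence $A$ is an NCCI, and by symmetry so is $B$.

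For the general version of (i), I would first establish the lemma that a separated, exhaustive filtered algebra $C$ with $\gr_F C$ an NCCI is itself an NCCI: lift a minimal presentation $\gr_F C = T_R V_0/\ldp L_0\rdp$ to a filtered presentation $C = T_R V_C/\ldp L_C\rdp$ with $\gr_F V_C = V_0$ and $\gr_F L_C = L_0$ (the lifted relations generate the kernel because they do after applying $\gr_F$); since multiplication is filtered, $\gr_F$ carries the three modules of \eqref{refainj} for $C$ into those for $\gr_F C$, so \eqref{refainj} for $\gr_F C$ forces $\gr_F\bigl((L_C T_R V_C)\cap(V_C\ldp L_C\rdp)\bigr)\subseteq L_C\ldp L_C\rdp$, and as the reverse inclusion of the ungraded modules is automatic, separatedness upgrades this to \eqref{refainj} for $C$. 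Applying this with the filtration $F$ on $C$ generated by the grading of $B$ and by $\tilde A$ in degree $0$, the hypotheses give $\gr_F C\cong A*_R B$, hence one implication; for the converse one observes that the same hypotheses make $F_0 C$ a subalgebra of $C$ isomorphic to $A$ and an $R$-bimodule direct summand whose variables form a sub-presentation, so the retraction argument of the previous paragraph applies verbatim to conclude that $A$ (and likewise $B$) is an NCCI whenever $C$ is. Finally, part (ii) follows from the free product case: choosing an $R$-bimodule section $\sigma\colon A\into T_R V$, Proposition~\ref{ournccipr}(3) gives $\mu_\sigma\colon A*_R T_R L\iso T_R V$, which carries the copy of $L'\subseteq L$ in the $T_R L$-factor onto $L'\subseteq T_R V$, hence descends to $(A*_R T_R L)/\ldp L'\rdp\iso T_R V/\ldp L'\rdp = B$; and $(A*_R T_R L)/\ldp L'\rdp\cong A*_R(T_R L/\ldp L'\rdp)\cong A*_R T_R(L/L')$ (right-exactness of $\otimes_R$, with $L/L'$ a finitely-generated positively-graded $R$-bimodule, $\k$-free in the situations of interest). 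As $T_R(L/L')$ is a free algebra it is an NCCI, so by the first sentence of (i) the algebra $B\cong A*_R T_R(L/L')$ is an NCCI whenever $A$ is.

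The step I expect to be the real obstacle is the filtered-to-graded bookkeeping in the general case of (i): one must make precise that the hypotheses on $C$ and on the section $\tilde A$ produce a minimal presentation of $C$ whose associated graded \emph{is} the standard presentation of $A*_R B$ (so that the generating and relation modules have matching Hilbert series and the retraction argument transfers), and that $F_0 C$ is an honest subalgebra isomorphic to $A$. By contrast the free product case itself, the ``$\gr_F C$ NCCI $\Rightarrow$ $C$ NCCI'' lemma, and part (ii) are essentially formal given Propositions~\ref{nccipr}, \ref{ournccipr} and the identity \eqref{eelem}; one should however keep track of $\k$-freeness of the modules involved (e.g.\ of $L/L'$ in part (ii)) so as to stay within the standing hypotheses.
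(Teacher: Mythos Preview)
Your argument is largely sound and considerably more explicit than the paper's one-line appeal to Proposition~\ref{ournccipr}(2). The Hilbert-series computation for the ``if'' direction of the free product case and the retraction argument via \eqref{refainj} for the ``only if'' direction are both correct; part~(ii) is essentially the paper's argument.

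There is one genuine gap, in the ``only if'' direction of the general case of (i): the parenthetical ``(and likewise $B$)'' does not follow by the same retraction. The retraction $\rho\colon T_R(V\oplus W)\to T_R V$ sends $N=M\oplus\tilde L$ to $L$ (since $M\subseteq(T_RW)_+$ is killed and $\tilde L$ projects to $L$), so \eqref{refainj} for $C$ restricts to \eqref{refainj} for $A$. But the opposite retraction $\rho'\colon T_R(V\oplus W)\to T_R W$ sends $N$ to $M+\rho'(\tilde L)$, and $\rho'(\tilde L)$ need not vanish or lie in $\ldp M\rdp$: e.g.\ with $A=\k[v_1,v_2]$, $B=\k[w]$, $\tilde L=\langle v_1v_2-v_2v_1+w\rangle$ one gets $\rho'(\tilde L)=\langle w\rangle$. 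So the retraction only proves that $T_RW/\ldp M+\rho'(\tilde L)\rdp$ is an NCCI, not that $B$ is.

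The fix is immediate with your own tools. Once the retraction gives that $A$ is an NCCI, combine $h(C)=h(\gr_F C)=h(A*_RB)$, the NCCI Hilbert series for $C$, and \eqref{eelem} to obtain
\[
h(B)^{-1}=h(A*_RB)^{-1}-h(A)^{-1}+1=(1-h(V\oplus W)+h(L\oplus M))-(1-h(V)+h(L))+1=1-h(W)+h(M),
\]
which is exactly \eqref{nphs} for $B$. Incidentally, the difficulty you flag (whether the subalgebra generated by $\tilde A$ is isomorphic to $A$) is not what is needed and can fail; the retraction argument operates entirely in $T_R(V\oplus W)$ and never uses that claim.
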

\begin{proof}
(i) Set $A = T_R V / \ldp L \rdp$ and $B = T_R W / \ldp L' \rdp$ for minimal, finitely-generated $L, L'$, so that $A *_R B = T_R (V \oplus W) / \ldp L, L' \rdp$, and $C = T_R (V \oplus W) / \ldp L', \tilde L \rdp$ for some $\tilde L$ mapping isomorphically to $L$ under the projection to $T_R V$.
The result then follows from (2) of Proposition \ref{ournccipr}.

(ii)  Assume $T_R V / \ldp L \rdp$ is an NCCI. Then, 
\begin{equation}
T_R V \cong T_R V / \ldp L \rdp *_R T_R L
\end{equation}
as graded $R$-bimodules (considering some section $T_R V/\ldp L \rdp \into T_R V$).
Modding by $\ldp L' \rdp$, we obtain
\begin{equation}
T_R V / \ldp L' \rdp \cong T_R V / \ldp L \rdp *_R (T_R L / \ldp L' \rdp), 
\end{equation}
By part (i), we may conclude the desired result if
$T_R L / \ldp L' \rdp$ is an NCCI.  But, this is obvious since
$L' \subset L$ is a subbimodule.
\end{proof}
\begin{rem}
More generally, part (ii) of Proposition \ref{starnccip} shows that,
for any NCCI $T_R V / \ldp L \rdp$ (with $L$ minimal and
finitely-generated), if $L' \subset T_R L$, then letting $\bar L'$ be
the image of $L'$ in $T_R V$, one has that $T_R V / \ldp \bar L' \rdp$
is an NCCI if and only if $T_R L / \ldp L' \rdp$ is an NCCI.
\end{rem}

Now, we note that all of the above results also hold for $\k=\Z$,
dealing with free $\Z$-modules (since the definition, in Proposition
\ref{nccipr}.(i), of an NCCI includes being a free $\Z$-module). 

We then deduce the following important result (needed for writing
bases), now with $R=\Z^{Q_0}$ for a quiver $Q$:
\begin{cor} \label{nccicor}
\begin{enumerate}
\item[(i)] If $\Pi_{Q, J}$ is an NCCI and $J \subset J' \subseteq
  Q_0$, then $\Pi_{Q, J'}$ is an NCCI, and
\begin{equation} \label{pijpj}
\gr^{\ldp \1_{J' \setminus J} r \1_{J' \setminus J} \rdp} \Pi_{Q,J'} \cong \Pi_{Q, J} *_R T_R \langle \1_{J' \setminus J} r \1_{J' \setminus J} \rangle,
\end{equation}
by taking the associated graded algebra with respect to powers of the ideal $\ldp
\1_{J' \setminus J} r \1_{J' \setminus J} \rdp$. One also has a graded
$R$-bimodule isomorphism $\liso$ by choosing any graded $R$-bimodule
section $\Pi_{Q, J} \into \Pi_{Q, J'}$;
\item[(ii)] If $Q'$ and $Q''$ are subquivers of $Q$ with vertex set
  $Q_0$ such that $Q_1 = Q_1' \sqcup Q_1''$ (all arrows of $Q$ are in
  either $Q'$ or $Q''$ but not both), and
  $I \subset Q_0$ is the set of vertices incident to arrows from both
  $Q_1'$ and $Q_1''$, then if $\Pi_{Q', J}$ is an NCCI, one has
  \begin{equation} \label{piq123} \gr^{\ldp \overline{Q_1''} 
      \rdp}\Pi_{Q,J} \cong \Pi_{Q', J} *_R \Pi_{Q'', J \cup I},
\end{equation}
by taking the associated graded algebra with respect to the filtration by powers
of the ideal $\ldp  \bar Q_1'' \rdp$. Similarly, one can
obtain a graded $R$-bimodule isomorphism
\begin{equation}
\Pi_{Q,J} \liso \Pi_{Q', J} *_R \Pi_{Q'', J \cup I}
\end{equation}
by picking any weight-graded $R$-bimodule section $\Pi_{Q', J} \subset
\Pi_{Q,J}$ of the quotient, and using the composition $\Pi_{Q'', J \cup I}
\into \Pi_{Q, J \cup I} \onto \Pi_{Q,J}$ of obvious maps.
\item[(iii)] In the situation of (ii), $\Pi_{Q,J}$ is an NCCI if and only if
  $\Pi_{Q'', J \cup I}$ is an NCCI.
\end{enumerate}
\end{cor}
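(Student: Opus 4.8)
The plan is to read all three assertions off Propositions \ref{ournccipr} and \ref{starnccip} through a dictionary between partial preprojective algebras and the finitely presented algebras $T_R V/\ldp L\rdp$ considered there. Write $R:=\k^{Q_0}$, so $P_{\dq}=T_R(\k\cdot\dq_1)$, and for $J\subseteq Q_0$ set $L_J:=\langle\,iri\mid i\in Q_0\setminus J\,\rangle\subseteq P_{\dq}$, a homogeneous (weight-two) $R$-subbimodule, free over $\k$ (its nonzero members $iri$ are linearly independent in weight two) and minimally generating $\ldp L_J\rdp$; thus $\Pi_{Q,J}=P_{\dq}/\ldp L_J\rdp$ is presented exactly as in Proposition \ref{ournccipr}. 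Two observations organize everything: (a) $J\subseteq J'$ iff $L_{J'}\subseteq L_J$, with $L_J=L_{J'}\oplus L_{J'\setminus J}$, where $L_{J'\setminus J}:=\langle\,iri\mid i\in J'\setminus J\,\rangle=\langle\1_{J'\setminus J}r\1_{J'\setminus J}\rangle$; and (b) if $Q_1=Q_1'\sqcup Q_1''$ then $P_{\dq}=P_{\overline{Q'}}*_R P_{\overline{Q''}}$ (since $\k\cdot\dq_1=\k\cdot\overline{Q_1'}\oplus\k\cdot\overline{Q_1''}$), and $iri=ir'i+ir''i$ with $ir'i\in P_{\overline{Q'}}$ of $\overline{Q_1''}$-length $0$ (nonzero exactly when $i$ is incident to an arrow of $Q_1'$) and $ir''i\in P_{\overline{Q''}}$ of $\overline{Q_1''}$-length $2$, where $r'=\sum_{a\in Q_1'}[a,a^*]$ and $r''=\sum_{a\in Q_1''}[a,a^*]$.

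\emph{Part (i).} Since $L_{J'}\subseteq L_J$ and $\Pi_{Q,J}$ is an NCCI, Proposition \ref{starnccip}(ii) already gives that $\Pi_{Q,J'}$ is an NCCI. For the isomorphism, choose an $R$-bimodule section of the surjection $\Pi_{Q,J'}\onto\Pi_{Q,J}$ (it exists by (a)); writing $\bar L$ for the image of $L_{J'\setminus J}$ in $\Pi_{Q,J'}$, the resulting multiplication map $\Pi_{Q,J}*_R T_R\bar L\to\Pi_{Q,J'}$ is a graded $R$-bimodule morphism that is surjective (its image contains $\bar L$ and a lift of $\Pi_{Q,J}$, hence surjects onto the associated graded of $\Pi_{Q,J'}$ for the $\ldp\bar L\rdp$-adic filtration) and injective by a Hilbert-series count: as both $\Pi_{Q,J}$ and $\Pi_{Q,J'}$ are NCCIs, \eqref{eelem} and the Hilbert-series characterization in Proposition \ref{nccipr} show that both sides have Hilbert series $\bigl(1-h(\k\cdot\dq_1)+h(L_{J'})\bigr)^{-1}$, using $h(L_J)=h(L_{J'})+h(L_{J'\setminus J})$ and $h(\bar L)=h(L_{J'\setminus J})$. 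Since this map intertwines the $\bar L$-length filtration on the source with the $\ldp\bar L\rdp$-adic filtration on the target, passing to associated graded yields the stated graded-algebra isomorphism $\gr^{\ldp\1_{J'\setminus J}r\1_{J'\setminus J}\rdp}\Pi_{Q,J'}\iso\Pi_{Q,J}*_R T_R\langle\1_{J'\setminus J}r\1_{J'\setminus J}\rangle$, and the map itself is the claimed bimodule isomorphism.

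\emph{Part (ii)} is the crux. Filter $\Pi_{Q,J}$ by powers of $\ldp\overline{Q_1''}\rdp$; on $P_{\dq}$ this is induced by the grading counting $\overline{Q_1''}$-arrows, so $\gr^{\ldp\overline{Q_1''}\rdp}P_{\dq}=P_{\dq}$. By (b), the initial term of $iri$ is $ir'i$ when $i$ is incident to $Q_1'$ and is $ir''i$ otherwise (in which case $iri=ir''i$ is already homogeneous); a direct check, in the style of (b), identifies $P_{\dq}$ modulo the ideal generated by all these initial terms with $\Pi_{Q',J}*_R\Pi_{Q'',J\cup I}$ (one uses that $ir'i=0$ unless $i$ is incident to $Q_1'$, and that for a vertex incident to $Q_1''$ the conditions ``not incident to $Q_1'$'' and ``not in $I$'' coincide, so the two candidate sets of black vertices of the $\overline{Q''}$-factor differ only by vertices where the relevant $ir''i$ vanishes). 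Hence there is a canonical algebra surjection $\Pi_{Q',J}*_R\Pi_{Q'',J\cup I}\onto\gr^{\ldp\overline{Q_1''}\rdp}\Pi_{Q,J}$, and part (ii) asserts it is an isomorphism, i.e.\ that the initial terms of the relations $\{iri\}$ generate the initial ideal of $\ldp\{iri\}\rdp$ (a Gr\"obner-type property). This is exactly where the NCCI hypothesis on $\Pi_{Q',J}$ enters: by Propositions \ref{ournccipr} and \ref{nccipr} it is equivalent to the bimodule decomposition $P_{\overline{Q'}}\iso\Pi_{Q',J}*_R T_R\langle\1_{Q_0\setminus J}r'\1_{Q_0\setminus J}\rangle$ and to exactness of the complex \eqref{kr} / the identity \eqref{refainj}, which pins down the syzygies among the leading relations $\{ir'i\}$; base-changing this structure along the free tensor algebra $P_{\overline{Q''}}$ shows that any relation surviving in the associated graded already lies in the ideal of initial terms, so the surjection above is injective. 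One then upgrades this to the stated graded $R$-bimodule isomorphism $\Pi_{Q,J}\iso\Pi_{Q',J}*_R\Pi_{Q'',J\cup I}$, defined from a weight-graded section of $\Pi_{Q,J}\onto\Pi_{Q',J}$ together with the composite $\Pi_{Q'',J\cup I}\into\Pi_{Q,J\cup I}\onto\Pi_{Q,J}$, exactly as in the multiplication-map description of Proposition \ref{ournccipr}(3). I expect this injectivity step to be the main obstacle: unlike in part (i), a naive Hilbert-series comparison cannot close it, because $\Pi_{Q'',J\cup I}$ is not assumed to be an NCCI and so its Hilbert series is not given by the NCCI formula; one must instead transport the bimodule resolution of $\Pi_{Q',J}$ over $P_{\overline{Q''}}$, which also needs $\Pi_{Q'',J\cup I}$ to be $\k$-flat in the relevant degrees (automatic over a field, and over $\Z$ holding for the partial preprojective algebras in question by the torsion-freeness result Proposition \ref{bpp}, noting that $I\neq\emptyset$ as soon as $Q_1''\neq\emptyset$ by connectedness of $Q$).

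\emph{Part (iii)} then follows cleanly from the general form of Proposition \ref{starnccip}(i). Take $C:=\Pi_{Q,J}$ with the surjection $C\onto\Pi_{Q',J}$ killing the $\overline{Q_1''}$-arrows, $B:=\Pi_{Q'',J\cup I}$ with the embedding $\Pi_{Q'',J\cup I}\into\Pi_{Q,J\cup I}\onto\Pi_{Q,J}=C$ (injective by the bimodule isomorphism of part (ii)), and $A:=\Pi_{Q',J}$; then $C$ is generated as a $B$-algebra by a weight-graded $R$-bimodule section $\tilde A\subset C$ of $A$, and $\tilde A$ meets $B$ only in $R$ by that same isomorphism. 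The descending filtration on $C$ generated by the (path-length) grading of $B$ and by $|\tilde A\setminus\{0\}|=0$ is exactly the $\ldp\overline{Q_1''}\rdp$-filtration, and part (ii) supplies the required graded $R$-bimodule isomorphism $\gr^{\ldp\overline{Q_1''}\rdp}\Pi_{Q,J}\iso\Pi_{Q',J}*_R\Pi_{Q'',J\cup I}$. Proposition \ref{starnccip}(i) now gives that $\Pi_{Q,J}$ is an NCCI if and only if both $\Pi_{Q',J}$ and $\Pi_{Q'',J\cup I}$ are; since $\Pi_{Q',J}$ is an NCCI by hypothesis, this is precisely the assertion that $\Pi_{Q,J}$ is an NCCI if and only if $\Pi_{Q'',J\cup I}$ is.
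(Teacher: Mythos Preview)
Your treatments of parts (i) and (iii) are correct and close in spirit to the paper's, though for (i) the paper proceeds more directly from Proposition~\ref{ournccipr}(2)--(3): writing $T_R V \cong \tilde\Pi_{Q,J} *_R T_R L_J$ and splitting $T_R L_J \cong T_R\langle\1_{J'\setminus J} r \1_{J'\setminus J}\rangle *_R T_R\langle\1_{Q_0\setminus J'} r \1_{Q_0\setminus J'}\rangle$, one mods by the latter factor to obtain \eqref{pijpj} without a separate Hilbert-series count.

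The genuine gap is in part (ii). You correctly isolate the difficulty---showing the surjection $\Pi_{Q',J} *_R \Pi_{Q'',J\cup I}\onto\gr^{\ldp\overline{Q_1''}\rdp}\Pi_{Q,J}$ is injective---and correctly observe that a Hilbert-series argument is unavailable since $\Pi_{Q'',J\cup I}$ is not assumed NCCI. But your proposed resolution, ``base-changing this structure along the free tensor algebra $P_{\overline{Q''}}$ shows that any relation surviving in the associated graded already lies in the ideal of initial terms,'' is a sketch, not a proof: it is not made precise what is base-changed, along what functor, or why this forces the initial ideal to be generated by initial terms. Bringing in Proposition~\ref{bpp} to supply $\k$-flatness of $\Pi_{Q'',J\cup I}$ is also awkward, since the corollary is meant to be a structural tool prior to and independent of such computations.

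The paper sidesteps the whole issue by inserting the intermediate algebra $\Pi_{Q,J\cup I}$. Since every vertex of $I$ is white there, each surviving relation $iri$ (for $i\in Q_0\setminus(J\cup I)$) lies entirely in $P_{\overline{Q'}}$ or entirely in $P_{\overline{Q''}}$, so one has an honest \emph{algebra} isomorphism $\Pi_{Q,J\cup I}\cong\Pi_{Q',J\cup I}*_R\Pi_{Q'',J\cup I}$. Now apply part (i) to the $Q'$-factor (the hypothesis that $\Pi_{Q',J}$ is NCCI enters here) to obtain $\gr\,\Pi_{Q,J\cup I}\cong\Pi_{Q',J}*_R T_R\langle\1_{I\setminus J}r_{Q'}\1_{I\setminus J}\rangle*_R\Pi_{Q'',J\cup I}$. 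Finally, mod by the remaining relations $\1_{I\setminus J}r_Q\1_{I\setminus J}$: this substitutes each $ir_{Q'}i$ by $-ir_{Q''}i\in\Pi_{Q'',J\cup I}$, and after passing to the associated graded with respect to $\ldp\overline{Q_1''}\rdp$ one obtains \eqref{piq123}. No control of initial ideals and no hypothesis on $\Pi_{Q'',J\cup I}$ is needed.
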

\begin{proof}
  (i) Let $V = \langle\, \dq_1 \, \rangle$. Let $\tilde \Pi_{Q,J}
  \subseteq T_R V$ be a graded $R$-bimodule section of $T_R V \onto
  \Pi_{Q,J}$. Then,
\begin{equation} \label{trve} T_R V = \tilde \Pi_{Q,J} *_R T_R \langle
  \1_{I \setminus J} r \1_{I \setminus J} \rangle = (\tilde \Pi_{Q,J}
  *_R T_R \langle \1_{J' \setminus J} r \1_{J' \setminus J} \rangle)
  *_R \langle \1_{I \setminus J'} r \1_{I \setminus J'} \rangle.
\end{equation}
If we mod by $T_R \1_{I \setminus J'} r \1_{I \setminus J'}$, we
obtain \eqref{pijpj}, which together with \eqref{trve} proves the
desired result.

(ii) It is easy to see that
\begin{equation} \label{piq121} \Pi_{Q, J \cup I} \cong \Pi_{Q',
    J \cup I} *_R \Pi_{Q'', J \cup I}.
\end{equation}
Now, let $V_1 = \langle \, \overline{Q'_1} \, \rangle$ and $V_2 =
\langle \, \overline{Q''_1} \, \rangle$.  By our hypotheses and
\eqref{pijpj}, we may rewrite \eqref{piq121} as
\begin{equation} \label{piq122} \gr^{\ldp \1_{I \setminus J} r
    \1_{I \setminus J} \rdp} \Pi_{Q, J \cup I} \cong
  \Pi_{Q',J} *_{R} T_R \langle \1_{I \setminus J} r_{Q'}
  \1_{I \setminus J} \rangle *_R \Pi_{Q'', J \cup I}.
\end{equation}
Now, modding by $\ldp \1_{I \setminus J} r_Q \1_{I \setminus J} \rdp$,
we replace all instances of $\1_{I \setminus J} r_{Q'} \1_{I \setminus
  J}$ with $-\1_{I \setminus J} r_{Q''} \1_{I \setminus J}$, and
obtain \eqref{piq123} (upon further taking the associated graded
algebra with respect to $\ldp \overline{Q''_1} \rdp$).

(iii) This follows from \eqref{piq123} and Proposition
\ref{starnccip}.(i).
\end{proof}
The corollary implies the inductive result used in \cite{EE} to show
that non-Dynkin quivers or partial preprojective algebras with at
least one white vertex are Koszul: that is, it allows one to reduce to
the case of extended Dynkin quivers and star-shaped quivers whose
special vertex is white and whose branches are of unit length.

\section{Hilbert series and a question about RCI algebras}\label{hsrcisec}
In this short section, we explain the consequences of our results for
Hilbert series in positive characteristic, and pose a question which
would generalize our main theorem. 
This section is \emph{not} needed for the proof of our main results.  

Throughout this section, we will use the abusive notation $\Pi_Q :=
\Pi_Q \otimes \k$, where $\k$ will be always a field, since we are
only interested in Hilbert series.

A main result of \cite{EG} is a formula for the Hilbert series
$h(A;t)$ of certain $\N$-graded algebras $A := \bigoplus_{m \geq 0}
A_m$ over semisimple rings $R = \C^I$ which are noncommutative
analogues of complete intersections, and also for $A/[A,A]$. For the
latter, it turns out to be more natural to describe the graded vector
space $\O(A) := \Sym(A/[A,A])_+$, where the $+$ means to pass to the
positively-graded $\C$-linear subspace of $A/[A,A]$, which we need to
do in order for the symmetric algebra to have finite dimension in each
graded component. (The reason why $\O(A)$ is more natural to describe
is because it is closely related to the $\C$-linear subspace of
functions on the representation variety which are invariant under
change of basis.)


Computing $h(\O(A);t)$ is tantamount to computing
$h(A/[A,A];t)$: explicitly, if $h(A/[A,A];t) = \sum_m a_m t^m$,
then
\begin{equation} \label{Sdef}
\ds h(\Sym(A/[A,A])_+;t) 
= \prod_{m \geq 1} \frac{1}{(1 - t^m)^{a_m}}.
\end{equation}

\subsection{The non-Dynkin, non-extended Dynkin and partial preprojective cases}
\label{mtcs}
First, let us work over the field $\k=\C$.
Let $(Q,J)$ be any pair where $Q$ is a quiver, $J \subset Q_0$ is a subset of
vertices, and either $J \neq \emptyset$ or $Q$ is neither Dynkin nor extended Dynkin.
 Then, by \cite{EG}:
 \begin{equation}\label{egfla} h(\Pi_{Q,J};t) = (1 - t \cdot C + t^2
   \cdot \1_{Q_0\setminus J})^{-1}, \quad h(\O(\Pi_{Q,J});t)= \bigl(\frac{1}{1-t^2}\bigr)^{\delta_{J,\emptyset}} \prod_{m \geq 1} \frac{1}{\det(1 -
     t^m \cdot C + t^{2m} \cdot \1_{Q_0 \setminus J}),} 
\end{equation}
where $C$ is the adjacency matrix of $\dq$, and $\delta_{J,\emptyset} = 1$ if $J = \emptyset$ and $0$ otherwise.  

The above formulas have the following representation-theoretic
interpretation (which was used in their proof in \cite{EG}): Let $V =
\langle \dq_1 \rangle$ and let $L = \langle \1_{Q_0 \setminus J} r \1_{Q_0
  \setminus J} \rangle$: thus, $\Pi_Q = T_{\k^{Q_0}} V / \ldp L \rdp$. Let
$L^\circ = L \cap [T_{\k^{Q_0}} V, T_{\k^{Q_0}} V]$. Then, \eqref{egfla}
expresses as the following formula \cite{EG}:
\begin{equation} \label{egflagen}
h(\Pi_{Q,J};t) = (1 - h(V ; t) + h(L ; t))^{-1}, \quad
h(\O(\Pi_{Q,J})) = \frac{1}{1-h(L^\circ;t)} \prod_{m \geq 1} \frac{1}{\det(1 -
h(V;t^m) + h(L;t^m))}.
\end{equation}
Since $L$ is a minimal generating bimodule of $\ldp L \rdp$, the first
formula above (for $h(\Pi;t)$) says that $\Pi$ is a noncommutative
complete intersection (NCCI) (cf.~Proposition \ref{nccipr}, taken from
\cite[Theorem 3.2.4]{EG}, which was itself taken from, e.g.,
\cite{An1}), and both formulas above together say that $\Pi$ is an
asymptotic representation-complete intersection (asymptotic RCI), by
\cite[Theorem 3.7.7 and Proposition 3.7.1]{EG}.  Note also that, in
fact, $\Pi$ is Koszul (by \cite[Theorem 2.3.4]{EE}, any NCCI with
quadratic relations (i.e., $L \subset T^2_{\k^{Q_0}} V$) is Koszul).

Furthermore, the expression
$\prod_{m \geq 1} \frac{1}{1 - h(V;t^m) + h(L;t^m)}$ can be viewed as
an analogue of the zeta function, so (following \cite{EG}) we define
\begin{equation}
\zeta(V,L;t) := \prod_{m \geq 1} \frac{1}{\det(1 - h(V;t^m) + h(L;t^m))}.
\end{equation}

Finally, the expression $1 - t \cdot C + t^2 = t \cdot ((\frac{1}{t} +
t) - C)$ can be viewed as the ``$t$-analogue'' of the Cartan matrix.
Also, $\frac{1}{1-tx+x^2}$ is the generating function for (suitably
normalized) Chebyshev polynomials $\phi$ of the second type, so that
$h(\Pi;t) = \phi(C;t) := 1 + \sum_{m \geq 1} \phi_m(C) t^m$.

Our main result, Theorem \ref{mt}, together with the fact that $\Pi_Q$
is torsion-free (which is a consequence of \cite{EE}, or alternatively of the
bases we construct), generalizes the above formulas to arbitrary characteristic:
\begin{cor}
  Take any pair $(Q,J)$ where $Q$ is a quiver and $J \subset Q_0$ is a
  subset of vertices, such that either $J \neq \emptyset$ or $Q$ is
  neither Dynkin nor extended Dynkin.
  Over any field $\k$ of characteristic $p > 0$, the Hilbert series of
  $\Pi_{Q,J}$ and $\O(\Pi_{Q,J})$ are given as follows:
\begin{equation}
h(\Pi_{Q,J}) = (1 - t \cdot C + t^2
   \cdot \1_{Q_0\setminus J})^{-1}, \quad h(\O(\Pi_{Q,J});t)
= \bigl(\prod_{\ell \geq 0} \frac{1}{1-t^{2 p^\ell}}\bigr)^{\delta_{J,\emptyset}} \prod_{m \geq 1} \frac{1}{\det(1 -
     t^m \cdot C + t^{2m} \cdot \1),}
\end{equation}
\end{cor}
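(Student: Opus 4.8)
The plan is to deduce both identities from facts already established: the first from the NCCI property alone, the second by combining the characteristic‑zero formula \eqref{egfla} of \cite{EG} with the torsion computation of Theorem \ref{mt} and the torsion‑freeness of partial preprojective algebras.

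For $h(\Pi_{Q,J};t)$, set $V=\langle\dq_1\rangle$ and $L=\langle\1_{Q_0\setminus J}\,r\,\1_{Q_0\setminus J}\rangle$, so that $\Pi_{Q,J}\otimes\k=T_{\k^{Q_0}}V/\ldp L\rdp$ with $L$ a minimal generating subbimodule. By Proposition \ref{p:prep-ncci} (or, equivalently, by the integral bases constructed in the body of the paper, which show $\Pi_{Q,J}$ is an NCCI and torsion‑free over $\Z$), the algebra $\Pi_{Q,J}\otimes\k$ is an NCCI over every field $\k$. Part \eqref{nphs} of Proposition \ref{nccipr} then gives $h(\Pi_{Q,J};t)=(1-h(V;t)+h(L;t))^{-1}$, and since $V$ and $L$ are free $\k$‑modules one has $h(V;t)=t\cdot C$ (with $C$ the adjacency matrix of $\dq$) and $h(L;t)=t^{2}\cdot\1_{Q_0\setminus J}$, which is the first formula. (Equivalently, one may simply note that $\Pi_{Q,J}$ is torsion‑free over $\Z$, so its Hilbert series is characteristic‑independent and equals the value in \eqref{egfla}.)

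For the second formula, the plan is to reduce via \eqref{Sdef} to counting graded dimensions: $h(\O(\Pi_{Q,J});t)=\prod_{m\geq1}(1-t^{m})^{-\lambda_m}$, where $\lambda_m:=\dim_\k(\Lambda_{Q,J})_m$. Since $\Lambda_{Q,J}$ is finitely generated in each degree over $\Z$ and $\Lambda_{Q,J}\otimes\Q$ has the graded dimensions dictated by \eqref{egfla}, the free rank of $(\Lambda_{Q,J})_m$ is the characteristic‑zero value $\lambda^0_m$, so $\lambda_m$ equals $\lambda^0_m$ plus the number of $p$‑power torsion summands of $(\Lambda_{Q,J})_m$. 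It remains to count those summands, which I would do in two cases. If $J\neq\emptyset$, then $\Lambda_{Q,J}$ is torsion‑free over $\Z$ (Proposition \ref{bpp}), so $\lambda_m=\lambda^0_m$ for all $m$, and $h(\O(\Pi_{Q,J});t)$ equals its characteristic‑zero value \eqref{egfla}; there the $\delta_{J,\emptyset}$‑exponent is $0$, matching the claimed formula whose first factor is then the empty product $1$. If $J=\emptyset$, then $Q$ is neither Dynkin nor extended Dynkin, and Theorem \ref{mt} shows the $p$‑primary torsion of $\Lambda_Q$ is a single copy of $\Z/p$ in each degree $2p^{\ell}$ ($\ell\geq1$), generated by $r^{(p^\ell)}$, and is zero in all other degrees, with no $p^2$‑torsion. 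Hence $\lambda_{2p^\ell}=\lambda^0_{2p^\ell}+1$ for $\ell\geq1$ and $\lambda_m=\lambda^0_m$ otherwise, so $h(\O(\Pi_Q);t)$ is $\prod_{\ell\geq1}(1-t^{2p^\ell})^{-1}$ times the characteristic‑zero series; substituting \eqref{egfla} with $\delta_{J,\emptyset}=1$ contributes the extra factor $(1-t^{2})^{-1}=(1-t^{2p^{0}})^{-1}$, which merges with the $\ell\geq1$ product into $\prod_{\ell\geq0}(1-t^{2p^\ell})^{-1}$, giving the stated formula.

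All the real input is imported: the characteristic‑zero formula \eqref{egfla} of \cite{EG}, the NCCI property (Proposition \ref{p:prep-ncci}), and above all Theorem \ref{mt}, whose proof is the bulk of the paper; given these, the corollary is bookkeeping. The one point needing care is the assertion that passing from characteristic zero to characteristic $p$ changes $\dim(\Lambda_{Q,J})_m$ by exactly $+1$ in the degrees $2p^{\ell}$ and not at all elsewhere: this rests both on the free rank being unchanged (torsion‑freeness of $\Pi_{Q,J}$ together with the $\Q$‑form of \eqref{egfla}) and on each relevant torsion summand being exactly $\Z/p$ rather than a higher $p$‑power, which is precisely the ``no $p^2$‑torsion'' part of Theorem \ref{mt}.
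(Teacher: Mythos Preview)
Your proposal is correct and follows essentially the same approach as the paper, which simply states that the corollary follows from Theorem \ref{mt} together with the torsion-freeness of $\Pi_Q$ (from \cite{EE} or the explicit bases). You have filled in the bookkeeping the paper leaves implicit: the first formula from the NCCI property, and the second by comparing $\dim_{\F_p}(\Lambda_{Q,J})_m$ with its rational value via the torsion description (Proposition \ref{bpp} when $J\neq\emptyset$, Theorem \ref{mt} when $J=\emptyset$), then absorbing the characteristic-zero $(1-t^2)^{-1}$ into the $\ell=0$ term of the product.
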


Using the notation $\zeta, V, L, L^{\circ}$ above, the formulas become
\begin{equation} \label{garhsp}
h(\Pi_{Q,J}) = \frac{1}{1 - h(V;t) + 
   h(L;t)}, \quad h(\O(\Pi_{Q,J});t)
= \zeta(V,L;t) \prod_{\ell \geq 0} \frac{1}{1-h(L^{\circ};t^{p^\ell})}.
\end{equation}
\subsection{A question on asymptotic RCI algebras in positive characteristic}
P. Etingof and the author have also done computer tests of some
finitely presented algebras over $\Z$ which are asymptotic RCI over
$\Q$, and in most sufficiently random cases, \eqref{garhsp} has
held. This motivates the following generalization of Theorem \ref{mt}.
Fix a prime $p \in \Z$, and let $\k := \Z_{\ldp p \rdp}$ be the
localization of $\Z$ at the ideal $\ldp p \rdp$.  Let $I$ be a set,
and $R:=\k^I$.
\begin{ques}\label{rciques} (P. Etingof and the author) 
  Let $A = T_R V / \ldp L \rdp$ be a finitely-presented algebra over
  $R$, with $L$ a minimal generating bimodule.  Further suppose that
  $L$ and $L \cap [T_{R} V, T_{R} V]$ are saturated, and $A$ is an
  asymptotic RCI \cite{EG} (roughly, this says that the Koszul complex
  of the representation varieties over $\k$ are asymptotically exact).
  Is it then true that the $p$-torsion of $A_{\cyc}$ is isomorphic to a
  graded $\F_p$-vector space with basis the classes $r_j^{(p^\ell)}$,
  for $\ell \geq 1$? Here, $(r_j)$ is a lift to $L$ of an $\F_p$-basis
  of $(L \cap [T_R V, T_R V]) \o \F_p$, and $r_j^{(p^\ell)}$ is the
  image of $\frac{1}{p} [r_j^{p^\ell}]$.
\end{ques}
As before, the classes $r_j^{(p^\ell)}$ must be $p$-torsion if
they are nonzero.  Beginning with an algebra $A$ over $\Z$, the
question asks whether these classes are always nonzero in primes such
that $A \otimes_\Z \Z_{\ldp p \rdp}$ is an asymptotic RCI, and whether
they generate all the $p$-torsion.

\subsection{The extended Dynkin case}\label{ss:edc}
The formula for $h(\Pi_{Q,J})$ above still
holds when $Q$ is extended Dynkin and $J = \emptyset$, 
but the second must be modified (since $\Pi_Q$ is still an NCCI but no longer
an asymptotic RCI).  

Suppose that $i_0 \in Q_0$ is an extending vertex of $Q$, i.e., removing
$i_0$ and its incident arrows leaves one with the corresponding Dynkin
quiver. Over $\k = \C$, one has the following isomorphisms of
graded vector spaces:
\begin{equation} \label{edisos}
(HH^0(\Pi_Q) \o \k) 
\iso (i_0 \Pi_Q i_0 \o \k) \iso (HH_0(\Pi_Q)_+ \o \k) \oplus \k =
((\Lambda_Q)_+ \o \k) \oplus \k,
\end{equation}
where the final $\k$ is placed in degree zero.  The first map is given
by the projection $x \mapsto i_0 x i_0$, and the second is given by
restriction of the obvious projection $(\Pi_Q)_+ \onto HH_0(\Pi_Q)_+$,
together with $(i_0 (\Pi_Q)_0 i_0 \o \k) \cong \k$. The isomorphisms
of \eqref{edisos} follow from the Morita equivalences recalled in \S
\ref{gpsec} together with the computation of Hochschild (co)homology
there, since $\Pi_Q \cong \fsf (\k[x,y] \rtimes \Gamma) \fsf$ induces
$i_0 \Pi_Q i_0 \cong \k[x,y]^\Gamma = HH^0(\k[x,y] \rtimes \Gamma)$, and
Hochschild (co)homology is invariant under Morita equivalence.


Note that the maps in \eqref{edisos} make sense also over $\Q$, and they
must also be isomorphisms.  Using the second isomorphism of
\eqref{edisos}, we obtain the following formula:
\begin{gather} \label{edfla} h(\O(\Pi_Q) \o \Q;t) = h(\Sym(i_0
  \Pi_Q i_0 \o \Q)_+;t) = \prod_{m \geq 1} \frac{1}{(1-t^{m})^{a_m}},
  \intertext{where} 1 + \sum_{m \geq 1} a_m t^m = h(i_0 \Pi_Q i_0; t) =
  \bigl(\frac{1}{1 - t \cdot C + t^2 \cdot \1}\bigr)_{i_0 i_0} =
  \phi(C;t)_{i_0 i_0}.
\end{gather}
Here, $_{i_0 i_0}$ denotes the entry of the matrix in the $i_0, i_0$
component.  There is a general formula for NCCI algebras over a
characteristic zero field \cite{EG}:
\begin{equation} \label{gnccif}
h(\O(A);t) = h(\Sym HH_2(A);t) \cdot \zeta(V,L;t).
\end{equation} 
Setting this
equal to \eqref{edfla} in our case yields the following curious
identity from \cite{EG}, of which we will provide a more direct proof:
\begin{equation} \label{egid} \prod_{m \geq 1}
  \frac{1}{(1-t^m)^{\varphi_m(C)_{i_0i_0}}} = \frac{1}{1-t^2} \cdot
  \prod_{m \geq 1} \frac{1}{\det (\1 - t^m \cdot C + t^{2m} \cdot
    \1)},
\end{equation}
where $\varphi_m := \phi_{m} - \phi_{m-2}$ is the $m$-th Chebyshev
polynomial of the first type. Specifically, we use $i_0 \Pi_Q i_0 \o
\Q \cong \Lambda_Q \o \Q$, compute the Hilbert series of the former
using bases, and compare it with the determinant of the $t$-analogue
of the Cartan matrix, to obtain the RHS of \eqref{egid}. To compare it
with the LHS, we use that $\Pi_Q$ is an NCCI over $\C$, which could be verified
explicitly using our bases, but it is easier to use the Morita
equivalence.

From Theorem \ref{dedz} below, together with the fact that $\Pi_Q$ is
torsion-free \cite{EE}, we deduce the following generalization to
arbitrary characteristic:
\begin{prop} Let $Q$ be any extended Dynkin quiver, and $\k$ any field of
characteristic $p > 0$. Then one has the following formulas:
\begin{gather}
h(\Pi_{Q}) = (1 - t \cdot C + t^2
   \cdot \1)^{-1} = \phi(C;t), \quad h(\Lambda_{Q};t) = hT(Q) + \phi(C;t)_{i_0 i_0}, \\
hT(Q) = \begin{cases}  \label{htqf}
  \sum_{m=1}^{\lfloor\frac{n-2}{2}\rfloor} t^{4m}, &
    \text{if $p=2$ and $Q = \tilde D_n$,} \\
  t^4, & \text{if $p=2$ and $Q=\tilde E_6$,} \\
  t^4 + t^8 + t^{16}, & \text{if $p=2$ and $Q=\tilde E_7$,} \\
  t^4+t^8+t^{16}+t^{28}, 
      & \text{if $p=2$ and $Q=\tilde E_8$,} \\
  t^6, & \text{if $p=3$ and $Q \in 
          \{\tilde E_6, \tilde E_7\}$,} \\
  t^6 + t^{18}, & \text{if $p=3$ and $Q=\tilde E_8$,} \\
  t^{10}, & \text{if $p=5$ and $Q=\tilde E_8$,} \\
   0, & \text{otherwise.}
\end{cases}                    
\end{gather}
\end{prop}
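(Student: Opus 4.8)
The plan is to reduce the proposition to two inputs: Theorem~\ref{dedz}, which gives the explicit $\Z$-module structure (free part plus torsion) of $\Lambda_Q$ in the Dynkin and extended Dynkin cases, and the torsion-freeness of $\Pi_Q$ from \cite{EE}. Given these, neither Hilbert series requires a new computation; the statement is essentially a repackaging of Theorem~\ref{dedz}, together with a reduction of the general-characteristic answer to the characteristic-zero one.

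For $h(\Pi_Q)$, I would first note that $\Pi_Q$ is torsion-free over $\Z$, with each graded block $(e_i\Pi_Q e_j)_m$ a finitely generated free $\Z$-module (by \cite{EE}, or by the integral bases constructed earlier in the paper). Hence the matrix Hilbert series $h(\Pi_Q\otimes\k;t)$ does not depend on the field $\k$ and equals $h(\Pi_Q\otimes\Q;t)$. Over $\Q$ this is the NCCI Hilbert series formula of Proposition~\ref{nccipr}, $h=(1-h(V;t)+h(L;t))^{-1}$ with $V=\langle\,\dq_1\,\rangle$ and $L$ the $\k^{Q_0}$-subbimodule spanned by the single relation $r$ (of degree $2$); since $h(V;t)=t\cdot C$ and $h(L;t)=t^2\cdot\1$, this gives $h(\Pi_Q)=(1-t\cdot C+t^2\cdot\1)^{-1}=\phi(C;t)$. (Equivalently, one could invoke Proposition~\ref{p:prep-ncci} to see $\Pi_Q$ is already an NCCI over $\Z$ and apply Proposition~\ref{nccipr} directly over $\k$, but the torsion-freeness route makes the characteristic-independence transparent and is what the paper's philosophy prefers.)

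For $h(\Lambda_Q)$, the idea is that the free part of $\Lambda_Q$ contributes the characteristic-independent series $\phi(C;t)_{i_0i_0}$, while the torsion contributes exactly $hT(Q)$. In detail: by Theorem~\ref{dedz}, in each degree $\Lambda_Q$ splits as its free part plus a finite torsion group $T$, and by Theorem~\ref{mt} (or directly from the description in Theorem~\ref{dedz}) $T$ has no $p^2$-torsion, hence is a sum of cyclic groups $\Z/q$ for various primes $q$. Therefore, over a field $\k$ of characteristic $p$,
\[
\dim(\Lambda_Q\otimes_\Z\k)_m=\rk(\Lambda_Q)_m+\#\{\,\Z/p\text{-summands of }\Lambda_Q\text{ in degree }m\,\},
\]
since $\Z/q\otimes_\Z\F_p$ is $\F_p$ when $p\mid q$ and $0$ otherwise (so summands at other primes contribute nothing, and there is no extra $\mathrm{Tor}$ term, as we take the ordinary tensor product). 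Here $\rk(\Lambda_Q)_m=\dim(\Lambda_Q\otimes\Q)_m$, which by the graded isomorphisms \eqref{edisos} (valid over $\Q$) together with \eqref{edfla} is the coefficient of $t^m$ in $\phi(C;t)_{i_0i_0}$ — keeping track of the degree-$0$ normalization built into \eqref{edisos}, i.e. reading $h(\Lambda_Q;t)$ as $h((\Lambda_Q)_+;t)+1=h(i_0\Pi_Q i_0;t)$ rather than as the literal Hilbert series of $(\Pi_Q)_{\cyc}$. The remaining term, the number of $\Z/p$-summands of $\Lambda_Q$ in degree $m$, is read off from the explicit torsion in Theorem~\ref{dedz}: it vanishes for $\tilde A_n$ and all $p$, and for $\tilde D_n,\tilde E_6,\tilde E_7,\tilde E_8$ it is supported in the degrees listed in \eqref{htqf} at the stably bad primes $p=2$, $p\in\{2,3\}$, $p\in\{2,3\}$, $p\in\{2,3,5\}$ respectively, and is $0$ for all other $Q,p$ — i.e. it is exactly the coefficient of $t^m$ in $hT(Q)$. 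Summing the two contributions yields $h(\Lambda_Q;t)=hT(Q)+\phi(C;t)_{i_0i_0}$.

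The real obstacle does not lie in this argument, which is purely formal once Theorem~\ref{dedz} is available, but in Theorem~\ref{dedz} itself, whose proof proceeds via explicit Gr\"obner generating sets for $\Pi_Q$ in the type $E$ (extended) Dynkin cases and a computation of the zeroth Poisson homology of the necklace Lie structure on $\Lambda_Q$. Within the present deduction the only two points requiring genuine care are: (i) the degree-$0$ normalization above, which must be routed through \eqref{edisos}, since \eqref{edisos} is only asserted over $\Q$ and $\C$ and in fact fails over $\F_p$ when $p\mid|\Gamma|$ — which is precisely where the torsion appears; and (ii) the fact that, because there is no $p^2$-torsion (Theorem~\ref{mt}), tensoring $\Lambda_Q$ with $\F_p$ adds exactly one dimension per $\Z/p$-summand and nothing else, so that the torsion correction is genuinely the naive ``torsion Hilbert series'' $hT(Q)$ and not, say, twice it.
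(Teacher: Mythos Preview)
Your proposal is correct and follows exactly the approach the paper uses: the paper states only that the proposition is deduced ``from Theorem~\ref{dedz} below, together with the fact that $\Pi_Q$ is torsion-free \cite{EE},'' and you have spelled out precisely these two reductions. Your careful handling of the degree-$0$ normalization and the observation that the absence of $p^2$-torsion (read directly from the explicit cyclic decomposition in Theorem~\ref{dedz}) ensures each $\Z/p$-summand contributes exactly one dimension over $\F_p$ are details the paper leaves implicit.
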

Here and elsewhere, we abuse notation and say that $Q = $ some Dynkin or
extended Dynkin quiver if, when orientations are discarded, one obtains the
corresponding quiver (in other words, $Q$ is given by choosing an orientation
on each arrow of the Dynkin or extended Dynkin quiver).

Note that the torsion of $\Lambda_Q$ in the extended Dynkin case
only appears in what we call ``stably bad primes'': $2$ for $D_n$, $2$
and $3$ for $E_6$ and $E_7$, and $2,3$, and $5$ for $E_8$ (this was
observed also in \cite{MOV} where they were called merely ``bad
primes'').  We will see that the same is true in the Dynkin
case. These primes are a subset of what we call ``bad primes'': primes
dividing the order of the group $\Gamma \subset SL_2(\C)$ associated
to the quiver under the McKay correspondence.  We use the term
``stably bad'' because, for types $A_n$ and $D_n$, these primes are
bad independently of $n$.

\subsection{The Dynkin case}\label{ss:dynkin}
In the case that $Q$ is Dynkin and $J = \emptyset$, the formula for
$h(\Pi_Q;t)$ is no longer valid: instead, $h(\Pi_Q;t)$ records the
dimensions of irreducible representations of $Q$ (because $\Pi_Q \o
\C$ is a direct sum of one copy of each).  One may easily show that
$\Pi_Q$ is torsion-free using Gr\"obner generating sets (cf.~Appendix
\ref{gbs} and Proposition \ref{gbp} therein).

Furthermore, as is proved in \cite{MOV}, $\Lambda_Q \o \Q = 0$, and
in fact $\Lambda_Q \o \F_p = 0$ if $p$ is not a stably bad prime.
As explained in Theorem \ref{dedz}, $\Lambda_Q$ is isomorphic to
the torsion of $\Lambda_{\widetilde Q}$ (for $\widetilde Q$ the
extended Dynkin quiver which extends $Q$), so so we may compute the
Hilbert series over any field:
\begin{prop}
Let $Q$ be a Dynkin quiver. Then $h(\Lambda_Q;t) = hT(\tilde Q)$, given in \eqref{htqf} (for $\tilde Q=$ the extended Dynkin quiver associated to $Q$).
\end{prop}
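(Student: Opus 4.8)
The plan is to deduce the statement directly from Theorem \ref{dedz}, which provides a graded $\Z$-module isomorphism $\Lambda_Q \cong \torn(\Lambda_{\widetilde Q})$ identifying $\Lambda_Q$ with the torsion submodule of $\Lambda_{\widetilde Q}$, where $\widetilde Q$ is the extended Dynkin quiver extending $Q$. In particular $\Lambda_Q$ is a finitely generated torsion $\Z$-module (equivalently $\Lambda_Q \otimes \Q = 0$, recovering \cite{MOV}), hence a finite graded abelian group, so for a field $\k$ of characteristic $p>0$ the Hilbert series $h(\Lambda_Q \otimes \k;t)$ has degree-$d$ coefficient equal to $\dim_{\F_p}(\Lambda_Q \otimes \F_p)_d$, the number of $p$-primary cyclic summands of $\Lambda_Q$ in degree $d$; in characteristic zero it vanishes, consistently with $hT(\widetilde Q)=0$ there.

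It then remains to read off this count from the explicit description of $\torn(\Lambda_{\widetilde Q})$ supplied by Theorem \ref{dedz} (and visible already in the preceding Proposition, where $h(\Lambda_{\widetilde Q} \otimes \k;t) = hT(\widetilde Q) + \phi(C;t)_{i_0 i_0}$, the first summand being the torsion contribution in characteristic $p$ and the second the rank of the free part, which is the same over every field). Two observations make the passage to characteristic $p$ immediate. First, by Theorem \ref{mt} in the extended Dynkin case there is no $p^2$-torsion, so the $p$-primary part of $\torn(\Lambda_{\widetilde Q})$ is a direct sum of copies of $\Z/p$ whose graded multiplicities are exactly those recorded by the characteristic-$p$ value of $hT(\widetilde Q)$ in \eqref{htqf}. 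Second, by inspection of \eqref{htqf} the sets of degrees in which distinct stably bad primes contribute are pairwise disjoint, so tensoring $\torn(\Lambda_{\widetilde Q})$ with $\F_p$ kills precisely the summands belonging to primes $\neq p$ and leaves a graded $\F_p$-vector space of Hilbert series $hT(\widetilde Q)$. Composing with $\Lambda_Q \cong \torn(\Lambda_{\widetilde Q})$ gives $h(\Lambda_Q;t) = hT(\widetilde Q)$, and when $p$ is not a stably bad prime both sides are $0$ (again by the vanishing statement of \cite{MOV} refined in Theorem \ref{dedz}).

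I expect essentially all of the genuine work to sit upstream, in Theorem \ref{dedz} itself (the computation of $\Lambda_Q$ and $\Lambda_{\widetilde Q}$ in the type $E$ Dynkin and extended Dynkin cases via Gr\"obner generating sets, together with the type $\widetilde A_n$ and $\widetilde D_n$ computations); granting that result, the present proposition is pure bookkeeping. The only assertions internal to this proof that require a moment's attention are the absence of a $\Q$-part of $\Lambda_Q$ (so that its Hilbert series over $\k$ only records torsion) and the prime-disjointness of the degree ranges in \eqref{htqf}, both of which are immediate by inspection; and the elementary fact that for a finite abelian group $G$ one has $\dim_{\F_p}(G[p]) = \dim_{\F_p}(G \otimes \F_p)$, which is why it is harmless that Theorem \ref{dedz} identifies $\Lambda_Q$ with the full torsion submodule rather than with its $p$-primary part.
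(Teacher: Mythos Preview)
Your approach is correct and matches the paper's: the proposition is stated as an immediate consequence of Theorem \ref{dedz}, which explicitly computes $\Lambda_Q$ for each Dynkin $Q$ (equivalently, the torsion of $\Lambda_{\widetilde Q}$), and one simply reads off the dimension over $\F_p$ in each degree.

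One small correction: you invoke Theorem \ref{mt} ``in the extended Dynkin case'' to conclude there is no $p^2$-torsion, but Theorem \ref{mt} is stated only for quivers that are neither Dynkin nor extended Dynkin. The absence of $p^2$-torsion in $\Lambda_{\widetilde Q}$ is instead immediate from Theorem \ref{dedz} itself, which lists every torsion summand explicitly as a single copy of $\Z/p$ in its degree. With that citation fixed, your argument goes through; in fact the detour through ``no $p^2$-torsion'' and ``disjoint degree sets for distinct primes'' is unnecessary, since Theorem \ref{dedz} already hands you $\Lambda_Q$ degree by degree as a specific $\Z/p$, from which $\dim_{\F_p}(\Lambda_Q \otimes \F_p)_d$ is read off directly.
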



\section{Refinement and partial proof of the main theorem 
($r^{(p^\ell)} \neq 0$)}\label{refppsec}
  Recall
that a \emph{forest} is a (directed) graph without (undirected)
cycles.
\begin{prop} \label{bpp} Take any quiver $Q$
  together with a nonempty subset of white vertices, $J \subset Q_0$.
  Let $G \subset \dq_1$ be a forest such that the map $G \rightarrow Q_0$,
  $a \mapsto a_s$ (the source vertex) yields a bijection
  $G \iso Q_0 \setminus J$.
Then, a free $\Z$-basis of $\Pi_{Q, J}$ is given by monomials in
 the arrows $Q$ that do not contain a subword $a a^*$ for
 any $a \in G$.  In particular, $\Pi_{Q,J}$ is an NCCI.

 Furthermore, $\Lambda_{Q, J}$ is a free $\Z$-module with basis given
 by cyclic words not containing $a a^*$ for any $a \in G$.
\end{prop}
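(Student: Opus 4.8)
The plan is to prove the $\Lambda_{Q,J}$ statement by the same Diamond Lemma computation that gives the basis of $\Pi_{Q,J}$, run on cyclic words --- exactly the passage from $A/[Ar'A,A]$ to $A/[A,A]$ in the proof of Lemma~\ref{mcl}. First I set up the rewriting system. For each black vertex $i\in Q_0\setminus J$ there is a unique $g_i\in G$ with $(g_i)_s=i$, and since $G$ is a forest it contains at most one of each pair $\{a,a^*\}$; in particular $g_i^*\notin G$ and $g_i$ is not a loop. From $r=\sum_{a\in Q_1}(aa^*-a^*a)$ one computes $iri=\sum_{e\in\dq_1,\,e_s=i}\varepsilon_e\,ee^*$, where $\varepsilon_e=1$ if $e\in Q_1$ and $\varepsilon_e=-1$ otherwise, and the coefficient $\varepsilon_{g_i}$ of $g_ig_i^*$ is a unit. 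Hence $iri=0$ solves for $g_ig_i^*$, yielding the reduction rule
\[
R_i:\qquad g_ig_i^*\;\rightsquigarrow\;-\varepsilon_{g_i}\!\!\sum_{e_s=i,\ e\neq g_i}\!\!\varepsilon_e\,ee^*.
\]
The basis statement for $\Pi_{Q,J}$ is the assertion that $\{R_i\}$ is a confluent, Noetherian rewriting system whose irreducible monomials are exactly those avoiding all $g_ig_i^*$; I would prove it with the Diamond Lemma of Appendix~\ref{dla} (Proposition~\ref{dl3}). For the final statement, note $\Lambda_{Q,J}=(P_{\dq})_{\cyc}/\overline{I}$, where $(P_{\dq})_{\cyc}$ is the free $\Z$-module on cyclic closed walks in $\dq$ and $\overline{I}$, the image of $\ldp\1_{Q_0\setminus J}r\1_{Q_0\setminus J}\rdp$, is spanned by the cyclic classes $[p\cdot iri]$ with $i$ black and $p$ a closed walk at $i$. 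The rules $R_i$ then act on cyclic words by $[p\,g_ig_i^*]\rightsquigarrow-\varepsilon_{g_i}\sum_{e}\varepsilon_e[p\,ee^*]$ (sum over $e\neq g_i$ with $e_s=i$).

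To apply the Diamond Lemma I need a compatible well-founded order. Since every rule preserves length, a weight suffices, and I would construct a $*$-invariant weight $\omega\colon\dq_1\to\Z_{>0}$ with $\omega(g_i)+\omega(g_i^*)>\omega(e)+\omega(e^*)$ for every black $i$ and every $e\neq g_i$ with $e_s=i$; ordering words (and cyclic words) by total $\omega$-weight, refined by length, is then well-founded and each $R_i$ strictly decreases it. Such $\omega$ exists thanks to the forest hypothesis: an edge count shows each connected component $T$ of $G$ has exactly one white vertex (a tree with $b$ black and $c$ white vertices has $b+c-1$ edges, but its edges biject with its black vertices via $e\mapsto e_s$, forcing $c=1$), and rooting $T$ at that white vertex makes $g_i$ the edge from $i$ to its parent (following $g_i,g_{(g_i)_t},\dots$ cannot repeat a vertex, so it reaches the white vertex, hence is the tree path out of $i$). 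Weighting each $G$-edge by a strictly decreasing function of its depth, and every remaining arrow by $1$, then makes $g_i$ strictly the heaviest arrow leaving $i$, as required.

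The remaining point --- and the crux --- is confluence, i.e.\ the absence of overlap ambiguities. Two reducible length-two subwords $g_ig_i^*$ and $g_jg_j^*$ can overlap in a word, or cyclically by wrapping around, only if the last letter of one equals the first letter of the other, forcing $g_i^*=g_j$ or $g_j^*=g_i$; either contradicts $G$ being a forest. Thus all occurrences of reducible subwords in any word or cyclic word are pairwise disjoint, so the corresponding reductions commute, which gives confluence; in particular the only short case, $[g_ig_i^*]$, carries a single occurrence since $g_i^*g_i$ is not of the form $g_jg_j^*$. The module Diamond Lemma now gives that $\Lambda_{Q,J}=(P_{\dq})_{\cyc}/\overline{I}$ is a free $\Z$-module with basis the irreducible cyclic words, i.e.\ those containing no subword $aa^*$ with $a\in G$, which is the claim. (Freeness of $\Pi_{Q,J}$ and its NCCI property fall out of the same computation together with Proposition~\ref{ournccipr}.)

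I expect essentially all the difficulty to be concentrated in recognizing that the forest hypothesis is precisely what eliminates every overlap ambiguity, and in handling the cyclic bookkeeping cleanly: making sure that "reduce some occurrence of $g_ig_i^*$ in a cyclic word'' is well defined modulo the rules (this follows from disjointness of occurrences), and that no hidden ambiguity lurks in short or wrapping cyclic words. The computation of $iri$, the construction of $\omega$, and the deduction of freeness are routine.
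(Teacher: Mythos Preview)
Your proof is correct and follows the same Diamond Lemma strategy as the paper: set up the rewriting rules $g_ig_i^* \rightsquigarrow -\varepsilon_{g_i}\sum_{e\neq g_i}\varepsilon_e\,ee^*$, observe that the forest hypothesis eliminates all overlap ambiguities (since $g_i^*=g_j$ is impossible), and conclude via confluence. The one substantive difference is your termination argument. The paper orders monomials by the \emph{count} of subwords $aa^*$ with $a\in G$, but this count need not decrease: for the $A_3$ quiver with $J=\{3\}$ and $G=\{a,b\}$ (arrows $a\colon 1\to 2$, $b\colon 2\to 3$), reducing $bb^*$ inside $abb^*a^*$ yields $aa^*aa^*$, which has two forbidden subwords instead of one. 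Your depth-based weight $\omega$ handles this correctly, because the new forbidden subwords created at the boundary always involve a \emph{child} vertex (deeper in the tree, hence lighter), so the total weight still drops. The paper's remark about arguing ``reverse-inductively on the distance of an arrow $a$ from $J$'' is gesturing at exactly this fix, but your weight function makes it explicit and uniform for both linear and cyclic words.
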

\begin{proof}
  A forest satisfying the given condition can be constructed
  inductively as follows: To begin, for every vertex in
  $Q_0 \setminus J$ which is adjacent to $J$, add an arrow to $G$
  with source at that vertex and target in $J$.  Inductively, for
  each vertex of $Q_0 \setminus J$ which is not incident to $G$, but
  is adjacent to a vertex which is incident to $G$, add an arrow with
  source at that vertex and target at a vertex incident to $G$.  When
  the process is completed, one obtains a forest satisfying
  the desired condition.
  
  The first result follows immediately from the Diamond Lemma
  (Propositions \ref{dl1}--\ref{dl3}) if we let the partial order on
  monomials be given by the number of subwords $a a^*$ with $a \in G$
  that appear.  To see that $\Pi_{Q,J}$ is an NCCI, we show (2) of
  Proposition \ref{ournccipr}. To do this, we adjoin generators
  $r_i (= i r i)$ for all $i \in Q_0 \setminus J$, and apply the Diamond
  Lemma to reduce any path to a unique sum of monomials in
  $\overline{Q}_1$ and the $r_i$ not containing $a a^*$ for any
  $a \in G$.

  For the final result, we note that one may still use the Diamond
  Lemma for $\Lambda_{Q, J}$ because the maximum number of swaps
  $a a^* \mapsto a^* a$ for $a \in G$ that may be performed in a
  cyclic word is still finite, since $a \in G$ cannot be a loop. This
  follows reverse-inductively on the distance of an arrow $a$ from $J$,
  using that this distance is bounded.
\end{proof}
The fact that $\Pi_{Q,J}$ is an NCCI was first shown in \cite{EE}
using Hilbert series, and in \cite{EG}, $\Pi_{Q,J}$ was further shown
to be a RCI.  The fact that $\Lambda_{Q,J}$ is torsion-free
over $\Z$ appears to be new.

As an application of the proposition, by comparing the above basis and
the formula \eqref{egflagen} for $h(\O(\Pi_{Q,J});t)$ in the
asymptotic RCI case (a generalization of RCI), one obtains a formula for
computing the number of cyclic words in letters $x_i, y_i, z_j$ not
containing $x_i y_i$ for any $i$.  

To handle the case where there are no white vertices, we need the
\begin{prop}\label{prepncci}\cite{EE} The algebra $\Pi_{Q}$ is an NCCI over $\Z$ if $Q$
is non-Dynkin (in particular, it is torsion-free).
\end{prop}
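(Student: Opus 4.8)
The plan is to get this as a short consequence of Corollary \ref{nccicor} and Proposition \ref{bpp}, with everything reduced to one genuinely new input: the case of an extended Dynkin quiver, which I would handle first. When $Q$ is extended Dynkin there is no white vertex with which to invoke Proposition \ref{bpp}, and the Morita model $\Pi_Q\otimes\k\simeq\k[x,y]\rtimes\Gamma$ degenerates in bad characteristic, so one has to argue uniformly. This is exactly what \cite{EE} does, showing, over an arbitrary field and by an integral argument, that the Koszul/Anick complex \eqref{kr} of $\Pi_Q$ is exact, i.e.\ that $\Pi_Q$ is an NCCI. Alternatively --- the route actually taken later in this paper --- one reads this off from the explicit $\Z$-bases: \S\S\ref{ans} and \ref{dns} for $\tilde A_n$ and $\tilde D_n$, Proposition \ref{enprop} for type $\tilde E$, and Lemma \ref{mcl}/Corollary \ref{mclcor} for the loop $\tilde A_0$. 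Each of these produces a graded $\Z^{Q_0}$-bimodule identification $P_{\dq}\cong\Pi_Q *_{\Z^{Q_0}} T_{\Z^{Q_0}}\langle r\rangle$, which is precisely conditions (2)--(3) of Proposition \ref{ournccipr}; hence $\Pi_Q$ is an NCCI, in particular free over $\Z$.

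With the base case in hand, suppose $Q$ is neither Dynkin nor extended Dynkin. As recalled in \S\ref{stratpfsec}, $Q$ then contains an extended Dynkin subquiver $Q^0\subsetneq Q$. I would first enlarge $Q^0$ to a subquiver $Q'$ of $Q$ with the full vertex set $Q_0$, adding the vertices of $Q_0\setminus Q^0_0$ as isolated vertices; since an isolated vertex only carries the vacuous relation $0=0$, the algebra $\Pi_{Q',\emptyset}$ differs from $\Pi_{Q^0}$ by a semisimple summand and is still an NCCI. Setting $Q'':=Q\setminus Q^0$ (vertex set $Q_0$, arrows $Q_1\setminus Q^0_1$), $J:=\emptyset$, and letting $I\subseteq Q_0$ be the set of vertices incident to arrows from both $Q^0_1$ and $Q_1\setminus Q^0_1$, I can apply Corollary \ref{nccicor}(ii)--(iii), whose hypothesis that $\Pi_{Q',\emptyset}$ be an NCCI has just been arranged. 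It gives: $\Pi_Q$ is an NCCI if and only if $\Pi_{Q\setminus Q^0,I}$ is.

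So it remains to show $\Pi_{Q\setminus Q^0,I}$ is an NCCI, which I would do with Proposition \ref{bpp}. First recolour white every black vertex of $(Q\setminus Q^0,I)$ that is isolated in $Q\setminus Q^0$: this does not change the algebra (again the relation there is $0=0$) and enlarges $I$ to some $I'$. Now every remaining black vertex lies in a connected component of $Q\setminus Q^0$ that contains an arrow, and such a component must meet $I'$: otherwise none of its vertices would be incident to an arrow of $Q^0_1$, no arrow of $Q$ would join it to $Q^0_0$, and it would be a union of components of $Q$ disjoint from the nonempty quiver $Q^0$, contradicting that $Q$ is connected. Hence every black vertex is joined by a path of arrows to a white vertex, so the inductive construction in the proof of Proposition \ref{bpp} produces the required forest in the double of $Q\setminus Q^0$, and $\Pi_{Q\setminus Q^0,I}=\Pi_{Q\setminus Q^0,I'}$ is an NCCI; therefore so is $\Pi_Q$. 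Torsion-freeness is then automatic, since by definition (Proposition \ref{nccipr}(1)) an NCCI over $\Z$ is in particular a projective, hence free, $\Z$-module.

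The hard part is entirely the extended Dynkin base case: no white vertex is available, the skew-group-algebra model is valid only in good characteristic, and one genuinely needs either the uniform Koszulity/NCCI argument of \cite{EE} or the case-by-case integral bases built later. Granting that, the reduction via Corollary \ref{nccicor} and Proposition \ref{bpp} is routine, the only delicacy being the bookkeeping of the contact set $I$ and of the vertices isolated in $Q\setminus Q^0$ sketched above.
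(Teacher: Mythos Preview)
Your proposal is correct and follows the same overall architecture as the paper: reduce to the extended Dynkin case via Corollary \ref{nccicor} and Proposition \ref{bpp}, then handle that base case separately. Your bookkeeping with $Q'$, $I$, and the isolated vertices is a careful expansion of what the paper compresses into a single sentence.

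The one point where the paper does something you do not mention is in the extended Dynkin base case. You offer two routes: cite \cite{EE}, or extract the free-product decomposition $P_{\dq}\cong\Pi_Q *_{\Z^{Q_0}} T_{\Z^{Q_0}}\langle r\rangle$ directly from the explicit bases. The paper instead takes a shortcut: use the bases only to see that $\Pi_Q$ is torsion-free (hence free over $\Z$), and then observe that the NCCI condition is the Hilbert-series identity of Proposition \ref{nccipr}\eqref{nphs}, which can therefore be checked after tensoring with $\C$, where the Morita equivalence $\Pi_{Q^0}\otimes\C\simeq\C[x,y]\rtimes\Gamma$ makes it immediate. This avoids verifying the free-product decomposition over $\Z$ by hand, which the paper itself flags as requiring ``some effort''. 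Your routes are valid, but the paper's is the lighter-weight one once the bases are in place.
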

\begin{proof} By Corollary \ref{nccicor} (and the comments afterward),
  and Proposition \ref{bpp}, one may reduce to the case that $Q$ is
  extended Dynkin. For the extended Dynkin case, the easiest proof is
  to use our bases to show that $\Pi_Q$ is torsion free; then, after
  tensoring with $\C$, one may use the Morita equivalence of $\Pi_Q$
  with $\C[x,y] \rtimes \Gamma$ from \S \ref{gpsec}.
  Alternatively, one could deduce the NCCI property from our
  computation of bases with some effort.
\end{proof}

For any non-Dynkin, non-extended Dynkin quiver $Q$, it is well-known
(and easy to check) that $Q \supsetneq Q^0$ for some extended Dynkin
quiver $Q^0$ with vertex set $Q^0_0$. The results of the previous
section then allow us to write, as graded $R$-bimodules,
\begin{equation} \label{mipe}
\Pi_Q = \tilde \Pi_{Q^0} *_R \Pi_{Q \setminus Q^0, Q^0_0},
\end{equation}
where $\tilde \Pi_{Q^0}$ is an arbitrary graded $R$-bimodule section
of $\Pi_Q \onto \Pi_{Q^0}$, and $\Pi_{Q \setminus Q^0, Q^0_0}$ embeds
canonically into $\Pi_Q$ via the sequence
$\Pi_{Q \setminus Q^0, Q^0_0} \into \Pi_{Q, Q^0_0} \onto \Pi_Q$.

\begin{ntn} \label{tildntn}
In general, if $Q \supsetneq  Q^0$ where $Q^0$ is extended Dynkin, then
we will fix a graded $R$-bimodule section $\tilde \Pi_{Q^0} \subset \Pi_Q$ of $\Pi_Q \onto \Pi_{Q^0}$, which exists because $\Pi_{Q^0}$ is torsion-free.  Then, 
for any subset $U \subset \Pi_{Q^0}$, we denote its image under the section
by $\tilde U$.
\end{ntn}
Now, we proceed to one of our main goals: a description of
$\Lambda_Q$ when $Q$ is non-Dynkin and non-extended Dynkin.  We begin
with 
\begin{prop} \label{mip2} Let $Q \supsetneq Q^0$ where $Q^0$ is
  non-Dynkin.  Let
    $V := \Pi_Q / [\ldp \langle \dq_1 \setminus \overline{Q_1^0} \rangle \rdp,
  \Pi_Q]$.
  Let $B$ be the algebra
  $B := T_R((\Pi_{Q^0})_+ \o_R (\Pi_{Q \setminus Q^0, Q^0_0})_+)$.  We
  have
\begin{enumerate}
\item[(i)]
\begin{equation} \label{vide}
V \liso \Pi_{Q^0} \oplus (B/[B,B])_+ \oplus \Lambda_{Q \setminus Q^0, Q^0_0},
\end{equation}
as $\Z$-modules, where the map is given using \eqref{mipe}. 
\item[(ii)]  $\Lambda_Q \cong V/W$ where $W$ is the image in $V$ of $[\tilde \Pi_{Q^0}, \langle \,\overline{Q^{0}_{1}} \, \rangle]$,
using Notation \ref{tildntn}.
\end{enumerate}
\end{prop}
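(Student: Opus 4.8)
The plan is to read off both statements from the free‑product decomposition \eqref{mipe}. Since $Q^0$ is non‑Dynkin, $\Pi_{Q^0}$ is an NCCI (Proposition \ref{prepncci}), so Corollary \ref{nccicor} gives, as graded $R$‑bimodules, $\Pi_Q \cong \tilde\Pi_{Q^0} *_R \Pi_{Q\setminus Q^0,Q^0_0}$. Write $A := \tilde\Pi_{Q^0}$ and $C := \Pi_{Q\setminus Q^0,Q^0_0}$, so $A_+ \cong (\Pi_{Q^0})_+$, $C_+ = (\Pi_{Q\setminus Q^0,Q^0_0})_+$, and $B = T_R(A_+\otimes_R C_+)$; set $I := \ldp\langle\dq_1\setminus\overline{Q^0_1}\rangle\rdp$. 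Fixing $\Z$‑bases $\mathcal A$ of $A_+$ and $\mathcal C$ of $C_+$ whose elements have a definite source and target vertex, the free product $\Pi_Q$ acquires its usual $\Z$‑basis: the idempotents $i\in Q_0$, together with the \emph{reduced words} $u_1\cdots u_k$ ($k\ge 1$, the $u_i$ lying alternately in $\mathcal A$ and $\mathcal C$ and composable). Since $C$ is generated by the new double arrows, $I$ is exactly the ideal generated by $C_+$: its basis is the set of reduced words using at least one letter of $\mathcal C$, and $\Pi_Q/I \cong \Pi_{Q^0}$, a free $\Z$‑module.

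For (i): as $I$ is an ideal, $[I,\Pi_Q]\subseteq I$, and the section $\tilde\Pi_{Q^0}$ splits the exact sequence $0\to I/[I,\Pi_Q]\to V\to \Pi_{Q^0}\to 0$, giving $V\cong \Pi_{Q^0}\oplus I/[I,\Pi_Q]$. It remains to identify $I/[I,\Pi_Q]=HH_0(\Pi_Q,I)$, which I would do by running the evident rewriting system on the reduced‑word basis: (a) a non‑closed reduced word $w=iwj$ ($i\ne j$) satisfies $wi-iw=-w$, so all non‑closed words vanish; (b) for a closed reduced word $w=u_1\cdots u_k$ one of the factors $u_1$, $u_2\cdots u_k$ lies in $I$, whence $w\equiv u_2\cdots u_k u_1 \pmod{[I,\Pi_Q]}$, re‑expanding $u_ku_1$ in $\mathcal A$ or $\mathcal C$ when the two are in the same alphabet (which strictly shortens the word); and (c) for $c\in\mathcal C$ and $c'\in C_+$ one has $cc'-c'c\in[I,\Pi_Q]$, so on length‑one ($=$ purely‑$\mathcal C$) words $[I,\Pi_Q]$ contains $[C_+,C_+]$ and the non‑closed elements, i.e. exactly $[C,C]_+$. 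Applying (a)--(c), every reduced word in $I$ reduces either to a cyclically‑alternating word $[a_1c_1\cdots a_mc_m]$ ($a_j\in\mathcal A$, $c_j\in\mathcal C$, $m\ge1$) — these are the cyclic words of $B=T_R(A_+\otimes_R C_+)$ — or to a purely‑$\mathcal C$ class, and the purely‑$\mathcal C$ part is $(C/[C,C])_+=(\Lambda_{Q\setminus Q^0,Q^0_0})_+$. That nothing further survives is checked most cleanly by verifying that the evident map $(B/[B,B])_+\oplus\Lambda_{Q\setminus Q^0,Q^0_0}\to I/[I,\Pi_Q]$, sending $[(a_1\otimes c_1)\cdots(a_m\otimes c_m)]\mapsto[\tilde a_1c_1\cdots\tilde a_mc_m]$ and $[c]\mapsto[c]$ via the inclusions $\tilde\Pi_{Q^0},\Pi_{Q\setminus Q^0,Q^0_0}\subset\Pi_Q$ of \eqref{mipe}, is well defined (relations (a)--(c) are all it must kill), surjective (by the reduction), and injective by comparing graded $\Z$‑ranks; this yields the isomorphism of (i), with the degree‑zero part $\Z^{Q_0}$ of $V$ distributed between the first and third summands in the obvious way.

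For (ii): by a standard induction on word length, $[\Pi_Q,\Pi_Q]=\langle af-fa\mid a\in\dq_1,\ f\in\Pi_Q\rangle$. Split according to $\dq_1=\overline{Q^0_1}\sqcup(\dq_1\setminus\overline{Q^0_1})$. If $a$ is a new arrow then $a\in I$, so $af-fa\in[I,\Pi_Q]$. If $a\in\overline{Q^0_1}$, decompose $f=f^A+f^I$ along $\Pi_Q=\tilde\Pi_{Q^0}\oplus I$ (reduced words with no $\mathcal C$‑letter versus the rest); then $af^I-f^Ia\in[I,\Pi_Q]$, while $af^A-f^Aa\in[\tilde\Pi_{Q^0},\langle\overline{Q^0_1}\rangle]$. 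Hence $[\Pi_Q,\Pi_Q]=[I,\Pi_Q]+[\tilde\Pi_{Q^0},\langle\overline{Q^0_1}\rangle]$, and therefore $\Lambda_Q=\Pi_Q/[\Pi_Q,\Pi_Q]=V/W$ with $W$ the image of $[\tilde\Pi_{Q^0},\langle\overline{Q^0_1}\rangle]$ in $V$, as claimed.

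The main obstacle is the middle computation in (i): showing that after (a)--(c) the surviving cyclically‑alternating words and a basis of $(\Lambda_{Q\setminus Q^0,Q^0_0})_+$ are genuinely $\Z$‑linearly independent in $I/[I,\Pi_Q]$ — equivalently, that (a)--(c) generate \emph{all} of $[I,\Pi_Q]$. This is the relative analogue of computing $HH_0$ of a free product; the cleanest route is the well‑definedness‑plus‑rank‑count argument for the inverse map above, using that $\Pi_{Q^0}$, $\Pi_{Q\setminus Q^0,Q^0_0}$ and hence $B$ are free $\Z$‑modules, which makes the rank comparison legitimate.
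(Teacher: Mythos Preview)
Your approach is essentially the paper's: reduce to the free-product decomposition \eqref{mipe}, run a cyclic-rotation rewriting on reduced words for part (i), and for part (ii) decompose $[\Pi_Q,\Pi_Q]$ according to whether the single arrow lies in $\overline{Q^0_1}$ or not. Your proof of (ii) is exactly the observation the paper records (cf.\ Lemma \ref{mcl}.\eqref{bacpp}), and your reductions (a)--(c) are the right moves for (i).

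The one genuine gap is the injectivity step you flag yourself. ``Comparing graded $\Z$-ranks'' does not close it: you know the source $(B/[B,B])_+\oplus(\Lambda_{Q\setminus Q^0,Q^0_0})_+$ is free, but you have no independent computation of the rank of $I/[I,\Pi_Q]$ --- that rank is precisely what you are trying to determine, so there is nothing to compare against. Equivalently, defining an inverse map $I/[I,\Pi_Q]\to(B/[B,B])_+\oplus(\Lambda_{Q\setminus Q^0,Q^0_0})_+$ by ``cyclically reduce and read off the class'' requires checking that $[I,\Pi_Q]$ is killed, and that check is exactly the missing confluence verification, not a consequence of freeness of the summands.

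What the paper does (by pointing to Lemma \ref{mcl}.\eqref{ampc}) is verify that the rewriting system is \emph{confluent} in the sense of Appendix \ref{dla}: one puts a partial order on (cyclic) reduced words --- first by the number of $\mathcal C$-letters, then by a disorder function counting how far the word is from cyclically-alternating form --- and checks that whenever a word admits two distinct single-step reductions, the results admit a common further reduction. Once confluence is established, the Diamond Lemma (Proposition \ref{dl3}) gives that the normal-form cyclic words are a $\Z$-basis of $I/[I,\Pi_Q]$, which is the injectivity you need. Your moves (a)--(c) are the reductions in that system; what is missing is the (routine but necessary) check that overlapping reductions commute, exactly as carried out in the proof of Lemma \ref{mcl}.\eqref{ampc}.
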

Note that $(B/[B,B])_+$ in \eqref{vide} has a basis consisting of
alternating cyclic words in $(\Pi_{Q^0})_+$ and $(\Pi_{Q \setminus
  Q^0, Q^0_0})_+$.  Here and below, an ``alternating cyclic word'' in
sets $\mathcal{A}$ and $\mathcal{B}$ means
 a word of the form $a_1 b_1 \cdots a_m b_m$ (for $m \geq 1$)
modulo simultaneous cyclic permutations of the indices, where $a_i \in \mathcal{A}$ 
and $b_i \in \mathcal{B}$.
\begin{proof}[Proof of Proposition \ref{mip2}]
  The first part follows from \eqref{mipe} and its proof, together
  with the Diamond Lemma argument from Lemma \ref{mcl}.\eqref{ampc}.
  The second part follows from the observation (cf.~Lemma
  \ref{mcl}.\eqref{bacpp}) that $[\Pi_Q, \Pi_Q] = [\Pi_Q \langle \dq_1
  \setminus \dzqo \rangle \Pi_Q, \Pi_Q] + [\tilde \Pi_{Q^0}, \langle \overline{Q^0_1}
  \rangle]$. 
\end{proof}
From now on we will consider the case where $Q^0$ is extended
Dynkin. Let $i_0 \in Q^0_0$ be a fixed choice of extending vertex. Fix
any quiver $Q \supsetneq Q^0$. Let $\Gamma \subseteq SL_2(\C)$ be the
group corresponding to $Q^0$.

First, we describe $W$ (defined in Proposition \ref{mip2}.(ii) above) away
from bad primes, i.e., we describe $W \otimes \Z[\frac{1}{|\Gamma|}]$.
  For simplicity, when we say ``we work over $S$'', for a commutative
ring $S$, we
mean that all $\Z$-modules should be tensored by $S$, e.g., $W$ denotes $W \otimes_\Z S$,
and we will omit the tensor product for ease of notation.
\begin{prop} \label{wdescp}
  We work over $\Z[\frac{1}{|\Gamma|}]$, with $W$ the
  $\Z[\frac{1}{|\Gamma|}]$-module obtained by tensoring the one in
  Proposition \ref{mip2}.(ii) by $\Z[\frac{1}{|\Gamma|}]$.  Let $W_0
  \subset W$ be the image of $[\Pi_{Q^0}, \Pi_{Q^0}]$ under
  \eqref{vide}.  Let $V' := V/W_0$ and $W' := W/W_0$. Then, these are
  graded modules with finitely-generated free homogeneous components,
  and
\begin{enumerate}
\item[(i)] $h(W';t) = t^2 \cdot h(HH^0(\Pi_{Q^0})_+;t) = t^2 \cdot h((i_0 \Pi_{Q^0} i_0)_+; t)$.
\item[(ii)] The composition $W' \into V' \onto V'/[\ldp r' \rdp^2]$ is
  injective, giving an isomorphism \\
  $W' \iso  [r'
  \widetilde{HH^0(\Pi_{Q^0})_+}] $, using Notation \ref{tildntn}.
\end{enumerate}
\end{prop}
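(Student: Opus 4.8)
The plan is to reduce Proposition~\ref{wdescp} to the $\Gamma$-equivariant computation of \S\ref{gpsec}: parts (ii) and (iii) of Theorem~\ref{mclgam}, as packaged in Corollary~\ref{mclgamcor}, are essentially the assertions (i) and (ii) above on the ``$C=\{1\}$'' summand, and what remains is to translate between $\Pi_Q$ and the skew-group picture via the Morita equivalence of \cite{CBH}. We work over $S := \Z[\frac{1}{|\Gamma|}]$; since the torsion of $\Lambda_{Q^0}$ lives at stably bad primes, all of which divide $|\Gamma|$, the modules $\Pi_{Q^0}\o S$, $\Lambda_{Q^0}\o S$, and $HH^0(\Pi_{Q^0})\o S\cong(i_0\Pi_{Q^0}i_0)\o S$ are torsion-free. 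As in the proof of Theorem~\ref{mt} for good primes, I would pass to $\k := \Z[\frac{1}{|\Gamma|},e^{2\pi i/|\Gamma|}]$, a finite free (hence faithfully flat) $S$-algebra, so that the conclusions of the proposition --- torsion-freeness, finite generation of graded components, injectivity of a map, the value of a Hilbert series --- may be checked after $\o_S\k$ and then descend. Over $\k$, the decomposition \eqref{mipe} together with the precise Morita equivalence recalled in \S\ref{gpsec} gives $\Pi_Q\o\k\cong\fsf A'\fsf$, where $A'=F'/\ldp xy-yx+r'\rdp$, $F'=(\k\langle x,y\rangle\rtimes\Gamma)*_{\k[\Gamma]}B$, $B=\k[\Gamma]\o_{\k^{Q^0_0}}(\Pi_{Q\setminus Q^0,Q^0_0}\o\k)$, $r'=\1_{Q^0_0}r_{Q\setminus Q^0}\1_{Q^0_0}$, and $\fsf$ (as in \eqref{piq0m}) is a full idempotent. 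Hence $\Lambda_Q\o\k\cong A'_\cyc$, and the whole tower $W_0\subset W\subset V$ of Proposition~\ref{mip2} transports to the corresponding objects built from $A'$.

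Next I would match the two presentations of $\Lambda_Q$. The Diamond-Lemma analysis behind Proposition~\ref{mip2}.(i) (the analogue of Lemma~\ref{mcl}.\eqref{ampc}, carried out as in the proof of Corollary~\ref{mclcor}) describes $V\o\k$, and upon quotienting by $W_0=[\Pi_{Q^0},\Pi_{Q^0}]$ it shows that $V'\o\k$ is canonically isomorphic to the module $((\k[x,y]\rtimes\Gamma)*_{\k[\Gamma]}B)_\cyc$ appearing in Corollary~\ref{mclgamcor}; here one uses that $HH_0(\k[x,y]\rtimes\Gamma)\cong\Lambda_{Q^0}\o\k=(i_0\Pi_{Q^0}i_0)\o\k$ by Morita invariance of Hochschild homology, so that the twisted sectors $HH_0(\k[x,y],\k[x,y]\gamma)$ with $\gamma\neq1$ contribute nothing. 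This isomorphism moreover carries the surjection $V'\onto\Lambda_Q$ onto the one of Corollary~\ref{mclgamcor}, so the kernel $W'$ is identified with $W_1$.

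Granting this identification, (ii) is precisely Theorem~\ref{mclgam}.(iii) transported along Corollary~\ref{mclgamcor}: the composite $W'=W_1\into V'\onto V'/[\ldp r'\rdp^2]$ is a monomorphism, becomes an isomorphism after $\o\C$, and has image \eqref{immodr2} --- which, under the identification $HH^0(\Pi_{Q^0})=Z(\Pi_{Q^0})\cong i_0\Pi_{Q^0}i_0$ of \eqref{edisos}, is exactly $[r'\widetilde{HH^0(\Pi_{Q^0})_+}]$ (the degree-zero term $[r'\o 1]$ present in the formulation of Theorem~\ref{mclgam}.(iii) drops out here, since $[r']=0$ in $\Lambda_Q$, $r'$ being a sum of commutators). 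Part (i) then follows: by (ii), $W'$ embeds into the free $S$-module $V'/[\ldp r'\rdp^2]$, hence is torsion-free with finitely generated graded components; and since the embedding becomes an isomorphism onto $\langle r'\rangle\o HH^0(\Pi_{Q^0})_+$ after $\o\C$, comparing graded ranks gives $h(W';t)=t^2\cdot h(HH^0(\Pi_{Q^0})_+;t)=t^2\cdot h((i_0\Pi_{Q^0}i_0)_+;t)$.

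The main obstacle is the matching carried out in the second paragraph: one must align the intrinsic presentation $\Lambda_Q=V'/W'$ of Proposition~\ref{mip2} with the Morita presentation $A'_\cyc=((\k[x,y]\rtimes\Gamma)*_{\k[\Gamma]}B)_\cyc/W_1$ of Corollary~\ref{mclgamcor} closely enough that $W'$ is genuinely identified with $W_1$, rather than merely that the two quotients coincide. This entails checking that the $\gamma\neq1$ twisted sectors vanish over $S$, and --- the one point that is genuinely specific to these algebras --- that the integral lattice $\bigl(\bigoplus_{a,b}\langle\gcd(a,b)\,x^{a-1}y^{b-1}\rangle\bigr)^\Gamma$ occurring in \eqref{immodr2} is the lattice cut out by $(i_0\Pi_{Q^0}i_0)_+$. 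Once that bookkeeping is in place, (i) and (ii) are a direct transcription of Theorem~\ref{mclgam}.(ii)--(iii).
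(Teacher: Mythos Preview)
Your proposal is correct and follows essentially the same route as the paper: pass to $\k=\Z[\tfrac{1}{|\Gamma|},e^{2\pi i/|\Gamma|}]$, invoke Theorem~\ref{mclgam}.(iii) together with the Morita dictionary of \S\ref{gpsec} (packaged as Corollary~\ref{mclgamcor}) to identify $W'$ with $W_1$, then descend to $S=\Z[\tfrac{1}{|\Gamma|}]$. The only real difference is in the descent step: you appeal to faithful flatness of $\k$ over $S$, while the paper instead uses that $S$ is a PID, so each graded piece of $V'$ and $W'$ is a finite direct sum of cyclic modules, and since $-\otimes_S\k$ annihilates no nonzero cyclic $S$-module and the result over $\k$ is free, freeness over $S$ follows. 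Both arguments are valid; the paper's is slightly more elementary, yours slightly more conceptual. Your flagging of the lattice-matching between \eqref{immodr2} and $[r'\widetilde{HH^0(\Pi_{Q^0})_+}]$ is apt---the paper glosses over this point.
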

\begin{proof}
  Over $\k := \Z[\frac{1}{|\Gamma|}, e^{\frac{2 \pi i}{|\Gamma|}}]$,
  this follows from Theorem \ref{mclgam}.(iii) and the partial proof
  of Theorem \ref{mt} contained there, using the projection $\k[x,y]
  \rtimes \Gamma \onto \fsf (\k[x,y] \rtimes \Gamma) \fsf \cong
  \Pi_{Q^0}$.  Since $\Z[\frac{1}{|\Gamma}]$ is a principal ideal
  domain, and the modules $V'$ and $W'$ are graded with
  finitely-generated homogeneous components, each homogeneous
  component of $V'$ and $W'$ is a direct sum of finitely many cyclic
  modules.  On the other hand, the functor $M \mapsto M
  \otimes_{\Z[\frac{1}{|\Gamma|}]} \k$ does not annihilate any cyclic
  modules.  Thus, since the target of this functor is free in each
  graded component, all of the cyclic modules occurring in $V'$ and
  $W'$ must be $\Z[\frac{1}{|\Gamma|}]$, i.e., $V'$ and $W'$ are free
  in each homogeneous component.  Then, the result on Hilbert series
  follows because the aforementioned functor preserves Hilbert series
  of graded modules with finitely-generated free homogeneous
  components.
  \end{proof}
  As a corollary, we can deduce in full generality the easier
  direction of Theorem \ref{mt}, that the classes $r^{(p^\ell)}$ are
  nonzero. We will use this in the proof of the other direction:
\begin{prop}\label{rpnz}
For 
every prime $p > 0$ and any $\ell \geq 1$, the class $r^{(p^\ell)} \in \Lambda_Q$
is nonzero.
\end{prop}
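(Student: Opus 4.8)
The plan is to reduce the statement to the extended Dynkin case, where it can be verified using the explicit descriptions already established. Since $Q$ is non-Dynkin, either it is extended Dynkin, or $Q \supsetneq Q^0$ for some extended Dynkin subquiver $Q^0$. First consider the latter case. The class $r^{(p^\ell)} \in \Lambda_Q$ is the image of $\frac{1}{p}\pi(r^{p^\ell})$, and using the decomposition \eqref{mipe} and Proposition \ref{mip2}, it suffices to detect its nonvanishing after passing to the quotient $V'/W' = (V/W_0)/(W/W_0)$, or rather to detect that the relevant class is not already killed. The key point is that $r^{(p^\ell)}$ lives in the summand controlled by $\Pi_{Q^0}$ (i.e., in the image of the map $(\k\langle x,y,r'\rangle)_{\cyc} \to \Lambda_Q$ coming from the extended Dynkin piece, with $r'$ the ``outside'' part of $r$), so it vanishes in $\Lambda_Q$ if and only if the corresponding class vanishes in the extended Dynkin computation after tensoring with $\F_p$.

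Concretely, I would argue as follows. Work over $\Z[\frac{1}{|\Gamma|}]$ first: by Proposition \ref{wdescp}(ii), the module $W'$ injects into $[r'\,\widetilde{HH^0(\Pi_{Q^0})_+}]$ via the quotient modulo $\ldp r'\rdp^2$, and this image is precisely the rescaled classes coming from $W_{a,b}$ (equivalently \eqref{immodr2} in the $\Gamma$-equivariant setting). For $p \nmid |\Gamma|$, Theorem \ref{mclgam}(iv) — transported through the Morita equivalence and Corollary \ref{mclgamcor} — already asserts that $r^{(p^\ell)}$ is nonzero in $\Lambda_Q \otimes \Z[\frac{1}{|\Gamma|}, e^{2\pi i/|\Gamma|}]$, hence nonzero in $\Lambda_Q$. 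For $p \mid |\Gamma|$ (a bad prime), this argument does not apply, so for those finitely many primes I would instead use the explicit bases: the element $r^{(p^\ell)}$ maps, under $\Lambda_Q \onto \Lambda_{Q^0}$ composed with the identification of $\Lambda_{Q^0}$ modulo torsion with $(i_0\Pi_{Q^0}i_0)_+$, to a class whose leading term with respect to a suitable Gröbner/Diamond-Lemma ordering is $\frac{1}{p}(xy)^{p^\ell}$-type and hence visibly nonzero in $\Lambda_{Q^0}\otimes\F_p$ by Theorem \ref{dedz}. Actually, a cleaner route: Corollary \ref{mclcor} already shows the $\tilde A_0$ (loop) case, and for any $Q$ the image of $r^{(p^\ell)}$ under a quotient map $\Lambda_Q \to \Lambda_{Q'}$ for $Q'$ a subquiver containing a loop (if one exists) is again of the same form; if no loop exists one falls back on the $Q^0$ computation.

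The honest reduction is this: since $\Lambda_{Q^0}$ (for $Q^0$ extended Dynkin, including the torsion) is explicitly known by Theorem \ref{dedz}, and since the surjection $\Lambda_Q \onto \Lambda_{Q^0}$ sends the class $r^{(p^\ell)}$ defined for $Q$ to the class $r^{(p^\ell)}$ defined for $Q^0$ (both being $\frac{1}{p}$ times the same universal expression in the relation), it is enough to check nonvanishing of $r^{(p^\ell)}$ in $\Lambda_{Q^0}$, reducing everything to a direct inspection of the tables in Theorem \ref{dedz} together with the observation that in the ``stably bad'' degrees $2p^\ell$ the torsion of $\Lambda_{Q^0}$ is exactly a $\Z/p$ generated by $r^{(p^\ell)}$. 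For extended Dynkin $Q^0$ of types $\tilde A_n$ and $\tilde D_n$, the explicit bases of Theorems \ref{ant}, \ref{dnt} (or the elementary Corollary \ref{mclcor} for $\tilde A_0$, and Remark \ref{xyrem}) pin this down, and for $\tilde E_n$ with $p \le 5$ it follows from Theorem \ref{dedz}'s computation via Gröbner generating sets.

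\textbf{Main obstacle.} The delicate point is the bad primes $p \mid |\Gamma|$, where the clean Morita-theoretic argument of \S\ref{gpsec} is unavailable; there one genuinely needs the explicit basis computations (Theorems \ref{ant}, \ref{dnt}, \ref{dedz}) to see that the class does not collapse. Everything else — compatibility of the definitions of $r^{(p^\ell)}$ under the surjection $\Lambda_Q \onto \Lambda_{Q^0}$, and nonvanishing for good primes via Theorem \ref{mclgam}(iv) — is essentially bookkeeping around results already proved in the excerpt.
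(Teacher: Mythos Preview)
Your ``honest reduction'' via the surjection $\Lambda_Q \onto \Lambda_{Q^0}$ is the core of the plan, and it does not work. The class $r^{(p^\ell)}$ is $p$-torsion, and for extended Dynkin $Q^0$ the torsion of $\Lambda_{Q^0}$ is \emph{finite} (Theorem \ref{dedz}). For $Q^0=\tilde A_{n-1}$ it vanishes entirely, so $r^{(p^\ell)}=0$ in $\Lambda_{Q^0}$ for every $p,\ell$; for $\tilde D_n$ and $\tilde E_n$ the torsion lives only in finitely many degrees, so $r^{(p^\ell)}=0$ in $\Lambda_{Q^0}$ for all large $\ell$. Thus the image in $\Lambda_{Q^0}$ cannot detect nonvanishing in $\Lambda_Q$. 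Your fallback to Theorems \ref{ant} and \ref{dnt} handles $Q$ containing a cycle or a $\tilde D_n$ subquiver (and is not circular), but it leaves uncovered precisely the star-shaped quivers with three branches that properly contain some $\tilde E_n$ and no $\tilde A$ or $\tilde D$; for these, at bad primes and large $\ell$, none of your proposed reductions applies.

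The paper's argument avoids this by never passing to $\Lambda_{Q^0}$. Instead it works inside the presentation $\Lambda_Q\cong V_Q/W_Q$ from Proposition \ref{mip2} and exploits Proposition \ref{wdescp}: the Hilbert series of $W_Q$ (over $\Z[\tfrac{1}{|\Gamma|}]$, hence its free rank over $\Z$) depends only on $Q^0$, not on $Q$. One introduces a universal model $A'\cong P_{\overline{Q^0}}$ with $V_{A'}\onto V_Q$ and $W_{A'}\onto W_Q$, and compares an \emph{arbitrary} $Q\supsetneq Q^0$ to the particular quiver $Q'$ obtained by adding a loop at every vertex of $Q^0$. For $Q'$ the result is already known (it contains $\tilde A_0$, so Corollary \ref{mclcor} applies), and since $W_{A'}/\langle[r']\rangle\iso W_{Q'}$ and $W_{Q'}$, $W_Q$ have the same rank, the saturation argument transfers the nonvanishing from $V_{Q'}$ to $V_Q$. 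The key idea you are missing is this lateral comparison between different $Q\supsetneq Q^0$ through the universal $A'$, rather than a downward projection to $Q^0$.
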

\begin{proof}
  For any quiver $Q^0$, we may perform the same procedure as in Proposition
  \ref{mip2} to
  obtain a basis of
  $A' := P_{\dzq}\langle r'_i \rangle_{i \in Q^0_0} / \ldp i r i + r'_i
  \rdp_{i \in Q^0_0} \cong P_{\dzq}$. 
  First, \eqref{mipe} becomes $A' \cong \Pi_{Q^0} *_{\Z^{Q^0_0}}
  \langle r'_i \rangle$ where we view $\langle r'_i \rangle$ as
  $\Z^{Q^0_0}$-modules by $j r'_i j' = \delta_{ij} \delta_{ij'}
  r'_i$. Let $r' := \sum_i r'_i$.  Then, Proposition \ref{mip2}
  presents $V_{A'} := A'/[A',A'r'A']$ as the direct sum of $\tilde
  \Pi_{Q^0}$ and the free $\Z$-module with basis given by alternating
  cyclic words in a basis of $\Pi_{Q^0}$ and $(r')^{\ell}, \ell \geq
  1$.  We may also compute the relations $W_{A'}$ as in the
  proposition.  For any quiver $Q \supsetneq Q^0$, we have a canonical
  map $A' \rightarrow \Pi_{Q}$ which induces maps $V_{A'} \rightarrow
  V_{Q}$ and $W_{A'} \rightarrow W_Q$.  It is easy to see that $W_{A'}
  \onto W_Q$ is a surjection, since the relations are integrally
  spanned by commutators $[\tilde \Pi_{Q^0}, \tilde \Pi_{Q^0}]$, which
  can be taken in $A'$.

  Now, assume $Q^0$ is extended Dynkin. By Proposition \ref{wdescp},
  the rank of $W_Q$ (which is free) does not depend on the choice of
  $Q$, but only on $Q^0$ (provided $Q \supsetneq Q^0$).  Also, if
  $Q' \supsetneq Q^0$ is the quiver obtained from $Q^0$ by adjoining a
  loop to each vertex in $Q^0_0$, then the map
  $V_{A'} \rightarrow V_{Q'}$ has kernel equal to
  $\langle [r'] \rangle$. By Proposition
  \ref{wdescp}, this shows that the map $W_{A'} \rightarrow W_{Q'}$
  also has kernel equal to $\langle [r'] \rangle$, using
  Theorem \ref{mclgam}.(iii).  So, we obtain an isomorphism
  $W_{A'}/\langle [r'] \rangle \iso W_{Q'}$.  Since $W_Q$ and $W_{Q'}$
  have the same Hilbert series, one also must obtain an isomorphism
  $W_{A'}/\langle [r'] \rangle \iso W_Q$.  Finally, the kernel of
  $V_{A'} \rightarrow V_Q$ contains the kernel of $V_{A'} \rightarrow
  V_{Q'}$, since $[r']$ is also zero in $V_Q$.

  Hence, in each graded degree $m$, one obtains an isomorphism
  $(W_{Q'})_m \iso (W_Q)_m$, and a monomorphism of their saturations,
  $\Sat((W_{Q'})_m) \into \Sat((W_Q)_m)$. Here, the \emph{saturation} of
  a $\Z$-submodule $M \subseteq V$ is the module $\Sat(M) := \{x \in V
  \mid \exists n \geq 1 \text{ s.t. } n \cdot x \in M\}$. We are
  taking the saturation of $(W_{Q'})_m$ inside $V_{Q'}[m]$ and of
  $(W_Q)_m$ inside $(V_Q)_m$.

  In particular, if we lift the class $r^{(p^\ell)}$ in any way to
  $V_{Q'}$, it lies in $\Sat((W_{Q'})_m) \setminus (W_{Q'})_m$ by
  Conjecture \ref{rh} (i.e., the $\tilde A_0$ case of Theorem
  \ref{mt}, proved in \S \ref{hrcsec}), and hence any lift to $V_{Q}$
  also has this property.
\end{proof}

To prove the other direction of Theorem \ref{mt} in full generality,
we generalize the theorem by an analysis in each prime $p$ using
$p$-th powers, as follows.

Let us define $W' := W \cap [\ldp r' \rdp]$ (similarly when we work
over more general commutative rings than $\Z$). This coincides with $W
\cap [\ldp \langle \dq_1 \setminus \dzqo \rangle \rdp]$, since $W/W'$ is
integrally spanned by $[\tilde \Pi_{Q^0}]$.  Let $W'_p \subset V \o
\F_p$ be the image of the map $W' \o \F_p \rightarrow V \o \F_p$
induced by inclusion. Then, for any $[w] \in W'_p$, we may consider
$[w]^p = [w^p] \in (V \cap [\ldp r' \rdp]) \o \F_p$. Note that $[w]^p
\mapsto 0 \in \Lambda_Q$, since the same is true for $[w]$.  Hence,
$[w]^p \in W'_p$ as well.  This observation allows us to state the
following theorem, which will be proved in \S \ref{wdescptpfs}, and
refines the main Theorem \ref{mt}.  We will use the canonical
projection $V \onto \Pi_{Q^0}$ whose kernel is the image of $\ldp r'
\rdp$.  We will work below over $\Z_{\ldp p \rdp}$, and note that
tensoring by $\F_p$ still makes sense, and in particular $W'_p = (W'
\otimes \Z_{\ldp p \rdp}) \otimes_{Z \ldp p \rdp} \F_p$.
\begin{thm} \label{wdescpt} We work over $\Z_{\ldp p \rdp}$ for any prime
  $p$. 
  Then, $W$ has the form
\begin{equation}
W = W_0 \oplus W',
\end{equation}
where $W' = W \cap [\ldp r' \rdp]$ and $W_0$ is a $\Z_{\ldp p
  \rdp}$-submodule such that the composition $W_0 \into V \onto
\Pi_{Q^0}$ is a monomorphism with image $[\Pi_{Q^0}, \Pi_{Q^0}]$.
Moreover, these satisfy the following conditions:
\begin{enumerate}
\item[(i)] $W_0$ is saturated except in the cases that $(Q^0, p) \in
  \{(\tilde D_n, 2), (\tilde E_n, 2), (\tilde E_n, 3), (\tilde E_8,
  5)\}$, when $W_0 = W_{0,s} \oplus W_{0,r}$ with $W_{0,s}$ saturated,
  and $W_{0,r}$ has finite rank and will be described in (iii).
\item[(ii)] $W' = W'_s \oplus W'_r$, where $W'_s$ 
is saturated (see (iv)) and $W'_r$ will be described in (iii).
\item[(iii)] $W_r := W_{0,r} \oplus W'_r$ has a basis of classes $\{f_\ell\}, \ell \geq 1$, with $|f_\ell| = 2p^\ell$,
satisfying
\begin{equation}
\text{ord}_p(f_\ell) = 1, \quad \frac{1}{p} f_{\ell+1} \equiv (\frac{1}{p} f_{\ell})^p \pmod p,
\end{equation}
where $\text{ord}_p(f)$ denotes the greatest nonnegative integer $m$
such that $f$ is a multiple of $p^m$.  One has $h(W_{0,r};t) \leq
hT(Q^0)$ from \eqref{htqf} (with the same $p$).  The image of
$\frac{1}{p} f_\ell$ in $\Lambda_Q$ is $r^{(p^{\ell})}$.
\item[(iv)] 
 $W'_s$ has a basis of classes $\{g_{i,\ell}\}$ as follows:
\begin{equation}
g_{i,\ell+1} \equiv g_{i,\ell}^p \pmod p.
\end{equation}
Here, the $g_{i,0}$ $\Z_{\ldp p \rdp}$-linearly span a submodule which
projects isomorphically mod $[\ldp \langle \dq_1 \setminus \dzqo \rangle
\rdp^3] + \ldp p \rdp$ to $[r' U_p]$, and $U_p \subset HH^0(\Pi_{Q^0})
\o \F_p$ is a certain $\F_p$-vector subspace with Hilbert series
$h(HH^0(\Pi_{Q^0})_+;t) - t^{2p-2}h(HH^0(\Pi_{Q^0});t^p) +
C_p(t)$. Here, $C_p(t) = 0$ unless $\Lambda_{Q^0}$ has $p$-torsion,
in which case, letting $m$ be the smallest positive integer such that
$r^{(p^m)}$ is zero in $\Lambda_{Q^0}$,
\begin{equation}
t^2 C_p(t) = t^{2p} - t^{2p^m} + \begin{cases} \sum_{\ell=1}^{\lfloor \frac{n}{4} \rfloor-1} t^{4(2\ell+1)} - t^{2^{\lfloor \log_2 \frac{2(n-2)}{2\ell+1} \rfloor + 1} \cdot (2\ell+1)}, & \text{if $p=2$ and $Q^0 = \tilde D_n$}, \\
t^{28} - t^{56}, & \text{if $p=2$ and $Q^0 = \tilde E_8$}, \\
0, & \text{otherwise}.
\end{cases}
\end{equation} 
$i_0 U_p i_0$ contains the image of the Poisson bracket 
$\{,\}: (i_0 \Pi_{Q^0} i_0 \o \F_p)^{\o 2} \rightarrow (i_0 \Pi_{Q^0} i_0) \o \F_p$,
to be defined in \SS \ref{lans}, \ref{dans}, and \ref{eans}.
\end{enumerate}
\end{thm}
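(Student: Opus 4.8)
The plan is to combine the good-prime computation already available with a reduction to the extended Dynkin type of $Q^0$, finishing the stably-bad $\tilde E_n$ cases ($p\le 5$) by direct computation. First I would set up the general structure. Since $Q$ is non-Dynkin and non-extended Dynkin, fix an extended Dynkin subquiver $Q^0\subsetneq Q$ and invoke Proposition \ref{mip2}, so that $\Lambda_Q\cong V/W$ with $W$ the image in $V$ of $[\tilde\Pi_{Q^0},\langle\overline{Q^0_1}\rangle]$; by Proposition \ref{mip2}(i) the module $V$ (hence $W$) is free over $\Z_{\ldp p\rdp}$ with finitely generated homogeneous components. Let $\pi_0\colon V\onto\Pi_{Q^0}$ be the canonical projection with kernel the image of $\ldp r'\rdp$. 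Since $\pi_0([\tilde a,b])=[a,b]$, and since $\Pi_{Q^0}$ is generated over $R$ by the arrows $\overline{Q^0_1}$ so that (via $[a,yz]=[ay,z]+[za,y]$) every commutator in $\Pi_{Q^0}$ is a $\Z$-span of commutators $[w,b]$ with $b$ an arrow, one gets $\pi_0(W)=[\Pi_{Q^0},\Pi_{Q^0}]$ and $W\cap\ker\pi_0=W\cap[\ldp r'\rdp]=:W'$. As $[\Pi_{Q^0},\Pi_{Q^0}]$ is a graded submodule of the free module $\Pi_{Q^0}$ with finitely generated components, it is free, so $0\to W'\to W\to[\Pi_{Q^0},\Pi_{Q^0}]\to 0$ splits; a splitting gives $W_0$ with $W=W_0\oplus W'$ and $W_0\into V\onto\Pi_{Q^0}$ monic onto $[\Pi_{Q^0},\Pi_{Q^0}]$. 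Next, $W_0\cong[\Pi_{Q^0},\Pi_{Q^0}]$ is saturated in $V$ iff $\Lambda_{Q^0}$ is $p$-torsion free, which by Theorem \ref{dedz} holds outside exactly the listed pairs $(Q^0,p)$, and in those pairs the defect of saturation of $W_0$ has Hilbert series at most $hT(Q^0)$, giving $W_0=W_{0,s}\oplus W_{0,r}$ with $h(W_{0,r};t)\le hT(Q^0)$. The $p$-th power relations for $\{f_\ell\}\subset W_r=W_{0,r}\oplus W'_r$ I would obtain from the fact that the $p$-th power map is well-defined on $W'_p\subseteq V\o\F_p$ and preserves it, together with the computation (as in Remark \ref{ppwrrem}) that $\tfrac1p[(xy-yx+r')^{p^{\ell+1}}]\equiv(\tfrac1p[(xy-yx+r')^{p^{\ell}}])^p\pmod p$; taking $f_{\ell+1}$ to be a lift of the $p$-th power of $\tfrac1p f_\ell$ then realizes $\tfrac1p f_{\ell+1}\equiv(\tfrac1p f_\ell)^p\pmod p$ with image $r^{(p^{\ell})}$ in $\Lambda_Q$.

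Then I would split into cases on $Q^0$. If $Q^0=\tilde A_n$ (including $\tilde A_0$, via Corollary \ref{mclcor} and Remark \ref{mclmtr}) the statement is Theorem \ref{ant}; if $Q^0=\tilde D_n$ it is Theorem \ref{dnt}; if $p\nmid|\Gamma|$ (in particular all $\tilde E_n$ with $p\ge 7$ and $\tilde E_6,\tilde E_7$ with $p=5$) it follows from Theorem \ref{mclgam}: there $\Lambda_{Q^0}$ is torsion-free, so $W_{0,r}=0$, $hT(Q^0)=C_p(t)=0$, the module $W'_s$ corresponds modulo $[\ldp\langle\dq_1\setminus\dzqo\rangle\rdp^3]+\ldp p\rdp$ to $[r'U_p]$ with $U_p$ of the stated plain Hilbert series, and the residual one-dimensional $\k/\ldp p\rdp$ summands in degrees $2p^{\ell}$ produced by Theorem \ref{mclgam}(iv) assemble into $W'_r$. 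The remaining cases are $Q^0=\tilde E_n$ with $p$ stably bad, i.e.\ $p\in\{2,3\}$ for $\tilde E_6,\tilde E_7$ and $p\in\{2,3,5\}$ for $\tilde E_8$, which is the content of this section: for these I would use Theorem \ref{dedz} for the precise $\Z$-module torsion and Hilbert series $hT(Q^0)$ of $\Lambda_{Q^0}$, and Proposition \ref{ezphp} for the zeroth Poisson homology of the necklace Lie structure on $\Lambda_{Q^0}\o\F_p$; combined with the NCCI property of $\Pi_Q$ (Corollary \ref{nccicor}, Proposition \ref{p:prep-ncci}) and torsion-freeness of $\Pi_Q$, these fix $h(V;t)$ and $h(W;t)$, hence the torsion of $\Lambda_Q=V/W$, which one then checks consists exactly of the classes $r^{(p^{\ell})}$ with the asserted $p$-th power relations. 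The identity for $t^2 C_p(t)$ is precisely the bookkeeping of which torsion classes of $\Lambda_{Q^0}$ survive to $\Lambda_Q$ (those in degrees $2p^{\ell}$, becoming $r^{(p^{\ell})}$) versus which are killed on passing from $Q^0$ to $Q$, each killed class being absorbed by a degree-matching new class of $W'_s$ — whence the extra terms for $\tilde D_n$ and for $\tilde E_8$ at $p=2$. Finally, the inclusion of the image of the necklace Poisson bracket on $i_0\Pi_{Q^0}i_0\o\F_p$ into $i_0 U_p i_0$ follows because $W$ contains every commutator $[\tilde a,\langle\overline{Q^0_1}\rangle]$ with $a\in\Pi_{Q^0}$, and expanding such a commutator using $r_{Q^0}\equiv -r'$ identifies its coefficient of $r'$ modulo $\ldp r'\rdp^2$ with the bracket, as computed in \S\S \ref{lans}, \ref{dans}, \ref{eans}.

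The hard part will be the last case — the stably-bad $\tilde E$ quivers and the matching of $C_p$: one must produce the explicit Gr\"obner generating sets for $\Pi_{Q^0}$ and $\Lambda_{Q^0}$ (Theorem \ref{dedz}), compute the zeroth Poisson homology (Proposition \ref{ezphp}), and then reconcile the non-saturation of $W_0$ (governed by $hT(Q^0)$) with the new relations in $W'$ and the $p$-th power maps so that the total torsion of $\Lambda_Q$ collapses exactly onto $\{r^{(p^{\ell})}\}_{\ell\ge 1}$. The combinatorics of the $\lfloor\log_2\rfloor$-exponents in the $\tilde D_n$ formula for $C_2$ and the degree-$28$ and degree-$56$ corrections for $\tilde E_8$ at $p=2$ are the most delicate points.
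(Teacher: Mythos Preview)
Your overall architecture matches the paper's: the decomposition $W=W_0\oplus W'$ via the projection $\pi_0$, the case split into good primes (Theorem \ref{mclgam}), $\tilde A_n$ (Theorem \ref{ant}), $\tilde D_n$ (Theorem \ref{dnt}), and the seven stably-bad $\tilde E_n$ pairs, and the key inputs Theorem \ref{dedz}, Proposition \ref{ezphp}, and Proposition \ref{ubrp} are all the right ones. Your argument for the splitting $W=W_0\oplus W'$ and the saturation criterion for $W_0$ is correct and more explicit than what the paper writes out.

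There is, however, a genuine gap in your treatment of the seven $\tilde E_n$ bad-prime cases. You write that the inputs ``fix $h(V;t)$ and $h(W;t)$, hence the torsion of $\Lambda_Q=V/W$''. This is a non-sequitur: knowing the Hilbert series of the free modules $V$ and $W$ does \emph{not} determine the torsion of $V/W$; it only determines the free rank in each degree. The paper's argument is different and more delicate. It \emph{constructs} an explicit submodule $U=W_0\oplus W'_s\oplus W'_r\subseteq W$: using Proposition \ref{ubrp} it lifts the Poisson-bracket image $M_p$ to a saturated $\tilde M\subset W'$; it finds a complementary piece $\tilde U'_p$ (verified in low degrees by Magma) and then closes under $p$-th powers to produce $W'_s$; it builds $W'_r$ from explicit polynomials $f_\ell=p\bar f_\ell$ with $\bar f_\ell$ representing $\tfrac1p[(x+y)^{p^\ell}-x^{p^\ell}-y^{p^\ell}]$. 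The Hilbert-series identity coming from Proposition \ref{ezphp} (the ``Claim'' $t^2h(i_0\Pi_{Q^0}i_0;t)=\sum_{\ell\ge0}t^{2p^\ell}(h(M_p;t^{p^\ell})+F(t^{p^\ell}))$) then shows $h(U;t)=h(W;t)$, so $W/U$ is torsion and $V/U$ has torsion generated by the $r^{(p^\ell)}$. The decisive step you omit is the appeal to Proposition \ref{rpnz} --- the \emph{independently proved} nonvanishing of $r^{(p^\ell)}$ in $\Lambda_Q$ --- which forces $U=W$. Without that, you cannot conclude that your candidate decomposition actually exhausts $W$ or that the torsion is as claimed.

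Two smaller points: the saturation of $(W_0)_{28}$ in the $(\tilde E_8,2)$ case is not automatic from your argument (there \emph{is} torsion in $(\Lambda_{Q^0})_{28}$, yet $(W_0)_{28}$ turns out to be saturated) and the paper verifies this by an explicit Magma computation; and the $C_p(t)$ terms for $\tilde D_n$ come out of the explicit analysis in Theorem \ref{dnt}, not from the $\tilde E$ endgame, so they should not be listed among the ``hard part'' of the remaining cases.
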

Here, the Poisson algebra $i_0 \Pi_{Q^0} i_0 \o \F_p$ 
is an analogue of $\C[x,y]^\Gamma$ (and, if
$p$ is a good prime, then $i_0 \Pi_{Q^0} i_0 \o \bar \F_p \cong \bar
\F_p[x,y]^\Gamma$ by the McKay correspondence \cite{CBH}).  For good
primes, the above theorem (except for the final statement about Poisson
bracket) is not difficult to deduce from Theorem \ref{mclgam}.

Theorem \ref{mt} follows from Theorem \ref{wdescpt}, since we deduce
that the torsion of $\Lambda_Q \otimes \Z_{\ldp p \rdp}$ for $Q
\supsetneq Q^0$ is at most $\Z/p$ in each degree $2p^\ell$, and this
is $\Z_{\ldp p\rdp}$-linearly spanned by the nonzero classes
$r^{(p^\ell)}$.

Proposition \ref{wdescp} (together with Propositions \ref{mip2} and
\ref{bpp}) gives us explicit $\Q$-bases of $\Lambda_Q$.  Fix a basis
$\mathcal{P}$ of $\Pi_{Q^0}$ by paths (which can be explicitly done
using Theorems \ref{ant}.(iii) and \ref{dnt}.(i), and Proposition
\eqref{enprop}.(ii)), such that, for every vertex $i \in Q^0_0$, the
basis $\mathcal{P}$ includes a subset $\mathcal{P}_i$ of paths beginning (and
ending) at $i$ which maps to a
basis of $\Lambda_{Q^0} \otimes \Q$: this is possible since
$\Lambda_{Q^0 \setminus i} \otimes \Q = 0$ for all $i$, with $Q^0
\setminus i$ the quiver obtained by deleting $i$ and all arrows
incident to $i$.  Fix also the basis $\mathcal{B}$ of $(\Pi_{Q \setminus
  Q^0, Q^0_0})_+$ given by Proposition \ref{bpp}.
%
\begin{prop} \label{ratbasp} For every non-Dynkin, non-extended Dynkin
  quiver $Q$, a $\Q$-basis for $\Lambda_Q \o \Q$
  is given as follows, depending on a choice of extended Dynkin
  subquiver $Q^0 \subsetneq Q$ on vertices $Q^0_0 \subsetneq Q_0$, and a
  \textbf{fixed} vertex $i' \in Q^0_0$ incident to an arrow of $Q
  \setminus Q^0$:
\begin{enumerate}
\item The basis $\mathcal{P}$ above;
\item 
Alternating cyclic words in $\mathcal{P}$ and $\mathcal{B}$
such that
 the source and target vertices of the basis elements match up (to give
a nonzero product),
\textbf{excluding} cyclic words of the form $[i' a^* a \pi]$ for $a \in G$
and $\pi \in \mathcal{P}_{i'}$,  
where
$G \subset \dq_1 \setminus \dzqo$ is the forest chosen in Proposition
\ref{bpp} (so $a$ must be the unique arrow of $G$ incident to $i'$);
\item A basis of $\Lambda_{Q \setminus Q^0, Q^0_0} \o \Q$, given in Proposition \ref{bpp}.
\end{enumerate}
\end{prop}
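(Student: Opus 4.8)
The plan is to work over $\Q$ and to obtain the basis by stacking three structural results already proved: Proposition~\ref{mip2} (the decomposition of $V$ and the presentation $\Lambda_Q = V/W$), Proposition~\ref{wdescp} (the splitting $W = W_0\oplus W'$ and the description of $W'$ modulo $[\ldp r'\rdp^2]$, which applies after base change from $\Z[\frac{1}{|\Gamma|}]$ to $\Q$), and Proposition~\ref{bpp} (the ``no $aa^*$'' bases of $\Pi_{Q\setminus Q^0,Q^0_0}$ and $\Lambda_{Q\setminus Q^0,Q^0_0}$). First I would rewrite $V\o\Q$ via Proposition~\ref{mip2}(i) as $\Pi_{Q^0}\o\Q\oplus(B/[B,B])_+\o\Q\oplus\Lambda_{Q\setminus Q^0,Q^0_0}\o\Q$ and substitute bases: the path basis $\mathcal{P}$ for the first factor, the Proposition~\ref{bpp} basis $\mathcal{B}$ for $\Pi_{Q\setminus Q^0,Q^0_0}$ (so the middle factor acquires the basis of alternating cyclic words in $\mathcal{P}_+$ and $\mathcal{B}_+$ with matching endpoints, the $R = \Z^{Q_0}$-bimodule structure forcing the matching), and the Proposition~\ref{bpp} basis for the third factor. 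By Proposition~\ref{wdescp} then $\Lambda_Q\o\Q\cong(V/W_0)/W'$; modding by $W_0$ turns the first factor into $\Lambda_{Q^0}\o\Q$ --- this is item (1), with basis the images of the loop subsets $\mathcal{P}_i$ (one uses $\Lambda_{Q^0\setminus i}\o\Q = 0$ to know $\Lambda_{Q^0}$ is spanned by loops at each vertex, so that one may work at the distinguished vertex $i'$) --- and leaves the other two factors and their bases alone. So it remains only to divide by $W'$.

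The crux is to show that the excluded classes $[i'a^*a\pi]$ (with $a\in G$ the arrow of the forest incident to $i'$ and $\pi\in\mathcal{P}_{i'}$) represent exactly the classes of $V/W_0$ that die on passing to the quotient by $W'$. Two inputs go in. By Proposition~\ref{wdescp}(i)--(ii), $W'$ is free with $h(W';t) = t^2\,h\big((i_0\Pi_{Q^0}i_0)_+;t\big)$, and the composite $W'\into V/W_0\onto (V/W_0)/[\ldp r'\rdp^2]$ is injective with image $[r'\,\widetilde{HH^0(\Pi_{Q^0})_+}]$. On the other hand, from $r' = \1_{Q^0_0}r_{Q\setminus Q^0}\1_{Q^0_0}$ one sees that $i'a^*ai'$ occurs, up to sign, as one summand of $i'r'i'$, the remaining summands being $i'bb^*i'$, $i'b^*bi'$ for arrows $b\neq a$ of $Q\setminus Q^0$ incident to $i'$; multiplying by a loop of $\mathcal{P}_{i'}$ and forming the cyclic class, those remaining summands give alternating cyclic words that are \emph{not} excluded --- here one uses that $a$ is the unique arrow of $G$ at $i'$ and that $G$ contains neither $a^*$ nor the reverses of those arrows $b$. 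Hence modulo $[\ldp r'\rdp^2]$ and the span of the non-excluded classes one has $[i'a^*a\pi]\equiv -[r'\,\widetilde{\pi}]$; and since $\mathcal{P}_{i'}$ maps to a basis of $\Lambda_{Q^0}\o\Q$, which has the Hilbert series $h((i_0\Pi_{Q^0}i_0)_+;t)$ by \eqref{edisos}, the family $\{[i'a^*a\pi]\}$ has the graded rank of $W'$ and spans, modulo the non-excluded classes, exactly the leading image $[r'\,\widetilde{HH^0(\Pi_{Q^0})_+}]$ of $W'$.

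The main obstacle is to upgrade this leading-term match to a genuine basis statement: one must show that the non-excluded classes together with $W'$ span all of $V/W_0$ (equivalently, that $W'$ is transverse to the span of the non-excluded classes), not merely that their graded ranks are complementary. I would argue this by induction on the $\ldp r'\rdp$-adic order of a class of $V/W_0$, using the normal forms for $(B/[B,B])_+$ from Proposition~\ref{mip2}: each excluded class $[i'a^*a\pi]$ is, by the previous paragraph, congruent modulo the non-excluded classes and an element of $W'$ to a class of strictly higher $\ldp r'\rdp$-order, which is handled inductively; the $\ldp r'\rdp$-filtration has finite-dimensional steps in each total degree, so the process terminates. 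A degree-by-degree count --- the graded rank of $W'$ equals the number of excluded classes --- then shows the non-excluded classes form a $\Q$-basis of $(V/W_0)/W' = \Lambda_Q\o\Q$, which is the basis asserted in items (1)--(3).
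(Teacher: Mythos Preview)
Your proposal is correct and follows exactly the route the paper indicates: the paper does not give a separate proof of this proposition but states that it follows from Propositions~\ref{mip2}, \ref{wdescp}, and \ref{bpp}, and you have filled in precisely those details. One small simplification: your induction on $\ldp r'\rdp$-adic order collapses to a single step, since every excluded class $[i'a^*a\pi]$ has degree exactly $2$ in the arrows of $\overline{Q}\setminus\overline{Q^0}$, while any class in $[\ldp r'\rdp^2]$ has such degree $\geq 4$ and is therefore automatically a linear combination of non-excluded basis elements.
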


Theorem \ref{wdescpt} similarly gives explicit $\F_p$-bases of
$\Lambda_Q \o \F_p$. One way to express this is as above, but
replacing $a^* a \pi$  by elements $(a^* a \pi)^{p^\ell}$ where
$\pi$ is a leading path of a basis element of $\tilde U_p$; we
also have to add in basis elements $r^{(p^\ell)}$ which have nonzero image
in $\Lambda_{Q^0}$. The details are omitted.

\section{The necklace Lie structure on $\Lambda_Q$ and
generalizations}\label{lqs1}
In this section, we will recall the necklace Lie structure on
$(P_\dq)_{\cyc}$ and its quotient $\Lambda_Q$.  In this section only, for
convenience, we will let $P := P_\dq$, $L := (P_\dq)_{\cyc}$, and
$\Lambda := \Lambda_Q$.  We will also let $Q$ be an arbitrary quiver
until \S \ref{fpds}, where we will have it contain an extended Dynkin
quiver once again.

First, we will recall the fundamental necklace Lie bracket on $L$
\cite{BLB,G} as well as the fact that it descends to $\Lambda_Q$ (by
the comment after Lemma 8.3.2 in \cite{CBEG}). Then we will recall the
related ``double Poisson'' bracket on $P$ defined by \cite{VdB}.  We
will need to generalize these structures a bit, and in particular
prove a noncommutative Leibniz identity \eqref{ncleibeqn}.  Then, we
will deduce a fundamental formula relating commutators with the
necklace bracket in the extended Dynkin case (Proposition
\ref{ubrp}). As explained in \S \ref{fpds}, we interpret the latter as
expressing $P=P_\dq$ as a ``free-product'' deformation of $\Pi_Q$ in
the sense of \cite{GScyc}.

At the same time, we recall the necklace Lie bialgebra structure on
$L$ \cite{S} and prove that it also descends to $(\Lambda_Q)_+$, and
derive some results such as a Batalin-Vilkovisky style identity
\eqref{bve}, since they arise naturally in the above context and may
be of independent interest.  However, we will not require anything
about the Lie cobracket in this paper, so the reader not interested in
this can skip all results involving the Lie cobracket.

We will also prove in this section that the classes $r^{(p^\ell)}$
are in the kernel of the Lie bracket and cobracket (Proposition
\ref{rkp}); in fact, as we mention in Remark \ref{r:rkp} below, this
is true for all of the torsion of $\Lambda_Q$.  However, we will not
need these results in this paper, so the reader may skip this as well
if desired.

\subsection{The necklace Lie and double Poisson brackets}
\label{ips}

We first recall the necklace Lie bracket \cite{BLB,G}.  Let
$L := (P_\dq)_{\cyc} = P_\dq/[P_\dq,P_\dq]$.  
Let $\omega$ be the natural symplectic
form on the free $\Z$-module $\Z \cdot \dq_1$, of rank
$|\dq_1| = 2 |Q_1|$, with symplectic basis $(a,a^*)_{a \in Q_1}$.  Also,
for any arrow $a: i \rightarrow j$ in $\dq$, let $a_s := i$ and
$a_t := j$. Then one defines the bracket $\{\,,\}$
by
\begin{gather}
\label{laf}
\{[a_1 a_2 \cdots a_m], [b_1 b_2 \cdots b_n]\} = \sum_{i,j} \omega(a_i,
b_j) [(a_i)_t a_{i+1}
\cdots a_{i-1} b_{j+1} \cdots b_{j-1}], \\
\delta([a_1 a_2 \cdots a_m]) = \sum_{i < j} \omega(a_i,a_j) [(a_j)_t
a_{j+1} \cdots a_{i-1}] \wedge [(a_i)_t a_{i+1} \cdots a_{j-1}].
\label{cofla}
\end{gather}
Here, we need the terms $(a_i)_t, (a_j)_t$
in case one has
$n=m=1$ in the first line, and in case $j=i+1$ or $j=m,i=1$ in the
second line.  
\begin{prop}\cite{S} The above defines the structure of an involutive
  Lie bialgebra on $L$: that is, $L$ is a Lie bialgebra satisfying
  $\br \circ \delta = 0$, where $\br: L \o L \rightarrow L$ is the
  bracket.
\end{prop}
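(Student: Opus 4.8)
The plan is to verify directly, from the formulas \eqref{laf} and \eqref{cofla}, the four axioms of an involutive Lie bialgebra on $L=(P_\dq)_{\cyc}$: that $\{\,,\}$ is skew and satisfies the Jacobi identity; that $\delta$ is co-skew and satisfies co-Jacobi; that $\delta$ is a $1$-cocycle for the adjoint action (the Lie bialgebra compatibility); and that $\br\circ\delta=0$. The first part is immediate bookkeeping: in \eqref{laf}, relabeling the pair $(i,j)\mapsto(j,i)$ and using $\omega(b_j,a_i)=-\omega(a_i,b_j)$ gives $\{x,y\}=-\{y,x\}$; in \eqref{cofla} the sum runs over $i<j$ only, and swapping the two wedge factors (a sign in $\wedge^2 L$) is exactly offset by $\omega(a_j,a_i)=-\omega(a_i,a_j)$, so $\delta$ lands in $\wedge^2 L$ with the right symmetry.

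For the Jacobi identity I would take the economical route: recognize $\{\,,\}$ as the Lie bracket induced on $A/[A,A]$, $A:=P_\dq$, by the canonical double Poisson bracket on $A$ in the sense of \cite{VdB} — the ``necklace'' double bracket $\ldb-,-\rdb$ determined on arrows by the symplectic pairing $\omega$. Van den Bergh's theorem that $\mu\circ\ldb-,-\rdb$ descends to a Lie bracket on $A/[A,A]$ then yields Jacobi once one checks that the induced bracket is precisely \eqref{laf}; alternatively one checks Jacobi by hand as in \cite{BLB,G}, expanding $\{x,\{y,z\}\}$ as a sum over an unordered dual-arrow pair joining two of the three necklaces followed by a further dual pair joining the result to the third, and observing the contributions cancel in the cyclic sum via antisymmetry of $\omega$. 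The co-Jacobi identity for $\delta$ is the dual bookkeeping: $(\delta\otimes 1)\delta([a_\cdot])$ is a sum over pairs of ``cuts'' of the cyclic word at two positions, and the three cyclic rotations of the co-Jacobi expression cancel in $\wedge^3 L$. This is the Lie bialgebra content of \cite{S}, whose argument I would follow.

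For the compatibility condition $\delta(\{x,y\})=\ad_x(\delta y)-\ad_y(\delta x)$, with $\ad_x$ acting by the bracket on each tensor factor, I would expand both sides combinatorially. A term on the left is a choice of a dual pair $(a_i,b_j)$ splicing $x=[a_\cdot]$ and $y=[b_\cdot]$ into one necklace, followed by a cut (a second dual pair) of that necklace; such a cut lies entirely in the $a$-part, entirely in the $b$-part, or straddles the splice. The first two families reproduce, up to sign, $\ad_y(\delta x)$ and $\ad_x(\delta y)$, while the straddling terms cancel in pairs; I would package this as an explicit bijection between the configurations on the two sides, being careful about the length-one degenerate cases that force the endpoint idempotents $(a_i)_t$ to appear.

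The step I expect to be the genuine obstacle is involutivity, $\br\circ\delta=0$ — the quiver analogue of Turaev's self-intersection cancellation for free loops on a surface. For $x=[a_1\cdots a_m]$, applying $\br$ to $\delta(x)$ gives a double sum: over positions $i<j$ with $\omega(a_i,a_j)\ne 0$, producing two necklaces $N_1,N_2$, and then over a dual letter pair $(b_k,c_l)$ with $b_k\in N_1$, $c_l\in N_2$, re-splicing $N_1$ and $N_2$ into one necklace. I would exhibit a fixed-point-free, sign-reversing involution on the data $(i,j,k,l)$: re-splicing along $(b_k,c_l)$ identifies, inside the original cyclic word, a second pair of positions whose cut yields the same intermediate $\{N_1,N_2\}$, and the two data carry opposite signs because one enters via the $i<j$ ordering and the other via the new cut. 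Making this involution precise — in particular matching the inserted vertex idempotents in the degenerate cases $j=i+1$ and $\{i,j\}=\{1,m\}$, and verifying it has no fixed points — is the delicate bookkeeping; everything else is routine. Since all of this is carried out in \cite{S}, in the paper one simply invokes that reference, but the structure of the argument is as above.
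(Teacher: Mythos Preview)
Your sketch is a reasonable outline of the argument, and you correctly anticipate the paper's treatment at the end: the paper does not give any proof of this proposition at all, but simply attributes it to \cite{S} and moves on. So there is nothing to compare against beyond noting that your expanded sketch (skew/co-skew by relabeling, Jacobi via the double Poisson formalism of \cite{VdB} or the direct combinatorics of \cite{BLB,G}, cocycle and co-Jacobi by cut-and-splice bookkeeping, and involutivity by a sign-reversing involution on the quadruple of cut/splice positions) is exactly the shape of the proof carried out in \cite{S}.
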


To define these more suggestively,
we may define \cite{S} operators
$\partial_a: L \rightarrow P, D_a: P \rightarrow P \o P$ by the
formulas
\begin{gather}\label{ped}
  \partial_a([a_1 \cdots a_m]) := \sum_{i: \, a_i = a}  (a_i)_t a_{i+1} \cdots a_{i-1}, \\
  \label{ded} D_a(a_1 \cdots a_m) := \sum_{i: \, a_i = a}  a_1 \cdots
  a_{i-1} (a_i)_s \o (a_i)_t a_{i+1} \cdots a_m,
\end{gather}
so that $\{\,,\}: L \o L \rightarrow L, \delta: L \rightarrow L \o L$
are given by (letting $m : P \o P \rightarrow P$ be the
multiplication)
\begin{equation}
\{\,,\} = \sum_{a \in Q_1} \pr \circ m \circ 
(\partial_a \o \partial_{a^*} - \partial_{a^*} \o \partial_a), \quad
\delta = \sum_{a \in Q_1} (\pr \o \pr) (D_a \partial_{a^*} - D_{a^*} \partial_a).
\end{equation}
(Note that $\partial_a$ was notated $\frac{\partial}{\partial a}$ in
\cite{S} and $D_a$ was notated $\tilde D_a$.)

Our first result is then
\begin{prop} \label{lbp} The submodule $\Z \cdot Q_0 \oplus [Pr] \subset
  P/[P,P] = L$ is a Lie bi-ideal, where $\Z \cdot Q_0 =L_0$ is
  the $\Z$-linear span of vertices.  Hence, $\Lambda_+ :=
  \Lambda/\Lambda_0$ is a Lie bialgebra.  Furthermore, $[Pr] \subset
  L$ is a Lie ideal, so that $\Lambda$ itself is a Lie algebra.
\end{prop}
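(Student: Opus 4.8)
The plan is to treat the two summands $\Z\cdot Q_0=L_0$ and $[Pr]$ separately, splitting the Lie-bi-ideal claim into a Lie-ideal statement for the bracket and a Lie-coideal statement for the cobracket; the Lie bialgebra axioms then descend to $\Lambda_+$ automatically from those established for $L$ in \cite{S}. The summand $L_0$ is disposed of immediately: it is spanned over $\Z$ by the trivial paths $[i]$, $i\in Q_0$, which contain no arrows and are therefore annihilated by every operator $\partial_a$ and (after $\partial_{a^*}$) by every $D_a$; by the formulas $\br=\sum_{a\in Q_1}\pr\circ m\circ(\partial_a\o\partial_{a^*}-\partial_{a^*}\o\partial_a)$ and $\delta=\sum_{a\in Q_1}(\pr\o\pr)(D_a\partial_{a^*}-D_{a^*}\partial_a)$ this gives $\br(L_0\o L)=\br(L\o L_0)=0$ and $\delta(L_0)=0$. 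Since $[Pr]$ (which equals the image $[PrP]$ of the two-sided ideal, by cyclicity in $L$) is concentrated in degrees $\geq 2$, the sum $L_0\oplus[Pr]$ is direct, and it remains only to show that $[Pr]$ is a Lie ideal and that $L_0\oplus[Pr]$ is a Lie coideal.

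For the Lie-ideal statement, I would evaluate $\br([v],[fr])$ for $[v]\in L$, $f\in P$, using the above formula for $\br$ together with \eqref{ped}, after expanding $fr=\sum_{b\in Q_1}f(bb^*-b^*b)$. In $\partial_a[fr]$ and $\partial_{a^*}[fr]$, the occurrences of the differentiated arrow that lie in the $f$-block contribute, after summing over $b$, terms of the form $f''\,r\,f'$ with $f',f''\in P$; multiplying by the corresponding factor $\partial_a[v]$ (resp.\ $\partial_{a^*}[v]$) and projecting to $L$ gives classes having $r$ as a factor, hence elements of $[Pr]$. The remaining ("bad") contributions come from differentiating a letter of a block $bb^*$ or $b^*b$; a direct computation shows that, after summing over $a\in Q_1$ and using cyclic invariance, their total equals $\bigl[\bigl(\sum_{c\in\dq_1}[c,\partial_c[v]]\bigr)\,f\bigr]$, and the element $\sum_{c\in\dq_1}[c,\partial_c[v]]\in P$ is $0$ because, spelled out on a cyclic word $[v]=[a_1\cdots a_m]$, it is the telescoping sum $\sum_{i}\bigl(a_ia_{i+1}\cdots a_{i-1}-a_{i+1}\cdots a_{i-1}a_i\bigr)$ of the linear rotations of $[v]$. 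Therefore $\br([v],[fr])\in[Pr]$, so $[Pr]$ is a Lie ideal and $\Lambda=L/[Pr]$ is a Lie algebra.

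The coideal statement is proved along the same lines, using $\delta$ in terms of the operators $D_a,\partial_a$ and \eqref{ded}. Computing $\delta([fr])$ amounts to cutting the cyclic word $[fr]$ at a dual pair of arrows: if neither cut falls inside a block $bb^*$ or $b^*b$ of $r$, then one of the two resulting cyclic pieces retains a contiguous $r$-block (so its class lies in $[Pr]$) and the other may degenerate to a vertex (so its class lies in $L_0$), whence the corresponding term lies in $(L_0\oplus[Pr])\wedge L+L\wedge(L_0\oplus[Pr])$. The terms in which a cut meets a block of $r$ must then be collected and shown to lie in the same submodule; I expect this to be the main obstacle, since a single block $bb^*$ can be met by a cut in several configurations, and the corresponding contributions only combine after reindexing the summation over $Q_1$, using the antisymmetry of $\wedge$ against the alternation $bb^*-b^*b$ together with a telescoping identity parallel to, but more intricate than, the one above. (Alternatively, one may simply cite \cite{CBEG}, the comment after Lemma 8.3.2, for the Lie-ideal part and run the coideal part by the same method.) Once both parts are established, $L_0\oplus[Pr]$ is a Lie bi-ideal, so $\br$ and $\delta$ descend to $\Lambda_+=\Lambda/\Lambda_0=L/(L_0\oplus[Pr])$, which thereby inherits the structure of an involutive Lie bialgebra.
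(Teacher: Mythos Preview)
Your Lie-ideal argument for $[Pr]$ is correct and is essentially the same telescoping identity the paper uses; the only difference is organizational. The paper packages the computation via the Loday bracket $\{\,,\}:L\otimes P\to P$ and its Leibniz rule \eqref{ncleibeqn}: from $\{[v],gr\}=\{[v],g\}r+g\{[v],r\}$ one is reduced to showing $\{[v],r\}=0$ in $P$, which is exactly your telescoping sum $\sum_j(a_{j+1}\cdots a_{j-1}a_j-a_ja_{j+1}\cdots a_{j-1})=0$. Your formulation via $\sum_{c\in\dq_1}[c,\partial_c[v]]=0$ is the same identity.

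The genuine gap is in the coideal part: you correctly anticipate that the ``cuts meeting a block of $r$'' are the hard terms, but you do not carry out the cancellation, and a direct bookkeeping of those terms is unpleasant. The paper avoids this entirely by invoking the BV-style identity \eqref{bve},
\[
\delta_\ell(rf)=\delta_\ell(r)(1\otimes f)+(1\otimes r)\delta_\ell(f)+(\pr\otimes 1)\ldb r,f\rdb,
\]
which cleanly separates the three contributions. One then checks: $\delta_\ell(r)=\sum_{a\in Q_1}(a_t\otimes a_s-a_s\otimes a_t)\in L_0\otimes P$; the middle term lies in $L\otimes Pr$; and a second telescoping computation gives $\ldb r,a_1\cdots a_n\rdb=a_1\cdots a_n\otimes(a_n)_t-(a_1)_s\otimes a_1\cdots a_n\in P\otimes P_0+P_0\otimes P$. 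Applying $(\pr\otimes\pr)$ and \eqref{copr} finishes the coideal claim. So the missing ingredient in your sketch is precisely the BV identity (or an equivalent organizing device), which turns the ``intricate'' cancellation you allude to into two short explicit computations of $\delta_\ell(r)$ and $\ldb r,f\rdb$.
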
 
(As mentioned above, the Lie part was proved in \cite{CBEG} in greater
generality: see Proposition 4.4.3.(ii) and the comment after Lemma 8.3.2.)
\begin{rem}
It is an interesting question to explicitly quantize
$\Lambda$ in the sense of Drinfeld (cf.~\cite{EK}): by \cite{EK} there
exists a functorial quantization of Lie bialgebras; however, we were
unable to find any natural formulas or even an explicit algorithm for
computing formulas which produces a quantization of $\Lambda$. In more
detail, in \cite{S} we found an explicit quantization of $L$ itself,
but we were unable to find a Hopf ideal of that quantization
corresponding to the Lie bi-ideal $\langle Q_0 \rangle \oplus [Pr]$.
\end{rem}

We can also answer the natural question of how the classes
$r^{(p^{\ell})}$ behave under the Lie structure:
\begin{prop} \label{rkp} In $\Lambda_+$, $r^{(p^\ell)}$
  is in the kernel of both the bracket and cobracket. More generally,
  $[i r^m] \in L$ is in the kernel of the bracket for any
  $i \in Q_0, m \geq 1$, and is in the kernel of the cobracket on
  $L_+ := L/L_0$.
\end{prop}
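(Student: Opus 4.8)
The plan is to reduce both assertions to the ``more general'' statement about the cyclic words $[ir^m]$ and then dispatch the bracket and the cobracket by direct computation with the operators $\partial_a$ and $D_a$ of \eqref{ped}--\eqref{ded}. Write $r_i := iri$, so that $ir^m = r_i^m$ in $P = P_\dq$ (a product of loops based at distinct vertices vanishes) and $[ir^m] = [r_i^m] \in L$. The class $[r^{p^\ell}]$ is divisible by $p$ in $L = (P_\dq)_\cyc$, which is a free $\Z$-module on cyclic words; writing $[r^{p^\ell}] = p\,s$ we have $r^{(p^\ell)} = \bar\alpha(s)$ in $\Lambda$, where $\bar\alpha\colon L \onto \Lambda$. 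By Proposition \ref{lbp} the bracket descends to $\Lambda$ and the cobracket to $\Lambda_+$. Hence, once we know $[r^{p^\ell}] = \sum_i [r_i^{p^\ell}]$ is killed by $\br$ in $L$ and that $\delta([r^{p^\ell}]) \in L_0 \otimes L + L \otimes L_0$, we get $p\cdot\br(s,-) = 0$ and $p\cdot\delta(\bar s) = 0$ in $L_+ \otimes L_+$; since $L$, $L_+$ and $L_+ \otimes L_+$ are torsion-free this forces $\br(s,-) = 0$ and $\delta(\bar s) = 0$, so $r^{(p^\ell)} = \bar\alpha(s)$ is annihilated by the bracket on $\Lambda$ and the cobracket on $\Lambda_+$. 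So I reduce to: for all $i \in Q_0$ and $m \geq 1$, $[r_i^m]$ is central for $\br$ on $L$, and $\delta([r_i^m])$ maps to $0$ in $L_+ \otimes L_+$.

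For the bracket, writing $r_i = \sum_{c \in \dq_1,\, c_s = i} \omega(c,c^*)\, cc^*$, a direct application of \eqref{ped} gives, for every $c \in \dq_1$,
\[
\partial_c([r_i^m]) = m\,\omega(c,c^*)\bigl(\delta_{c_s,i}\, c^* r_i^{m-1} - \delta_{c_t,i}\, r_i^{m-1} c^*\bigr)
\]
(both terms present when $c$ is a loop at $i$, neither when $c$ is not incident to $i$). Substituting into $\br([r_i^m],[w]) = \sum_{c \in \dq_1} \omega(c,c^*)\, \pr\!\bigl(\partial_c([r_i^m]) \cdot \partial_{c^*}([w])\bigr)$, using $\omega(c,c^*)^2 = 1$, the cyclicity of $\pr$, and the substitution $c \mapsto c^*$ (which interchanges $\{c : c_s = i\}$ with $\{c : c_t = i\}$), I obtain
\[
\br([r_i^m],[w]) = m\,\pr\Bigl(r_i^{m-1}\bigl(\textstyle\sum_{d:\, d_t = i}\partial_d([w])\,d - \sum_{d:\, d_s = i} d\,\partial_d([w])\bigr)\Bigr).
\]
Both inner sums equal $i\,\Sigma([w])$, where $\Sigma([w])$ is the sum of all cyclic rotations of $[w] = [w_1\cdots w_n]$ (a sum of loops): the first keeps exactly the rotations that end at $i$, the second exactly those that begin at $i$, and for a closed path these coincide. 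Hence $\br([r_i^m],[w]) = 0$. (Conceptually, $\br([r^m],-)$ is the Hamiltonian of a function of the moment map $r$, i.e.\ the infinitesimal gauge transformation by $m\,r^{m-1}$, which acts trivially on $L$.)

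For the cobracket I feed the same formulas into $\delta = \sum_{a \in Q_1}(\pr \otimes \pr)(D_a\partial_{a^*} - D_{a^*}\partial_a)$: each of $\partial_{a^*}([r_i^m])$ and $\partial_a([r_i^m])$ is $m$ times a signed sum of monomials $r_i^{m-1}$ with a single exposed arrow glued at one end. Since $D_a$ is a derivation into the outer bimodule, the terms in which $D_a$ or $D_{a^*}$ differentiates the exposed arrow produce, after $\pr \otimes \pr$, elements with one tensor factor a vertex idempotent $[(a)_s]$ or $[(a)_t]$, hence elements of $L_0 \otimes L + L \otimes L_0$, which I discard. In the remaining terms $D$ hits an arrow inside $r_i^{m-1}$; there I split each monomial $\beta_1 \cdots \beta_{m-1}$ (a product of blocks $cc^*$ with $c_s = i$) at the unique occurrence of that arrow inside some block $\beta_j$, reparametrize $(j;\{\beta_l\}_{l \neq j})$ as an ordered pair $(\gamma,\delta)$ of block-words with $|\gamma| + |\delta| = m-2$, and collapse the free choices in $\gamma,\delta$ into powers $r_i^k$. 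One finds that the contribution of $a \in Q_1$ equals $m\sum_{k=0}^{m-2}(1 - \sigma)\bigl([r_i^k] \otimes \omega(c,c^*)[cc^* r_i^{m-2-k}]\bigr)$ modulo $L_0 \otimes L + L \otimes L_0$, where $\sigma$ transposes the tensor factors and $cc^*$ runs over the block(s) of $r_i$ attached to $a$. Summing over $a \in Q_1$ runs $cc^*$ over all blocks of $r_i$, so $\sum_{c:\, c_s = i}\omega(c,c^*)[cc^* r_i^{m-2-k}] = [r_i^{m-1-k}]$ and
\[
\delta([r_i^m]) \equiv m\sum_{k=0}^{m-2}(1 - \sigma)\bigl([r_i^k] \otimes [r_i^{m-1-k}]\bigr) = m\bigl([i] \otimes [r_i^{m-1}] - [r_i^{m-1}] \otimes [i]\bigr) \pmod{L_0 \otimes L + L \otimes L_0},
\]
the last equality being a telescoping, as $\sigma$ sends the $k$-th summand to the $(m-1-k)$-th. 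Both surviving terms lie in $L_0 \otimes L + L \otimes L_0$, so $\delta([r_i^m])$ vanishes in $L_+ \otimes L_+$, completing the reduction.

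I expect the cobracket step to be the main obstacle. The bracket identity is essentially a one-line cancellation once $\partial_c([r_i^m])$ is written down, but for $\delta$ one must carefully track the signs $\omega(c,c^*)$ and the idempotent insertions built into $D_a$, decide precisely which $D$-terms fall into $L_0 \otimes L + L \otimes L_0$, and organize the reindexing of block-words; in particular the case where $a$ is a loop at $i$ (so that $aa^*$ and $a^*a$ are both blocks of $r_i$) requires an entirely parallel but separate bookkeeping. One should also double-check that the stated formula for $\partial_c([r_i^m])$ and the Leibniz rule for $D_a$ carry the conventions of \eqref{ped}--\eqref{ded} exactly.
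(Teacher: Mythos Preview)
Your proposal is correct, and the reduction from $r^{(p^\ell)}$ to the classes $[ir^m]$ via torsion-freeness of $L$ matches the paper exactly. The arguments for $[ir^m]$, however, take a genuinely different route.

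For the bracket, you compute $\partial_c([r_i^m])$ directly and exhibit the cancellation between the ``starts at $i$'' and ``ends at $i$'' rotations of $[w]$. The paper instead uses the Loday bracket on $\tilde P = L \oplus P$ and the Leibniz rule \eqref{ncleibeqn}: from $\{[f],gr\} - \{[f],g\}r = g\{[f],r\}$ and the telescoping identity $\{[f],r\} = 0$ (equation \eqref{eq01}, proved in Lemma~\ref{lip}), one gets $\{[f],[gr]\} \in [Pr]$, and iterating with $r$ replaced by $iri$ gives the result in one line. Your computation is self-contained; the paper's is shorter once the $\tilde P$-machinery is set up.

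For the cobracket the difference is sharper. You expand $(\pr\otimes\pr)(D_a\partial_{a^*} - D_{a^*}\partial_a)([r_i^m])$ directly, separate off the boundary terms landing in $L_0\otimes L + L\otimes L_0$, and organize the interior terms into a telescoping sum. This works, but as you note the bookkeeping (signs, loop case, block reindexing) is delicate. The paper bypasses all of it by working with the lift $\delta_\ell: P \to L\otimes P$ and the BV identity \eqref{bve}: setting $\delta_\ell' = (q\otimes q')\circ\delta_\ell$ with $q,q'$ the projections killing degree zero, one gets $\delta_\ell'(ir^{m+1}) = (1\otimes ir)\delta_\ell'(ir^m) + \delta_\ell'(ir)(1\otimes r^m) + (q\circ\pr\otimes q')\ldb ir, ir^m\rdb$, and a short computation using \eqref{eq02} shows $\ldb ir, ir^m\rdb = ir^m\otimes i - i\otimes ir^m$, which is killed by $q\circ\pr\otimes q'$. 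Since $\delta_\ell'(ir) = 0$ by degree, induction closes. So the paper trades your explicit telescoping for an induction powered by the BV identity; the latter is cleaner and more robust, while yours has the advantage of not invoking any of the $\tilde P$-structure developed in \S\ref{lbs}--\ref{bvp}.
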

\begin{rem}\label{r:rkp}
The complete kernel of the Lie bracket can be found in
\cite[Theorem 9.2.2]{Sarx}, and in
\cite[Proposition 9.2.1]{Sarx}
 the first statement is also generalized to all of
the torsion of $\Lambda$.
\end{rem}

We will prove Propositions \ref{lbp} and \ref{rkp} in \S
\ref{lbprkppfs}, after developing the noncommutative BV structure on
$L$.

\subsection{Lifting brackets to $P$}\label{lbs}
In \cite{VdB}, lifts of the Lie bracket on $L$ to $P$ were defined as
follows.  Let $\tau$ denote the operator which permute components of
tensor products. Namely, if $\sigma \in \Sigma_{n}$, then
$\tau_\sigma: V_1 \o V_2 \o \cdots \o V_n \rightarrow
V_{\sigma^{-1}(1)} \o V_{\sigma^{-1}(2)} \o \cdots \o
V_{\sigma^{-1}(n)}$ is the result of applying the permutation
$\sigma$.

Then, one has the double Poisson bracket
$\ldb , \rdb: P \otimes P \rightarrow P \o P$:
\begin{equation}
\ldb a_1 \cdots a_m, b_1 \cdots b_n \rdb := \sum_i \omega(a_i,b_j) b_1 \cdots b_{j-1} (a_i)_t a_{i+1} \cdots a_m \o a_1 \cdots a_{i-1} (a_i)_s b_{j+1} \cdots b_n.
\end{equation}
In terms of the operator $D_e$ of \eqref{ded}, one has
\begin{equation}
\ldb , \rdb := \sum_{a \in Q_1} (m \o m) \circ \tau_{(13)} \circ (D_a \o D_{a^*} - D_{a^*} \o D_a).
\end{equation}

As mentioned in \cite{VdB}, the formula
$m \circ \ldb\,,\rdb: P \o P \rightarrow P$ defines a Loday bracket.
Since this kills $[P,P] \o P$, for our purposes it will be more
convenient to work with the induced map
$\{\,,\}: L \o P \rightarrow P$, which we henceforth call the Loday
bracket:
\begin{equation} \label{lodb}
\{[a_1 \cdots a_m], b_1 \cdots b_n\} := \sum_i \omega(a_i, b_j)
b_1 \cdots b_{j-1} (a_i)_t a_{i+1} \cdots a_{i-1} b_{j+1} \cdots b_m.
\end{equation}
In terms of \eqref{ped}, \eqref{ded},
\begin{equation}
\{\,,\}\Bigl|_{L \o P} := 
\sum_{a \in Q_1} m \circ (m \o 1) \circ \tau_{(12)} \circ (\partial_a \o D_{a^*} - \partial_{a^*} \o D_a).
\end{equation}
Combining this with the Lie bracket $\{\,,\}: L \o L \rightarrow L$, one
can consider $\{\,,\}$ to be a Lie bracket on 
$\tilde P := L \oplus P$, by defining
\begin{equation}
\{\,,\} \Bigr|_{P \o L} := - \{\,,\} \Bigr|_{L \o P} \circ \tau_{(12)}, \quad \{\,,\}\Bigr|_{P \o P} = 0,
\end{equation}
without losing any information: in fact, this encodes the fact that
the bracket $\{\,,\}$ is skew on $L$.

We may now compare the bracket $\{\,,\}$ with the algebra multiplication on $P$, and it
turns out that this satisfies a kind of Leibniz rule:
\begin{prop} \label{poisprop}
Defining multiplication $m \bigr|_{(L \o P) \oplus (P \o L) \oplus (L \o L)} = 0$ by $L$
to be zero,  $(\tilde P,\{\,,\},m)$ satisfies 
\begin{equation}\label{ncleibeqn}
\{a,bc\} = b \{a,c\} + \{a,b\} c.
\end{equation}
\end{prop}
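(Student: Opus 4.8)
The plan is to check \eqref{ncleibeqn} by a direct inspection of the definition \eqref{lodb}, exploiting the fact that the Loday bracket $\{[w],-\}\colon P\to P$ modifies its second argument only ``locally''. First I would reduce to the essential case. Since the multiplication $m$ on $\tilde P=L\oplus P$ kills any tensor factor lying in $L$, the product $bc$ is nonzero only if $b,c\in P$, and if $a\in P$ then every term of \eqref{ncleibeqn} already vanishes ($\{\,,\}|_{P\otimes P}=0$, and $\{a,b\}c=b\{a,c\}=0$). So it suffices, by bilinearity of $\{\,,\}$ and of $m$, to take $a=[a_1\cdots a_m]\in L$ a single cyclic word and $b=b_1\cdots b_k$, $c=c_1\cdots c_l$ single paths (monomials in the arrows of $\dq$, possibly of length $0$, i.e.\ vertices). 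If $(b_k)_t\ne(c_1)_s$ then $bc=0$, while also $\{a,b\}c=b\{a,c\}=0$ since every summand of $\{a,b\}$ (resp.\ of $\{a,c\}$) is a path ending at $(b_k)_t$ (resp.\ beginning at $(c_1)_s$); so we may assume $bc\ne0$.

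Next I would write $w:=bc=w_1\cdots w_{k+l}$ and read off \eqref{lodb}: each summand of $\{a,w\}$ is indexed by a pair $(i,s)$ with $\omega(a_i,w_s)\ne0$, and it is obtained from $w$ by replacing the single letter $w_s$ with the cyclic rotation $(a_i)_t\,a_{i+1}\cdots a_m\,a_1\cdots a_{i-1}$ of $a$ --- a path with the same source and target as $w_s$, because $\omega(a_i,w_s)\ne0$ forces $w_s$ and $a_i$ to be mutually reverse arrows. Since the alteration is confined to position $s$, I would split the sum according to whether $s\le k$ (the altered letter lies in $b$) or $s>k$ (it lies in $c$); factoring each summand, the $s\le k$ terms collect to $\{a,b\}\cdot c$ and the $s>k$ terms collect to $b\cdot\{a,c\}$, which is \eqref{ncleibeqn}. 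The degenerate configurations need a line each: for a vertex $b=e$ one has $\{a,e\}=0$ and $e\,\{a,c\}=\{a,c\}=\{a,ec\}$ since every term of $\{a,c\}$ begins at $e$ (and symmetrically for a vertex $c$), and the case $m=1$ is simply the instance in which the inserted rotation is the idempotent $(a_i)_t$.

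There is no genuine obstacle here: the identity is essentially the statement that the operator $D_{a^*}$ occurring in the expression of $\{\,,\}|_{L\otimes P}$ through $\partial_a$ and $D_a$ is a \emph{double derivation} in its argument, i.e.\ $D_{a^*}(bc)=D_{a^*}(b)\cdot c+b\cdot D_{a^*}(c)$ for the outer $P$-bimodule structure on $P\otimes P$, which is immediate from \eqref{ded}; pushing this through the two multiplications $m$ and the flip $\tau_{(12)}$ reproduces the Leibniz rule, while $\partial_a$ acts only on $a\in L$ and so plays no role in the bimodule bookkeeping. The only point demanding care is the handling of the endpoint and vertex conventions in the degenerate cases above, and these are routine.
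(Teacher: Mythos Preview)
Your proposal is correct and follows essentially the same approach as the paper, which simply asserts that \eqref{ncleibeqn} ``follows immediately from the definitions'' (or alternatively from Van den Bergh's double Poisson axioms). Your direct verification via the locality of the Loday bracket, together with the observation that $D_{a^*}$ is a double derivation, is precisely the computation the paper leaves to the reader.
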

\begin{proof}
  The fact that $\tilde P$ is Lie follows from a straightforward
  computation, and \eqref{ncleibeqn} follows immediately from the
  definitions.  Alternatively, one may derive this from the double
  Poisson axioms that $\ldb, \rdb$ satisfies (see \cite{VdB}).
\end{proof}
Also, one has the rather obvious identity:
\begin{equation} \label{brpr}
\pr \circ \{\,,\}\Bigl|_{P \o L} = \{\,,\} \circ(\pr \o 1), \text{ and similarly} 
 \pr \circ \{\,,\}\Bigl|_{L \o P} = \{\,,\} \circ (1 \o \pr).
\end{equation}

It turns out that the cobracket lifts to $P$ as well:
\begin{gather}
\nonumber 
\label{dld} \delta_\ell: P \rightarrow L \o P, \delta: \tilde P \rightarrow \tilde P \o \tilde P, \\
\delta_\ell(a_1 \cdots a_n) :=
\sum_{i < j} -\omega(a_i, a_j) [(a_i)_t a_{i+1} \cdots a_{j-1}] \o a_1 \cdots a_{i-1} (a_{i})_s a_{j+1} \cdots a_n, \\\delta_\ell = \sum_{a \in Q_1} (\pr \o m) \tau_{(12)} \bigl((1 \o D_a) D_{a^*} - (1 \o D_{a^*}) D_a\bigr), \\
\delta\Bigl|_P = \delta_\ell - \tau_{(12)} \delta_\ell,
\end{gather}
with $\delta$ defined on $L$ as usual. One then has
\begin{prop}
The structure $(\tilde P, \{\,,\}, \delta)$ is an involutive Lie bialgebra.
\end{prop}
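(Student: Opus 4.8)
The plan is to reduce every axiom for $(\tilde P,\{\,,\},\delta)$ either to the corresponding statement for the involutive Lie bialgebra $(L,\{\,,\},\delta)$ established in \cite{S}, or to a ``doubled'' identity on $P=P_\dq$ of the kind used by van den Bergh \cite{VdB}, transported through the projection $\pr\colon P\onto L$ by means of the compatibilities \eqref{brpr} and \eqref{ncleibeqn}. The point throughout is that $\tilde P=L\oplus P$ carries only finitely many ``mixed'' components, and each can be organized so that the $L$-factors behave as in \cite{S} while the $P$-factor rides along inertly.

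First, the Lie algebra structure on $(\tilde P,\{\,,\})$ is already supplied by Proposition \ref{poisprop} and its proof. Concretely, antisymmetry holds by the definitions of $\{\,,\}|_{P\otimes L}$ and $\{\,,\}|_{P\otimes P}=0$, while the Jacobi identity decomposes according to the number of $L$-factors: on $L^{\otimes 3}$ it is the Jacobi identity for the necklace bracket \eqref{laf} from \cite{S}; on $L^{\otimes 2}\otimes P$ it is exactly the assertion that $\{\,,\}\colon L\otimes P\to P$ of \eqref{lodb} is a Lie action, which is a consequence of the double Poisson axioms of \cite{VdB} together with \eqref{ncleibeqn}; and on the remaining components every iterated bracket lands in $P$, where $\{\,,\}$ vanishes, so Jacobi is trivially satisfied.

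Next, the Lie coalgebra axioms. Co-antisymmetry is immediate: $\delta|_P=\delta_\ell-\tau_{(12)}\delta_\ell$ from \eqref{dld} is manifestly $\tau_{(12)}$-antisymmetric, and $\delta|_L$ is antisymmetric by \cite{S}. For co-Jacobi, $(\mathrm{id}+\sigma+\sigma^2)(\delta\otimes\mathrm{id})\,\delta=0$ with $\sigma$ the cyclic rotation of the three tensor factors, the $L^{\otimes 3}$ component is \cite{S}, and each component with a $P$-factor is proved by the same combinatorial argument: applying $(\delta\otimes\mathrm{id})\delta_\ell$ to a path $a_1\cdots a_n$ amounts to choosing two disjoint pairs of arrows to contract, recording the symplectic signs $\omega(\cdot,\cdot)$ and the order conditions, with the uncontracted arc left in the $P$-slot, and the three cyclic terms cancel in pairs precisely as on $L$. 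The cocycle (one-cocycle) identity $\delta\{a,b\}=\mathrm{ad}_a(\delta b)-\mathrm{ad}_b(\delta a)$ for the diagonal adjoint action on $\tilde P^{\otimes2}$ holds on $L^{\otimes 2}$ by \cite{S}; on $L\otimes P$ and $P\otimes P$ it follows from the corresponding doubled bracket--cobracket identity on $P$, with $\{\,,\}|_{P\otimes P}=0$ killing the superfluous terms and \eqref{brpr} (and its evident $\delta$-analogue) translating between the $P$- and $L$-levels.

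Finally, involutivity $\br\circ\delta=0$ on $\tilde P$. On $L$ this is part of \cite{S}. On $P$, since $\tilde P$ is a free $\Z$-module, unwinding the definition of $\delta|_P$ and of $\br|_{P\otimes L}=-\br|_{L\otimes P}\circ\tau_{(12)}$ reduces the claim to $\{\,,\}\circ\delta_\ell=0$ as a map $P\to P$; this is again the ``cut $a_1\cdots a_n$ at two dual arrows, then reglue with a third pairing'' computation, whose summands cancel exactly as in the proof of involutivity on $L$ in \cite{S}. Assembling the above gives all the axioms. The main obstacle is not conceptual but organizational: one must set up a coherent ``double cobracket'' (respectively double-bracket) framework so that the mixed identities genuinely reduce to the statements of \cite{S} and \cite{VdB}, and then carry the signs $\omega(\cdot,\cdot)$ and the index inequalities through triple and quadruple arrow contractions; once the reduction is in place, no new difficulty beyond that already handled in \cite{S} arises, since the extra $P$-tail is inert.
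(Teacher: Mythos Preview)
Your proposal is correct and follows essentially the same approach as the paper. The paper's proof is a single sentence: ``This follows from the same proof as \cite[\S 2]{S} (one simply needs to remember where the start and end of pieces that come from $P$ are),'' and your argument is a careful unpacking of exactly this---reducing each axiom componentwise to either the known case on $L$ from \cite{S} or to an inert extension where the $P$-tail rides along, tracking endpoints.
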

\begin{proof}
  This follows from the same proof as \cite[\S 2]{S} (one simply
  needs to remember where the start and end of pieces that come from
  $P$ are).
\end{proof}
As before, one may view $\delta$ on $P$ as a lift of $\delta$ on $L$: If we
define $\pr \Bigl|_{L} = \text{id}$, one has
\begin{equation} \label{copr}
(\pr \o \pr) \delta = \delta \circ \pr.
\end{equation}

\subsection{Noncommutative BV structure}
One notices a BV-style connection encompassing the Poisson and Lie bialgebra
structures, whose proof is straightforward and omitted:
\begin{prop}\label{bvp}
  The following BV-style identity is satisfied by $\tilde P$: For any
  $a,b \in P$, one has
\begin{equation} \label{bve}
\delta_\ell(ab) = \delta_\ell(a) (1 \o b) + (1 \o a) \delta_\ell(b) + (\pr \o 1) \ldb a , b \rdb.
\end{equation}
To get an equation involving $\delta$, one can apply $(1-\tau_{(12)})$ to each side
(which originally lives in $L \o P$).
\end{prop}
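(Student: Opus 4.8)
The plan is to prove \eqref{bve} by a direct term-by-term comparison on monomials. By $\Z$-multilinearity of $\delta_\ell$ and of $\ldb\,,\rdb$ it suffices to take $a = a_1 \cdots a_m$ and $b = b_1 \cdots b_n$ with all $a_i, b_j \in \dq_1$, and to verify the identity for this pair. I would then write the product $ab = c_1 \cdots c_{m+n}$ as the concatenated word (so $c_k = a_k$ for $k \leq m$ and $c_{m+k} = b_k$ for $1 \leq k \leq n$) and expand $\delta_\ell(ab)$ via its defining sum over ordered pairs of positions $1 \leq k < l \leq m+n$.

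The key step is to partition that double sum according to where the two marked positions $k < l$ sit relative to the junction between the $a$-block and the $b$-block. If $k < l \leq m$, the internal cyclic word $[(a_k)_t a_{k+1}\cdots a_{l-1}]$ lies entirely inside $a$, while the external word is $a_1 \cdots a_{k-1}(a_k)_s a_{l+1}\cdots a_m \cdot b$, i.e.\ exactly a term of $\delta_\ell(a)$ with $b$ appended on the right; summing over these pairs gives $\delta_\ell(a)(1 \o b)$. If $m < k < l$ the situation is symmetric: the internal word lies inside $b$ and the external word is $a \cdot b_1 \cdots b_{i-1}(b_i)_s b_{j+1}\cdots b_n$ (writing $k = m+i$, $l = m+j$), so the sum over these pairs is $(1 \o a)\delta_\ell(b)$.

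The remaining ``straddling'' pairs, $k \leq m < l$, produce the double bracket. Writing the first position as $i \leq m$ and the second as $m+j$ with $1 \leq j \leq n$, the internal cyclic word is $[(a_i)_t a_{i+1}\cdots a_m b_1 \cdots b_{j-1}]$ and the external word is $a_1 \cdots a_{i-1}(a_i)_s b_{j+1}\cdots b_n$. By cyclic invariance of the projection $\pr$, $[(a_i)_t a_{i+1}\cdots a_m b_1 \cdots b_{j-1}] = [b_1 \cdots b_{j-1}(a_i)_t a_{i+1}\cdots a_m]$, which is precisely $\pr$ applied to the first tensor factor of the corresponding term of $\ldb a, b\rdb$; the external word matches the second tensor factor verbatim. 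Since a straddling pair automatically satisfies $k < l$, summing over all $(i,j) \in \{1,\dots,m\}\times\{1,\dots,n\}$ reproduces $(\pr \o 1)\ldb a, b\rdb$, and collecting the three cases yields \eqref{bve}. The $\delta$-version then follows by applying $1 - \tau_{(12)}$ to both sides and using $\delta|_P = \delta_\ell - \tau_{(12)}\delta_\ell$ as in \eqref{dld}.

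The only delicate point — and the step I would be most careful with — is the sign and endpoint bookkeeping: one must track the coefficient $-\omega(c_k,c_l)$ appearing in $\delta_\ell$ against the coefficient $\omega(a_i,b_j)$ in $\ldb\,,\rdb$, together with the antisymmetry $\omega(v,w) = -\omega(w,v)$, and confirm that the vertex idempotents $(a_i)_s,(a_i)_t$ land in the correct slots after the cyclic rotation (here one uses that a nonzero cyclic word is a genuine cycle, so its endpoints agree). There is no conceptual obstacle. Alternatively, the identity can be read off formally from Van den Bergh's double Poisson calculus \cite{VdB} by expressing both $\delta_\ell$ and $\ldb\,,\rdb$ through the double-derivation operators $D_a$ of \eqref{ded} (cf.\ \eqref{dld}) and using that each $D_a$ is a derivation into the outer bimodule $P \o P$; I would, however, favor the direct computation above since it makes the three contributions on the right-hand side of \eqref{bve} transparent.
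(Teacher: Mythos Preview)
Your proposal is correct and is exactly the direct computation the paper has in mind; the paper itself omits the proof, merely stating it is ``straightforward and omitted.'' Your partition of the double sum for $\delta_\ell(ab)$ into the three blocks $k<l\le m$, $m<k<l$, and $k\le m<l$ is precisely the intended verification, and you correctly flag the sign and idempotent bookkeeping in the straddling case as the only point requiring care.
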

What this says is that $\delta$ can be defined as the unique cobracket
satisfying \eqref{bve} which vanishes on $P_0 \oplus P_1$.  Since
$\delta$ has total degree $-2$, it must be the unique \textbf{homogeneous}
cobracket (or, indeed, homogeneous linear map $P \rightarrow L \o P$) of
degree $-2$ 
satisfying \eqref{bve}.

\subsubsection{More general cobrackets} The results of this subsection will not
be needed elsewhere in the paper.

One could define more general inhomogeneous cobrackets which do not vanish
on $P_1$. For example, one could start with
$\delta'_\ell(a) = F_a \o a$, for elements $F_a \in L$ assigned to
arrows $a \in \dq_1$, satisfying: $\delta(F_a) = 0$ (using the old
$\delta$ from \eqref{cofla}), $F_a = - F_{a^*}$, and
$\{b b^*, F_a\} = 0, \forall a,b \in \dq_1$. This extends to a unique
$\delta'_\ell$ on $P$ satisfying \eqref{bve}, which induces a Lie coalgebra
structure on $L$.

However, this does not necessarily yield an involutive Lie bialgebra: to
obtain involutivity and the one-cocycle condition, we can set
$\delta_\ell = \sum_{a \in Q_1} -F_a \o \ad [a a^*] + [a a^*] \o \ad
F_a$.

More generally, any $\delta_\ell'$ satisfying \eqref{bve} must be of the form
\begin{equation}
\delta_\ell' = \delta_\ell + \sum_i F_i \otimes \theta_i,
\end{equation}
where $F_i \in L$ and $\theta_i \in \Der(P)$.  The condition that $\delta_\ell'$ induce a cobracket on $L$ (the co-Jacobi condition) is then
\begin{equation} \label{delcurv}
Skew \circ \bigl( \delta(\sum_i F_i \o \theta_i) + \frac{1}{2} \{\sum_i F_i \o \theta_i, \sum_j F_j \o \theta_j\} \bigr) = 0, 
\end{equation}
where
\begin{gather}
\delta(F \o \theta) := \delta(F) \o \theta - F \o [\delta_\ell, \theta], \\ 
[\delta_\ell, \theta] := \delta_\ell \circ \theta - (\theta \o 1 - \1 \o \theta) \circ \delta_\ell, \\
\{F \o \theta, F' \o \theta'\} := (F \wedge \theta(F')) \o \theta' + (F' \wedge \theta'(F)) \o \theta - (F \wedge F') \o [\theta, \theta'], \text{ and} \\
Skew = \sum_{\sigma \in S_3} \text{sign}(\sigma) \cdot \tau_{\sigma}.
\end{gather}
In the case that the $\theta_i$ are inner derivations (equivalently, they
kill $r$; in particular, they descend to $\Pi$), then we may
consider maps of the form $\delta' = \delta + \sum (F_i \o \ad G_i - G_i \o \ad F_i)$. Then, one has $[\delta_\ell, \ad F] = (1 \o \ad) \delta(F)$. Thus, \eqref{delcurv} says
\begin{equation} \label{delcurv2}
Skew \bigl( (\delta \o 1) (\sum_i F_i \wedge G_i) + \frac{1}{2} \{ \sum_i F_i \wedge G_i, \sum_j F_j \wedge G_j \} \bigl) = 0,
\end{equation}
where now $\{F \o G, F' \o G\} = F \o \{G,F'\} \o G$, extended linearly.

This is a version of the Maurer-Cartan equation.  The explanation is
that it is the condition $(D')^2 = 0$, where $D'$ is the operator on
the exterior algebra $\Lambda^{\bullet} L$ induced by $\delta' := (\pr
\o \Id) \circ \delta'_\ell$ and the bracket.  Indeed, as observed in
\cite[\S 2.10]{G}, involutive Lie (super)bialgebra structures on
$\mathfrak{g}$ are equivalent to BV structures on the exterior algebra
$\Lambda^\bullet \mathfrak{g}$ with a differential obtained from maps
$\mathfrak{g} \rightarrow \wedge^2 \mathfrak{g}$ and $\wedge^2
\mathfrak{g} \rightarrow \mathfrak{g}$.  As a consequence, solutions
of \eqref{delcurv2} will give solutions of \eqref{bve} which descend
to involutive Lie bialgebra structures (not merely
cobrackets). Furthermore, given that $\delta$ descends to $\Lambda$,
so does the above $\delta'$.

For example, for any $F_i, G_i \in [P_Q]$, one obtains the class
$\delta_\ell' = \delta +\sum_i (F_i \o \ad G_i - G_i \o \ad F_i)$,
which gives an involutive Lie bialgebra satisfying \eqref{bve}. Note
that this still gives a graded Lie bialgebra using the ``geometric''
grading, given by setting $|Q^*| = 1$ and $|Q|=0$: the bracket and
cobracket then both have degree $-1$.  (The total grading we are using
everywhere else in this paper is the analogue of the Bernstein or
additive grading.  In \cite{GSncbv}, we use the geometric grading
since it exists for much more general algebras (replacing $P_\dq =
T_{P_Q} \Der(P_Q, P_Q \o P_Q)$ by $T_A \Der(A, A \o A)$ for more
general $A$.)

For more details and a general construction of noncommutative
BV structures, see \cite{GSncbv}. For our purposes, we will only
need the identity \eqref{bve}.

\subsection{Proof of Propositions \ref{lbp} and \ref{rkp}} \label{lbprkppfs}
We break Proposition \ref{lbp} into two lemmas:
\begin{lemma} \label{lip}
The $\Z$-submodule $[Pr] \subset L = P/[P,P]$ is a Lie ideal.
\end{lemma}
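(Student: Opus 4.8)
The plan is to express the necklace bracket with elements of $[Pr]$ through the Loday bracket of \eqref{lodb}. For a cyclic word $[c] \in L$, set $\theta_{[c]} := \{[c],-\} \colon P \to P$. By Proposition \ref{poisprop} (the identity \eqref{ncleibeqn} with first argument in $L$), $\theta_{[c]}$ is a derivation of $P$. I will show that $\{[c],\pr(w)\} \in [Pr]$ for every $[c] \in L$ and every $w$ in the ideal $\ldp r \rdp = PrP$; since $[Pr] = \pr(PrP)$ and the necklace bracket on $L$ is skew, this yields that $[Pr]$ is a Lie ideal.

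The crux is the identity $\theta_{[c]}(r) = 0$ in $P$ --- the statement that $r$ is ``central'' for the bracket. To prove it I would first compute $\theta_{[c]}$ on generators: comparing \eqref{lodb}, applied with a single arrow in the second slot, with \eqref{ped}, and using $\omega(a,a^*) = 1$, one finds $\theta_{[c]}(a) = -\partial_{a^*}[c]$ and $\theta_{[c]}(a^*) = \partial_a[c]$ for each $a \in Q_1$. Applying the derivation $\theta_{[c]}$ to $r = \sum_{a \in Q_1}(aa^* - a^*a)$, the mixed terms cancel in pairs and one is left with
\begin{equation}
\theta_{[c]}(r) = \sum_{b \in \dq_1}\bigl(b\,\partial_b[c] - \partial_b[c]\,b\bigr).
\end{equation}
Writing a representative $[c] = [c_1\cdots c_m]$ and using $c_i\,(c_i)_t = c_i$ and $(c_i)_t\, c_{i+1} = c_{i+1}$, both $\sum_b b\,\partial_b[c]$ and $\sum_b \partial_b[c]\,b$ equal $\sum_{i=1}^m c_i c_{i+1}\cdots c_{i-1}$ (indices mod $m$), the sum of all cyclic rotations of $c_1\cdots c_m$; hence $\theta_{[c]}(r) = 0$. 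The boundary cases, where $[c]$ is a vertex or a loop, are immediate.

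Once $\theta_{[c]}(r) = 0$ is known, the derivation property gives $\theta_{[c]}(prp') = \theta_{[c]}(p)\,r\,p' + p\,r\,\theta_{[c]}(p') \in PrP$, so $\theta_{[c]}$ preserves the ideal $\ldp r \rdp$. Then for $w \in PrP$ the compatibility \eqref{brpr}, $\pr \circ \{,\}|_{L \o P} = \{,\} \circ (1 \o \pr)$, gives $\{[c],\pr(w)\} = \pr(\theta_{[c]}(w)) \in \pr(PrP) = [Pr]$. Since every element of $[Pr]$ admits such a representative and the necklace bracket on $L$ is skew-symmetric, both $\{L,[Pr]\} \subseteq [Pr]$ and $\{[Pr],L\} \subseteq [Pr]$, so $[Pr]$ is a Lie ideal. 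The only non-formal point is the identity $\theta_{[c]}(r) = 0$; alternatively one could cite the moment-map property of $r$ for van den Bergh's double bracket, but the computation above is quick and self-contained.
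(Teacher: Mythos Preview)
Your proof is correct and follows essentially the same approach as the paper: both use the Loday bracket $\{\,,\}:L\otimes P\to P$ together with the Leibniz identity \eqref{ncleibeqn} to reduce the question to showing $\{[c],r\}=0$, verify this by the telescoping/rotation computation, and then invoke \eqref{brpr} to descend to $L$. The only difference is cosmetic: you first compute $\theta_{[c]}$ on single arrows and then assemble the formula for $\theta_{[c]}(r)$, whereas the paper writes out $\{[a_1\cdots a_m],r\}$ directly as a single sum and observes the cancellation.
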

\begin{proof}
  Note that this is actually a special case of Proposition 4.4.3(ii)
  from \cite{CBEG}. We give an elementary proof.  Pick
  $f = [a_1 \cdots a_m] \in L$ and let $g \in P$ be arbitrary. We make
  use of the Poisson bracket $\{\,,\}$ on $\tilde P = L \oplus P$, to
  obtain
\begin{multline} \label{eq01}
\{[a_1 \cdots a_m], g r\} - \{[a_1 \cdots a_m], g\}r = g\{[a_1 \cdots a_m], r\} \\ =
g \sum_{j} \omega(a_j,a_j^*) (-1)^{[a_j^* \in Q_1]} \left( a_{j+1} \cdots a_{j-1} a_j -
a_j a_{j+1} \cdots a_{j-1} \right) \\ = g \sum_j \left( a_{j+1} \cdots a_{j-1} a_j - a_j a_{j+1} \cdots a_{j-1} \right) = 0.
\end{multline}
Here $[\text{statement}]$ is defined to equal one if ``statement'' is
true and to equal zero if ``statement'' is false.  Now, the result follows from
\eqref{brpr}.
\end{proof}
\begin{lemma}
The $\Z$-submodule $[Pr] + L_0 \subset L$ is a Lie bi-ideal.
\end{lemma}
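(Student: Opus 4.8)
The plan is to check the two halves of ``Lie bi-ideal'' separately for $I := [Pr] + L_0$. The ideal half is immediate: since a vertex is a cyclic word of length zero, \eqref{laf} forces $\{L, L_0\} = 0$, so $\{I, L\} = \{[Pr], L\} + \{L_0, L\} = \{[Pr], L\} \subseteq [Pr] \subseteq I$ by Lemma \ref{lip}. For the same reason \eqref{cofla} gives $\delta(L_0) = 0$, so the coideal condition $\delta(I) \subseteq I \wedge L$ reduces to showing $\delta([gr]) \in I \wedge L$ for an arbitrary path $g$ in $\dq$ (such classes span $[Pr]$).

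To compute $\delta([gr])$ I would not use \eqref{cofla} directly (the cyclic wrap-around of pairs of indices inside $g$ is awkward), but instead exploit the lift $\delta_\ell \colon P \to L \otimes P$ and the noncommutative BV identity \eqref{bve} with $a = g$, $b = r$:
\[
\delta_\ell(gr) = \delta_\ell(g)(1 \otimes r) + (1 \otimes g)\,\delta_\ell(r) + (\pr \otimes 1)\ldb g, r\rdb ,
\]
after which $\delta([gr])$ is obtained by antisymmetrizing and applying $\pr \otimes \pr$, via \eqref{copr}. The first summand has its right tensor leg in $Pr$, hence contributes to $L \wedge [Pr]$. For the second summand, evaluating \eqref{dld} on $aa^* - a^*a$ and summing over $a \in Q_1$ shows $\delta_\ell(r)$ has all left legs equal to vertices, so $(1 \otimes g)\delta_\ell(r) \in L_0 \otimes P$ and contributes to $L_0 \wedge L$. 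Both of these sit inside $I \wedge L$.

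The one nontrivial point is the third summand, and the key is the moment-map style identity
\[
\ldb g, r\rdb = g \otimes e_{g_s} - e_{g_t}\otimes g ,
\]
with $e_{g_s}, e_{g_t}$ the source and target vertices of $g$. I would prove this by expanding $\ldb g, aa^* - a^*a\rdb$ from the definition of the double bracket: writing $g = g_1\cdots g_m$, one finds that each position $i$ --- independently of whether $g_i$ is an arrow of $Q$ or a reversed arrow --- contributes exactly $g_i\cdots g_m \otimes g_1\cdots g_{i-1} - g_{i+1}\cdots g_m \otimes g_1\cdots g_i$, and the sum over $i$ telescopes to the displayed expression. Granting this, $\pr \otimes \pr$ of the antisymmetrization of $(\pr \otimes 1)\ldb g, r\rdb$ is $[g] \wedge (e_{g_s} + e_{g_t}) \in L \wedge L_0$. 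Assembling the three pieces gives $\delta([gr]) \in L \wedge [Pr] + L \wedge L_0 \subseteq I \wedge L$, so $I$ is a Lie coideal; combined with the ideal half, $I = [Pr] + L_0$ is a Lie bi-ideal, whence $\Lambda_+ = L/I$ inherits the Lie bialgebra structure and $\Lambda = L/[Pr]$ the Lie algebra structure, completing Proposition \ref{lbp}. The main obstacle is keeping signs and the starred/unstarred bookkeeping straight in the telescoping computation of $\ldb g, r\rdb$; everything else is routine manipulation with the already-established identities \eqref{bve}, \eqref{dld}, \eqref{copr}.
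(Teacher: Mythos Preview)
Your proof is correct and follows essentially the same approach as the paper: both reduce the coideal condition to a BV-identity computation of $\delta_\ell$ applied to a product involving $r$, identify the three summands as landing in $L \otimes Pr$, $L_0 \otimes P$, and (via a telescoping computation of the double bracket with $r$) $P_0 \otimes P + P \otimes P_0$, and then project via \eqref{copr}. The only cosmetic difference is that the paper writes $\delta_\ell(rf)$ and computes $\ldb r, f\rdb$, whereas you write $\delta_\ell(gr)$ and compute $\ldb g, r\rdb$; these are equivalent by the skew property of the double bracket, and your telescoping formula $\ldb g, r\rdb = g \otimes e_{g_s} - e_{g_t} \otimes g$ matches the paper's $\ldb r, f\rdb = f \otimes e_{f_t} - e_{f_s} \otimes f$ under $\tau_{(12)}$ and a sign.
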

\begin{proof}
  It is obvious that $L_0$ is in the kernel of the Lie bracket, so we
  need only show that $[Pr] \oplus L_0$ is a Lie coideal.  Let
  $f \in P$ be arbitrary; we compute $\delta_\ell(rf)$ by means of the
  BV identity \eqref{bve}:
\begin{equation}
\delta_\ell(rf) = \delta_\ell(r) (1 \o f) + (1 \o r) \delta_\ell(f) +
(\pr \o 1) \ldb r,f \rdb.
\end{equation}
Now, $\delta_\ell(r) = \sum_{a \in Q_1} (a_t \o a_s - a_s \o a_t)$, so that the first
term on the RHS is in $L_0 \o P$.  The second term is obviously in $L \o Pr$. Let
us compute the last term on the RHS.  Let us assume $f = a_1 \cdots a_n$ is a single path:
\begin{multline} \label{eq02}
\ldb r, a_1 \cdots a_n \rdb = \sum_i \omega(a_i^*,a_i) (-1)^{[a_i \in Q_1]}
\bigl(a_1 \cdots a_{i-1} a_i \o
(a_i)_t a_{i+1} \cdots a_n \\ - a_1 \cdots a_{i-1} (a_i)_s \o a_i \cdots a_n \bigr)=  a_1 \cdots a_{n} \o (a_n)_t - (a_1)_s \o a_1 \cdots a_n,
\end{multline}
since $\omega(a_i^*,a_i) (-1)^{[a_i^* \in Q_1]} = 1$ as before.  This is in
$P_0 \o P + P \o P_0$. The lemma now follows
from \eqref{copr}.
\end{proof}


  

\begin{proof}[Proof of Proposition \ref{rkp}]
  We prove the second assertion (which clearly implies the first,
  since $L$ is a free module). To show that $[i r^m]$ is in the kernel
  of the Lie bracket on $L$ for any $i \in Q_0$ and $m \geq 0$, one
  simply applies $\eqref{eq01}$ multiple times, replacing $r$ with $i
  r i$ (and hence limiting the sum to those arrows $a_j$ which are
  adjacent to $i$).

Showing that $[i r^m]$ is in the kernel of the cobracket is a bit
more difficult.  We show more generally that $i r^m$ is in the kernel
of $\delta_\ell' := (q \o q') \circ \delta_\ell: P \rightarrow L_+ \o P_+$, where
$q: L \onto L/L_0 = L_+, q': P \onto P/P_0=P_+$ are the projections.  
Inductively, we need to
show that
\begin{equation}
0 = \delta_\ell'(i r^{m+1}) = i r \delta_\ell'(i r^m) + \delta_\ell'(i r) r^m
+ (q \circ \pr \o q') \ldb i r, i r^m \rdb = 
(q \circ \pr \o q')\ldb i r, i r^m \rdb.
\end{equation}
Now, considering \eqref{eq02}, one verifies that most of the terms in
$\ldb i r, i r^m \rdb$ cancel, leaving $i r^m \o i - i \o i r^m$.
This is killed by $q \circ \pr \o q'$, verifying the desired result.
\end{proof}

\subsection{Commutators and Poisson brackets in the extended Dynkin case}
\label{fpds} 
Throughout this subsection, let $Q^0$ be a (fixed) extended Dynkin quiver. Let $\Gamma < SL(2,\C)$
be the corresponding finite group under the McKay correspondence.

\subsubsection{Preliminaries on $i_0 \Pi_{Q^0} i_0$ and
$\Lambda_{Q^0}$}
It will be useful to consider the following sequence of natural maps
(cf.~\eqref{edisos}):
\begin{equation}
HH^0(\Pi_{Q^0}) \into \Pi_{Q^0} \onto HH_0(\Pi_{Q^0}) = \Lambda_{Q^0}.
\end{equation}
Tensoring over $\k := \Z[\frac{1}{|\Gamma|},e^{\frac{2 \pi
    i}{|\Gamma|}}]$, this composition is an isomorphism onto
$((\Lambda_{Q^0})_+ \otimes \k) \oplus \k$ (where $1 \in \k$ is the
class of the unit in $P_{\overline{Q^0}}$).  To see this, we can use
the graded Morita equivalence $\Pi_{Q^0} \o \k \simeq \k[x,y] \rtimes
\Gamma$ (cf.~\S \ref{gpsec}), which induces isomorphisms on $HH^0$ and
$HH_0$. Thus, we can replace $\Pi_{Q^0} \o \k$ by $\k[x,y] \rtimes
\Gamma$, and the result follows from the fact that $HH^0(\k[x,y]
\rtimes \Gamma) = \k[x,y]^\Gamma \to HH_0(\k[x,y] \rtimes \Gamma) =
(\k[x,y]^\Gamma \oplus \k[\Gamma]^\Gamma)/\k$ is an isomorphism in
positive degrees.

Note that, if we work over $\Z$ or a field of characteristic dividing
$|\Gamma|$, then the map $HH^0(\Pi_{Q^0}) \to \Lambda_{Q^0}$ is
\emph{not} an isomorphism in positive degrees, even for the cases of
$\tilde A_{n-1}, \tilde D_n$.  For example, in the case $\tilde
A_{n-1}$, a central element involves a sum over all vertices of cycles
beginning and ending at that vertex; for each vertex, the
corresponding summand has the same image in $\Lambda_{Q^0}$ and
hence the sum must be a multiple of $n$, which does not yield an
isomorphism when we don't invert $|\Gamma|=n$.

Back to the situation above with $\k =
\Z[\frac{1}{|\Gamma|},e^{\frac{2 \pi i}{|\Gamma|}}]$, we may
transplant the commutative multiplication on $HH^0(\Pi_{Q^0} \o \k)
\cong \k[x,y]^\Gamma \cong i_0 \Pi_{Q^0} i_0 \o \k$ to
$((\Lambda_{Q^0})_+ \o \k) \oplus \k$ using the isomorphism
$HH^0(\Pi_{Q^0} \o \k) \iso ((\Lambda_{Q^0})_+ \o \k) \oplus \k$
above.  It is easy to check that this multiplication is compatible
with the necklace bracket, i.e., induces a graded Poisson algebra
structure on $((\Lambda_{Q^0})_+ \o \k) \oplus \k$ (it essentially
follows from Proposition \ref{poisprop}). Since there is a unique
generically nondegenerate Poisson bracket on $HH^0(\Pi_{Q^0} \o \k)
\cong \k[x,y]^\Gamma$ up rescaling, this shows that the isomorphism
$HH^0(\Pi_{Q^0} \o \k) \iso ((\Lambda_{Q^0})_+ \o \k) \oplus \k$
carries the standard Poisson bracket to the necklace bracket, up to
scaling.  This actually works over $\Z[\frac{1}{|\Gamma|}]$ since we
do not need the roots of unity to express the center of
$HH^0(\Pi_{Q^0} \o \Z[\frac{1}{|\Gamma|}])$.

It is useful to have the following result (still with $\k= \Z[\frac{1}{|\Gamma|},e^{\frac{2 \pi i}{|\Gamma|}}]$):
\begin{prop}\label{izprop}
  Let $Q^0$ be any extended Dynkin quiver with extending vertex $i_0
  \in Q^0_0$, and let $i \in Q^0_0$ be any vertex.  Let $z \in
  HH^0(\Pi_{Q^0} \o \k)$ be any central element.  Then, taking image
  in $HH_0(\Pi_{Q^0} \o \k) = \Lambda_{Q^0} \o \k$, we obtain
\begin{equation}
[i z] = \dim(\rho_i) [i_0 z], \quad [z] = |\Gamma| [i_0 z].
\end{equation}
\end{prop}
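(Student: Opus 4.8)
\emph{Proof proposal.} The plan is to transport the whole computation to the skew group algebra $B:=\k[x,y]\rtimes\Gamma$ by means of the Morita equivalence $\Pi_{Q^0}\otimes\k\cong\fsf B\fsf$ of \cite{CBH} recalled in \S\ref{gpsec}, where $\fsf=\sum_{i\in Q^0_0}\fsf_i$ and $\fsf_i\in\k[\Gamma]$ is a primitive idempotent with $\k[\Gamma]\fsf_i\cong\rho_i$. Since both $HH^0$ (the center) and $HH_0$ are Morita invariant, we work inside $B$: the central class $z$ is represented by an element $\tilde z\in Z(B)=\k[x,y]^\Gamma$, the vertex idempotent $i$ by $\fsf_i$, and the extending vertex $i_0$, which labels the trivial representation (so $\dim\rho_{i_0}=1$), by $\fsf_{i_0}=\frac1{|\Gamma|}\sum_{\gamma\in\Gamma}\gamma$; accordingly $[z]$, $[iz]$, $[i_0z]$ are the images in $HH_0(B)$ of $\tilde z$, $\fsf_i\tilde z$, $\fsf_{i_0}\tilde z$. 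It suffices to treat $z$ homogeneous of positive degree, so that $\tilde z$ lies in the augmentation ideal of $\k[x,y]$ (the degree-zero part of $HH^0$ is spanned by the unit).

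The key local fact is that for such $\tilde z$ and any $\gamma\in\Gamma$ with $\gamma\neq1$ one has $[\tilde z\gamma]=0$ in $HH_0(B)$. This holds because $\gamma-1$ acts invertibly on the plane $\k\cdot\{x,y\}$: a finite-order element of $SL_2$ other than the identity has no eigenvalue $1$, and $\det(\gamma-1)$, a product of factors $\lambda-1$ with $\lambda$ a root of unity of order dividing $|\Gamma|$, has norm dividing a power of $|\Gamma|$ and is therefore a unit in $\k=\Z[\frac1{|\Gamma|},e^{2\pi i/|\Gamma|}]$. (Alternatively, since $\Lambda_{Q^0}\otimes\k$ is torsion-free, the vanishing may be checked after $\otimes_\k\C$.) Writing $\tilde z=xf+yg$ with $f,g\in\k[x,y]$ and choosing linear forms $\ell_1,\ell_2$ with $\gamma\cdot\ell_1-\ell_1=x$, $\gamma\cdot\ell_2-\ell_2=y$, a short computation in $B$ gives $xf\gamma=-[\ell_1,f\gamma]$ and $yg\gamma=-[\ell_2,g\gamma]$, so $\tilde z\gamma\in[B,B]$; one may instead invoke Proposition \ref{hcsgap}, under which the $\gamma$-summand $HH_0(\k[x,y],\k[x,y]\gamma)\cong\k\gamma$ is concentrated in degree zero.

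Granting this, for any $b=\sum_\gamma c_\gamma\gamma\in\k[\Gamma]$ one gets $[b\tilde z]=[\tilde zb]=\sum_\gamma c_\gamma[\tilde z\gamma]=c_1[\tilde z]$ in $HH_0(B)$, using that $\tilde z$ is $\Gamma$-invariant, so $\gamma\tilde z=\tilde z\gamma$. The coefficient $c_1$ of the identity in the primitive idempotent $\fsf_i$ equals $\dim(\rho_i)/|\Gamma|$: right multiplication by $\fsf_i$ is an idempotent $\k$-linear endomorphism of $\k[\Gamma]$ with image $\k[\Gamma]\fsf_i\cong\rho_i$, hence of trace $\dim(\rho_i)$, whereas computing that trace in the group basis gives $|\Gamma|c_1$. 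Therefore $[iz]=[\fsf_i\tilde z]=\frac{\dim(\rho_i)}{|\Gamma|}[\tilde z]$ and $[i_0z]=\frac1{|\Gamma|}[\tilde z]$, giving $[iz]=\dim(\rho_i)[i_0z]$ and $[z]=[\tilde z]=|\Gamma|[i_0z]$. The one point needing attention is to read all three classes off consistently in the single model $B$ (with $z$ represented by $\tilde z\cdot1$), since the map $HH^0\to HH_0$ is not itself a Morita invariant; the main computational step is then the vanishing $[\tilde z\gamma]=0$ for $\gamma\neq1$, whose proof rests on the arithmetic that $\det(\gamma-1)$ is a unit in $\k$ (or, equivalently, on passing to $\C$).
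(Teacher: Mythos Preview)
Your proposal is correct and follows essentially the same route as the paper: transport to $B=\k[x,y]\rtimes\Gamma$ via the Morita equivalence, observe that $[\tilde z\gamma]=0$ in $HH_0(B)$ for $\gamma\neq1$ and $\tilde z$ of positive degree, and then read off the coefficient of the identity in $\fsf_i$ as $\dim(\rho_i)/|\Gamma|$. The paper's proof is a terse four-line sketch of exactly this argument; you have supplied the details it omits—namely the explicit reason $[\tilde z\gamma]$ vanishes (via invertibility of $\gamma-1$, or equivalently via the conjugacy-class decomposition of Proposition~\ref{hcsgap}), the trace computation identifying the coefficient of $e$ in $\fsf_i$, and the caveat about reading all three classes in a single model for $HH_0$.
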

\begin{proof}
Let us consider the sequence 
\begin{equation}
HH^0(\Pi_{Q^0} \o \k) \into 
\k[x,y] \rtimes \Gamma \onto HH_0(\k[x,y] \rtimes \Gamma) \iso \Lambda_{Q^0} \o \k
\end{equation}
We know that the sequence is injective.  By the analysis in \S
\ref{gpsec}, the image of $[iz]$ in $HH_0(\k[x,y] \rtimes \Gamma)$
consists of projection of $z \fsf_i$ to $z e$ where $e \in \Gamma$ is
the identity.  But this is the projection of the idempotent $\fsf_i$
to $e$, which is taking the trace of the identity element in the
representation $\rho_i$, which is the dimension.
\end{proof}

\subsubsection{$P_{Q^0}$ as a free-product deformation of $\Pi_{Q^0}$}\label{ss:fpdef}
Next, let
$Q \supsetneq Q^0$ be any quiver,
and as in \S \ref{gpsec} let
$r' := \sum_{a \in Q_1 \setminus Q_1^0} \1_{Q^0_0} (a a^* - a^* a)
\1_{Q^0_0}$.
For any
$f \in \Pi_{Q^0}$ and $z \in HH^0(\Pi_{Q^0})$, choose lifts $\tilde f, \tilde z \in P_{\dzq}$.
It follows using the Morita equivalence of \S \ref{gpsec} that
\begin{equation} \label{nrpz}
[\tilde z,\tilde f] \equiv - \mu_{r'} \ldb \tilde z, \tilde f \rdb  \pmod{\ldp r' \rdp^2 + [\ldp r' \rdp, f]},
\end{equation}
where
\begin{equation}
\mu_{r'} (a \o b) := a r' b.
\end{equation}
For example, in the case
$\Gamma=\{1\}$, the above is an expansion of $[x^a y^b, x^c
y^b]$ under the relation $[x,y] = r'$. In this case, $Q^0$ is the
quiver with one vertex and one loop, and
\begin{equation}
[x^n, y] = -\sum_{i=0}^{n-1} x^i r' x^{n-i-1} \equiv -r' x^{n-1} \pmod{[\ldp r' \rdp^2] + [\ldp r' \rdp, \Pi_Q]}.
\end{equation}
Let us return to the case of general extended Dynkin $Q^0$.  
\begin{rem}
Because of Proposition \ref{ppp}, or by the same argument, one
deduces that \eqref{nrpz} remains true after replacing $\Pi_Q$ by
$P_{\dzq}$, or more generally by $\Pi_{Q,J}$, for $Q \supset Q^0$ and
either $Q \neq Q^0$ or $J \neq \emptyset$; we then set $r' = -\sum_{a
  \in Q_1^0} (aa^*-a^*a) \in \Pi_{Q,J}$.

Now, replace $\Pi_Q$ by $P_{\dzq}$. By \eqref{mipe}, as in the proof
of Proposition \ref{mip2}.(i), one deduces that $P_{\dzq}/ [\ldp r'
\rdp, f]$ and $P_{\dzq}/([\ldp r' \rdp, f] + \ldp r' \rdp^2)$ are
torsion-free. Hence, \eqref{nrpz} holds over $\Z$ (working in
$P_{\dzq}$).

We interpret \eqref{nrpz}, together with the NCCI property that
$P_\dzq \cong \Pi_{Q^0} *_R T_\Z \langle r \rangle$ (with $R = \Z^{Q_0}$),
as saying that $P_\dzq$ is a \textbf{noncommutative} or \textbf{free
  product} deformation of $\Pi_{Q^0}$, which ``quantizes'' the double
bracket $\ldb\, ,\rdb$ (more precisely, the Poisson bracket on
$HH^0(\Pi_{Q^0})$, using the following propositions). One can also say
that $P_Q$ is a noncommutative deformation of $\Pi_Q$ for any
non-Dynkin, non-extended Dynkin quiver (the NCCI property yields
``flatness''), but without a statement about Poisson bracket. For more
details and the general theory of this type of ``free product''
deformation, see \cite{GScyc}.
\end{rem}

We may now deduce the following useful result, over $\Z$:
\begin{prop} \label{ubrp} Let $Q \supsetneq Q^0$ where $Q^0$ is extended Dynkin. 
For any $z \in HH^0(\Pi_{Q^0})$ and $x \in \Pi_{Q^0}$, and any lifts $\tilde z, \tilde x$
to $\Pi_Q$, 
\begin{equation}
\tilde z \tilde x - \tilde x \tilde z \equiv [r' \psi(\{[i_0 z],[x]\})] \pmod{[\ldp \langle \dq_1 \setminus \dzqo \rangle \rdp,  \Pi_{Q}] + [\ldp \dq_1 \setminus \dzqo \rdp^3]},
\end{equation}
where $\psi: \Lambda_{Q^0} = HH_0(\Pi_{Q^0}) \onto HH^0(\Pi_{Q^0})$ is the composition
$$HH_0(\Pi_{Q^0}) \onto HH_0(\Pi_{Q^0}) / (\text{torsion}\oplus \bigoplus_{i \neq i_0} [i]) \cong i_0 \Pi_{Q^0} i_0 \cong HH^0(\Pi_{Q^0}).$$
\end{prop}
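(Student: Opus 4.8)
The plan is to deduce this from \eqref{nrpz}. Since $z\in HH^0(\Pi_{Q^0})$ is central, the image of $\tilde z\tilde x-\tilde x\tilde z$ under $\Pi_Q\onto\Pi_Q/\ldp\dq_1\setminus\dzqo\rdp\cong\Pi_{Q^0}$ is $[z,x]=0$, so $\tilde z\tilde x-\tilde x\tilde z$ already lies in the ideal $\ldp\dq_1\setminus\dzqo\rdp$, and both sides of the asserted congruence represent elements of $\bar V:=V/[\ldp\dq_1\setminus\dzqo\rdp^3]$, with $V$ as in Proposition \ref{mip2}. By Propositions \ref{mip2}, \ref{bpp} and \ref{prepncci}, $V$, equipped with its filtration by powers of $\ldp\dq_1\setminus\dzqo\rdp$, has $\Z$-linearly split, free associated graded; hence $\bar V$ is a graded $\Z$-module whose homogeneous components are free and finitely generated. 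As the difference of the two sides is a homogeneous element of the torsion-free module $\bar V$, it therefore suffices to prove the congruence after $\o_\Z\k$, where $\k:=\Z[\frac{1}{|\Gamma|},e^{\frac{2\pi i}{|\Gamma|}}]$ --- the same reduction used throughout the extended Dynkin part of the paper.

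Next I would manipulate the right-hand side of \eqref{nrpz}, which by the Remark preceding the proposition holds over $\Z$. Since $r'\in\ldp\dq_1\setminus\dzqo\rdp^2$, both $\ldp r'\rdp^2$ and $[\ldp r'\rdp,\Pi_Q]$ lie in the target modulus, so $\tilde z\tilde x-\tilde x\tilde z\equiv -\mu_{r'}\ldb\tilde z,\tilde x\rdb$. Writing $\ldb\tilde z,\tilde x\rdb=\sum_k u_k\o v_k$ and rotating each monomial $u_k r' v_k$ past its ideal factor $r'$ (legitimate modulo $[\ldp\dq_1\setminus\dzqo\rdp,\Pi_Q]$), this becomes $-r'\cdot m(\tau_{(12)}\ldb\tilde z,\tilde x\rdb)$. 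Because $r'$ is already of degree two in the new arrows, $r'\cdot w$ depends, modulo $\ldp\dq_1\setminus\dzqo\rdp^3$, only on the image of $w$ in $\Pi_{Q^0}$; and since $m\circ\ldb , \rdb$ descends to a Loday bracket (it kills $[P,P]\o P$ and, by \eqref{eq01} and the proof of Lemma \ref{lip}, the preprojective relation), that image is a well-defined $\kappa\in\Pi_{Q^0}$ whose cyclic class in $\Lambda_{Q^0}$ is $\{[z],[x]\}$, by \eqref{brpr}.

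It remains to identify $[r'\kappa]$ with $[r'\psi(\{[i_0z],[x]\})]$ in $\bar V\o\k$, and this is the main obstacle: $[r'(-)]$ on $\bar V$ is \emph{not} insensitive to the choice of representative of a class in $\Lambda_{Q^0}$ --- it detects that representative through the vertices incident to the new arrows --- so one must verify that the representative produced by the double bracket is exactly the central lift $\psi(\{[i_0z],[x]\})$, with all scalars correct. I would carry this out over $\k$ via the Morita equivalence \eqref{morpi}: under $\Pi_{Q^0}\o\k\cong\fsf(\k[x,y]\rtimes\Gamma)\fsf$ one has $i_0\Pi_{Q^0}i_0\o\k\cong\k[x,y]^\Gamma\cong HH^0(\Pi_{Q^0})\o\k$, with $\psi$ the inverse of the isomorphism $HH^0(\Pi_{Q^0})\o\k\iso((\Lambda_{Q^0})_+\o\k)\oplus\k$ of \S\ref{fpds}, the necklace bracket going over to the standard generically nondegenerate (hence unique up to scale) Poisson bracket, and $-\mu_{r'}\ldb , \rdb$ becoming precisely the deformed-commutator term already analyzed in the proof of Theorem \ref{mclgam}. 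Combining $\psi([i_0z])=z$ with the scalar comparisons $[iz]=\dim(\rho_i)\,[i_0z]$ and $[z]=|\Gamma|\,[i_0z]$ of Proposition \ref{izprop} pins down the representative and fixes the constant, yielding $[r'\kappa]=[r'\psi(\{[i_0z],[x]\})]$ over $\k$ and hence, by the descent of the first paragraph, over $\Z$.
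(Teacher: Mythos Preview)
Your proposal is correct and follows essentially the same route as the paper's own (very terse) proof: you reduce to $\k = \Z[\frac{1}{|\Gamma|},e^{2\pi i/|\Gamma|}]$ via torsion-freeness of the quotient (which the paper deduces from Proposition \ref{mip2}), and then invoke \eqref{nrpz} together with Proposition \ref{izprop}. The paper compresses this into two sentences; you have simply unpacked how \eqref{nrpz} connects to the necklace bracket via the cyclic rotation $\mu_{r'}\ldb\tilde z,\tilde x\rdb \mapsto [r'\cdot m(\tau_{(12)}\ldb\tilde z,\tilde x\rdb)]$ and how the Morita equivalence and Proposition \ref{izprop} pin down the central representative and the normalization by $|\Gamma|$.
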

\begin{proof}
  It follows from Proposition \ref{mip2} that $\Pi_{Q} / ([\ldp \langle
  \dq_1 \setminus \dzqo \rangle \rdp, \Pi_{Q}] + \ldp \langle \dq_1
  \setminus \dzqo \rangle \rdp^3)$ is torsion-free.  Hence, it is
  enough to prove the above formula tensored over $\k$, where this
  follows from \eqref{nrpz} and Proposition \ref{izprop}.
\end{proof}
\begin{rem}
Equation \eqref{nrpz}
 also implies that, letting
\begin{equation}
\pi: P_{\dzq} \onto \Pi_{Q^0}
\end{equation}
be the projection, 
\begin{equation}
\mu \ldb P_{\dzq}, \pi^{-1}(HH^0(\Pi_{Q^0})) \rdb \subset \pi^{-1}(HH^0(\Pi_{Q^0})),
\end{equation}
where $\mu$ is the multiplication map. In other words,
$\pi^{-1}(HH^0(\Pi_{Q^0}))$ is a Loday ideal with respect to the Loday bracket
$L_{\dzq} \o P_{\dzq} \rightarrow P_{\dzq}$ of \S \ref{lbs}.
\end{rem}

\section{Quivers containing $\tilde A_{n-1}$} \label{ans}
\subsection{Bases of $\Pi_Q$ for type $A$ quivers and refinement of
Theorem \ref{mt}} 
\label{typeabasessec}
Here we describe bases of $\Pi_Q$ for extended Dynkin quivers of type
$A$ and quivers which properly contain them. The resulting Theorem
\ref{ant} proves Theorem \ref{mt} for all quivers containing a cycle. 
We work over $\Z$ throughout, i.e., with $\Z$-modules.

Let $Q^0 = \tilde A_{n-1}$, as depicted in Figure \ref{anf}, forming a polygon
which is oriented counter-clockwise (none of the results
depend on this choice; in particular the choice of orientation does not
affect the structure of $\Pi_{Q^0}, \Pi_Q$, or their zeroth Hochschild homology;
we only make this choice for convenience).
We define the following:
\begin{equation} \label{anxydefn}
x = \sum_{a \in Q_1^0} a, \quad y = \sum_{a \in Q^0} a^*.
\end{equation}
We call an arrow is \emph{counter-clockwise-oriented} or
\emph{clockwise-oriented} depending on whether it forms part of the
counter-clockwise or clockwise orientation of the polygon in Figure
\ref{anf}, i.e., whether it is in $Q^0$ or not.  Let the vertex set
$Q^0_0$ be given the natural structure of a $\Z/n$-torsor (i.e., affine
space over $\Z/n$), where adding one means moving once
counter-clockwise (if the reader prefers, one can let $Q^0_0 = \Z/n$,
choosing a fixed vertex to be labeled by zero). 
\begin{figure}[hbt]
\begin{center}
\includegraphics{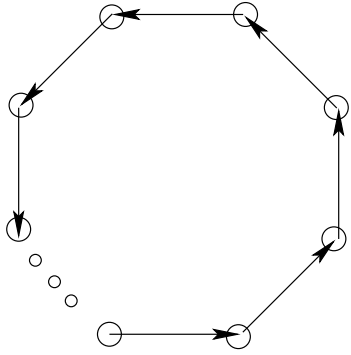}
\caption{$Q^0 = \tilde A_{n-1}$ with counter-clockwise orientation}
\label{anf}
\end{center}
\end{figure}

We then have the
\begin{thm} \label{ant}
Let $Q^0 = \tilde A_{n-1}$ with the above notation and orientation.
\begin{enumerate}
\item[(i)]
For any $i, j \in Q^0_0$, a basis of $i \Pi_{Q^0} j$ is given by $ix^a y^b j$ for
$(a-b) \equiv (j-i) \pmod n$.
\item[(ii)] A basis of $i \Pi_{Q^0} j$ is also given by the nonzero elements
$i z_{a,b} j$ \eqref{zabdefn}, which are equal to the $i x^a y^b j$ above.
\item[(iii)] $\Lambda_{Q^0}$ is a free $\Z$-module with basis given,
  for any fixed $i_0 \in Q^0_0$, by the classes
  $[i_0 x^a y^{b}] = [i_0 z_{a,b}]$ for $a, b \geq 0$ and $n \mid (b-a)$.
\item[(iv)] A basis of $HH^0(\Pi_{Q^0})$ is given by $z_{a, b}$ for
$a, b \geq 0$ and $n \mid (b-a)$.
\item[(v)] For any $Q \supsetneq Q^0$, and any fixed vertex $i_0 \in Q^0_0$,
  $W$ of Proposition
  \ref{mip2}.(ii) has the form $W = W' \oplus W_0$, where $W_0$ is the
  image of
 $[\Pi_{Q^0}, \Pi_{Q^0}]$ under \eqref{vide} and
  and $W'$ is a free $\Z$-module with basis the classes $W_{a,b}$
  given by \eqref{wabdefn1}--\eqref{wabdefn3}.  As in the case of
  Lemma \ref{mcl}, $W_{a,b}$ is a generator of the $\Z$-submodule
   $\langle [iz_{a-1,b}, x], [iz_{a,b-1}, y] \rangle_{i \in
    Q^0_0}$.
\item[(vi)] The integral span of the classes $W_{a,b}$, for $(a,b)
  \neq (p^\ell,p^\ell)$ is a saturated $\Z$-submodule of $V$. The
  order of $W_{p^\ell,p^\ell}$
 in $V$ is $p$.
\item[(vii)] The image of $\frac{1}{p} W_{p^\ell,p^\ell}$ in
  $\Lambda_Q$ is nonzero, equal to $r^{(p^\ell)}$, and every
  $p$-torsion class in $\Lambda_Q$ is an integral combination of
  these classes.
\end{enumerate}
\end{thm}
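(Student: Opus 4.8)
The plan is to prove parts (i)--(vii) together by reducing everything to the general machinery already set up, chiefly Corollary \ref{mclgamcor}/Corollary \ref{mclcor} and Proposition \ref{mip2}, specialized to $Q^0 = \tilde A_{n-1}$, for which the corresponding group $\Gamma \subset SL_2(\C)$ is the cyclic group $\Z/n$, so that \emph{every} prime except those dividing $n$ is good. First I would establish (i): apply the Diamond Lemma (Appendix \ref{dla}) directly to $P_{\overline{Q^0}}$ with the presentation given by $r = \sum_{a \in Q^0_1}(aa^* - a^*a)$, using the same disorder/partial-order argument as in the proof of Lemma \ref{mcl}.\eqref{bapp}, but now carried out in the $\Z/n$-graded (by vertices) setting: one shows that $i x^a y^b j$ for $(a-b) \equiv (j-i) \pmod n$ is the normal-form basis. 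Part (ii) then follows because the normal form of $i z_{a,b} j$ with respect to this basis has leading term $i x^a y^b j$ (cf.~Remark \ref{xyrem}), and $z_{a,b}$ and $x^a y^b$ have the same bidegree, so they differ by elements of strictly smaller disorder which vanish by degree reasons; hence $i z_{a,b} j = i x^a y^b j$ in $\Pi_{Q^0}$. Part (iv) is the statement that $HH^0(\Pi_{Q^0}) = i_0 \Pi_{Q^0} i_0$-translated sums; concretely a central element is $\sum_{i \in Q^0_0} i z_{a,b} i$ over all vertices with $n \mid (b-a)$, and these are $\Z$-linearly independent by (i). Part (iii) follows from Proposition \ref{bpp} or directly: $[i_0 x^a y^b] = [i_0 z_{a,b}]$ gives a basis of $\Lambda_{Q^0}$, since cyclic words in $P_{\overline{Q^0}}$ reduce, via $a^*a \mapsto aa^*$ swaps along a spanning forest, to cyclic words based at $i_0$, and the length-graded pieces are finite.

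The heart is parts (v)--(vii), and here I would invoke Corollary \ref{mclgamcor} with $\Gamma = \Z/n$, $B = \k[\Gamma] \otimes_{\k^{Q^0_0}} \Pi_{Q \setminus Q^0, Q^0_0}$, and $r' = \1_{Q^0_0} r_{Q\setminus Q^0} \1_{Q^0_0}$, exactly as in the proof of Theorem \ref{mt} for good primes in \S \ref{gpsec}. For $n$ not a unit this requires care, so instead I would work more directly: combining Proposition \ref{mip2}.(ii) (which gives $\Lambda_Q \cong V/W$ with $W$ the image of $[\tilde\Pi_{Q^0}, \langle \overline{Q^0_1}\rangle]$) with the explicit $\tilde A$-bases from (i)--(iv), one shows that $W$ is spanned by $W_0$ (the image of $[\Pi_{Q^0},\Pi_{Q^0}]$) together with the classes $W_{a,b}$ defined by \eqref{wabdefn1}--\eqref{wabdefn3} — but now with the products $z_{a_\ell,b_\ell}$, $r'$ interpreted as paths in the $\tilde A_{n-1}$ quiver rather than in the one-vertex case. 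The key point, as in Lemma \ref{mcl}.\eqref{bacpp}, is the identity $[\Pi_Q,\Pi_Q] = [\Pi_Q \langle \overline{Q}_1 \setminus \overline{Q^0_1}\rangle \Pi_Q, \Pi_Q] + [\tilde\Pi_{Q^0}, \langle \overline{Q^0_1}\rangle]$, and that the relations $\langle [i z_{a-1,b}, x], [i z_{a,b-1}, y]\rangle_{i \in Q^0_0}$ have $W_{a,b}$ as a generator — this is the analogue of \eqref{xycomms}, and I expect it to go through by the same ``commute the rightmost letter all the way around'' computation, now keeping track of vertices (so that $x = \sum a$, $y = \sum a^*$ over the polygon arrows act cyclically). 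Given this, (vi) follows as in Lemma \ref{mcl}.\eqref{bactpp}: $V$ is free (by Proposition \ref{mip2}.(i) and (i)--(iv) here), each $W_{a,b}$ lives in its own bidegree, and the gcd of the coefficients $\gcd(a,b)/\rep(a_\cdot,b_\cdot)$ ranging over admissible tuples is $1$ unless $(a,b) = (p^\ell,p^\ell)$, in which case (restricting away from pure $r'$-powers, which here have zero image in $V$ since $r'$ is a non-vertex element whose powers are cyclically trivial modulo the $Q\setminus Q^0$ relations) it equals $p$. Part (vii) then follows from the computation at the end of the proof of Lemma \ref{mcl}: $\frac1p W_{p^\ell,p^\ell}$ and $\frac1p[r^{p^\ell}]$ have the same image in $\widehat{\Lambda_Q}$, this image is $r^{(p^\ell)}$, it is nonzero by Proposition \ref{rpnz}, and it generates the $p$-torsion since $\Lambda_Q \cong V/W$ with $V$ free and $W$ generated by the $W_{a,b}$ whose greatest integer factor is $p$ only when $a=b=p^\ell$.

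The main obstacle I anticipate is making the reduction to the $\tilde A_{n-1}$ setting of the combinatorics in Lemma \ref{mcl} fully rigorous — specifically, verifying that in the quiver case the classes $W_{a,b}$, formed as formal sums of alternating cyclic paths, still generate $\langle [iz_{a-1,b},x],[iz_{a,b-1},y]\rangle_i$ and still enjoy the coefficient-gcd property, because now the ``letters'' $x$ and $y$ are not single generators but sums over the $n$ polygon arrows, and one must check that the vertex-bookkeeping does not introduce extra relations or extra torsion. I expect this to reduce cleanly to the one-vertex Lemma \ref{mcl} via the identification $\Pi_{Q^0} \cong \fsf(\k\langle x,y\rangle \rtimes \Z/n)\fsf$ over $\k = \Z[\frac1n, \zeta_n]$ (so that the $\Gamma$-equivariant Lemma \ref{mclgam} applies verbatim), together with the fact that $V$ and $W$ are defined over $\Z$ and the functor $-\otimes_\Z \k$ detects both the rank and the torsion; the only subtlety is that $\Z[\frac1n,\zeta_n]$ is not a PID, handled exactly as in the remark after Theorem \ref{mclgam}.
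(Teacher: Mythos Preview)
Your direct approach in the middle paragraph is exactly what the paper does, and your identification of the obstacle is on target. But your proposed resolution of that obstacle via the Morita equivalence over $\k = \Z[\tfrac{1}{n},\zeta_n]$ has a genuine gap: tensoring by $\k$ kills all $p$-torsion for primes $p \mid n$, so the functor $-\otimes_\Z \k$ does \emph{not} detect torsion at those primes. Since the theorem asserts $p$-torsion in degree $2p^\ell$ for \emph{every} prime $p$, the $\Gamma$-equivariant route via Theorem~\ref{mclgam} is incomplete precisely at the primes dividing $n$, and you cannot close the argument this way.

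The paper instead carries out the direct vertex-tracking computation over $\Z$ that you sketched but did not flesh out. The key step is this. For each vertex $i$ one computes
\[
\eta([z_{a-1,b}, ix]) = \bigl[(\underline{i+1} - i)\, z_{a,b}\bigr] + (\text{terms involving } r'),
\]
where $\underline{i+1}$ is the next vertex counterclockwise. For fixed $(a,b)$ there are $n$ such relations. Using $n-1$ of them one eliminates the classes $[i z_{a,b}]$ for all $i \neq i_0$; these telescoping relations constitute the saturated summand $W_0$. The one remaining relation is obtained by \emph{summing} all $n$ over $i \in Q^0_0$: in this sum every vertex idempotent is replaced by $\1_{Q^0_0}$ (each vertex occurs once as $i$ ranges over the cycle), so the vertex dependence disappears and one recovers literally the one-vertex relation $W_{a,b}$ of \eqref{wabdefn1}--\eqref{wabdefn3}. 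After also invoking the analogue of \eqref{sc1}--\eqref{sc2} to divide out the correct integer, the gcd argument of Lemma~\ref{mcl}.\eqref{bactpp} and the identification of $\tfrac{1}{p}W_{p^\ell,p^\ell}$ with $r^{(p^\ell)}$ go through verbatim, entirely over $\Z$, with no appeal to $\Gamma$ or to inverting $n$.
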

Parts (i)--(iv) are easy and their proofs are omitted. The remainder
of the theorem will be proved in \S \ref{ss:ant-proof}. As a corollary, one
deduces Theorem \ref{mt} in this case, and moreover easily obtains a
$\Z$-basis of $\Lambda_Q$ modulo torsion together with $\F_p$-bases
of all of the $p$-torsion for all $p$.  Also, note that Conjecture
\ref{rh} is a special case of $n=1$ (the case of quivers with one
vertex). We remark also that the above yields bases of $\Pi_{A_{n-1}}$ as
well, by taking the images of all paths in the above basis of $\Pi_{\tilde A_{n-1}}$ and discarding those whose image is zero.
This is true because, for
each $i,j \in Q^0_0$ and $m \geq 1$, there is at most one basis element
in $(i \Pi_{\tilde A_n} j)_m$ above that projects to a nonzero element
of $(i \Pi_{A_n} j)_m$.

\subsection{The case of $\tilde A_0$}
\label{a0pfs} Although Corollary \ref{mcl} already implies Theorem
\ref{mt} in the case $Q^0 = \tilde A_0$ (cf.~Remark \ref{mclmtr}), and
in fact Theorem \ref{ant} for $\tilde A_0$, we give a slightly
different explanation using bases, as this will be generalized to
$\tilde A_{n-1}$ for general $n$ in the next subsection.

Let $Q \supsetneq Q^0 = \tilde A_0$, i.e., $Q$ is a quiver containing
an arrow which is a loop, say $x \in Q_1$, based at the vertex $i_0 \in
Q_0$.  Let $Q^0$ be the subquiver consisting of just the vertex $i_0$
and the loop $x$.  Fix a maximal tree $G \subset \dq$ as in
Proposition \ref{bpp}: here this means that all arrows of $G$ are
oriented towards the vertex $i_0$ (one can follow oriented arrows of
$G$ to arrive at $i_0$ from any vertex).  We define $G^* := \{a^* \mid
a \in G\}$, and one evidently has $G \cap G^* = \emptyset$.

Let the reverse of the arrow $x$ be 
 $y := x^*$, and take $z_{a,b}$ defined as in \eqref{zabdefn}.

Let us make the isomorphism $\Lambda_Q \cong V/W$ from Proposition
\ref{vide} more explicit in this case.  Define
$r_{Q^0_0} = \1_{Q^0_0} r \1_{Q^0_0} = i_0 r i_0$ as in the setup of
Proposition \ref{mip2}.
Let $F := \Pi_{Q,Q^0_0}$, and set $r' := r_{Q^0_0} -xy + yx \in
\Pi_{Q\setminus Q^0, Q^0_0}$. Set $A = F/\ldp i_0 r i_0 \rdp \cong \Pi_Q$.

Then, $V := A/[A(\overline{Q}_1 \setminus \overline{Q^0_1})A,A]$ has a basis
consisting of:
\begin{enumerate}
\item Cyclic words in $\overline{Q}_1$
 containing an arrow from
  $\overline{Q \setminus Q^0}$, such that maximal subwords from
  $\overline{Q \setminus Q^0}$ are as dictated by Proposition \ref{bpp}, 
  and maximal
  subwords from $\overline{Q^0}$ are of the form $z_{a,b}$;
\item Monomials of the form $z_{a,b}$,
\end{enumerate}
and is free (cf.~Proposition \ref{bpp}).  Then,
$\Lambda_Q \cong A/[A,A] \cong V/W$ where
$W = \langle W_{a,b} \rangle_{a,b \geq 1, (a,b) \neq 1}$ is as
described in Lemma \ref{mcl}.
We have assumed that $Q \setminus Q^0$ is nonempty, so that
$r' \neq 0$. Also, since $r'$ is a sum of commutators, 
$[(r')^{p^\ell}]$ is a multiple of $p$ (as a class of $F/[F,F]$, and
hence in $V$). Then, the rest of the result follows immediately from
(the proof of) Lemma \ref{mcl}. We note that we could have
alternatively proved this result by presenting $\Pi_Q$ as a case of
Corollary \ref{mclcor} (using that $\Lambda_{Q\setminus Q^0,Q^0_0}$ is
torsion free).

This finishes the proof of Theorem \ref{ant} and hence Theorem
\ref{mt} in the $\tilde A_0$ case, which includes Conjecture \ref{rh}
as a special case.

\subsection{Proof of Theorem 
\ref{ant}}\label{ss:ant-proof}
The proof generalizes the argument of 
the previous subsection.
When there is any chance of confusion, if $i \in Q^0_0$ and
$m \in \Z$, we use underlined notation, $\underline{i+m} \in Q^0_0$, for
the result of adding $m$, i.e., moving $m$ steps in the counterclockwise
direction.
We can describe the
path algebra $P_{\overline{Q^0}}$ as generated by $Q^0_0, x,$ and $y$,
with relations/conditions: (1) $Q^0_0$ are idempotents of degree zero;
(2) $x$ and $y$ have degree $1$; and (3) $i x = x (i+1),$ $(i+1) y = y
i$, and $ixj = jyi = 0$ if $j \neq i+1$.  As before, we can define
$z_{a,b}$ by \eqref{zabdefn}. Here $x$ and $y$ are given in
\eqref{anxydefn}.
 
We define $r_{Q^0_0} := \sum_{i \in Q^0_0} i r i$ and $r' := r_{Q^0_0} -
\sum_{a \in Q_1} (aa^* - a^*a)$ analogously to the case $n=0$.
Let $F = \Pi_{Q,Q^0_0}$ and $A = F / \ldp r_{Q^0_0} \rdp = \Pi_Q$.  
We will compute $W$ and the quotient $V / W$, in the notation of
Proposition \ref{mip2}. Clearly, $i W j = i V j$ when $i \neq j$, so
it suffices to consider $\bar W := \bigoplus_{i \in Q^0_0} i W i$.

In this case, $\bar W$ is generated by reducing the elements $[z_{a-1,b},
ix], [z_{a,b-1}, iy] \in P_{\dq}$ for all $a,b \in \N$ with $n \mid
|b-a|$, and all $i \in Q^0_0$, modulo $r_{Q^0_0} = xy - yx +
r'$ and commutators of elements which include a multiple of $\langle
Q\setminus Q^0 \rangle$ (including $r'$).

One finds relations similar to \eqref{wabdefn1}--\eqref{wabdefn3},
keeping track of the idempotents $Q^0_0$, and substituting
$r_{Q^0_0}-xy+yx$ for $r$.  We will now see why our choice of the
$z_{a,b}$'s (intended originally to carry over well to the present
setting) is convenient, stemming from the property that $i xy = xy i$
for all $i \in Q^0_0$. Let us assume that $a > b$, since essentially the
same relations result in the other case (and the torsion must be the
same).  We may assume that $n \mid (b-a)$, or else the bidegree
$(a,b)$-part is zero.  Let $\eta: A \onto V$ be the projection. Since,
writing $iz_{a,b} = t_1 t_2 \cdots t_{a+b}$ where $t_j \in
\overline{Q^0_1}$, one has
\begin{equation} \label{sc1}
\sum_{m=1}^{a+b} \eta([t_{m} t_{m+1} \cdots
t_{m-2}, t_{m-1}]) = 0,
\end{equation}
it follows (using $\eta([Ar'A,A])=0$) that 
\begin{equation} \label{sc2}
b\eta([iz_{a,b-1}, y]) +
b\eta([(\underline{i+1})z_{a-1,b}, x]) + \sum_{m = 0}^{a-b-1} \eta([(\underline{i+m})
z_{a-1,b}, x]) = 0.
\end{equation}  
So, we need only compute the $\eta([i
z_{a-1,b},x])$, or equivalently, the $\eta([z_{a-1,b}, ix])$:
(recall that $Q^0_0$ is considered as a $\Z/n$-torsor and
$\underline{i+m}$ is the operation of adding $m \in \Z$ to $i \in
Q^0_0$):
\begin{equation}
\eta([z_{a-1,b}, ix]) = [(\underline{i+1} - i) z_{a,b}] + 
[\sum_{c =0}^{b-1} (\underline{i-(a-b-2)}) r'(xy)^c x^{a-b-1} (xy)^{b-1-c} x].
\end{equation}
Using only $n-1$ of the above $n$ relations for each fixed $a,b$, this
can be interpreted as eliminating the classes $[iz_{a,b}]$ for all $i
\in Q^0_0$ except any fixed vertex $i_0 \in Q^0_0$.  To add the last
relation in, we need only consider the sum of all $n$ relations, which
together with \eqref{sc2} (allowing us to divide by $\ds
\frac{b}{\gcd(\frac{a-b}{n},b)}$) gives \eqref{wabdefn1} for $a > b$
(the coefficients of $Q^0_0$ disappear as we are summing over all
translations around the cycle: every vertex of $Q^0_0$ becomes
$\1_{Q^0_0}$).  The same argument works for $a < b$, and so the proof
from \S \ref{a0pfs} shows that $\Lambda_Q$ has no torsion in these
cases. Note that the $W_{a,b}$ remain nonzero in this case since $Q
\neq Q^0$---even if there is only an additional arrow at one particular
vertex $i \in Q^0_0$, there are terms in $W_{a,b}$ where $r$ only
appears adjacent to this vertex $i$.

In the case $a=b=m$, again we see that the quotient $V/W$ is the same
as eliminating $[i z_{m,m}]$ for all $i \neq i_0$, and considering
only the relation $W_{a,a}$ from \eqref{wabdefn1}--\eqref{wabdefn3}.
For the same reasons as in \S \ref{a0pfs}, it follows that $V/W$ has
torsion $\Z/p$ in exactly those bidegrees $(p^\ell,p^\ell)$ for $p$
prime and $\ell \geq 1$, and the torsion is generated by the class
$[r^{p^\ell}]/p \in P_\dq/[P_\dq,P_\dq]$.  This implies parts
(v)--(vii) of Theorem \ref{ant}, which were all that remained to be proved.

\subsection{Hilbert series and \eqref{egid} in the $\tilde A_n$ case} 
\label{ean} In this
section, we verify the Hilbert series of $\Lambda_{\tilde A_n}$ using
our bases, and give a direct proof of the curious identity
\eqref{egid} from \cite{EG} in this case.  We provide this since it is
an easy consequence of Theorem \ref{ant} (which we just proved), and
gives a different proof from what is found elsewhere.

First, from Theorem \ref{ant} we deduce
\begin{prop}
The Hilbert series of $\Lambda_{\tilde A_{n-1}}$ and $i \Pi_{\tilde A_{n-1}} i$, for
any vertex $i$, are given by
\begin{equation}
  h(i \Pi_{\tilde A_{n-1}} i;t) = h(\Lambda_{\tilde A_{n-1}};t) = \frac{1 + t^n}{(1-t^2)(1-t^n)} = \frac{1-t^{2n}}{(1-t^2)(1-t^n)^2}.
\end{equation}
\end{prop}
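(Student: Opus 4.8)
The plan is to read off both Hilbert series from the explicit integral bases provided by Theorem~\ref{ant} and then simplify the resulting rational functions. This is essentially a bookkeeping exercise once the bases are in hand, and the two identities claimed (the first and second equality) are each elementary algebraic manipulations.

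First I would compute $h(i\Pi_{\tilde A_{n-1}}i;t)$. By Theorem~\ref{ant}.(i), a basis of $i\Pi_{\tilde A_{n-1}}i$ is given by the elements $i x^a y^b i$ with $a - b \equiv 0 \pmod n$ (since $j = i$ forces $j - i \equiv 0$), where the degree of $x^ay^b$ is $a+b$. So I need to count pairs $(a,b)$ of nonnegative integers with $n \mid (a-b)$ and $a+b = m$, summed with weight $t^m$. Writing $a - b = nk$ for $k \in \Z$, I get $a = \tfrac{m+nk}{2}$, $b = \tfrac{m-nk}{2}$, which requires $m \equiv nk \pmod 2$ and $|nk| \le m$. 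The generating function is then $h(i\Pi_{\tilde A_{n-1}}i;t) = \sum_{k \in \Z} t^{|nk|} \cdot \frac{1}{1-t^2}$, because for fixed $k$ the smallest valid $m$ is $|nk|$ and then $m$ runs over $|nk|, |nk|+2, |nk|+4, \ldots$. Summing the geometric series in $k$ gives $\frac{1}{1-t^2}\bigl(1 + 2\sum_{k\ge 1} t^{nk}\bigr) = \frac{1}{1-t^2} \cdot \frac{1+t^n}{1-t^n}$. The same computation with the basis $[i_0 x^a y^b]$, $n \mid (b-a)$, from Theorem~\ref{ant}.(iii) gives the identical series for $h(\Lambda_{\tilde A_{n-1}};t)$, establishing the middle equality $h(i\Pi i;t) = h(\Lambda;t)$ as well as the first displayed equality $= \frac{1+t^n}{(1-t^2)(1-t^n)}$.

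Then I would verify the second equality $\frac{1+t^n}{(1-t^2)(1-t^n)} = \frac{1-t^{2n}}{(1-t^2)(1-t^n)^2}$, which is just the factorization $1 - t^{2n} = (1+t^n)(1-t^n)$. This is immediate.

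I do not expect any genuine obstacle here, since Theorem~\ref{ant} (already proved in \S\ref{ss:ant-proof}) hands us exactly the bases needed, and the rest is a one-parameter geometric sum. The only point requiring a little care is the parity constraint $m \equiv nk \pmod 2$ in the enumeration of $(a,b)$: one must check that for each fixed $k$ the allowed degrees $m$ form an arithmetic progression with common difference $2$ starting at $|nk|$, so that the contribution is cleanly $t^{|nk|}/(1-t^2)$ rather than something more complicated. (When $n$ is even this is automatic; when $n$ is odd it follows because $nk$ and $|nk|$ have the same parity.) After that, summing $1 + 2\sum_{k\ge1}t^{nk} = (1+t^n)/(1-t^n)$ finishes the proof.
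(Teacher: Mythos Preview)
Your proposal is correct and follows the same approach as the paper: both deduce the Hilbert series directly from the explicit bases of Theorem~\ref{ant}.(i) and~(iii). The paper states the proposition as an immediate consequence of Theorem~\ref{ant} without spelling out the enumeration, so your computation is simply a careful expansion of what the paper leaves implicit.
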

It immediately follows that one has the formula
\begin{equation} \label{pfan}
h(\Lambda_{\tilde A_{n-1}};t)(1-t^2) = 1 + \frac{2t^n}{1-t^n},
\end{equation}
which we can use to verify the following formula
for Hilbert series from \cite{EG} (using \eqref{gnccif}):
\begin{gather} \label{egedf}
h(\O(\Pi);t) = \prod_{m \geq 1} \frac{1}{(1-t^m)^{a_m}}, \\ \label{egedf2}
\prod_{m \geq 1} \frac{1}{(1-t^m)^{a_m - a_{m-2}}} = \frac{1}{1-t^2} \cdot \prod_{m \geq 1} \frac{1}{\det (1 - t^m \cdot C + t^{2m} \cdot 1)},
\end{gather}
where $C$ is the adjacency matrix of $\overline{\tilde Q}$, and $a_{-1}=a_0=0$.  The element
$1 - t \cdot C + t^2 \cdot \1$ is $1/t$ times the so-called
``$t$-analogue of the Cartan matrix'', $(1 + \frac{1}{t}) \cdot \1 - C$.  For $\tilde A_{n-1}$,
one has
\begin{equation} \label{detcan}
\det(1 - t \cdot C + t^2 \cdot \1) = (1-t^n)^2.
\end{equation}

To verify \eqref{egedf2},
set $h((\Lambda_{\tilde A_{n-1}})_+;t) = \sum_m a_m t^m$; one then has from \eqref{pfan}
\begin{equation}
a_m - a_{m-2} = 2 [n \mid m], \quad m \geq 3,
\end{equation}
which implies the desired identity.

Since $h(i_0 \Pi_{\tilde A_{n-1}} i_0;t)=h(\Lambda_{\tilde A_{n-1}};t)$,
 by \eqref{egfla}, the $a_m$'s above satisfy
\begin{equation}
1 + \sum a_m t^m = \bigl(\frac{1}{1 - t \cdot C + t^2 \cdot \1}\bigr)_{i_0i_0}.
\end{equation}
That
is, $a_m = \phi_{m}(C)_{i_0i_0}$, where $\phi_m$ is the $m$-th Chebyshev
polynomial of the second type.  Since
$\phi_m - \phi_{m-2} = \varphi_m$, a Chebyshev polynomial of the first
type, our work above explicitly verifies the identity \eqref{egid} from \cite{EG}.

So, from our point of view, this identity is the fact that $\Lambda_{\tilde A_{n-1}} \cong i_0 \Pi_{\tilde A_{n-1}} i_0$, together with the similarity between the
 identity \eqref{pfan} and the formula for the determinant of
the $t$-analogue of the Cartan matrix, \eqref{detcan}.

\subsection{Poisson structure on $i_0 \Pi_Q i_0$ 
for $Q = \tilde A_{n-1}$} \label{lans} For $Q$
extended Dynkin, there is an injection $i_0 \Pi_Q i_0 \into
\Lambda_Q$, whose cokernel is isomorphic to the torsion of
$\Lambda_Q$.  Hence, the necklace Lie bracket on $\Lambda_Q$
induces a bracket on $i_0 \Pi_Q i_0$, obtained by taking the image in
$\Lambda_Q$, applying the bracket, and then using the isomorphism
$i_0 \Pi_Q i_0 \cong \Lambda_Q / \text{torsion}$. It is clear that
the result is a Lie bracket. Moreover, $i_0 \Pi_Q i_0$ is commutative.
Then, it follows from \eqref{ncleibeqn} that $i_0 \Pi_Q i_0$ is actually a Poisson
algebra, i.e., that the Leibniz identity is satisfied.



In the case $Q = \tilde A_{n-1}$, by Theorem \ref{dedz}, $\Lambda_Q$
is torsion-free, so in this case $i_0 \Pi_Q i_0 \rightarrow \Lambda_Q$
is an isomorphism.  Here we describe explicitly the resulting Poisson
structure.

The simplest way to understand $\Lambda_{\tilde A_{n-1}}$ is in terms
of the basis $[i_0 x^a y^b]$, where $x$ denotes moving clockwise
one arrow, and $y$ denotes moving counterclockwise, and $i_0$ is a
fixed vertex.  One requires that $n \mid (a-b)$, and $a, b \in \N$. 
To compute the bracket, we first compute the bracket in terms of the rational
basis $[\1 x^a y^b]$, where
$\1$ is the identity (the sum of all vertices); 
since there is no torsion in $\Lambda_{\tilde A_{n-1}}$, this suffices.
Then, it immediately follows that one can compute the bracket (and
cobracket) by summing over ways to pair letters $x,y$ that correspond
to opposite arrows.  One easily computes that $\delta$ is zero.  The
bracket is then
\begin{equation}
[ [i_0 x^a y^b], [i_0 x^c y^d] ] = i_0 \frac{ad-bc}{n} x^{a+c-1} y^{b+d-1}.
\end{equation}
In other words, the Poisson structure on $i_0 \Pi_Q i_0$ is given by
$\{i_0 z_{a,b}, i_0 z_{c,d}\} = \frac{ad-bc}{n} i_0 z_{a+c-1,b+d-1}$.

In terms of the isomorphism
$\Lambda_{\tilde A_{n-1}} \iso \Lambda_{\tilde A_{n-1}} \o 1 \subset
\Lambda_{\tilde A_{n-1}} \o \C \iso \C[x,y]^{\Z/n}$,
one has integral basis elements $x^n, y^n, xy$, with Poisson bracket
which is $\frac{1}{n}$ times the usual Poisson bracket on $\C[x,y]$,
restricted to $\C[x,y]^{\Z/n}$.

Summarizing, we can give an explicit presentation of $i_0 \Pi_Q i_0$ as a
graded Poisson algebra over $\Z$ as follows:
\begin{prop} \label{apbp} The following is an explicit presentation
of $i_0 \Pi_Q i_0$ for $Q = \tilde A_{n-1}$:
\begin{gather}
X := [i_0 x^n], \quad Y := [i_0 y^n], \quad Z := [i_0 xy], \\
i_0 \Pi_Q i_0 \liso \Z[X,Y,Z] / \ldp XY-Z^n \rdp,  \label{anap}\\
\{X,Y\} = n Z^{n-1}, \quad \{X,Z\} = X, \quad \{X, Y\} = - Y, \\
|X| = n, \quad |Y| = n, \quad |Z| = 2.
\end{gather}
\end{prop}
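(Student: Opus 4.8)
The plan is to establish the ring presentation \eqref{anap} first and then read off the Poisson structure from the explicit bracket formula computed in \S\ref{lans}. The relevant input is: by Theorem \ref{ant}(i)--(ii), $i_0\Pi_Q i_0$ is a free $\Z$-module with basis the elements $i_0 z_{a,b} i_0 = i_0 x^a y^b i_0$ for $a,b\geq 0$ with $n\mid(a-b)$, and in that basis $X = i_0 z_{n,0} i_0$, $Y = i_0 z_{0,n} i_0$, $Z = i_0 z_{1,1} i_0$; by Theorem \ref{dedz}, $\Lambda_Q$ is torsion-free, so $i_0\Pi_Q i_0 \iso \Lambda_Q$ is exactly the commutative Poisson algebra discussed in \S\ref{lans} (commutative, with Poisson bracket the necklace bracket, which satisfies the Leibniz rule by \eqref{ncleibeqn}). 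The gradings $|X|=|Y|=n$, $|Z|=2$ are just the path-lengths of $x^n$, $y^n$, $xy$ and require no argument.

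For the ring isomorphism, I would first note that \eqref{zabdefn} together with Theorem \ref{ant}(ii) gives
\[
XY = i_0 x^n i_0 \cdot i_0 y^n i_0 = i_0 x^n y^n i_0 = i_0 z_{n,n} i_0 = i_0 (xy)^n i_0 = Z^n
\]
in $i_0\Pi_Q i_0$, so there is a graded ring map $\phi\colon \Z[X,Y,Z]/\ldp XY-Z^n\rdp \to i_0\Pi_Q i_0$. I would then check $\phi$ is bijective by matching $\Z$-bases: splitting the basis element $i_0 x^a y^b i_0$ according to the sign of $a-b$ and applying Theorem \ref{ant}(ii) and \eqref{zabdefn} once more identifies it with $X^{(a-b)/n}Z^b$ (if $a\geq b$) or $Y^{(b-a)/n}Z^a$ (if $a\leq b$), so the monomials $X^kZ^b$ ($k,b\geq 0$) and $Y^kZ^a$ ($k\geq 1$, $a\geq 0$) span $i_0\Pi_Q i_0$, and these are precisely the images under $\phi$ of the standard monomial basis of $\Z[X,Y,Z]/\ldp XY-Z^n\rdp$ (the one with leading term $XY$). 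Alternatively, once surjectivity is in hand, I could compare Hilbert series: the Proposition of \S\ref{ean} gives $h(i_0\Pi_Q i_0;t) = (1-t^{2n})/((1-t^2)(1-t^n)^2)$, which matches that of the hypersurface $\Z[X,Y,Z]/\ldp XY-Z^n\rdp$, and a degreewise surjection of finitely generated graded free $\Z$-modules of equal Hilbert series is an isomorphism.

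For the Poisson structure, I would plug into the formula $\{i_0 z_{a,b}, i_0 z_{c,d}\} = \tfrac{ad-bc}{n}\, i_0 z_{a+c-1,b+d-1}$ from \S\ref{lans}:
\[
\{X,Y\} = \tfrac{n^2}{n}\, i_0 z_{n-1,n-1} = nZ^{n-1},\qquad \{X,Z\} = \tfrac{n}{n}\, i_0 z_{n,0} = X,\qquad \{Y,Z\} = \tfrac{-n}{n}\, i_0 z_{0,n} = -Y
\]
(so the third displayed bracket relation in the statement, ``$\{X,Y\}=-Y$'', should read $\{Y,Z\}=-Y$). Since $X,Y,Z$ generate $i_0\Pi_Q i_0$ as a commutative algebra and the necklace bracket is a biderivation there, the bracket is completely determined by these three values; transporting along $\phi$, it agrees with the unique biderivation of $\Z[X,Y,Z]/\ldp XY-Z^n\rdp$ specified by them (which is well defined because $\{XY-Z^n,\,\cdot\,\}$ kills $XY-Z^n$ against each of $X,Y,Z$ --- e.g.\ $\{XY-Z^n,X\} = X\{Y,X\}+\{X,X\}Y - nZ^{n-1}\{Z,X\} = -nXZ^{n-1}+nXZ^{n-1}=0$, and similarly against $Y$ and $Z$).

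I do not expect a serious obstacle: the whole argument is bookkeeping. The one point that needs genuine care is keeping the identification $i_0 z_{a,b} i_0 \leftrightarrow X^iY^jZ^k$ straight, so that the relation comes out as $XY=Z^n$ with coefficient exactly $1$ and the bracket constants are exactly $n,1,-1$; and the one conceptual step worth stating explicitly is that two Poisson brackets on a commutative $\Z$-algebra agreeing on a set of algebra generators coincide, which is why proving that $X,Y,Z$ generate $i_0\Pi_Q i_0$ is part of the argument and not an afterthought.
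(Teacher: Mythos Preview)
Your proposal is correct and follows essentially the same approach as the paper: the proposition is presented there as a summary of the discussion in \S\ref{lans}, and you have simply supplied the routine verifications (the relation $XY=Z^n$ via $i_0 z_{n,n} i_0 = (i_0 xy i_0)^n$, the basis or Hilbert-series comparison to establish \eqref{anap}, and the direct substitution into the bracket formula $\{i_0 z_{a,b}, i_0 z_{c,d}\} = \tfrac{ad-bc}{n}\, i_0 z_{a+c-1,b+d-1}$). Your catch of the typo $\{X,Y\}=-Y$ for $\{Y,Z\}=-Y$ is correct as well.
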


We remark that $i_0 \Pi_Q i_0$ can be thought of as ``$\Z[x,y]^\Gamma$'' = 
$\C[x,y]^\Gamma \cap \Z[x,y]$ (using $\frac{1}{n}$ times the standard bracket), 
and this also allows one to make sense of ``$\F_p[x,y]^\Gamma$'' 
for primes $p \mid n$. The same comment will apply for other extended Dynkin
quivers $Q$, when we explicitly compute $i_0 \Pi_Q i_0$ as a Poisson algebra
in those cases as well.

\section{Quivers containing $\tilde D_n$}
\label{dns}
\subsection{Bases of $\Pi_Q$ for type $D$ quivers and refinement
of Theorem \ref{mt}}
Here we describe bases of $\Pi_Q$ for extended Dynkin quivers of type
$D$ and quivers properly containing them, leading to Theorem \ref{dnt}
which implies Theorem \ref{mt} in the case of quivers containing a
$\tilde D_n$ quiver, i.e., containing either multiple nodes (vertices
of valence $\geq 3$) or a node of valence $4$.  Together with the type
$\tilde A$ case in the previous section, this proves Theorem \ref{mt}
in all cases except star-shaped quivers with three branches.

We will need the following notation.  Suppose that $Q^0 = \tilde D_n$
is drawn and oriented as follows:

\begin{figure}[hbt]
\begin{center}
\includegraphics{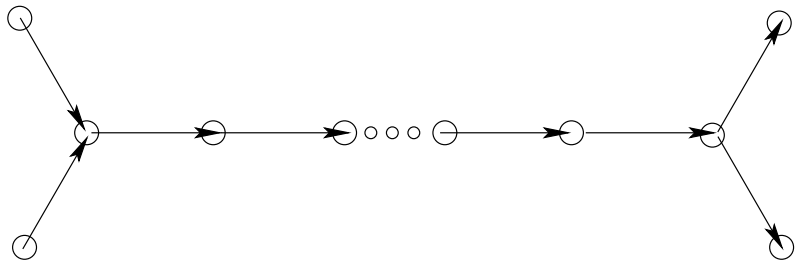}
\caption{$Q^0 = \tilde D_{n}$ with the preferred orientation}
\label{dnf}
\end{center}
\end{figure}
As in the figure, we let $i_{LU}, i_{LD}, i_{RU}, i_{RD}$ denote
the four external vertices ($L, R, U, D$ stand for ``left, right, up, down'', respectively).  Furthermore, we set $i_L := i_{LU} + i_{LD}$ and $i_R :=
i_{RU} + i_{RD}$, the sum of the leftmost and rightmost external vertices. We then define
\begin{equation}
\ino := \sum_{i \text{ internal}} i,
\end{equation}
the sum of internal vertices.

Next, we define
\begin{equation}
R := \sum_{a \in \dzqo \mid a \text{ is oriented rightward}} a, \quad
L := \sum_{a \in \dzqo \mid a \text{ is oriented leftward}} a.
\end{equation}
Here, ``leftward'' means an arrow which is either oriented from right to left in the
diagram, or one whose terminal endpoint is one of the two leftmost vertices, $i_{LU}$ and $i_{LD}$;
similarly we define ``rightward'' to be all other arrows.
One can think of $R$ and $L$ 
 as analogous to $x$ and $y$ in
the $\tilde A_{n-1}$ case (although the behavior is not exactly the same).
The choice
of orientation above is not essential, since none of the results
(essentially) depend on it; in particular, the structures of
$\Pi_{Q^0}$ and $\Pi_Q$ remain the same.

We will consider a slightly larger set of paths, called ``generalized
paths,'' $GP \subset P_{\dzq}$ (we will also use the term for the image
of such paths in $\Pi_{Q^0}$ or $\Pi_{Q}$) 
as follows: Elements of $GP_m$ are products of the form
\begin{equation} \label{gpfla}
i_1 X_1 i_2 X_2 \cdots i_m X_m i_{m+1}, \quad i_j \in Q^0_0 \cup \{i_L, i_R\}, \quad X_j \in \{L,R\}.
\end{equation}
In other words, we allow products of not merely arrows, but also the
sums of two external arrows which are pointed in the same left-right
direction and share either the same initial vertex or share the same
terminal vertex (note that the shared vertex is necessarily an internal
vertex).  Generalized paths have well-defined ``generalized
endpoints'', which means either a vertex of $Q^0_0$ or one of the elements
$i_{L}, i_R$ (the sum of the two left or right endpoints).  In
\eqref{gpfla} these are given by $i_1$ and $i_{m+1}$.

It will be convenient to define ``$F$,'' for ``forward,'' as follows: if
$Y$ is any generalized path, then 
\begin{equation} \label{fdefn}
Y F := \begin{cases} Y R, & \text{if $Y = Y i_L$ or $Y = Y' R i$ where $i \in Q^0_0$ is internal and $Y' \in GP$}, \\
Y L, & \text{if $Y = Y i_R$ or $Y = Y' L i$ where $i \in Q^0_0$ is internal and
$Y' \in GP$}.
\end{cases}
\end{equation}

\`A priori, in order for symbols $F$ to become elements of $P_\dzq$,
they must be multiplied on the left by a generalized path $Y$. It is
easy to check, however, that one obtains a well-defined product
$(P_\dzq)_+ \times \{F\} \rightarrow P_\dzq$ by linearity (i.e., it is
enough to multiply on the left by any positively-graded element of the
path algebra).

We
call an arrow ``upward'' if, in Figure \ref{dnf}, it goes diagonally up
and to the left or diagonally up and to the right, i.e., it has either
initiates at $i_{LD}$ or $i_{RD}$, or else terminates at
$i_{LU}$ or $i_{RU}$.  Similarly define ``downward'' to be an arrow
going diagonally down and to the left or down and to the right. 

Then,
we define $L_U$ and $L_D$ by:
\begin{equation}
L_U := \sum_{a \in \dq_1 \mid \text{$a$ is leftward and not downward}} a,
 \quad L_D := \sum_{a \in \dq_1 \mid \text{$a$ is leftward and not upward}} a,
\end{equation}
where the notation is chosen because, if $P$ is a path, then $P L_U$,
if nonzero, is a path of length one more which follows $P$ by a
leftward arrow which is not downward, i.e., either an arrow diagonally
up and to the left, or a leftward arrow whose incident vertices are
both internal; the similar statement holds for $L_D$.

We similarly define $R_U$ and $R_D$ to be the sum of all rightward
arrows which are not downward and the sum of all rightward arrows which
are not upward, respectively.  Finally, define $F_U$ by \eqref{fdefn},
replacing $L$ and $R$ by $L_U$ and $R_U$, and similarly define $F_D$.

We now define our proposed basis elements,
for any $c, C \geq 0$, and any choice of initial vertex $i$, terminal
vertex $j$, and initial direction $R$ or $L$:
\begin{gather}
z_{c,C,R,i,j} := \begin{cases} 
                          i (RL)^c j, & \text{if $C=0$,} \\
                          i (RL)^c R j, & \text{if $C=1$,} \\
                          i (RL)^c R_U F_U^{C-2} F j, & \text{if $C \geq 2$}.
\end{cases} \\
z_{c,C,L,i,j} := \begin{cases} 
                          i (LR)^c j, & \text{if $C=0$,} \\
                          i (LR)^c L j, & \text{if $C=1$,} \\
                          i (LR)^c L_U F_U^{C-2} F j, & \text{if $C \geq 2$}.
\end{cases}
\end{gather}
\begin{rem}
  The indices $c, C$ in the $\tilde D_n$ case are analogous to the
  quantities $\min(a,b)$ and $|a-b|$ in the $\tilde A_{n-1}$ case:
  $c$ gives the number of short cycles ($RL$ or $LR$) and
  $\frac{1}{2(n-2)} \cdot C$ gives the number of long cycles (a power of
  $f_U$ or $f_D$). We cannot keep the same notation in both cases
  because in the $\tilde A_{n-1}$ case, the winding number is a
  meaningful quantity (which is $\frac{1}{n} \cdot (a-b)$), whereas in
  the $\tilde D_n$ case, the only meaningful quantity is the number of
  short and long cycles: the starting direction $R$ or $L$ is only
  meaningful if beginning at an internal vertex, and the meaning is
  lost modulo commutators (or when passing to the center).
\end{rem}
The main result of this section is:
\begin{thm} Let $Q^0 = \tilde D_n$. We use the notation of Proposition \ref{mip2}. 
\label{dnt}
\begin{enumerate}
\item[(i)] A basis for $i \Pi_{Q^0} j$ for any $i,j \in Q^0_0$ consists of
all nonzero elements having the form 
\begin{enumerate}
\item[(a)] $z_{c,C,R,i,j}$, for $c, C \geq 0$ and any $i,j$, or 
\item[(b)] $z_{c,C,L,i,j}$, for $c,C \geq 0, i,j \in Q^0_0$ such that
 either $i \in \{i_{RU}, i_{RD}\}$ or $C \geq 1$.
\end{enumerate}
In particular, for $i_0 := i_{LU}$, a basis of $i_0 \Pi_{Q^0} i_0$
  is given by $z_{c,C,R,i_0,i_0}$ for $c, C \geq 0$.
\item[(ii)] For any quiver $Q \supsetneq Q^0$, $W = W_1 \oplus W_2$,
  where $W_1 \subset W$ projects isomorphically under $V \onto
  \Pi_{Q^0} \onto \Pi_{Q^0} / \langle z_{c,C,R,i_{LU},i_{LU}},
  z_{c,0,L,i_{RU},i_{RU}} \rangle_{c,C\geq 0}$ onto the (saturated)
    submodule
\begin{equation}
\bigoplus_{i \neq j} i \Pi_{Q^0} j \oplus \bigoplus_{i \in Q^0_0 \mid i \notin \{i_{LU},i_{RU}\}} 
i \Pi_{Q^0} i \oplus \langle z_{c,C,L,i_{RU},i_{RU}}  \rangle_{c \geq 0, C \geq 1},
\end{equation}
and $W_2$ has a basis of classes $W_{c,C}$, for $C = 2(n-2) \cdot C'$
and $c, C' \geq 0$, having the form
\begin{gather}
W_{c,C} := \sum_{c_\cdot, C_\cdot, i_\cdot, X_\cdot \mid (*)} 
\frac{\gcd(c,C')}{\rep(c_\cdot, C_\cdot)}
[\prod_{m=1}^{|\{c_\cdot\}|} r'z_{c_m, C_m, X_m, i_m, i_{m+1}}]   \quad (C > 0), 
\intertext{where $(*)$ is the condition that $X_m$ always begin going forward:}
\prod_{m'=1}^{m-1} z_{0,C_{m'},X_{m'},i_{m'},i_{m'+1}} F = 
\prod_{m'=1}^{m-1} z_{0,C_{m'},X_{m'},i_{m'},i_{m'+1}} X_m, \text{ and} \\
c = |\{c_\cdot\}| + \sum_m c_m, \quad C = \sum_m C_m; \\
\intertext{finally,}
W_{c,0} := (RL)^c - (LR)^c = i_L (RL)^c - i_R (LR)^c + \ino ((RL)^c - (RL + r')^c) \ino.\label{wc0rel} 
\end{gather}
\item[(iii)] The integral span of $W_{c,C}$ for $C > 0$ or $c \neq
  p^\ell$ for any prime $p$ and $\ell > 1$ is saturated, and the order
  of $W_{p^\ell,0}$ is $p$ (modulo the other $W_{c,C}$'s or
  otherwise).
\item[(iv)] The image of $\frac{1}{p} W_{p^\ell,0}$ is $r^{(p^\ell)}$
  and these classes generate the torsion of $\Lambda_Q$ (ranging
  over all $p$ and $\ell$). 
\end{enumerate}
\end{thm}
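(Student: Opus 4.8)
The plan is to follow the template established in the $\tilde A_{n-1}$ case (Theorem \ref{ant}) and in Lemma \ref{mcl}, reducing the type $\tilde D_n$ computation to the same ``local'' torsion phenomenon at a single vertex. First I would set up the notation exactly as in the proof of Theorem \ref{ant}: write $r_{Q^0_0} = \sum_{i} iri$, set $r' = r_{Q^0_0} - \sum_{a \in Q_1}(aa^* - a^*a)$, let $F = \Pi_{Q,Q^0_0}$ and $A = F/\ldp r_{Q^0_0}\rdp = \Pi_Q$, and use Proposition \ref{mip2} to write $\Lambda_Q \cong V/W$. Parts (i) and (ii) are the combinatorial heart: part (i) is a Diamond Lemma computation (as in Appendix \ref{dla}), using the generalized-path basis elements $z_{c,C,X,i,j}$ and a disorder function counting swaps needed to reach the normal form $z_{c,C,X,i,j}$ (analogous to $\Dis$ and the $O$-filtration by powers of $\ldp r'\rdp$ in Lemma \ref{mcl}). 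Part (ii) then follows by reducing the commutator relations $[\text{generalized path}, \text{arrow}]$ modulo $r_{Q^0_0}$ and modulo $[Ar'A,A]$, exactly as \eqref{wabdefn1}--\eqref{wabdefn3} were obtained in Lemma \ref{mcl}.(3) and as \eqref{sc1}--\eqref{sc2} were obtained in the $\tilde A_{n-1}$ case.

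Granting parts (i)--(iii), the final statement (iv) is short. Part (iii) says that $V$ is a free $\Z$-module, that the span of the $W_{c,C}$ with $C>0$ or $c \neq p^\ell$ is saturated, and that $W_{p^\ell,0}$ has order exactly $p$ modulo the remaining relations; since each $W_{c,C}$ lives in its own bigraded (or rather, graded) degree $2(c + C' \cdot (\text{something}))$, the torsion of $V/W$ decomposes as a direct sum over graded degrees, each summand cyclic, and the only nontrivial ones are $\Z/p$ in the degrees of the $W_{p^\ell,0}$, namely degree $2p^\ell$. So the torsion of $\Lambda_Q = V/W$ is $\Z/p$ in degrees $2p^\ell$, generated by the image of $\tfrac 1p W_{p^\ell,0}$. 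It then remains to identify this image with $r^{(p^\ell)}$. For this I would use the explicit form \eqref{wc0rel}: $W_{p^\ell,0} = i_L(RL)^{p^\ell} - i_R(LR)^{p^\ell} + \ino\bigl((RL)^{p^\ell} - (RL+r')^{p^\ell}\bigr)\ino$. The first two terms map to $\pm[(RL)^{p^\ell}]$-type classes which, being $p^\ell$-th powers of paths equivalent under commutators, contribute the same class up to sign; the combination is $p$ times an integral cyclic-word combination (just as in Lemma \ref{mcl}.(4), where $\tfrac1p[r^{p^\ell} - (xy)^{p^\ell} - (-yx)^{p^\ell}]$ was shown to be a $p$-multiple). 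The same manipulation as in the last display of the proof of Lemma \ref{mcl} --- replacing $(LR)$ by $(RL + r')$ wherever it appears inside a bracket divisible by $p$, and observing $(RL+r')$ is exactly the reduction of $r_Q$ restricted to the relevant vertices --- yields that $\tfrac1p W_{p^\ell,0} \equiv \tfrac1p[(r_Q)^{p^\ell}] = r^{(p^\ell)} \pmod p$ in $\Lambda_Q$.

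The nonvanishing of $r^{(p^\ell)}$ is then immediate: it is already proved in full generality in Proposition \ref{rpnz}, so I would simply cite that, or alternatively note that part (iii) forces $\tfrac1p W_{p^\ell,0} \notin W$ hence its image in $\Lambda_Q$ is a nonzero torsion class. Combined with the degree-by-degree cyclicity of the torsion, every $p$-torsion class in $\Lambda_Q$ is an integral combination of the $r^{(p^\ell)}$, which is the assertion of (iv). The assembly of (iv) from (i)--(iii) parallels the passage from parts (v)--(vi) to (vii) in Theorem \ref{ant}, and from Lemma \ref{mcl}.(1)--(3) to (4).

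The main obstacle I expect is part (i), and specifically the Diamond Lemma verification that the generalized-path normal forms $z_{c,C,X,i,j}$ really are closed under the reduction system and that all ambiguities are resolvable. The type $D$ case is genuinely harder than type $A$ here because of the branching at the two external ends: the ``forward'' operation $F$ (and $F_U, F_D$) encodes a choice that must be made consistently, and one has to check that overlapping instances of $RL \leftrightarrow LR$ swaps, or swaps involving the external arrows $L_U, L_D, R_U, R_D$, do not create genuinely inequivalent reductions. This is the analogue of the non-overlap argument in Lemma \ref{mcl} (the claim that two adjacent $xy$/$yx$ instances cannot overlap), but now the local configurations to rule out are more numerous and involve the star structure at $i_{LU}, i_{LD}$ (resp.\ $i_{RU}, i_{RD}$). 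Once the normal forms and confluence are pinned down, the rest --- the reduction of commutators to the relations $W_{c,C}$, the saturation and order computations in (iii), and the identification in (iv) --- proceeds by the same bookkeeping as in the $\tilde A_{n-1}$ case, keeping careful track of vertex idempotents and of the coefficients $\tfrac{\gcd(c,C')}{\rep(c_\cdot,C_\cdot)}$.
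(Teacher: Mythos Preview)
Your overall plan matches the paper's, and your treatment of (iv) given (i)--(iii) is essentially what the paper does. There are, however, two specific tools the paper uses that you have not identified, and without them part (i) and the organization of (ii) would be difficult to carry out as you describe.

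First, for part (i) the paper does \emph{not} use a swap-counting disorder function. Instead it introduces a filtration of $\Pi_{Q^0}$ by powers of the ideal $\ldp \ino RL \ino, \ino LR \ino \rdp$ and works in the associated graded. Spanning is shown by explicit manipulations using identities such as $R_U L + R_D L = RL$ (this is where the branching is handled, by showing that any $L_D$ in the ``forward'' part may be replaced by $-L_U$ modulo higher filtration degree). For linear independence, the Diamond Lemma ordering is defined via an ``ends'' sequence $E(X)$ recording the alternating pattern of left/right external hits, and a set $WD(X)$ of positions where the path moves \emph{away} from the next endpoint it will reach; the order compares $|WD(X)|$ and then the positions of wrong-direction steps. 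A naive swap-count disorder along the lines of Lemma \ref{mcl} is unlikely to give confluence here precisely because of the ambiguity you flag: at an external branch one has $R_U L$ and $R_D L$ which are not related by a single local swap but by the global relation $R_U L + R_D L = RL$.

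Second, the same filtration by $\ldp \ino RL \ino, \ino LR \ino \rdp$ governs part (ii): the elimination of redundant commutators is done ``modulo higher powers'' of this ideal (the paper's Notation \ref{mhpn}), via a sequence of identities (\eqref{zcomm1}--\eqref{zcomm4} in the paper) that successively push commutators into higher filtration degree. This is what lets one reduce to a single relation $W_{c,C}$ in each bidegree. Your description of reducing commutators ``exactly as in \eqref{sc1}--\eqref{sc2}'' is correct in spirit, but the $\tilde D_n$ case requires several more elimination steps, and the filtration is the bookkeeping device that makes them tractable.
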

  Although we could have
included a description of $\Lambda_{Q^0}$ above, we relegate this to
Theorem \ref{dedz}.

Note that, as in the $A_n$ case, one way to obtain bases for
$\Pi_{D_n}$ is from the above basis.  One can set
$i_0 := i_{LD}$ and $D_n := \tilde D_n \setminus \{i_0\}$, and use
those basis elements $z_{c,C,X,i,j}$ which are nonzero under the
quotient $\Pi_{\tilde D_n} \onto \Pi_{D_n}$, and which do not pass
through $i_{LU}$ (but may begin or end at $i_{LU}$).

\subsection{Proof of Theorem \ref{dnt}.(i)}
Define a filtration on $\Pi_{Q^0}$ by powers of the two-sided ideal
$\ldp \ino RL \ino, \ino LR \ino \rdp$.  We will show that $\gr \Pi_{Q^0}$ is a
free $\Z$-module with the given basis (more precisely, its image in
$\gr \Pi_{Q^0}$), which shows that the same is true for $\Pi_{Q^0}$.

We begin by showing that the given elements integrally span $\Pi_{Q^0}$
and $\gr \Pi_{Q^0}$.  We first show that $\Pi_{Q^0}$ and $\gr \Pi_{Q^0}$ are
integrally spanned by a larger set: elements of the form $i (RL)^k
Y$ or $i (LR)^k Y$ where $Y$ is a path that satisfies the following
property: The paths change from going to the left to right or
vice-versa only at the four endpoints of $\tilde D_{n}$.  This differs
from the proposed basis only by allowing $F_U$'s to change to $F_D$'s,
and replacing the first $F_U$ or $F$ by any of $L_U, L_D, R_U$,or
$R_D$.

First, $\Pi_{Q^0}$, as well as $\gr \Pi_{Q^0}$, is obviously generated by
elements $i (RL)^k Y$ or $i (LR)^k Y$ where $Y$ is a generalized path.
We can assume that $Y$ does not begin with $LR$, $RL$, or any
expression equal to one of these, since otherwise we could absorb this
into the product $(LR)^k$ or $(RL)^k$: if $Y$ begins at an internal
vertex $i$, then $i LR = i RL$, whereas if $Y$ begins at an external
vertex, only one of $i LR$ and $i RL$ is nonzero.  Next, we show that,
if $Y$ includes a change of left-right direction at a vertex other
than an external one, then it is equivalent to an element that makes such a
change of direction strictly earlier, up to elements of $\ldp \ino RL
\ino, \ino LR \ino\rdp^{k+1}$.  Since we assume the change of direction
cannot happen at the beginning, this will prove the desired result
(obviously, there are only finite number of possible expressions
$(LR)^k Y, (RL)^k Y$ of a fixed length to consider).

Suppose that the first change of direction at an internal vertex (say $j$)
is of the form $i L j R$ where $i$ is also an internal vertex.
Then, it must follow $L$, so that $L i L j R = L i LR = L i RL$,
giving the desired element which changes direction at an internal
vertex earlier.  Similarly, we can handle $i R j L$ where $i,j$ are
internal.

Next, suppose that the first change of direction at an internal vertex
(say $j$) is of the form $i L j R$ where $i$ is external.  Then it
must be preceded by $R_X$ for the appropriate $X \in \{U,D\}$, so we
get $R_X L j R = R_X L R_\oX = RL R_\oX$ for $X \neq \oX \in \{U,D\}$.
If these three arrows are the first of $Y$, then we can absorb an
additional $RL$ (or $LR$) into the initial power of $RL$ or $LR$. In this case,
the element $i(RL)^kY$ or $i(LR)^kY$ lies in 
$\ldp \ino RL \ino, \ino LR \ino\rdp^{k+1}$, so we can discard it.
Otherwise, these three arrows must be preceded by $R$,
since we assumed this was the first change of direction.  Then, we get
$R i R L R_\oX = R i LR R_\oX$, again yielding an expression that
changes direction at an internal vertex earlier.

This proves the desired claim.  Now, to show that $\Pi_{Q^0}$, as well as
$\gr \Pi_{Q^0}$, is integrally spanned by the elements stated in the
theorem, we make two observations. First of all, if we have a nonzero
expression $i (RL)^k Y$ for $k \geq 1$, where $Y$ does not begin with
$R_U$ or $R_D$, then $i$ must be an internal vertex and $Y$ must begin
with $L_U$ or $L_D$. In this case, because $i (LR)^k = i (RL)^k$, this
expression is already equal to $i(LR)^k Y$, so we don't need $i(RL)^k
Y$ to integrally span $\Pi_{Q^0}$.

Next, take an expression of the form $i (RL)^k Y$ where $k \geq 1$ and
$Y$ begins with $R_U$ or $R_D$. If $Y$ includes $L_D$ anywhere except
at the very end, then it precedes $R$, and we may freely replace $L_D
R$ with $LR - L_U R$.  Since $Y$ does not begin with $L_D$, the
occurrence $L_D R$ in $Y$ is preceded by $L$.  Thus, we replace the
resulting $LL_DR$ in $Y$ by $LLR - L L_UR = LRL - LL_U R$.  The
resulting $LRL$ term involves changing direction at an internal
vertex, which we showed above must be in $\ldp \ino RL \ino, \ino LR
\ino\rdp^{k+1}$. In other words, in $\gr \Pi_{Q^0}$, one can replace the
$L_D$ in $Y$ by $-L_U$, and obtain the same element.  Hence, the
elements stated in the theorem integrally span $\gr \Pi_{Q^0}$, as well
as $\Pi_{Q^0}$.



It remains to show linear independence of the given elements over $\Z$
(i.e., that they form a basis for the free $\Z$-module $\Pi_{Q^0}$).  We
do this by mirroring the above, but using the Diamond Lemma and a bit
more careful analysis.  For any generalized path $Y \in GP$, we can
construct an alternating sequence
\begin{equation}
E(X) := (t_1, t_2, t_3, \ldots, t_m)_{t_\ell \in \{L,R\}},
\end{equation} which we call ``the ends
of $X$'', as follows: Start with the empty sequence, unless
the initial vertex of $X$ is an external one, in which case we start with
the side $L$ for left or $R$ for right.
Every time we hit the left end (a left external vertex or the
superposition of both) we add an $L$ to the sequence $E(X)$, unless $E(X)$ is
already nonempty with last term equal to $L$.  Every time we hit
the right end, we add an $R$, unless the sequence is nonempty with
last term $R$.  That is, the sequence records the order in which the
path hits left and right endpoints, throwing out multiple hits of one
side before next hitting the opposite side.  Put another way, it
records which is the first side the path reaches, and the number of
times it alternates from one side to another.

Now, for any
$X \in GP \cap (P_\dzq)_k$ of length $k$, with
$E(X) = (t_1, t_2, \ldots, t_m)$, we can write
$X = X_1 i_{t_1} X_2 i_{t_2} \ldots i_{t_m} X_{m+1}$ in the unique way such that
$X_1 \in P_\dzq$ has minimal possible length, and for this value of $X_1$,
$X_2$ has minimal possible length, etc.

Finally, let us consider the places $\ell \in \{1,2,\ldots,k\}$ where
$X$ is ``not going in the correct direction.''  By ``not the correct
direction at $\ell$'', we mean that the path is headed away from the
next endpoint $i_{t_{m}}$ (=towards the previous endpoint $i_{t_{m-1}}$)
appearing in the path.  Precisely, suppose that
$X = a_1 a_2 \cdots a_k$ and $X$ hits the endpoint $i_{t_m}$, at the
time corresponding to when we add $t_m$ to the sequence,
between $a_{\ell_i-1}$ and $a_{\ell_i}$ 
(i.e., if the last endpoint $t_m$ is hit at the
very end, set $\ell_m := k+1$).  Then, for all
$\ell_i \leq \ell < \ell_{i+1}$, we say that $\ell$ is a place where
$X$ heads in the ``wrong direction'' if $a_\ell \neq t_{i+1}$.  Now,
for any $X \in GP \cap (P_\dzq)_k$ of the form $X=iXj$ for some
$i,j \in Q^0_0$, let $WD(X) \subset \{1,2,\ldots,k\}$ be the subset of
places where $X$ goes in the wrong direction.  One easily sees that
the size of $WD(X)$ is the difference between $k$ and the shortest path
with the same endpoints as $X$ (this makes sense since we assumed $X$
had definite endpoints). 

Now, we may define $X \prec Y$ if either $|WD(X)| > |WD(Y)|$, or else
$|WD(X)| = |WD(Y)|$ and there is an order-preserving bijection $\phi:
WD(X) \rightarrow WD(Y)$ such that $\phi(c) \leq c$ for all $c \in
C(X)$.  That is, the places where $X$ goes in the ``wrong direction''
occur strictly before the corresponding places in $Y$.

Now, we finally have the ordering we need such that the relations on
$\langle GP \cap (P_\dzq)_k \rangle$ modulo which one gets $(\Pi_{Q^0})_k$
are confluent and give the desired elements as the normal form (one
reduces as in the first part of this proof).  So, the desired elements
are linearly independent and form a basis of $\Pi_{Q^0}$. Also, since
their images integrally span $\gr \Pi_{Q^0}$, they form a basis of $\gr
\Pi_{Q^0}$ as well, which is therefore also a free $\Z$-module.

Similarly to the $\tilde A_{n-1}$ case, it is not difficult to deduce
from the above procedure that $\Pi_{Q^0}$ is an NCCI (that one has a
unique reduction of $P_\dzq$ to words in the above basis and the
relations $i r i$). Also, it is clear from the above proof that for any
of the basis elements, we can feel free to replace any of the $F_U$'s
in the $z_{c,C,X,i,j}$ by $F_D$'s, and we will still be left with a
basis.

\subsection{Proof of the remainder of Theorem \ref{dnt}}
\label{dnpfs} 
Suppose $Q$ is any quiver with a proper subquiver $Q^0 \subsetneq Q$
with $\overline{Q^0} \cong \overline{\tilde D_{n}}$.
Define $r_{Q^0_0} :=
\sum_{i \in Q^0_0} i r i$.  Fix a forest $G \subset \overline{Q}_1
  \setminus \overline{Q_1^0}$ as in Proposition \ref{bpp}.

  As in the $\tilde A$ case, and using the same notation $V$, $W$, and
  $A$, it suffices to compute $\bar W := \bigoplus_{i \in Q^0_0} i W i
  \subset V$, which is integrally spanned by $\eta([X,e])$ for $X$
  listed in Theorem \ref{dnt}.(i) and $a \in \overline{Q_1^0}$ such
  that $Xa \in \bar \Pi_Q := \sum_{i \in Q_0} i \Pi_Q i$
  (cf.~Proposition \ref{mip2}.(iii)).  We can instead let $a$ be one
  of $L_U, L_D, R_U,$ or $R_D$.

First, we need to eliminate some of the commutators $\eta([X,a])$.  We
prefer to eliminate those $X = z_{c,C,X,i,j}$ with smaller $c$, so as
to maximize the $c$ of the remaining commutators.  Essentially, this
means continuing to use the filtration on $A$ by powers of the ideal
$(LR, RL)$.

First note the basic
\begin{lemma} \label{udsl}
If $X$ is obtained from
$z_{c,C,X,i,j}$ by changing $\alpha$ instances of $R_U L$ to $R_D L$
or $L_U R$ to $L_D R$, then $X \equiv (-1)^\alpha
z_{c,C,X,i,j}$ modulo $\langle
z_{c',C',X,i,j}\rangle_{c' > c}$.
\end{lemma}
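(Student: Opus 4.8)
The plan is to reduce the statement to the preprojective relations at the two branch vertices of $\tilde D_n$ and to re-use the reduction machinery from the proof of Theorem~\ref{dnt}.(i). Since the $z_{c,C,X,i,j}$ form a $\Z$-basis of $\Pi_{Q^0}$ adapted to the filtration by powers of the ideal $\ldp \ino RL\ino, \ino LR\ino\rdp$ (a basis element beginning with $i(RL)^c\cdots$ lies in the $c$-th power and in no deeper one), the submodule $\langle z_{c',C',X,i,j}\rangle_{c'>c}$ is precisely the deeper filtration piece; and the swaps $R_U\to R_D$, $L_U\to L_D$ leave the initial $(RL)^c$ untouched, so $X$ lies in the same filtration level as $z_{c,C,X,i,j}$. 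Hence it suffices to prove the congruence in the associated graded algebra $\gr\Pi_{Q^0}$, where (as established in the proof of (i)) every path reversing left--right direction at an internal vertex vanishes; in particular a chain arrow immediately followed by a direction reversal is $\gr$-trivial, and this is the fact that makes the substitutions below legitimate.

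The sign $-1$ is produced by the relation $iri=0$ at a branch vertex. At the left branch vertex $v_L$, writing out $v_L r v_L$ for the orientation of Figure~\ref{dnf} and discarding the interior chain term (a reversal at an internal vertex, hence $\gr$-trivial), one is left with a relation equating the length-two bounce $v_L \to i_{LU}\to v_L$ with $-1$ times the bounce $v_L\to i_{LD}\to v_L$ in $\gr\Pi_{Q^0}$; the sign $-1$ is exactly the antisymmetry in $r=\sum_{a\in Q^0_1}(aa^*-a^*a)$. Expressing these two bounces in terms of the $U$- and $D$-decorated generators and inserting the identity into $z_{c,C,X,i,j}$ at a position where its long cycle turns around at the left end converts a subword $L_U R$ into $L_D R$ at the cost of a sign, modulo paths in deeper filtration, i.e.\ modulo $\langle z_{c',C',X,i,j}\rangle_{c'>c}$; the mirror computation at the right branch vertex $v_R$ handles $R_U L\to R_D L$. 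This settles the case $\alpha=1$ --- and, since the preprojective relation is inserted locally at the swapped position, the same argument applies verbatim to any element of the spanning set in which some of the $U$-decorations have already been replaced by $D$-decorations, which is what the induction needs.

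For general $\alpha$ I would induct on $\alpha$: performing the swaps one at a time, each application of the (generalized) $\alpha=1$ statement multiplies the principal term by $-1$ and adds terms in $\langle z_{c',C',X,i,j}\rangle_{c'>c}$, and a swap subsequently performed on such an error term only produces further elements of that same submodule, which is being discarded; hence $X\equiv(-1)^\alpha z_{c,C,X,i,j}$ modulo $\langle z_{c',C',X,i,j}\rangle_{c'>c}$. I expect the main obstacle to be the middle paragraph: matching the geometric $U/D$ generators to the two bounces at each branch vertex with the correct sign, and checking that the error terms produced by the preprojective substitution reduce, under the confluent rewriting of part (i), precisely to combinations of basis elements with strictly more short cycles (and not, say, to elements with a different generalized endpoint $j$ or lying outside the claimed basis). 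This is routine but delicate, given the number of conventions --- $L$ versus $L_U$ versus $L_D$, generalized endpoints, and the operators $F, F_U, F_D$ --- that must be kept straight.
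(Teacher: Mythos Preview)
Your argument is correct and is essentially the paper's argument, which the paper compresses into a single line: ``This follows from the fact that $R_U L + R_D L = RL$, using the ideas in the proof of Theorem~\ref{dnt}.(i).'' The point is that at the branch vertex $v_R$ one has $v_R R_U + v_R R_D = v_R R$ as a \emph{path-algebra} identity (there is no rightward chain arrow out of $v_R$), so $R_D L = RL - R_U L$ there; and $v_R RL v_R = v_R(\ino RL\ino)v_R$ is literally a generator of the filtration ideal, so substituting it into the long cycle of $z_{c,C,X,i,j}$ lands in level $c+1$. The same holds at $v_L$ with $L_U R + L_D R = LR$. Your route through the preprojective relation at the branch vertex is equivalent in $\Pi_{Q^0}$ (where $\ino LR\ino=\ino RL\ino$), just one step longer.

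One small correction: the sign $-1$ is not ``the antisymmetry in $r=\sum(aa^*-a^*a)$.'' With the orientation of Figure~\ref{dnf}, the two external bounces at $v_L$ (and at $v_R$) appear with the \emph{same} sign in $v_L r v_L$; the relation reads (bounce up) $+$ (bounce down) $=$ (chain term), and the $-1$ comes simply from solving this for one bounce in terms of the other. Equivalently, it is immediate from $R_D L = RL - R_U L$ without reference to orientation at all.
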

\begin{proof}
This follows from the fact that $R_U L + R_D L = RL$, using the ideas
in the proof of Theorem \ref{dnt}.(i) (in the last subsection).
\end{proof}

We use this to write some relations which are deduced similarly to
\eqref{sc1}, \eqref{sc2}.  These will allow us to express certain
commutators in terms of commutators involving basis elements living in
a smaller power of $\ldp \ino RL \ino, \ino LR \ino \rdp$.

\begin{ntn}\label{mhpn} We say that an equality holds 
  ``modulo commutators with higher powers of \\
  $\ldp \ino RL \ino, \ino LR \ino \rdp$'' if the equation is true up to
  $\eta$ of commutators with elements $z_{c',C',X',i',j'}$, where $c'$
  is greater than all the indices $c$ which appear in the equation.
\end{ntn}

Take $C$ such that $2(n-2) \mid C$ and $C > 0$.  This is equivalent to
the condition that a basis element of the form $z_{c,C,X,i,i}$ has
either (1) $i$ is internal and $z_{c,C,X,i,i} = z_{c,C-1,X,i,j} X$ for
some $j$ (i.e., it ends in $X$); or (2) $i$ is external.  Pick any
choice of $i, X$ such that $z_{c,C,X,i,i}$ is a basis element; let $c
> 0$. Let $\tilde X$ be defined to be $X$ when $i$ is internal (first
case), and $\oX$ otherwise (second case), where here and below $\oX
\in \{L,R\}$ denotes the opposite direction from $X \in \{L,R\}$. That
is, $\tilde X$ is the last direction in $z_{c,C,X,i,i}$. One then
computes, using \eqref{sc1}, Lemma \ref{udsl}, and Notation
\ref{mhpn}, for $c \geq 1$,
\begin{multline} \label{zcomm1}
\sum_j c\eta([z_{c-1,C+1,X,i,j},j\overline{X}i]) + c
\eta([z_{c,C-1,\tilde X,j,i}, iXj]) \\ + \frac{C}{2(n-2)} \sum_{i',j'
\in Q^0_0 \setminus\{i_{rd}, i_{\ell d}\}, Y \in \{L,R\}}
(-1)^{c \delta_{\oX,Y}} \eta([z_{c,C-1,Y,i',j'},j'(L+R)i']) = 0 \\
\text{ modulo commutators with higher powers of $\ldp \ino RL \ino, \ino LR \ino \rdp$},
\end{multline}
which shows that we can eliminate the relations $\sum_j
\eta([z_{c-1,C+1,X,i,j},j\overline{X}i])$. In terms of commutators of
basis elements with arrows, we do the following: In the case that $i$
is not adjacent to external vertices by an arrow moving in the $X$
direction (beginning with $i$), this sum is only over a single value
of $j$, so we eliminate the relation
$\eta([z_{c-1,C+1,X,i,j},j\overline{X}i])$.  Otherwise, the first sum
is over two possible values of $j$, then we can choose to eliminate
the relation $\eta([z_{c-1,C+1,X,i,i_{XD}},i_{XD} \overline{X} i])$.
Note in the above that the sums over all $i', j'$ are effectively only
over adjacent pairs $i', j'$, and the $j'(L+R)i'$ is shorthand for the
unique arrow in $\overline{Q^0}$ from $j$ to $i$.

Also, the relation $\eta([z_{c-1,C+1,X,i,i_{XU}},i_{XU} \overline{X}
i])$ we left above can be interpreted simply as expressing
$z_{c-1,C+2,X,i,i}$ as $z_{c,C,X,i_{XU},i_{XU}}$ plus some multiple of
$r' = r_{Q^0_0} - \sum_{a \in Q_1^0} (a a^* - a^* a)$ in the quotient
$V/W$.  So we can also eliminate this relation if we also eliminate
the generator $z_{c-1,C+2,X,i,i}$ from $V$.  Actually, this paragraph
is still true if $C=0$, so we can also eliminate the relation
$\eta([z_{c-1,1,X,i,i_{XU}},i_{XU} \overline{X} i])$ and the generator
$z_{c-1,2,X,i,i}$ (for $i$ adjacent to external vertices by arrows of the form
$i X_U j$ or $i X_D j$).

Next, consider a basis element $z_{c,C,X,i,i}$ such that
$2(n-2) \nmid C$ and $2(n-2) \nmid (C-2)$ (we dealt with the other
cases in the last paragraph).  In particular, $i$ is internal and not
adjacent to external vertices by moving in the $X$ direction.
Let $g=g(X,i)$ be defined to be
the distance from $i$ to the $X$-end. One has
$g = \frac{C}{2} - (n-2) \lfloor \frac{C}{2(n-2)} \rfloor$ since the
$F_U^C$-portion of the element $z_{c,C,X,i,i}$ goes forward $g$ units,
cycles   $\lfloor \frac{C}{2(n-2)}\rfloor$ times around the long distance
of $\tilde D_{n}$, and then moves $g$ units forward back to $i$.
We will show that (1) 
$\eta([z_{c,C-1,X,i,j},j\overline{X}i]) \equiv -
\eta([z_{c,C-1,\oX,j,i},iXj])$
modulo commutators with higher powers (in particular, modulo terms that begin
and end closer to the external vertices on the $X$-side than $i$), and
that (2) either of these (sums of) commutators can be taken to express
$z_{c,C,X,i,i}$ in terms of $z_{c+m,C-2m,X,j',j'}$ for $m > 0$ (and
$g(X,j') < g(X,i)$); eventually this will reduce us to basis elements beginning
and ending at an external vertex.

(1) follows from the identity
\begin{equation} \label{zcomm2}
 [z_{c,C-1,X,i,j}, j\overline{X}i] + [j(\oX X)^c z_{0,C-1,X,j,i},i X j] 
=  [j \overline{X} X j, z_{c,C-2,X,j,j}], 
\end{equation}
and (2) follows because $\eta([z_{c,C-1,X,i,j},j\overline{X}i]) -
(z_{c,C,X,i,i} - z_{c+1,C-2,X,j,j}) \in [r' A]$.

Similarly to \eqref{zcomm2}, for $C > 2$ we can also show that
$\eta([z_{c,C-1,\oX,i,i_{XD}}, i_{XD} \overline{X} i])$ is in the 
integral span
of other commutators (note that $2(n-2) \mid C$ for this to be
nonzero):
\begin{multline}
[i (X \oX)^c z_{0,C-1,\oX,i,i_{XD}}, i_{XD} \overline{X} i] + [i (X \oX)^c z_{0,C-1,\oX,i,i_{XU}}, i_{XU} \overline{X} i] + [z_{c,C-1,\oX,i_{XD},i},i X i_{XD}] \\ + [z_{c,C-1,\oX,i_{XU},i},i X i_{XU}] = [(X \oX)^c z_{0,C-2,\oX,i,i}, X \oX],
\end{multline}
which shows that
\begin{multline}
\eta([z_{c,C-1,\oX,i,i_{XD}},i_{XD} \overline{X} i]) +
\eta([z_{c,C-1,\oX,i,i_{XU}},i_{XU} \overline{X} i ]) +
\eta([z_{c,C-1,\oX,i_{XD},i},i X i_{XD}]) \\  +
\eta([z_{c,C-1,\oX,i_{XU},i},i X i_{XU}]) \in
\eta([\ldp \ino RL \ino, \ino LR \ino \rdp^{c+1},\Pi_Q]),
\end{multline}
so we can indeed throw out any one of these commutators: we choose
$\eta([z_{c,C-1,\oX,i,i_{XD}},i_{XD} \overline{X} i])$.

The only other commutators of the form $\eta([X,e])$, with $X$ as in
Theorem \ref{dnt}.(i) such that $Xe$ is nonzero and in $\bar \Pi_Q$,
that we have not yet mentioned, are those of the form
$\eta([z_{c,1,X,i,j},j\overline{X} i])$ where $j$ is an internal
vertex.  Here we can make use of the identity, similar to the above:
if $i$ and $j$ are internal, then
\begin{equation}
\eta([z_{c,1,X,i,j}, j \overline{X} i]) = \eta([z_{c,1,\oX,j,i}, i X j]),
\end{equation}
and otherwise,
\begin{equation}
\sum_{Y \in \{U,D\}} \eta([z_{c,1,\oX,i_{X Y}, j}, j X i]) = \sum_{Y \in \{U,D\}} \eta([z_{c,1,X,j,i_{\oX Y}}, i_{\oX Y} \overline{X} j]).
\end{equation}
Thus, we can eliminate the commutators $\eta([z_{c,1,L,i,j}, j R i])$
for $i \neq i_{RD}, j \neq i_{L D}$.  Furthermore, in the cases $i =
i_{RD}$ and $j = i_{LD}$, these commutators allow one to express
$z_{c,0,L,i_{R D}, i_{RD}}$ in terms of $z_{c, 0, L, i_{RU}, i_{RU}}$,
and $z_{c,0,R,i_{LD}, i_{LD}}$ in terms of $z_{c,0,R,i_{L U},
  i_{LU}}$, so we can eliminate all commutators $\eta([z_{c,1,L,i,j},
j R i])$ along with the generators $z_{c, 0, \oX, i_{X D}, i_{X D}}$
for $X \in \{L,R\}$.  To summarize the result of the computation (with
parenthetical English versions):
\begin{enumerate}
\item We can eliminate $z_{c,C,X,i,i}$
  from the basis if $i$
  is internal and $2(n-2)
  \nmid
  C$, thus passing to a submodule \(V_0 \subset V\)
  (that is, we consider only cyclic paths that are a combination of
  length-two cycles at a vertex and long cycles around the length of $\tilde D_{n}$);
\item The only commutators we then need to consider, in order to
  integrally span $W \cap V_0$ (up to dividing certain commutators
  by certain integers), and thus present $\Lambda_Q \cong V_0 / (W
  \cap V_0)$, are
\begin{enumerate}
\item $\eta([z_{c,C-1,X,i,j}, j (L+R) i])$ for $2(n-2) \mid C$ and
  $j \notin \{i_{LD}, i_{RD}\}$ (we only need consider commutators
  obtained from the previous elements by separating the last arrow off
  and writing the commutator), and
\item $\eta([z_{c,1,R,i,j},jLi])$.
\end{enumerate}
\end{enumerate}

In computing the remaining commutators, we use the following basic
identities, which are similar to those for $\tilde A_0, \tilde A_n$:
First, note that, by the choice of orientation of $\tilde D_{n} = Q^0$
in the previous subsection, in $A = \Pi_{Q}$, one has $\ino (LR - RL)
\ino = \ino (LR - RL + r_{Q^0_0}) \ino \in \langle \overline{Q}_1 \setminus
  \overline{Q^0_1} \rangle^2$.  So, letting $r':= r_{Q^0_0} - \sum_{a \in Q_1^0} (a
a^* - a^* a)$ (as before),
\begin{multline}
R_X (LR) i_R = (R_X LR) i_R = (RLR_\oX + R_X L R_X - R_\oX L R_\oX) i_R
\\ = ((RL) R_\oX  - R_\oX r') i_{R\oX} + R_X r' i_{RX}; \label{fdr}
\end{multline}
\begin{equation}
R (RL) \ino =
((RL)R - R r') \ino; \label{idr}
\end{equation}
and similarly swapping left with right, and the element $r' \in \Pi_Q$
with $-r'$.  We can use these to move an $R$ (or $L$) past a power of
$RL$ (or $LR$), thus allowing one to compute $\eta
([z_{c,C-1,X,i,j},jRi])$ for $X = R$ and $i$ internal, or for $X=L$
and $i$ a right external vertex (and similarly swapping left with
right).  For $i$ internal and $j \neq i_{L D}$ a vertex adjacent to
$i$ on the left, one obtains
\begin{equation} \label{zcomm3}
\eta([z_{c,C-1,R,i,j},jRi]) = z_{c,C,R,i,i} - z_{c,C,R,j,j}
+ \sum_{0 \leq c' < c} [r' z_{c',C-1,R,i,j} z_{c-c'-1,1,R,j,i}].
\end{equation}
(We already noted that we can discard the relation in the case $j =
i_{L D}$.)

For $i \in \{i_{RU}, i_{RD}\}$ right external, one obtains the
following. We use the simplified notation
$i^{(c)} := \begin{cases} i, \text{if $c$ is even}, \\ \overline{i},
  \text{if $c$ is odd}, \end{cases}$ where $\overline{i}$ is the other
right external vertex from $i$.  We'll need the same definition for
left external vertices later.
\begin{multline} \label{zcomm4}
\eta([z_{c,C-1,L,i,j},jRi]) = z_{c,C,L,i,i} - \eta(z_{c,1,R,j,i^{(c)}} 
z_{0,C-1,L,i^{(c)},j})
\\ + \sum_{c' < c, \epsilon \in \{0,1\}} 
(-1)^{1+c'+\epsilon} 
[r' z_{c',C-1,L,i^{(\epsilon)},j} \cdot z_{c-c'-1,1,R,j,i^{(\epsilon)}}];
\end{multline}
We simplify the RHS by noting that 
\begin{equation}\label{zfix}
\eta(z_{c,1,R,j,i^{(c)}} z_{0,C-1,L,i^{(c)},j}) = \begin{cases}
z_{c,C,R,j,j}, & \text{if $i^{(c)} = i_{RU}$}, \\
-z_{c,C,R,j,j}+ Y, & \text{if $i^{(c)} = i_{RD}$},
\end{cases}
\end{equation}
where $Y \in \langle z_{c+m(n-2),C-2m(n-2),R,j,j} \rangle_{m \geq 1}$ (that is, $Y$ consists of ``higher powers of $\ldp \ino RL \ino, \ino LR \ino \rdp$'').

We can thus consider the above relations \eqref{zcomm3},\eqref{zcomm4}
as eliminating for each $a,b$ the basis elements $z_{c,C,X,i,i}$
(where $i$ is not lower external) for all but a single pair
$(X_0,i_0)$ where $i_0 \notin \{i_{RD}, i_{LD}\}$, leaving only a
single relation.  We can conveniently choose this relation to be the
sum of \eqref{zcomm3} and \eqref{zcomm4} over all $i \notin \{i_{RD},
i_{LD}\}$ (with $j \neq i_{LD}$ for \eqref{zcomm3}), together with
$(-1)^c$ times the sum of the commutators obtained from these by
swapping left and right.  This is zero if $c = 0$, so we can assume $c
\geq 1$. By \eqref{zcomm1}, to include all of the commutators we
discarded, we need only divide the resulting relation by $\ds
\frac{c}{\gcd(c,\frac{C}{2(n-2)})}$.

We thus get the single relation (for all choices of $c, C \geq 1$ such
that $2(n-2) \mid C$):
\begin{multline}
  \frac{\gcd(c, \frac{C}{2(n-2)})}{c} \sum_{i,j \notin \{i_{L D},
    i_{RD}\}} \eta([z_{c,C-1,R,i,j},jLi] +(-1)^c
  [z_{c,C-1,L,i,j},jRi]) = \\ \frac{\gcd(c, \frac{C}{2(n-2)})}{c}
  \sum_{c',i,X; j \notin \{i_{LD}, i_{RD}\}} (-1)^{c \delta_{X,L} +
    (c-c')(\delta_{i,i_{\oX D}} + \delta_{i, i_{\oX U}})+
    \delta_{i,i_{\oX D}}} [r' z_{c',C-1,X,i,j} z_{c-c'-1,1,\oX,j,i}] +
  Y,
\end{multline}
where $Y$ is in the integral span of products of terms
$z_{c',C',X,i',j'}$ and $r'$, such that the sum of $C'$-degrees is
strictly less than $b$. That is, $Y$ is the image of a greater power
of the ideal $\ldp \ino RL \ino, \ino LR \ino \rdp$ than $c$ (as is
$z_{c,C,X,i,j}$ for any $X,i,j$).  In other words, if we pass to the
associated graded algebra of $\Pi_Q$ with respect to the filtration by
powers of the ideal $\ldp \ino RL \ino, \ino LR \ino \rdp$ considered
earlier, and the associated graded $\Z$-module with respect to the
image of this filtration in $\bar \Pi_Q/([\bar \Pi_Q(\overline{Q}_1
\setminus \overline{Q^0_1})\bar \Pi_Q,\bar \Pi_Q] \cap \bar \Pi_Q)$,
then we can eliminate the $Y$ term and have an equality above.  In any
case, by induction on filtration degree, we will see that one can
neglect the $Y$ portion.

We then expand the RHS in terms of the basis of $V$ previously
described: (1) the elements $z_{c,C,X,i,j}$ from Theorem
\ref{dnt}.(i); and (2) cyclic alternating products of elements
$z_{c,C,X,i,j}$ and monomials in $\overline{Q}_1 \setminus \overline{Q^0_1}$ not
containing $a a^*$ for any $a \in G$ (the forest we picked as in
Proposition \ref{bpp}). This results in the expression
\begin{equation}
  \frac{\gcd(c, \frac{C}{2(n-2)})}{c} 
  \sum_{(c_\cdot, C_\cdot, X_\cdot, i_\cdot, j_\cdot) \mid (*)} 
  (c_1+1) s(c_\cdot,C_\cdot,i_\cdot,X_\cdot) [\prod_{m=1}^{|\{c_\cdot\}|} r' z_{c_m,C_m,X_m,i_m,i_{m+1}}] + Y,
\end{equation}
\begin{multline} 
  s(c_\cdot,C_\cdot,i_\cdot,X_\cdot):= (-1)^{c \delta_{X_1,L}}
  \prod_{m=1}^{|\{c_\cdot\}|}
  (-1)^{(\delta_{i_m,i_{RD}}+\delta_{i_m,i_{L D}}) + c_m(\sum_{k \geq m}B_k)}, \\
  \text{where }B_k = \text{\# of times that an arrow of the form $R_Z
    i_{R Z}$ or $L_Z i_{L Z}$} \\ \text{appears in the factor
    $z_{c_k,C_k,X_k,i_k,i_{k+1}}$, and } Y \in \ldp \ino RL \ino, \ino
  LR \ino \rdp^{c+1}.
\end{multline}
where the $(*)$ in the first sum indicates that we sum only over
distinct $|\{c_\cdot\}|$-tuples of elements of $\N \times \N \times
\{L,R\} \times Q^0_0 \times Q^0_0$ such that $\sum c_m = c-|\{c_\cdot\}|,
\sum C_m = C, C_m \geq 1, \forall m$, and $X_{m+1}$ is always a
forward direction that follows $z_{c_m,C_m,X_m,i_m,i_{m+1}}$ (which
can be upward or downward).  We take indices modulo the length of the
tuples (e.g.~$i_{|\{c_\cdot\}|+1}=i_1$).

Since $\sum_{m=1}^{|\{c_\cdot\}|} B_m$ is always even (because $2(n-m)
\mid C$), when we pass to summing only over distinct \textbf{cyclic}
$|\{c_\cdot\}|$-tuples, all of the contributing terms have the same
sign, leaving us with
\begin{equation} \label{finaldnreln}
 \sum_{(c_\cdot, C_\cdot, X_\cdot, i_\cdot, j_\cdot) \mid (*)} 
\pm \frac{\gcd(c, \frac{C}{2(n-2)})}{\rep(c_\cdot,C_\cdot,X_\cdot,i_\cdot,j_\cdot)}
[\prod_{m \in \Z/|\{c_\cdot\}|} r' z_{c_m,C_m,X_m,i_m,i_{m+1}}] + Y,
\end{equation}
which now so closely resembles \eqref{wabdefn1}--\eqref{wabdefn3} that
it is straightforward to conclude that there is no torsion in the
portion of $\Lambda_Q$ corresponding to $C \geq 1$. To be precise,
consider the grading on $\Pi_Q$ and $\Lambda_Q$ by length of paths, so
that $(\Pi_Q)_m, (\Lambda_Q)_m$ denote the $\Z$-submodules integrally
spanned by paths of length $m$.  Consider the filtration by powers of
the ideal $\ldp \ino RL \ino, \ino LR \ino \rdp$ in $\Pi_Q$, and the
filtration by the image of these powers in $\Lambda_Q$. Then we have
the associated graded $\Z$-modules, $\gr \Pi_Q = \bigoplus_g \gr_g
\Pi_Q$ and $\gr \Lambda_Q = \bigoplus_g \gr_g \Lambda_Q$.  Thus,
$\gr_g \Pi_Q =
\ldp \ino RL \ino, \ino LR \ino \rdp^g / \ldp \ino RL \ino, \ino LR
\ino \rdp^{g+1}$, and $\gr_g \Lambda_Q$ is its quotient by the
integral span of commutators.

Then, the precise statement we infer from the above is that there is
no torsion in $(\gr_g \Lambda_Q)_m$ except possibly when $2g =
m$. This is the maximum possible value of $g$, so that $(\gr_g
\Lambda_Q)_{2g} = [ \ldp \ino LR \ino, \ino RL \ino \rdp^g]_{2g}$ is
actually a submodule of $(\Lambda_Q)_{2g}$.  Therefore it has torsion
if and only if $(\Lambda_Q)_{2g}$ itself does.

It remains to see if the module $[\ldp \ino LR \ino, \ino RL \ino \rdp^g]_{2g}$
has torsion. We can present the  module as follows.  It is integrally
spanned by $[z_{c,0,R,i,i}], [z_{c,0,L,i_{RY},i_{RY}}]$, and multiples of
$\overline{Q}_1 \setminus \overline{Q^0_1}$ (with each $z_{c_t,0,X_t,i,i}$ having
$X_t = R$ unless $i$ is right external, in which case each $X_t = L$).
The relations are spanned by $\eta([z_{c-1,1,R,i,j}, jLi])$ for $j
\neq i_{R D}, i \neq i_{L D}$ (we can ignore other commutators by our
initial arguments), and by $(\sum_{i \in Q_0 \setminus Q^0_0} i r i)$.
When $i$ and $j$ are internal vertices, we can view this as
eliminating $z_{c,0,R,i,i}$ for all internal vertices except one, and
expressing the other in terms of that one and multiples of $r'$.  This
leaves us with only two relations.  Let $j$ be the internal vertex
adjacent to the left end. For convenience, let us replace
$\eta([z_{c-1,1,R,i_{LU},j},jLi_{LU}])$ by
$\eta([z_{c-1,1,R,i_{LU},j},jLi_{LU}]) +
\eta([z_{c-1,1,R,i_{LD},j},jLi_{LD}])$.  (The added term is exactly
what is used to eliminate the $z_{c,2,L,j,i_{LY}}$ we would otherwise
need as a generator, cf.~\eqref{zcomm1} and the following paragraphs.)
Then, this relation can be viewed as expressing $z_{c,0,R,j,j}$ in
terms of $z_{c,0,R,i_{L U}, i_{LU}}$, since we already expressed
$z_{c,0,R,i_{LD}, i_{LD}}$ in terms of the latter.  This eliminates
the generator $z_{c,0,R,i,i}$ for the last remaining internal vertex
$i$, since we may take $i=j$.

To express the final relation in terms of remaining basis elements, we
take the sum over all adjacent $i,j$ of $\eta([z_{c-1,1,R,i,j},jLi])$.
The result is
\begin{equation} \label{dnsreln} \sum_{i \in Q^0_0}
  \eta(i((RL)^{c}-(LR)^{c})i) = \sum_{i \in Q^0_0 \mid \text{$i$ is
      internal}} [i((RL)^{c} - (RL + r')^{c})i] + \sum_{X \in \{U,D\}}
  (i_{LX} (RL)^c i_{LX} - i_{RX} (LR)^c i_{RX}),
\end{equation}
where we used the non-basis elements $z_{c,0,R,i_{LD}, i_{LD}}$ and
$z_{c, 0, L, i_{RD}, i_{RD}}$ for convenience, which can be replaced
by basis elements with only a slight modification.

The result is nonzero, since there exists a vertex $i \in Q^0_0$ such
that $i$ is adjacent to an arrow of $\overline{Q \setminus Q^0}$; in
this case, for every proper factor $1 \neq m \mid c$, one has the term
$\ds [i((RL)^{\frac{c}{m}-1} r')^{m} i]$, nonzero and independent of
other terms in the expansion, with coefficient $m$.  Hence, the gcd of
all coefficients is one unless $c$ is a prime power. We conclude that
there is only torsion in $\Lambda_Q$ in degrees $2p^k$ for $p$ prime
and $k \geq 1$, where it is generated by the single relation
\eqref{dnsreln}.  For much the same reason as in the $\tilde
A_{n-1}$-case (by the similarity of \eqref{dnsreln} and
\eqref{wabdefn1}--\eqref{wabdefn3}; see the end of the proof for
$\tilde A_{n-1}$), the torsion is a single copy of $\Z/p$, generated
by $\frac{1}{p}[r^{p^k}]$.

This completes the proof of Theorem \ref{dnt}.
\subsection{Hilbert series for $\tilde D_n$ and \eqref{egid}}\label{edn}
From Theorem \ref{dnt}, we deduce the
\begin{prop}
  The Hilbert series of $i_0 \Pi_{\tilde D_{n}} i_0$ over any field, for $i_0$ the extending vertex
  (i.e.~an external vertex) of $\tilde D_n$, and the Hilbert series
  of $\Lambda_{\tilde D_n}$ over characteristic zero, are given by
\begin{equation}
  h(i_0 \Pi_{\tilde D_{n-1}} i_0;t) = h(\Lambda_{\tilde D_{n}};t) = \frac{1}{(1-t^{2n-4})(1-t^2)} -
\frac{t^2}{1-t^4} = \frac{1+t^{2n-2}}{(1-t^4)(1-t^{2n-4})}.
\end{equation}
\end{prop}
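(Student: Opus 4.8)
The plan is to follow the template of the $\tilde A_{n-1}$ computation in \S\ref{ean}: read off an explicit $\mathbb Z$-basis of $i_0\Pi_{\tilde D_n}i_0$ from Theorem~\ref{dnt}.(i), count it by degree, and transfer the answer to $\Lambda_{\tilde D_n}$ over $\mathbb Q$ using \eqref{edisos}. Here $i_0=i_{LU}$ is the (external) extending vertex, and I will freely use that $\Pi_{\tilde D_n}$ is a free $\mathbb Z$-module, so that the Hilbert series of $i_0\Pi_{\tilde D_n}i_0$ and of its graded pieces are independent of the ground field.

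First I would pin down which of the symbols $z_{c,C,R,i_0,i_0}$ ($c,C\ge 0$) represent nonzero paths — by Theorem~\ref{dnt}.(i) these form the basis. Unwinding the $\tilde D_n$ conventions: $z_{c,C,R,i_0,i_0}$ is a closed walk at $i_0$ of path-length $2c+C$; the $c$ leading short cycles $RL$ merely shuttle the current vertex between the two left leaves $i_{LU}$ and $i_{LD}$ (one has $i_0(RL)i_0=0$, as this is $\pm$ the preprojective relation at $i_0$, while $i_0(RL)i_{LD}$ and $i_{LD}(RL)i_0$ are nonzero), and the ``long'' tail of length $C$ closes the walk back up at $i_0$ exactly when $C$ is a positive multiple of $2(n-2)$ — one full traversal out to the opposite fork and back has length $2(n-2)$, and the terminal symbol $F$, following a string of $F_U$'s, resolves to the upper leaf $i_{LU}=i_0$. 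Thus the nonzero basis elements are precisely: $z_{2a,0,R,i_0,i_0}$ for $a\ge 0$, of degree $4a$; and $z_{c,2(n-2)m,R,i_0,i_0}$ for $c\ge 0$, $m\ge 1$, of degree $2c+2(n-2)m$. (Their linear independence is part of Theorem~\ref{dnt}.(i); note the three ``generators'' $z_{2,0}$, $z_{0,2(n-2)}$, $z_{1,2(n-2)}$ sit in degrees $4$, $2(n-2)$, $2(n-1)$, matching the invariant ring of the corresponding binary dihedral group.)

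Then I would sum the two families:
\[
h\big(i_0\Pi_{\tilde D_n}i_0;t\big)=\sum_{a\ge 0}t^{4a}+\sum_{c\ge 0}\ \sum_{m\ge 1}t^{2c+2(n-2)m}=\frac{1}{1-t^4}+\frac{1}{1-t^2}\cdot\frac{t^{2n-4}}{1-t^{2n-4}},
\]
and a single common-denominator step (using $1-t^4=(1-t^2)(1+t^2)$) turns the right-hand side into $\dfrac{1+t^{2n-2}}{(1-t^4)(1-t^{2n-4})}$, and equivalently into $\dfrac{1}{(1-t^{2n-4})(1-t^2)}-\dfrac{t^2}{1-t^4}$, as claimed. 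Finally, for $\Lambda$: over $\mathbb Q$, the maps \eqref{edisos} (which, as explained in \S\ref{ss:edc}, make sense and are isomorphisms over $\mathbb Q$) give a graded isomorphism $\big((\Lambda_{\tilde D_n})_+\otimes\mathbb Q\big)\oplus\mathbb Q\cong i_0\Pi_{\tilde D_n}i_0\otimes\mathbb Q$, whence $h(\Lambda_{\tilde D_n};t)=h(i_0\Pi_{\tilde D_n}i_0;t)$ with the same degree-$0$ normalization as in \S\ref{ean}.

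The hard part will be the bookkeeping inside the basis count: extracting from the proof of Theorem~\ref{dnt}.(i) the precise statement that closure of $z_{c,C,R,i_0,i_0}$ at the external vertex forces $C\in 2(n-2)\mathbb Z_{\ge 0}$ while killing $z_{2a+1,0,R,i_0,i_0}$, and verifying that nothing is double-counted. A route that sidesteps this entirely is available: $\Pi_{\tilde D_n}$ is a torsion-free NCCI (proved in \S\ref{dns}, cf.\ also \cite{EE}), so by \eqref{egfla} one has $h(i_0\Pi_{\tilde D_n}i_0;t)=\big((\mathbf 1-tC+t^2\mathbf 1)^{-1}\big)_{i_0i_0}$ over any field, and this entry is the ratio of the ``$t$-analogue'' Cartan determinants of $D_n$ and $\tilde D_n$, which one computes by a short tridiagonal recursion. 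I would nonetheless keep the basis-counting argument as the primary one, since it parallels \S\ref{ean} and records explicit Poisson generators of $i_0\Pi_{\tilde D_n}i_0$.
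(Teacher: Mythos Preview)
Your proposal is correct and is precisely the approach the paper takes: the paper's ``proof'' is the single line ``From Theorem~\ref{dnt}, we deduce the [proposition]'', and you are supplying the intended unpacking---enumerate the nonzero $z_{c,C,R,i_0,i_0}$ from Theorem~\ref{dnt}.(i), sum by degree, then invoke \eqref{edisos} over~$\Q$ exactly as in \S\ref{ean}. Your two-family count reproduces the paper's form $\frac{1}{(1-t^{2n-4})(1-t^2)}-\frac{t^2}{1-t^4}$ on the nose, and your acknowledged ``hard part'' (pinning down the nonvanishing set and the degree-$0$ normalization) is indeed the only place requiring care; the NCCI alternative you sketch is a legitimate independent check.
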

It immediately follows from the above that one has the formula
\begin{equation} \label{pfdn}
h(\Lambda_{\tilde D_{n}};t)(1-t^2) = 1 + \frac{t^{2n-4}}{1-t^{2n-4}} + \frac{2t^4}{1-t^4} - \frac{t^2}{1-t^2},
\end{equation}
which we can use to verify the formulas \eqref{egedf},\eqref{egedf2}.
Namely, letting $C$ be the adjacency matrix of $\overline{\tilde D_n}$, one has
\begin{equation} 
\det(1 - t \cdot C + t^2 \cdot \1) = \frac{(1-t^{2n-4})(1-t^4)^2}{1-t^2}.
\end{equation}

One then has from \eqref{pfdn}
\begin{equation}
a_m - a_{m-2} = [(2n-4) \mid m] + 2[4 \mid m] - [2 \mid m], \quad m \geq 3,
\end{equation}
which implies the desired identity \eqref{egedf2} and verifies \eqref{egid} for
$\tilde D_n$.
\subsection{The Poisson algebra $i_0 \Pi_Q i_0$
  for $Q = \tilde D_n$} \label{dans} As in \S \ref{lans}, the Lie
structure on $\Lambda_Q$ can be described in terms of a Poisson
algebra structure on $i_0 \Pi_Q i_0$. In particular, by Theorem
\ref{dedz} (which does not rely on the results of this subsection) we
can define a Lie bracket on $i_0 \Pi_Q i_0$ using the projection
$q: (\Lambda_Q)_+ \onto (i_0 \Pi_Q i_0)_+$ and inclusion $j: i_0 \Pi_Q
i_0 \into \Lambda_Q$ by the formula
\begin{equation}\label{e:i0pq-br}
\{f,g\} = q\{j(f),j(g)\}.
\end{equation}
It follows as before from \eqref{ncleibeqn} that this endows $i_0
\Pi_Q i_0$ with the structure of a Poisson algebra. In view of
Proposition \ref{rkp} and Remark \ref{r:rkp}, in fact the Poisson
structure on $i_0 \Pi_Q i_0$ entirely captures the Lie algebra
structure on $\Lambda_Q$.

We then compute the following:
\begin{prop} \label{dpbp}
Let $Q = \tilde D_n$, and let $i_0 := i_{LU}$ be the upper-left external vertex.
Set $X := i_0 z_{2,0} i_0, Y := i_0 z_{0,2(n-2)} i_0$, and $Z := i_0 z_{1,2(n-2)} i_0$.
Then, one has
\begin{gather} \label{i0dnpres}
i_0 \Pi_Q i_0 \liso \Z[X,Y,Z] / \ldp Z^2 + XY^2 - \delta_{2 \mid n} X^{\frac{n}{2}} Y - \delta_{2 \nmid n} X^{\frac{n-1}{2}} Z \rdp, \\
\{X,Y\}= 2 Z - \delta_{2 \nmid n} X^{\frac{n-1}{2}}, 
\quad \{X,Z\} = 2 X Y - \delta_{2 \mid n} X^{\frac{n}{2}},
\\ \{Y,Z\} = Y^2-\delta_{2 \mid n} \frac{n}{2} X^{\frac{n-2}{2}} Y - \delta_{2 \nmid n}\frac{n-1}{2}
X^{\frac{n-3}{2}} Z, \\
|X| = 2, \quad |Y| = 2(n-2), \quad |Z| = 2(n-1).
\end{gather}
\end{prop}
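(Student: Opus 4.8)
The plan is to deduce Proposition \ref{dpbp} from Theorem \ref{dnt}.(i), which already gives an explicit $\Z$-basis $z_{c,C,R,i_0,i_0}$ ($c,C \geq 0$) of $i_0 \Pi_Q i_0$, together with the Poisson structure recipe \eqref{e:i0pq-br}: transport the necklace bracket on $\Lambda_Q \o \Q$ through the isomorphism $i_0 \Pi_Q i_0 \o \Q \cong \Lambda_Q \o \Q$ (which holds in the extended Dynkin case by \S \ref{ss:edc} / Proposition \ref{wdescp}), and then check that the resulting formulas have integral coefficients so that they descend to $i_0 \Pi_Q i_0$ over $\Z$. Concretely, I would first establish the ring presentation \eqref{i0dnpres}. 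Since $\tilde D_n$ is extended Dynkin with associated binary dihedral group $\Gamma = BD_{n-2} \subset SL_2(\C)$, the Morita equivalence of \S \ref{gpsec} gives $i_0 \Pi_Q i_0 \o \C \cong \C[x,y]^\Gamma$, whose classical invariant theory presentation is $\C[X,Y,Z]/(Z^2 + X Y^2 - (\text{correction term}))$ with the stated degrees $2, 2(n-2), 2(n-1)$ — this matches \eqref{i0dnpres}, and the parity split $\delta_{2\mid n}$ vs.\ $\delta_{2\nmid n}$ reflects the two conjugacy classes of the binary dihedral group distinguished by whether $-1$ lies in the commutator subgroup. To upgrade this to an isomorphism over $\Z$ (not just $\C$), I would use that both sides are free graded $\Z$-modules with the same Hilbert series (the left side by Theorem \ref{dnt}.(i), giving $h(i_0\Pi_Q i_0;t) = \frac{1+t^{2n-2}}{(1-t^4)(1-t^{2n-4})}$ from \S \ref{edn}; the right side by a direct count from the presentation), so a graded surjection $\Z[X,Y,Z]/\ldp \cdots \rdp \onto i_0 \Pi_Q i_0$ that is an isomorphism after $\o\,\C$ must already be an isomorphism. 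The surjectivity is checked by exhibiting $X, Y, Z$ as the stated specific basis elements $z_{2,0}, z_{0,2(n-2)}, z_{1,2(n-2)}$ and verifying, using the multiplication rules in $\Pi_{Q^0}$ from the proof of Theorem \ref{dnt}.(i) (the identities \eqref{fdr}, \eqref{idr}), that monomials $X^a Y^b Z^c$ with $c \leq 1$ span.

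Next I would compute the brackets $\{X,Y\}, \{X,Z\}, \{Y,Z\}$. The cleanest route: work over $\Q$, where by the discussion in \S \ref{fpds} (the paragraph establishing that the necklace bracket on $((\Lambda_{Q^0})_+ \o \k) \oplus \k \cong HH^0(\Pi_{Q^0} \o \k) \cong \k[x,y]^\Gamma$ is, up to a scalar, the unique generically nondegenerate Poisson bracket) the bracket on $i_0 \Pi_Q i_0 \o \C$ must coincide with a fixed multiple of the reduction of the standard symplectic bracket $\{x,y\}=1$ on $\C[x,y]$ to the invariant subring. So I would pick explicit $\Gamma$-invariant polynomials $P_X, P_Y, P_Z \in \C[x,y]$ of the right degrees representing $X, Y, Z$, compute the three Poisson brackets $\{P_X,P_Y\}$ etc.\ in $\C[x,y]$ using ordinary calculus, re-express the results as polynomials in $P_X, P_Y, P_Z$ via the presentation, and fix the overall normalizing scalar by matching one bracket against a direct necklace-bracket computation in $\Lambda_{\tilde D_n} \o \Q$ using formula \eqref{laf} on the cyclic words $z_{c,C}$ (most efficiently the lowest-degree one, $\{X,Y\}$, which pairs the two shortest generators). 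That pins down the normalization to give exactly the coefficients $2$, $2$, $1$ and the $\delta$-corrections shown. Finally, integrality: each bracket formula in \eqref{i0dnpres} has $\Z$-coefficients, and since $i_0\Pi_Q i_0$ is $\Z$-free and sits inside $i_0\Pi_Q i_0 \o \Q$ where the brackets are determined as above, the Poisson structure is defined over $\Z$ — equivalently one observes the necklace bracket \eqref{laf} manifestly has integer structure constants and \eqref{e:i0pq-br} involves only the projection $q$ modulo torsion, which is defined over $\Z$ by Theorem \ref{dedz}.

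The main obstacle I expect is the bookkeeping in the two parity cases: choosing correct $\Gamma$-invariant representatives $P_X, P_Y, P_Z$ and getting the correction terms $\delta_{2\mid n} X^{n/2} Y$, $\delta_{2\nmid n} X^{(n-1)/2} Z$ (in the relation) and $\delta_{2\mid n}\frac{n}{2} X^{(n-2)/2}Y$, $\delta_{2\nmid n}\frac{n-1}{2} X^{(n-3)/2}Z$ (in $\{Y,Z\}$) exactly right, including the binomial-type numerical coefficients $\frac n2$, $\frac{n-1}{2}$ — these must be cross-checked against a genuinely noncommutative computation in $\Pi_{\tilde D_n}$ to be sure the classical invariant-theory normalization has not absorbed a spurious factor. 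One should also double-check that the chosen $i_0 = i_{LU}$ and the orientation of Figure \ref{dnf} produce precisely these signs; changing the extending vertex or orientation would only rescale generators, but the displayed presentation commits to one choice. I would organize the proof as: (1) ring presentation via Hilbert series + Morita over $\C$; (2) identification of the bracket with a scalar multiple of the symmetrized symplectic bracket via uniqueness (\S \ref{fpds}); (3) explicit degree-matched computation of the three brackets and fixing of the scalar by one necklace-bracket calculation; (4) integrality remark.
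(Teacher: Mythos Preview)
Your approach is correct in outline, but it takes a genuinely different route from the paper and is more roundabout than necessary. The paper's proof is entirely direct: it computes the relation and the brackets $\{X,Y\}$, $\{X,Z\}$ from the necklace formula \eqref{laf} using the explicit basis of Theorem~\ref{dnt}.(i), appeals to the Hilbert series of \S\ref{edn} only to confirm there is a single relation, and then obtains $\{Y,Z\}$ by a short algebraic trick rather than a long cycle computation---namely, expand $\{Y,Z^2\}$ once as $2Z\{Y,Z\}$ and once via the relation \eqref{i0dnpres} for $Z^2$, and cancel the factor of $Z$ using that $i_0\Pi_Q i_0$ is an integral domain (visible from the filtration and basis). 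No passage through $\C[x,y]^\Gamma$ is used.

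Your detour through the McKay correspondence is valid but does not actually save work. The issue you flag at the end is the crux: the graded isomorphism $i_0\Pi_Q i_0 \otimes \C \cong \C[x,y]^\Gamma$ is only determined up to graded automorphism, and in the relevant degrees $2(n-2)$ and $2(n-1)$ the graded pieces are two-dimensional (containing $X^{(n-2)/2}$ resp.\ $X^{(n-1)/2}$ alongside $Y$ resp.\ $Z$ depending on parity). So ``fixing one overall scalar'' is not enough---you must pin down the exact mixing of $Y,Z$ with powers of $X$, and that forces precisely the direct computations you were hoping to avoid. A cleaner variant of your idea: once you have established the relation $f=0$ in \eqref{i0dnpres} (by direct computation of $Z^2$ in the basis, which both approaches need anyway), the uniqueness of a generically nondegenerate Poisson bracket on a hypersurface singularity means the bracket is the Jacobian bracket $\{X,Y\}=\lambda\,\partial_Z f$, $\{Y,Z\}=\lambda\,\partial_X f$, $\{Z,X\}=\lambda\,\partial_Y f$; then a single necklace computation of $\{X,Y\}$ fixes $\lambda$. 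This bypasses $\C[x,y]^\Gamma$ entirely and recovers all three brackets from the relation---arguably tidier than either your proposal or the paper's, and a good consistency check on the signs.
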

Notice that, over $\Z[\frac{1}{2}]$,  $i_0 \Pi_Q i_0 \o \Z[\frac{1}{2}]$ is generated as a Poisson
algebra by $X$ and $Y$.
\begin{proof}
This can all be computed using the results of the preceding sections.  To see that the given relation is
the only relation (i.e., the map in \eqref{i0dnpres} is injective), we can compare Hilbert series.
The rest can all be computed using the basis.  To compute $\{Y,Z\}$, it is easiest to compute everything
else first, and then compute $\{Y,Z^2\}$ two ways: either as $2Z\{Y,Z\}$, or using the formula
\eqref{i0dnpres} for $Z^2$, and then use that $i_0 \Pi_Q i_0$ is an integral domain (which is
easy to see from the filtration and bases).
\end{proof}

\section{(Partial) Preprojective algebras of star-shaped quivers}\label{ssqsec}
The purpose of this section is to establish some basic results about
the structure of (partial) preprojective algebras of star-shaped
quivers, and more generally to study what happens to preprojective
algebras when one adds line segments or loops to the quiver. We will
begin in \S \ref{gasec} with geometric motivation, relating
preprojective algebras to Riemann surfaces.  The reader not interested
in this purely motivational material may safely skip \S \ref{gasec}.

\subsection{A geometric analogy} \label{gasec}
There is a known analogy between the preprojective algebra of
star-shaped quivers with branches of lengths $d_1, \ldots, d_n$ and
the Riemann spheres with orbifold points of orders
$d_1+1, \ldots, d_n+1$, wherein the preprojective algebra is the
``additive'' version and the fundamental group of the associated
orbifold is the ``multiplicative'' version.  Namely, the fundamental
group of the latter orbifold is the quotient of the free group on $x_1, \ldots, x_n$
by the relations
\begin{equation}
x_1^{d_1+1}, \ldots, x_n^{d_n+1}, x_1 x_2 \cdots x_n,
\end{equation}
whereas, for $i_s$ the special vertex of the preprojective algebra of the associated quiver $Q$, and now letting $x_i := e_i e_i^*$ where $e_i$ is the arrow in $\dq$ in the $i$-th branch which begins at $i_s$ (which we will assume
for simplicity lies in $Q$), 
\begin{equation}
i_s \Pi_Q i_s \cong \Z \langle x_1, \ldots, x_n \rangle / \ldp x_1^{d_1+1}, \ldots, x_n^{d_n+1}, x_1 + x_2 + \cdots + x_n \rdp.
\end{equation}
Next, suppose that $j \in Q_0$ is the endpoint of a branch, say the first branch. Then, if we take the partial preprojective algebra with respect to $\{j\}$, this changes the $i_s$-part by eliminating the relation $x_1^{d_1+1}$, i.e.:
\begin{equation}
i_s \Pi_{Q,\{j\}} i_s \cong \Z \langle x_1, \ldots, x_n \rangle / \ldp x_2^{d_2+1}, \ldots, x_n^{d_n+1}, x_1 + x_2 + \cdots + x_n \rdp.
\end{equation}
This is the same as letting $d_1 \rightarrow \infty$, and corresponds
geometrically to replacing the first orbifold point by a puncture
point.

\subsection{Preprojective algebras of quivers with line segments added}
The last comment suggests the following interpretation of the partial
preprojective algebra: Take the underlying quiver and adjoin, at each
white vertex $j \in J$, an infinite ray based at $j$, to form an
extended quiver $\hat Q_{J,\infty}$.  Then, the projection $\1_{Q_0}
\Pi_{\hat Q_{J,\infty}} \1_{Q_0}$ of the preprojective algebra of this
extended quiver to the span of paths beginning
and ending at $Q_0$ (see Remark \ref{rem:prep-infinite} below)
is the partial preprojective algebra
$\Pi_{Q,J}$.  Precisely, for any quiver $Q$ and any
subset $J \subseteq Q_0$, we make the following definition:
\begin{defn} \label{qhatdfn}
  For any map $f: J \rightarrow \N$, let $\hat Q_{J,f}$ be the
  quiver obtained from $Q$ by attaching a segment of length $f(j)$ to
  each $j \in J$ (i.e., the segment has $f(j)$ arrows and $f(j)$
  vertices), oriented outward from $j$.
\end{defn}
The choice of orientation in the definition is not important; we make
it only for definiteness.

Of course, we could have taken $J = Q_0$ in the above definition
without loss of generality, but it will be convenient for the limit we
consider below to sometimes have $J$ be a proper subset (i.e., to
compare $\Pi_{Q,J}$ with a limit $\ds
\lim\limits_{\underset{f}{\leftarrow}} \Pi_{\hat Q_{J, f}}$).
\begin{defn}
For any two functions $f, g: J \rightarrow \N$, 
we say that $f \leq g$ if $f(j) \leq g(j)$ for all $j \in J$.
\end{defn}
Note that, for any $f \geq g$, one has a surjection $\Pi_{\hat Q_{J,
    f}} \onto \Pi_{\hat Q_{J, g}}$.  One may consider the inverse
limit of \emph{graded algebras}, $\ds \lim\limits_{\underset{f}{\leftarrow}}
\Pi_{\hat Q_{J, f}}$.  Here, the fact that we are taking the limit
as graded algebras means that the limit is defined separately in each
degree, using sequences of homogeneous elements.

We may think of this limit as $\Pi_{\hat Q_{J, \infty}}$, where
$\hat Q_{J, \infty}$ is the ``infinite quiver'' described above, using
the following remark:
\begin{rem}\label{rem:prep-infinite}
  The definition of preprojective algebra makes sense even if the
  quiver has infinitely many arrows, as long as there are only
  finitely many arrows incident to each vertex.  Call such a quiver a
  locally finite quiver.  Then the preprojective algebra of a locally
  finite quiver $Q$ on vertex set $Q_0$ is
\begin{equation}\label{e:prep-locfin}
\Pi_Q := P_{\dq}/(\sum_{a \in Q_1:  a_s = i} (a a^* - a^* a))_{i \in Q_0},
\end{equation}
i.e., even though the element $r$ may no longer exist, we can still
define the elements $r_i := \sum_{a \in Q_1 : a_s = i} (a a^* - a^* a)$,
and take the ideal generated by these elements. In the case of a
finite quiver, $r_i = iri$ and $\ldp r\rdp = \ldp iri\rdp_{i \in Q_0}$,
so this indeed recovers the original definition.
\end{rem}

Note that Hochschild homology does
not commute with the above inverse limit. One has instead 
the following result:
\begin{prop} \label{ppp} 
We have the following natural isomorphisms:
\begin{enumerate}
\item[(i)] $\Pi_{Q, J} \cong \lim\limits_{\underset{f}{\leftarrow}} \1_{Q_0} \Pi_{\hat Q_{J, f}} \1_{Q_0}$.
\item[(ii)] $\lim\limits_{\underset{f}{\leftarrow}} \Lambda_{\hat Q_{J, f}} \cong
\Lambda_{Q, J}/ \langle j r^\ell j \rangle_{j \in J, \ell \geq 1}$.
\end{enumerate}
\end{prop}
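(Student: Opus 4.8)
\textbf{Proof plan for Proposition \ref{ppp}.}

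The plan is to analyze the two assertions separately, both reducing to a careful bookkeeping of which paths survive in the finite quivers $\hat Q_{J,f}$ and then passing to the limit. For part (i), first I would fix a degree $m$ and observe that an element of $(\1_{Q_0}\Pi_{\hat Q_{J,f}}\1_{Q_0})_m$ is represented by a combination of paths in $\overline{\hat Q_{J,f}}$ of length $m$ beginning and ending in $Q_0$. The key point is that once $f(j) > m/2$ for all $j$, any path of length $\le m$ beginning and ending in $Q_0$ that enters a segment attached at $j$ can never reach the far endpoint, so the preprojective relations $r_k$ at the segment vertices $k$ act exactly as they do when the segment is infinite (equivalently, as they do in $\Pi_{Q,J}$, where the relations $iri$ at the white vertices $i\in J$ are simply dropped). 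Concretely, I would build the natural surjections $\Pi_{Q,J} \onto \1_{Q_0}\Pi_{\hat Q_{J,f}}\1_{Q_0}$ coming from sending each arrow of $Q$ to itself and each $r_i$, $i\in J$, to the path down-and-back along the attached segment (which is nonzero in $\Pi_{\hat Q_{J,f}}$ precisely because the segment has positive length), check these are compatible with the transition maps $\Pi_{\hat Q_{J,f}} \onto \Pi_{\hat Q_{J,g}}$ for $f\ge g$, and hence obtain a map $\Pi_{Q,J} \to \lim_f \1_{Q_0}\Pi_{\hat Q_{J,f}}\1_{Q_0}$. Injectivity in degree $m$ follows from the fact that for $f$ large the map $\Pi_{Q,J} \onto \1_{Q_0}\Pi_{\hat Q_{J,f}}\1_{Q_0}$ is already an isomorphism in degrees $\le m$ — this is where one invokes a basis argument: using Proposition \ref{bpp} (or just the Diamond Lemma directly, since a segment attached at $j$ contributes only the subwords $a a^*$ in the forest and these reduce uniquely), the degree-$\le m$ part of $\Pi_{\hat Q_{J,f}}$ projected to $\1_{Q_0}\cdot(-)\cdot\1_{Q_0}$ has a $\Z$-basis indexed by paths in $Q$ with bounded insertions of the $r_i$'s, independent of $f$ for $f$ large, matching a basis of $\Pi_{Q,J}$ in that range.

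For part (ii), the cyclic-word version, I would run the same stabilization argument but now at the level of $\Lambda = (\Pi)_{\cyc}$. The subtlety, and what I expect to be the main obstacle, is that a cyclic word can ``wind'' around an attached segment arbitrarily many times without its length growing past $m$ only if the segment has length zero — but more relevantly, a cyclic word of length $m$ that uses a segment of length $\ell$ can contribute at most $\lfloor m/2\ell\rfloor$ traversals of that segment, so in the limit $f\to\infty$ every cyclic word that passes through a segment becomes zero, \emph{except} that the down-and-back path at a white vertex $j$ gives exactly the class $[j r^k j]$ for various $k$, and these do \emph{not} vanish in $\Lambda_{Q,J}$. So the transition maps $\Lambda_{\hat Q_{J,f}} \onto \Lambda_{\hat Q_{J,g}}$ kill more and more cyclic words as $f$ grows, and the inverse limit is obtained from $\Lambda_{Q,J}$ by additionally killing precisely the classes $[j r^\ell j]$, $j\in J$, $\ell\ge 1$ — these are the classes represented by a cyclic word that is a power of the down-and-back loop at $j$, which in $\Lambda_{\hat Q_{J,f}}$ equals $[j\,(e_j e_j^{*})^{\ell}\,\cdots\,j]$ traversing the segment, and this is a path that, for $f>\ell\cdot(\text{something})$, is ``pushed to infinity.'' I would make this precise by identifying, for each degree $m$ and each large $f\ge f'$, the kernel of $\Lambda_{\hat Q_{J,f}} \onto \Lambda_{\hat Q_{J,f'}}$ in degree $m$ as spanned by cyclic words that traverse some attached segment, and checking that the intersection over all $f$ of the preimages in $\Lambda_{Q,J}$ of these kernels is exactly $\langle j r^\ell j\rangle_{j\in J,\ell\ge1}$.

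To organize this cleanly I would introduce, for $f\ge g$, the ``distance to $Q_0$'' filtration and note that a cyclic word in $\overline{\hat Q_{J,f}}$ maps to zero in $\Lambda_{\hat Q_{J,g}}$ iff it lies outside the subalgebra generated near $Q_0$, i.e. iff it reaches depth $>g(j)$ in some segment; combined with the fact (from part (i), or Proposition \ref{bpp}) that the relevant $\Z$-modules are free with explicit bases by non-backtracking cyclic words, the inverse limit computation becomes a direct comparison of basis sets. The only genuinely delicate point is to confirm that $[j r^\ell j]$ for $\ell\ge 1$ really is the full discrepancy — i.e. that no \emph{other} cyclic word in $\Lambda_{Q,J}$ becomes zero in the limit — and this follows because any cyclic word in $\Pi_{Q,J}$ not of the form $[j r^\ell j]$ can be represented by a cyclic word in $\overline{\hat Q_{J,f}}$ of the \emph{same length} that only uses arrows of $Q$ together with at most $\lfloor m/2\rfloor$ down-and-back loops, none of which traverse a full segment once $f$ is large, so it survives in $\Lambda_{\hat Q_{J,f}}$ for all large $f$ and hence defines a nonzero element of the inverse limit. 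Conversely $[j r^\ell j]$, being a power of a full down-and-back loop at a white vertex, is killed once the segment is long enough relative to its degree, so it dies in the limit. This establishes the claimed isomorphism.
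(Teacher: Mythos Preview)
Your plan for part (i) is essentially the paper's argument. The paper compresses it into the single formula
\[
\1_{Q_0}\,\Pi_{\hat Q_{J,f}}\,\1_{Q_0}\;\cong\;\Pi_{Q,J}\big/\ldp\, j\,r^{f(j)}\,j\,\rdp_{j\in J},
\]
from which the graded stabilization is immediate; your basis argument via Proposition~\ref{bpp} arrives at the same thing.

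For part (ii) your outline has the right shape but a real gap at the injectivity step. You want to show that the kernel of
\[
\Lambda_{Q,J}\;\longrightarrow\;\Lambda_{\hat Q_{J,f}}
\]
(for $f$ large in a fixed degree) is exactly $\langle [j r^\ell j]\rangle$. You correctly observe that these classes die---indeed $[(e_j e_j^*)^\ell]$ can be pushed along the segment and vanishes at the endpoint for every $f(j)\ge 1$, not just large $f$. But your argument that \emph{nothing else} dies is not a proof: saying a cyclic word ``can be represented by a cyclic word in $\overline{\hat Q_{J,f}}$ that doesn't traverse a full segment, so it survives'' conflates having a representative with being nonzero as a class. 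The issue is precisely to compute the kernel of the surjection
\[
(\1_{Q_0}\,\Pi_{\hat Q_{J,f}}\,\1_{Q_0})_{\cyc}\;\twoheadrightarrow\;\Lambda_{\hat Q_{J,f}}
\]
induced by inclusion, and show it is spanned by the classes $[(e_j e_j^*)^\ell]$ and nothing more. Your proposed appeal to ``explicit bases by non-backtracking cyclic words'' via Proposition~\ref{bpp} does not apply here: $\hat Q_{J,f}$ has \emph{no} white vertices, so Proposition~\ref{bpp} gives no basis of $\Lambda_{\hat Q_{J,f}}$.

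The paper handles this by isolating exactly that kernel computation as Proposition~\ref{attlsp}(i), proved immediately afterward. There the key idea is to build an explicit inverse map
\[
\Lambda_{\hat Q_{J,f}}\;\longrightarrow\;(\1_{Q_0}\,\Pi_{\hat Q_{J,f}}\,\1_{Q_0})_{\cyc}\big/\langle [C_s^\ell]\rangle
\]
by sending a closed path $p_{j\,i_s}\,x\,p_{i_s\,j}$ based at a segment vertex $j$ (with $x\in i_s\Pi i_s$) to $[x\,C_s^{d_{i_s,j}}]$, then checking well-definedness via an exact-sequence description of $j_1\Pi j_2$ for segment vertices (Lemma~\ref{qhatijl}). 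That construction is the content you are missing; once it is in place, your stabilization argument combined with part (i) finishes (ii) exactly as the paper does.
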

\begin{proof} 
(i) This follows from the formula 
\begin{equation} \label{piqhatf}
\1_{Q_0} \Pi_{\hat Q_{J, f}} \1_{Q_0} \cong \Pi_{Q,J} / \ldp j r^{f(j)} j \rdp_{j \in J}.
\end{equation}
Note that the limit is of graded algebras, and in each degree the RHS
stabilizes to the LHS; in particular all classes in the RHS are
represented by finite linear combinations of paths in $\hat Q_{J,\infty}$.

(ii) Let $I$ be the vertex set of $\hat Q_{J, f}$.  By the following
Proposition \ref{attlsp}, 
the inclusion
 $\1_{Q_0}
\Pi_{\hat Q_{J, f}} \1_{Q_0} \into \Pi_{\hat Q_{J, f}}$
induces
an isomorphism $(\1_{Q_0}
\Pi_{\hat Q_{J, f}} \1_{Q_0})_{\cyc} / \langle [a_s^{\ell}], [i] \rangle_{s
  \in J, \ell \geq 1, i \in I \setminus Q_0} \iso
\Lambda_{\hat Q_{J, f}}$, where, for each $s \in J$,  
$a_s$ is the cycle of length two
obtained by beginning at $s$,
traversing one arrow in $\overline{\hat Q_{J, f}} \setminus \dq$,
and then its reverse back to $i_s$. If we now assume that $f$ is such
that $f(i) \geq N$ for all $i \in J$, then in degrees $m < 2N$, we
evidently have $((\1_{Q_0} \Pi_{\hat Q_{J, f}} \1_{Q_0})_{\cyc})_m \cong
(\Lambda_{Q, J})_m$, using the isomorphism \eqref{piqhatf}.
Under this isomorphism, the image of $a_s^\ell$ is $i_s r^\ell i_s$,
for every $i_s \in J$.  Now taking the limit $N \rightarrow
\infty$ gives the result.
\end{proof}
In the above proposition, we used the following  result for 
for (still more general) quivers $\hat Q$ obtained from 
$Q$ by attaching any number of line
segments to its vertices (not limiting to one line segment per
vertex).  In particular, this explains why adding infinite rays to
a vertex is like adding ``punctures'' to the corresponding ``surface.''
\begin{defn} Beginning with a quiver $Q$, a (not
  necessarily distinct) collection of vertices $i_1, \ldots, i_m \in
  Q_0$, and some positive integers $d_1, \ldots, d_m$, let $\hat Q :=
  \hat Q_{i_1,\ldots,i_m, d_1, \ldots, d_m}$ be the quiver obtained
  from $Q$ by attaching line segments $L_s, s \in \{1, \ldots, m\}$ of
  lengths $d_s$ to vertices $i_s \in Q_0$, oriented outward from the
  vertex. That is, $L_s$ is a quiver whose arrows are disjoint from $Q$,
  sharing only the vertex $i_s$. Then
  $(L_s)_1 \cap Q_1 = \emptyset$ and $(L_s)_0 \cap Q_0 = \{i_s\}$.
  Let $L_s^* \subseteq \hat Q^*$ be the oppositely oriented
  quiver and $\overline{L_s}:=L_s \cup L_s^* \subseteq \overline{\hat Q}$ the
  double.
\end{defn}
The choice of orientation in the definition of the segments $L_s$ is
not important; we made an arbitrary choice.
We will also make use of the following notation:
\begin{ntn}\label{n:pijdefn}
  For any branch $L_s$ based at vertex $i_s$ and any vertex $i \in
  (L_s)_0$, let $p_{i_s i}$ and $p_{i i_s}$ denote the straight-line paths
  in $\hat Q$ from $i_s$ to $i$ and from $i$ to $i_s$ inside
  $\overline{L_s}$, respectively. Let $d_{i_s,i}=d_{i,i_s}$ denote the
  lengths of these paths. For each line segment $L_s$, let $C_s$ be
  the cycle in $\bar{\hat Q}$ of length two which begins at $s$,
  travels one arrow along the segment $L_s$, and then takes the reverse
  arrow back to $s$.
\end{ntn}
\begin{prop} \label{attlsp}
\begin{enumerate}
\item[(i)] The inclusion
$\1_{Q_0} \Pi_{\hat Q} \1_{Q_0} \into \Pi_{\hat Q}$ induces a surjection
\begin{equation} \label{hh0hats}
(\1_{Q_0} \Pi_{\hat Q} \1_{Q_0})_{\cyc} \onto \Lambda_{\hat Q}
\end{equation}
with kernel $\langle [C_s^\ell] \rangle_{s \in \{1,\ldots,m\}, \ell \geq 1}$.
As a consequence, the inclusion induces an isomorphism
\begin{equation} \label{hh0hati}
\Lambda_{\hat Q} \liso (\1_{Q_0} \Pi_{\hat Q} \1_{Q_0})_{\cyc} / \langle [C_s^\ell] \rangle_{s \in \{1,\ldots, m\}, \ell \geq 1}.
\end{equation}
\item[(ii)] One obtains isomorphisms, for $J = \{i_1,\ldots,i_m\}$,
\begin{gather}
\Lambda_{\hat Q} \liso (\Pi_{Q,J} *_{\Z^{Q_0}} B)_{\cyc} /\langle [C_s^\ell] \rangle_{s \in \{1,\ldots,m\}, \ell \geq 1}, \text{ where}\\ B =
\bigoplus_{j \in J} B_j, \quad B_j = \Z \langle C_s \mid i_s = j \rangle / 
\ldp C_s^{d_s}, \sum_{s' \mid i_{s'} = j} C_{s'} \rdp_{s \mid i_s = j}.
\end{gather}
\end{enumerate}
\end{prop}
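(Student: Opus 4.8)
The plan is to deduce both parts of Proposition \ref{attlsp} from the structural results already developed, chiefly the NCCI decomposition of (partial) preprojective algebras from Corollary \ref{nccicor} and the description of $\Lambda_Q$ in terms of the free product from Proposition \ref{mip2}. First I would set up the algebra side: the quiver $\hat Q$ decomposes, in the sense of Corollary \ref{nccicor}.(ii), as the original $Q$ together with the disjoint branches $L_s$, all attached at the white vertices $J$. Since each branch $L_s$ is a type $A$ (line) quiver, $\Pi_{L_s, \{i_s\}}$ (or rather its partial version with $i_s$ the sole white vertex) is an NCCI by Proposition \ref{bpp}, and in fact $i_s \Pi_{L_s,\{i_s\}} i_s \cong \Z\langle C_s \rangle / \ldp C_s^{d_s} \rdp$ — this is the "one branch" computation alluded to in \S \ref{ssqsec}. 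Applying Corollary \ref{nccicor}.(ii) with $Q' = Q$ (containing the $Q$-arrows) and $Q''$ the disjoint union of the branches, and noting that the incidence set is exactly $J$, yields a graded $\Z^{Q_0 \sqcup \hat Q_0}$-bimodule isomorphism $\Pi_{\hat Q} \cong \widetilde{\Pi}_{Q, J} *_{\Z^{Q_0}} (\bigoplus_{j\in J} B_j)$ after imposing the relations $r_i$ at the new (black) interior branch vertices; the key point is that those relations at interior branch vertices force $C_s^{d_s}$-type relations and the sum-of-branches relation at each $j$, producing exactly the algebra $B = \bigoplus_j B_j$ described in (ii). This handles the algebra decomposition underlying both parts.

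Next I would prove part (i), the statement about $\Lambda_{\hat Q} = (\Pi_{\hat Q})_{\cyc}$. The inclusion $\1_{Q_0}\Pi_{\hat Q}\1_{Q_0} \into \Pi_{\hat Q}$ induces a map on cyclic quotients, and I would argue surjectivity by the same "reduction to cyclic words based at $Q_0$" argument used repeatedly (e.g. in Lemma \ref{mcl}.(2) and Proposition \ref{bpp}): any cyclic word in $\overline{\hat Q}_1$ either already lies (up to cyclic permutation) in $\1_{Q_0}\Pi_{\hat Q}\1_{Q_0}$, or it traverses some branch arrow, in which case — using the relations $r_i$ at interior branch vertices — one can push such excursions down to powers of the length-two cycles $C_s$ based at $i_s \in Q_0$; every cyclic word is thus equivalent to one supported at $Q_0$ together with correction terms, and since $[C_s^\ell]$ for $\ell\geq 1$ is visibly in the image of $(\1_{Q_0}\Pi_{\hat Q}\1_{Q_0})_{\cyc}$, surjectivity follows. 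For the kernel, I would use Proposition \ref{mip2}-style bookkeeping: with $Q^0 = Q$ (white vertices $J$) playing the role there, $B/[B,B]$ and the relevant commutator submodule $W$ are spanned by alternating cyclic words, and the only classes of $(\1_{Q_0}\Pi_{\hat Q}\1_{Q_0})_{\cyc}$ that die in $\Lambda_{\hat Q}$ are exactly those cyclic words wholly supported on a single branch, i.e. the $[C_s^\ell]$, since any genuinely alternating word mixing branch and $Q$ letters survives. This gives the kernel as $\langle [C_s^\ell]\rangle_{s,\ell\geq 1}$ and hence the isomorphism \eqref{hh0hati}.

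Finally, part (ii) is a formal consequence: substitute the bimodule decomposition $\Pi_{\hat Q} \cong \widetilde{\Pi}_{Q,J} *_{\Z^{Q_0}} B$ into \eqref{hh0hati}, observe that $\1_{Q_0}\Pi_{\hat Q}\1_{Q_0}$ corresponds under the free product to $\Pi_{Q,J} *_{\Z^{Q_0}} B$ itself (the branch generators $C_s$ are already based at $Q_0$, so no further projection is needed beyond what Corollary \ref{nccicor}.(ii) gives), and then quotient by the $[C_s^\ell]$ as in (i). I would remark that $B_j$ is precisely $\Z\langle C_s \mid i_s = j\rangle / \ldp C_s^{d_s}, \sum_{s'\mid i_{s'}=j} C_{s'}\rdp$ because imposing the preprojective relation $r_j$ at the (now black, in $\hat Q$) vertex $j$ — or rather the interior branch relations forcing the telescoping — identifies the sum of the incident branch-loops with what comes from $\Pi_{Q,J}$, matching the stated presentation. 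The main obstacle I anticipate is not any single hard lemma but the careful verification that the kernel computation in part (i) is exactly $\langle[C_s^\ell]\rangle$ and nothing more — i.e. that no "mixed" cyclic word involving both a branch and the base quiver becomes a nontrivial relation — which requires running the Diamond-Lemma/normal-form argument of Proposition \ref{bpp} and Lemma \ref{mcl}.(2) in this slightly more general setting and checking that the partial order (by number of branch-arrow swaps, say) still yields confluence; everything else is bookkeeping with free products and NCCI flatness already in hand.
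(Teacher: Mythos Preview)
Your surjectivity argument in (i) and your derivation of (ii) from (i) both match the paper. The divergence is in how you bound the kernel in (i) from above. You propose to extract it from a free-product decomposition via Corollary \ref{nccicor}.(ii) and a Diamond-Lemma normal-form computation \`a la Proposition \ref{mip2}. This has a genuine gap: applied to the full preprojective $\Pi_{\hat Q}$ (so with no white vertices in the corollary), Corollary \ref{nccicor}.(ii) needs $\Pi_{Q',\emptyset}$ to be an NCCI for one of the two subquivers, but Proposition \ref{attlsp} carries no such hypothesis and is invoked precisely when $\hat Q$ (hence both subquivers) can be Dynkin --- for instance in Proposition \ref{starprop}, where the base is a single vertex and the branches are type $A$ segments. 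Even granting some free-product description of $\1_{Q_0}\Pi_{\hat Q}\1_{Q_0}$, the step you yourself flag as the main obstacle --- that the \emph{extra} commutators based at branch vertices contribute only $[C_s^\ell]$ --- remains unexecuted, and Proposition \ref{mip2} does not supply it (its hypothesis is again that the inner quiver be non-Dynkin).

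The paper sidesteps all of this by constructing an explicit inverse $\Lambda_{\hat Q}\to(\1_{Q_0}\Pi_{\hat Q}\1_{Q_0})_{\cyc}/\langle[C_s^\ell]\rangle$. The enabling input is a short structural lemma (Lemma \ref{qhatijl}) that, for branch vertices $j_1,j_2$, presents $j_1\Pi_{\hat Q}j_2$ via exact sequences as the image of $i_{s_1}\Pi_{\hat Q}i_{s_2}$ under $x\mapsto p_{j_1 i_{s_1}}\,x\,p_{i_{s_2}j_2}$, with explicitly described kernel. One then defines the candidate inverse on $j\Pi_{\hat Q}j$ by $p_{j i_s}\,x\,p_{i_s j}\mapsto[x\,C_s^{d_{i_s j}}]$ (and short branch-only paths to zero), checks well-definedness from the lemma, and verifies directly on two commutator types that, after quotienting by $\langle[C_s^\ell]\rangle$, the resulting map kills $[\Pi_{\hat Q},\Pi_{\hat Q}]$. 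This is elementary, needs no NCCI or confluence input, and works for arbitrary $Q$.
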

Note that it is easy to combine Propositions \ref{ppp} and
\ref{attlsp} to describe the case of a quiver with some finite and
some infinite line segments added. In this direction, we only state
the promised
\begin{cor} \label{quivsfcc}
  Let $Q$ be the quiver obtained by beginning with one vertex $i_s$,
  and attaching $g$ loops, and $m$ line segments of lengths
  $d_1, \ldots, d_m$ (allowing for $d_j = \infty$).  
  Then, defining $\Lambda_Q = (\Pi_Q)_{\cyc}$ where
  $\Pi_Q = P_{\dq} / \ldp i r i\rdp_{i \in Q_0}$, one obtains formulas
\begin{gather} \label{addansfc1} i_s \Pi_Q i_s \cong A := \Z \langle
  x_1, \ldots, x_g, y_1, \ldots, y_g, p_1, \ldots, p_m \rangle / \ldp
  \sum_{i=1}^g [x_i, y_i] + \sum_{i=1}^m p_i, p_j^{d_j+1} \rdp_{j
    \in \{1,\ldots,m\}},
  \\ \label{addansfc2} \Lambda_Q \cong A/([A,A] +\langle p_j^\ell
  \rangle_{j \in \{1,\ldots,m\}, \ell \geq 1}),
\end{gather}
where we set by definition $p_j^\infty := 0$ (for any $j$).  Thus,
$\Lambda_Q$ is the ``additive analogue of the orbifold surface of
genus $g$ with orbifold/puncture points $p_1, \ldots, p_m$ of orders
$d_1+1, \ldots, d_m+1$.''
\end{cor}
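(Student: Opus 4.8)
The plan is to obtain both isomorphisms in Corollary~\ref{quivsfcc} by combining Proposition~\ref{ppp} (to reduce the infinite segments to finite ones) with Proposition~\ref{attlsp} and the structure of the corner algebra $i_s\Pi i_s$ of a one-vertex quiver with loops and attached branches, as developed in \S\ref{ssqsec}. First I would establish the algebra formula \eqref{addansfc1}, assuming for the moment that all $d_j$ are finite. Let $Q_\circ$ be the quiver with the single vertex $i_s$ and the $g$ loops, so $\Pi_{Q_\circ}=i_s\Pi_{Q_\circ}i_s$ is exactly the algebra $\Pi$ of \eqref{onevertex} with $n=g$. Attaching the $j$-th segment (of length $d_j$) and using the results of \S\ref{ssqsec} on adjoining a branch at a vertex --- equivalently, the identity \eqref{piqhatf} in its evident several-segments-at-one-vertex form, or a direct Diamond Lemma computation for this star-shaped-with-loops quiver --- one sees that the corner algebra acquires a new generator $p_j:=e_je_j^*$, where $e_j$ is the first arrow of the segment; that the relation $i_sri_s$ changes by the summand $+p_j$; and that the relations $a_k^*a_k=a_{k+1}a_{k+1}^*$ at the interior vertices of the segment together with $a_{d_j}^*a_{d_j}=0$ at its endpoint telescope to the single nilpotency relation $p_j^{d_j+1}=0$ (a two-line induction). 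Doing this for $j=1,\dots,m$ yields $i_s\Pi_Qi_s\cong A$. For a segment with $d_j=\infty$ the endpoint relation is absent, so no nilpotency relation is imposed (this is the content of the convention $p_j^\infty:=0$), and Proposition~\ref{ppp}(i) identifies $i_s\Pi_Qi_s$ with the inverse limit over truncations of the infinite rays, which stabilizes in each fixed degree; hence the general case of \eqref{addansfc1} follows from the finite one.

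Next I would deduce the homology formula \eqref{addansfc2}. With all $d_j$ finite, Proposition~\ref{attlsp}(i), applied with base quiver $Q_\circ$ (whose vertex set is $\{i_s\}$), gives $\Lambda_Q=\Lambda_{\hat Q}\cong(i_s\Pi_Qi_s)_{\cyc}/\langle[C_s^\ell]\rangle_{\ell\ge1}$, where $C_s$ is the length-two cycle of segment $s$; under the isomorphism of the previous paragraph $C_s$ corresponds to $p_s$, so $\Lambda_Q\cong A_{\cyc}/\langle[p_j^\ell]\rangle_{j,\ell}=A/([A,A]+\langle p_j^\ell\rangle_{j,\ell})$, which is \eqref{addansfc2}. (Alternatively one invokes Proposition~\ref{attlsp}(ii) directly with $B$ the single summand $\Z\langle p_1,\dots,p_m\rangle/\ldp p_j^{d_j+1},\sum_jp_j\rdp$; the only point to check is that the defining relation $\sum_i[x_i,y_i]+\sum_jp_j=0$ of $A$ and the relation $\sum_jp_j=0$ present in $\Pi_{Q_\circ,\{i_s\}}*_\Z B$ have the same image in cyclic words, which holds because $[f(\sum_i[x_i,y_i])g]=0$ identically in $(\Z\langle x,y,p\rangle)_{\cyc}$ by the cyclic identity $[x_iwy_i]=[y_ix_iw]$.) Finally, Proposition~\ref{ppp}(ii) shows that for the infinite segments one merely passes to the quotient by the classes $\langle p_j^\ell\rangle$ with $d_j=\infty$, which is already built into \eqref{addansfc2}, so again the finite case implies the general one. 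The concluding ``orbifold surface'' statement is then just a reformulation.

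The step I expect to be the main obstacle is purely bookkeeping rather than conceptual: Propositions~\ref{ppp} and \ref{attlsp} are phrased for at most one segment (or ray) attached to each vertex, so one must first record their routine generalization to several segments attached at the same vertex $i_s$, in which the local algebra at $i_s$ becomes the free product $\Z\langle C_s:i_s\text{-segment}\rangle$ modulo the individual nilpotency relations and the single relation $\sum C_s=0$; and one must keep the exponents straight, checking that a segment with $d_j$ arrows produces $p_j^{d_j+1}=0$ (not $p_j^{d_j}=0$), which is where the short telescoping computation is used. Everything else --- the structure of $i_s\Pi_Qi_s$, the cyclic-word identity, and the passage to the limit over truncations --- is immediate from the cited results.
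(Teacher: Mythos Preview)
Your proof is correct and follows essentially the same route as the paper, which simply says the corollary ``easily follows from part (ii) of Proposition~\ref{attlsp} using the argument of the proof of Proposition~\ref{ppp}.'' One small correction to your closing remarks: Proposition~\ref{attlsp} is already stated for a \emph{not necessarily distinct} collection of base vertices $i_1,\ldots,i_m$, so it handles several segments at the same vertex out of the box; only Proposition~\ref{ppp} is literally phrased for one ray per vertex, and there (as you say) one just reuses its argument rather than its statement.
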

The corollary easily follows from part (ii) of Proposition \ref{attlsp} using
the argument of the proof of Proposition \ref{ppp}.

The rest of this section is devoted to the proof of Proposition
\ref{attlsp}. 
\begin{proof}[Proof of Proposition \ref{attlsp}]
(i) First, we show surjectivity in \eqref{hh0hats}.  
Any path in $P_{\overline{\hat Q}}$
which lies entirely in one of the line segments $\overline{L_s}$ (for any fixed $s$)
maps to zero in
$\Lambda_{\hat Q}$.  Then, any other path projects to the
same class as a path that begins and ends at a vertex in $Q^0_0$.
This proves the surjectivity. Also, it is clear that $[a_s^\ell]$ is in the
kernel of \eqref{hh0hats} for any $\ell \geq 1$ and $s \in \{1,\ldots,m\}$, so
it remains to show that the resulting map \eqref{hh0hati} is injective (and
hence an isomorphism).

To do this, we construct an explicit inverse. For this, we need to
characterize the $\Z$-module $j_1 \Pi j_2$ in terms of $\1_{Q_0} \Pi_{\hat
  Q} \1_{Q_0}$ (in cases when at least one of $j_1, j_2$ are not in $Q_0$).
We use the following lemma.  
\begin{lemma} \label{qhatijl} In the situation of Proposition
  \ref{attlsp}, let $\hat Q_0$ be the vertex set of $\hat Q$, and let
  $j_1, j_2 \in \hat Q_0 \setminus Q_0$, which are on line segments
  $\overline{L_{s_1}}$ and $\overline{L_{s_2}}$, respectively.  
Then, there is an exact sequence of $\Z$-modules,
\begin{equation}
0 \rightarrow a_{s_1}^{d_{s_1} - d_{i_{s_1},j_1} + 1} \Pi_{\hat Q} +
\Pi_{\hat Q} a_{s_2}^{d_{s_2} - d_{i_{s_2},j_2} + 1} \into i_{s_1} \Pi_{\hat Q}
i_{s_2} \mathop{-\!\!\!-\!\!\!\onto}^{x \mapsto  p_{j_1 s_1} x p_{s_2 j_2}} 
j_1 \Pi_{\hat Q} j_2 \rightarrow 0, \quad \text{if $s_1 \neq s_2$},
\end{equation}
\begin{multline}
0 \rightarrow a_{s_1}^{d_{s_1} - d_{i_{s_1},j_1} + 1} \Pi_{\hat Q} +
\Pi_{\hat Q} a_{s_2}^{d_{s_2} - d_{i_{s_2},j_2} + 1} \into i_{s_1} \Pi_{\hat Q}
i_{s_2} \traa^{x \mapsto  p_{j_1 s_1} x p_{s_2 j_2}} 
j_1 \Pi_{\hat Q} j_2 \\ \rightarrow \sum_{\ell=0}^{\min(d_{i_{s_1} j_1}, d_{i_{s_2} j_2})-1} p_{j_1 j_2} (a')^\ell p_{j_2 j_1} \rightarrow 0, \quad \text{if $s_1 = s_2$},
\end{multline}
where $a'$ is any cycle of length two inside $\overline{L_{s_2}}$, 
beginning and ending at $j_2$.

Furthermore, if one of $j_1, j_2$ is in $Q_0$ and the other is in $\hat Q_0 \setminus Q_0$,
we have the exact sequence 
\begin{gather}
0 \rightarrow a_{s_1}^{d_{s_1} - d_{i_{s_1},j_1} + 1} \Pi_{\hat Q}
 \into i_{s_1} \Pi_{\hat Q}
j_2 \mathop{-\!\!\!-\!\!\!\onto}^{x \mapsto  p_{j_1 s_1} x} 
j_1 \Pi_{\hat Q} j_2 \rightarrow 0, 
\quad \text{if } j_1 \in (L_{s_1})_0, j_2 \in Q_0, \text{ or}\\
0 \rightarrow
\Pi_{\hat Q} a_{s_2}^{d_{s_2} - d_{i_{s_2},j_2} + 1} \into i_{s_1} \Pi_{\hat Q}
i_{s_2} \mathop{-\!\!\!-\!\!\!\onto}^{x \mapsto  x p_{s_2 j_2}} 
j_1 \Pi_{\hat Q} j_2 \rightarrow 0, \quad \text{if } j_2 \in (L_{s_2})_0, j_1 \in Q_0.
\end{gather}
\end{lemma}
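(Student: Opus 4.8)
The plan is to prove Lemma \ref{qhatijl} by a direct combinatorial analysis of paths in $\overline{\hat Q}$, reducing everything to the structure of $\Pi_{\hat Q}$ as a $\Z$-module spanned by paths modulo the relations $r_i$. The key observation is that any arrow incident to a vertex $j_1 \in (\overline{L_{s_1}})_0 \setminus Q_0$ lies inside the double segment $\overline{L_{s_1}}$, so a path beginning at $j_1$ must start by traveling along $\overline{L_{s_1}}$ until it first reaches $i_{s_1}$; the only freedom before that is how many times it bounces back and forth at $j_1$ and intermediate vertices, which is controlled by the relations $r_i$ at the internal segment vertices. More precisely, at each internal vertex $i$ of a segment, the relation $r_i$ identifies the length-two cycle going ``outward then back'' with the one going ``inward then back'' (up to sign), so that after reduction the only paths surviving are those that go straight from $j_1$ to $i_{s_1}$ possibly after an initial stretch of bounces at $j_1$ itself, i.e. after a power of $C_{s_1} = a_{s_1}$.

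First I would establish the easier surjectivity claims: the map $x \mapsto p_{j_1 s_1} x p_{s_2 j_2}$ visibly lands in $j_1 \Pi_{\hat Q} j_2$, and conversely any path from $j_1$ to $j_2$, after reducing using the segment relations, is either of the form $p_{j_1 s_1} (\text{path in } \Pi_{\hat Q} \text{ from } i_{s_1} \text{ to } i_{s_2}) p_{s_2 j_2}$, or (in the $s_1 = s_2$ case) is a short bouncing path $p_{j_1 j_2}(a')^\ell p_{j_2 j_1}$ that never reaches $i_{s_2}$. Handling the kernel is the substantive part: I need to show that the image of $p_{j_1 s_1} x p_{s_2 j_2}$ is zero in $j_1 \Pi_{\hat Q} j_2$ exactly when $x$ lies in the stated submodule. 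The ``if'' direction is a computation with the relations $r_i$ along the segment: multiplying $a_{s_1}^{d_{s_1} - d_{i_{s_1},j_1}+1}$ on the left and then by $p_{j_1 s_1}$ pushes the ``excess'' bounces past the end of the segment, where they vanish because there is no further arrow; this is the segment analogue of $p_j^{d_j+1}$ vanishing in Corollary \ref{quivsfcc}. The ``only if'' direction I would get from a normal-form / basis argument: choose a Gröbner-type spanning set for $\Pi_{\hat Q}$ compatible with the segments (reducing any path to one where, near $j_1$, the bounces are pushed as far out as possible) and check that distinct normal forms on both sides correspond, so injectivity of the induced map on the quotient follows. Alternatively, and perhaps more cleanly, I would note that $\1_{Q_0}\Pi_{\hat Q}\1_{Q_0} \cong \Pi_{Q,J}/\ldp j r^{f(j)} j\rdp$ (as in \eqref{piqhatf}) and deduce the kernel description from the corresponding presentation, using that $a_{s}$ maps to $i_{s} r i_{s}$.

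The mixed case where one of $j_1, j_2$ lies in $Q_0$ is a strict specialization: the same argument applies but with no reduction needed on the $Q_0$ side, so only one of the two ``truncation'' submodules $a_{s_i}^{\ldots}\Pi_{\hat Q}$ appears, and there is no residual bouncing term. I would phrase the proof once in the general two-segment case and then remark that the other three cases follow by suppressing the corresponding piece. For part (i) of Proposition \ref{attlsp}, I would then apply the lemma with $j_1 = j_2$ ranging over segment vertices to see that every cyclic path reduces to one supported on $Q_0$ (giving surjectivity of \eqref{hh0hats}) and that the only new cyclic relations introduced are $[C_s^\ell]$; for part (ii), combine \eqref{piqhatf} with the NCCI decomposition $\Pi_{\hat Q} \cong \Pi_{Q,J} *_{\Z^{Q_0}} B$ from Corollary \ref{nccicor}.

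The main obstacle I expect is making the ``push the bounces out to the end of the segment'' manipulation fully rigorous in the presence of the relations $r_i$ at internal segment vertices, i.e. verifying precisely that the submodule $a_{s_1}^{d_{s_1} - d_{i_{s_1},j_1}+1}\Pi_{\hat Q} + \Pi_{\hat Q} a_{s_2}^{d_{s_2}-d_{i_{s_2},j_2}+1}$ is exactly the kernel and not something smaller or larger. Getting the exponent $d_{s_i} - d_{i_{s_i},j_i}+1$ right (it counts the number of segment arrows strictly beyond $j_i$, plus one) requires careful bookkeeping, and the cleanest route is probably to run a Diamond Lemma argument (Appendix \ref{dla}) on $\overline{\hat Q}$ with an ordering that prioritizes pushing short cycles toward the free end of each segment, rather than to argue by hand.
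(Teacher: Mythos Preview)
Your proposal is correct in outline and matches the paper's proof for surjectivity and for the easy exactness claims. Where you diverge is in the kernel computation: you anticipate needing a Diamond Lemma argument or an appeal to \eqref{piqhatf} to pin down the kernel exactly, and you flag this as the main obstacle. The paper instead handles this step more directly, at the level of the path algebra. It considers the surjection $j_1 P_{\bar{\hat Q}} j_2 \onto j_1 \Pi_{\hat Q} j_2$, whose kernel is $j_1 \ldp r \rdp j_2$, and decomposes that ideal as
\[
j_1 \ldp r \rdp j_2 = j_1 \ldp (\1_{L_{s_1}} + \1_{L_{s_2}}) r \rdp j_2 + p_{j_1 i_{s_1}} \ldp r \rdp p_{i_{s_2} j_2},
\]
where $\1_{L_s}$ is the sum of non-$Q_0$ vertices on $L_s$. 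The first summand accounts for the segment relations (your ``push the bounces out'' moves), and intersecting with $p_{j_1 i_{s_1}} P_{\bar{\hat Q}} p_{i_{s_2} j_2}$ gives exactly the claimed kernel. This bypasses any normal-form machinery: once you know that paths inside a segment are determined by endpoints and length, the ideal decomposition does the rest.

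Your alternative route via \eqref{piqhatf} would not apply directly, since that formula describes $\1_{Q_0} \Pi_{\hat Q} \1_{Q_0}$ rather than $j_1 \Pi_{\hat Q} j_2$ for $j_1, j_2 \notin Q_0$; you would still need the segment analysis to bridge the gap. The Diamond Lemma route would work but is heavier than necessary here.
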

\begin{proof}
The image of any path in $\Pi_{\hat Q}$ which lies strictly inside a line segment 
$\overline{L_{s}}$ only depends on its endpoints and length.  This and the fact that any path from $j_1$ to $j_2$ must pass through $\dq$ if $i_{s_1} \neq i_{s_2}$ shows the
exactness at $j_1 \Pi_{\hat Q} j_2$ above.  Exactness (injectivity) at $a_{s_1}^{d_{s_1} - d_{i_{s_1},j_1}} \Pi_{\hat Q} +
\Pi_{\hat Q} a_{s_2}^{d_{s_2} - d_{i_{s_2},j_2}}$ and at $\sum_{\ell=0}^{\min(d_{i_{s_1} j_1}, d_{i_{s_2} j_2})-1} p_{j_1 j_2} (a')^\ell p_{j_2 j_1}$ (surjectivity) is obvious. It remains to show exactness at $i_{s_1} \Pi_{\hat Q} i_{s_2}$.  That is, it remains to
compute the kernel of $x \mapsto p_{j_1 s_1} x p_{s_2 j_2}$.  To do this, let
us consider
\begin{equation}
 j_1 P_{\bar{\hat Q}} j_2 \onto j_1 \Pi_{\hat Q} j_2.
\end{equation}
The kernel of this is $j_1 \ldp r \rdp j_2$, which we may rewrite as follows.
For all $s$, let $\1_{L_s}$ be the sum of all vertices on $L_s$ except 
$i_s$ (so, the vertices from $\hat Q_0 \setminus Q_0$ on $L_s$).  Then,
\begin{equation}
j_1 \ldp r \rdp j_2 = j_1 \ldp (\1_{s_1} + \1_{s_2})r \rdp j_2 + p_{j_1 i_{s_1}} \ldp r \rdp p_{i_{s_2} j_2}.
\end{equation}
We deduce (using the RHS and the observations at the beginning of the
proof) that
\begin{equation}
p_{j_1 i_{s_1}} P_{\bar{\hat Q}} p_{i_{s_2} j_2} \cap 
j_1 \ldp r \rdp j_2  = p_{j_1 i_{s_1}} a_{s_1}^{d_{s_1} - d_{i_{s_1},j_1} + 1} \Pi_{\hat Q} p_{i_{s_2} j_2} +
p_{j_1 i_{s_1}} \Pi_{\hat Q} a_{s_2}^{d_{s_2} p_{i_{s_2} j_2} - d_{i_{s_2},j_2} + 1} + p_{j_1 i_{s_1}} \ldp r \rdp p_{i_{s_2} j_2}.
\end{equation}
This yields the desired result.
\end{proof}

Now, we define the map
$\Lambda_{\hat Q} \rightarrow (\1_{Q_0} \Pi_{\hat Q} \1_{Q_0})_{\cyc} / \langle
[C_s^\ell] \rangle_{s \in \{1,\ldots, m\}, \ell \geq 1}$
  as follows. First, let us define a map 
\begin{equation}\label{premap}
\Pi_{\hat Q} \rightarrow
(\1_{Q_0} \Pi_{\hat Q} \1_{Q_0})_{\cyc}.
\end{equation}
First, the map sends $i x j$ to zero
if $i \neq j$ for vertices $i,j \in \hat Q_0$.  Next, on $\1_{Q_0} \Pi_{\hat Q} \1_{Q_0}$,
the map is the tautological one. Then,
for any $j \in \hat Q_0 \setminus Q_0$, with $j$ in the line
  segment $L_s$ ($j \in (L_s)_0$), we set
  $p_{j i_{s}} x p_{i_s j} \mapsto [x (p_{i_s j} p_{j i_s})] = [x
  C_s^{d_{i_s j}}]$.
  To see that this is well-defined, by the lemma it suffices to show
  that if
  $x \in C_s^{d_s-d_{i_s j}+1} \Pi_{\hat Q} + \Pi_{\hat Q} C_s^{d_s-d_{i_s
      j}+1}$,
  then
  $[x C_s^{d_{i_s j}}] = 0 \in (\1_{Q_0} \Pi_{\hat Q} \1_{Q_0})_{\cyc}$.
  However, this follows from the fact that
  $[C_s^{d_s - d_{i_s j} + 1} \Pi_{\hat Q} C_s^{d_{i_s j}}] +
  [\Pi_{\hat Q} C_s^{d_{i_s j}} C_s^{d_s - d_{i_s j} + 1}] \subset
  [\1_{Q_0} \Pi_{\hat Q} \1_{Q_0}, \1_{Q_0} \Pi_{\hat Q} \1_{Q_0}] + [C_s^{d_s + 1}
  \1_{Q_0} \Pi_{\hat Q} \1_{Q_0}]$.
  Then, the only elements of $j \Pi_{\hat Q} j$ which can not be
  written in the form $p_{j i_{s}} x p_{i_s j}$ are (by the Lemma)
  those classes represented by paths lying entirely in $L_s$, of
  total length less than $2d_{j i_s}$.  Let us define such paths to
  map to zero. We thus get a well-defined map \eqref{premap}.

Next, if we post-compose the map with the quotient
\[
(\1_{Q_0} \Pi_{\hat Q} \1_{Q_0})_{\cyc} \onto (\1_{Q_0} \Pi_{\hat Q} \1_{Q_0})_{\cyc} /
\langle [C_s^\ell] \rangle_{s \in \{1,\ldots, m\}, \ell \geq 1},
\] then
all elements of $\Pi$ represented by paths which lie entirely in $L_s$ (of any length) map to zero. It remains
to show that the composite map kills $[\Pi_{\hat Q}, \Pi_{\hat Q}]$.  For this, we
need to show that
\begin{enumerate}
\item $[p_{j i_s} f p_{i_s j}, p_{j i_s} f' p_{i_s j}]$ maps to zero
for any $f, f' \in i_s \Pi_{\hat Q} i_s$ and $j \in (L_s)_0$, 
\item $[p_{j i_s} f p_{i_s j}, a']$ maps to zero for any $f \in i_s \Pi_{\hat Q} i_s$, $j \in (L_s)_0$ ($j \neq i_s$), and where $a'$ is a path of length two beginning
and ending at $j$.
\end{enumerate}
The image of the class in part (1) is $[f C_s^{d_{i_s j}} f' C_s^{d_{i_s j}}]
-[f' C_s^{d_{i_s j}} f C_s^{d_{i_s j}}] = 0$.  The image of the class in part 
(2) is $[f C_s^{d_{i_s j}+1}] - [C_s f C_s^{d_{i_s j}}] = 0$.  This proves that
our map descends to a map
\begin{equation}
\Lambda_{\hat Q} \rightarrow (\1_{Q_0} \Pi_{\hat Q} \1_{Q_0})_{\cyc} / \langle
C_s^\ell \rangle_{s \in \{1,\ldots, m\}, \ell \geq 1}.
\end{equation}
By definition, this map inverts the map in
\eqref{hh0hati} induced by the inclusion.

(ii) This easily follows from (i) using the formula
\begin{equation}
\1_{Q_0} \Pi_{\hat Q} \1_{Q_0} \cong \Pi_{Q,Q^0_0} *_{\Z^{Q_0}} B. \qedhere
\end{equation}
\end{proof}
\subsection{Presentation of $\Pi_Q$ and $\Lambda_Q$ for star-shaped quivers}
We will need the following notation, which makes sense not just for star-shaped quivers but for
tree-shaped quivers (i.e., quivers such that, forgetting the orientations of arrows, one obtains a tree):
\begin{ntn} For two vertices $i,j$ in a tree-shaped quiver $Q$, let
  $d_{i,j}$ denote their distance, i.e., the minimum number of arrows
  that must be traversed in $\dq$ to go from $i$ to $j$. Let
  $p_{i,j} \in P_{\dq}$ be the unique path of length $d_{i,j}$ from $i$ to
  $j$.
\end{ntn}
Note that, in the case that $i$ and $j$ lie in an external line
segment of the tree $Q$, then the above notation is consistent with
Notation \ref{n:pijdefn}.

From Corollary \ref{quivsfcc}
and Lemma \ref{qhatijl} we immediately deduce the following general result:
\begin{prop} \label{starprop}
Let $Q$ be a star-shaped quiver with branches $L_{k}$ of lengths $d_{k}$, for
$k \in \{1,\ldots,m\}$, and special vertex $i_s \in Q_0$. Then:
\begin{enumerate}
\item[(i)] $i_s \Pi_Q i_s \cong A := \Z\langle x_1,\ldots,x_m \rangle / \ldp x_1+\cdots+x_m,x_1^{d_1+1},\ldots,x_m^{d_m+1} \rdp$.
\item[(ii)] For every pair of vertices $j_1, j_2$ in branches
  $L_{k_1}, L_{k_2}$, respectively,
  $i \Pi_Q j \cong A / \langle x_{1}^{d_{k_1} - d_{i_s,j_1}+1} A + A
  x_{2}^{d_{k_2} - d_{i_s,j_2}+1} \rangle$.
\item[(iii)] $\Lambda_Q \cong A/([A,A] + \langle x_{k}^{q} \rangle_{1 \leq k \leq m, q \geq 1})$.
\end{enumerate}
\end{prop}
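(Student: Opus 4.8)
The plan is to derive Proposition \ref{starprop} directly from Corollary \ref{quivsfcc} and Lemma \ref{qhatijl}, which handle the slightly more general situation of a single vertex with loops and line segments attached; here we simply specialize to zero loops ($g=0$). First I would set up the identification: a star-shaped quiver $Q$ with special vertex $i_s$ and branches $L_1,\ldots,L_m$ of lengths $d_1,\ldots,d_m$ is exactly the quiver denoted $\hat Q$ in Corollary \ref{quivsfcc} with $g=0$, so \eqref{addansfc1} with $g=0$ gives
\begin{equation}
i_s \Pi_Q i_s \cong A := \Z\langle x_1,\ldots,x_m \rangle / \ldp x_1+\cdots+x_m, x_1^{d_1+1},\ldots,x_m^{d_m+1} \rdp,
\end{equation}
which is part (i). Here one identifies $x_k = p_k = e_k e_k^*$ where $e_k$ is the first arrow of branch $L_k$ issuing from $i_s$, matching the notation of \S \ref{gasec}. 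Then part (iii) is immediately \eqref{addansfc2} in the case $g=0$, noting that no $p_j=\infty$ occurs since we take all $d_k$ finite (the formula also covers $d_k=\infty$, which corresponds to the partial preprojective case).

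For part (ii), I would invoke Lemma \ref{qhatijl}. Given vertices $j_1\in (L_{k_1})_0$ and $j_2\in (L_{k_2})_0$, the lemma (applied to $\hat Q = Q$, viewing each branch as an attached line segment $L_{k}$ based at $i_s$, with $d_{i_s, j_\ell} = d_{i_s, j_\ell}$ in the notation there) produces an exact sequence with left term $a_{s}^{d_{k_1}-d_{i_s,j_1}+1}\Pi_Q + \Pi_Q\, a_{s}^{d_{k_2}-d_{i_s,j_2}+1}$ and right term $j_1\Pi_Q j_2$, realizing $j_1\Pi_Q j_2$ as a quotient of $i_s\Pi_Q i_s$ by the image of that left term under $x\mapsto p_{j_1 i_s} x\, p_{i_s j_2}$ (composing with $p_{j_1 i_s},\ p_{i_s j_2}$ is, up to the identification in (i), precisely multiplication by suitable powers of the $x_k$'s on each side, since $C_{k}^{d} = p_{i_s j} p_{j i_s}$ corresponds to $x_{k}^{d}$-type elements in branch $L_k$). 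The cycle $a_s$ associated to branch $L_{k_\ell}$ corresponds to $x_{k_\ell}$, so the left ideal is $x_{k_1}^{d_{k_1}-d_{i_s,j_1}+1} A + A\, x_{k_2}^{d_{k_2}-d_{i_s,j_2}+1}$, giving the stated isomorphism. (When $k_1=k_2$ the lemma has an extra term in the sequence, but that term maps onto a submodule of $j_1\Pi_Q j_2$ rather than contributing to the kernel, so the formula for $j_1\Pi_Q j_2$ as a quotient of $A$ still holds as stated; I would remark on this case briefly. The cases where one of $j_1, j_2$ equals $i_s$, i.e. lies in $Q_0$ of the ambient quiver before attaching the segments, are the last two displays of Lemma \ref{qhatijl} and give the one-sided versions, again consistent with (ii) upon reading $d_{i_s, i_s}=0$.)

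The main obstacle — though it is more bookkeeping than genuine difficulty — is matching conventions: relating the generators $x_k$ of $A$ in (i) with the branch-cycles $C_k$ / length-two cycles $a_s$ of Lemma \ref{qhatijl} and Corollary \ref{quivsfcc}, and checking that the exponents $d_{k}-d_{i_s,j}+1$ coming out of the lemma are exactly the exponents appearing in (ii). Concretely one must verify that, under $i_s\Pi_Q i_s \cong A$, left-multiplication by $p_{j i_s}$ followed by right-multiplication by $p_{i_s j}$ has the effect of quotienting by the ideal generated by $x_{k}^{d_{k}-d_{i_s,j}+1}$ on the appropriate side — this is because a path in branch $L_k$ from $i_s$ reaching depth $d_{i_s,j}$ and bouncing back has its "remaining room" in the segment governed by $d_{k}-d_{i_s,j}$, and the preprojective relation $x_k^{d_k+1}=0$ translates, after the $p_{j i_s}(-)p_{i_s j}$ sandwich, into $x_k^{d_k+1-d_{i_s,j}}=0$ seen from $j$. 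Once these identifications are pinned down, parts (i)–(iii) follow with no further computation.
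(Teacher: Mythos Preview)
Your approach is exactly the paper's: it says only that Proposition \ref{starprop} is ``immediately deduced'' from Corollary \ref{quivsfcc} and Lemma \ref{qhatijl}, and you have correctly filled in those details for (i) and (iii) by setting $g=0$ in \eqref{addansfc1}--\eqref{addansfc2}, and for (ii) by invoking the exact sequences of Lemma \ref{qhatijl}.

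One small correction on the $k_1=k_2$ case in (ii): the extra term $\sum_{\ell} p_{j_1 j_2}(a')^\ell p_{j_2 j_1}$ in Lemma \ref{qhatijl} is the \emph{cokernel} of the map $i_s\Pi_Q i_s \to j_1\Pi_Q j_2$, not a submodule absorbed into the image. So in that case $j_1\Pi_Q j_2$ is not literally a quotient of $A$; rather it is the quotient of $A$ \emph{plus} the finitely many short in-branch paths not passing through $i_s$. This is arguably a wrinkle in the statement of (ii) itself rather than a defect in your argument, and it does not affect the applications in the paper (which only use (ii) with $j_1=j_2=i_0$ the extending vertex, or the analogous one-sided versions).
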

Note that the torsion structure of $\Lambda_Q$, as a special case of
all quivers $Q$, is the main result of this article, but this cannot
be obviously deduced for star-shaped quivers using only the above
proposition.

\section{Type $\tilde E_n$ quivers} \label{ens}


Specializing Proposition \ref{starprop} to the case of quivers of type
$\tilde E_n$, and using noncommutative Gr\"obner generating sets
(cf.~Appendix \ref{gbs} and Proposition \ref{gbp} therein), we obtain
the following bases of $\Pi_Q$:
\begin{prop} \label{enprop}
Let $Q$ be an extended Dynkin quiver of type $E$.
Let $A$ be defined as in Proposition \ref{starprop}.(i).  For
readability, set $x := x_1, y := x_2, z := x_3$, assume $d_1 \geq d_2
\geq d_3$, and set $d := d_1$.
\begin{enumerate}
\item[(i)] A basis for $A \cong i_s \Pi_Q i_s$ (as a free $\Z$-module) is given by the elements, for all $0 \leq \ell_1 < d$, $\ell_2 \geq 0$, and $\ell_3 \geq 0$, 
\begin{gather}
x^{\ell_1} (y x^{d})^{\ell_2} (y x^{d-1})^{\ell_3} \cdot Y, \quad Y \in \{yx^{d-2} y, yx^{\ell_4}, 1\}_{0 \leq \ell_4 \leq d-2}.  \\
\notag \text{ (i.e., $Y$ is an initial subword of $yx^{d-2} y$).}
\end{gather}
\item[(ii)] A basis for $i_0 \Pi_Q i_0$, (as a free $\Z$-module), 
via Proposition \ref{starprop}.(ii), is given by
\begin{gather}
i_0, \\
p_{i_0 i_s} \bigl((y x^d)^{\ell} y \bigr) p_{i_s i_0} \quad(\ell \geq 0), \\
p_{i_0 i_s} \bigl((y x^d)^{\ell_1} (y x^{d-1})^{\ell_2} y x^{d-2} y\bigr) p_{i_s i_0} \quad(\ell_1,\ell_2 \geq 0).
\end{gather}
\end{enumerate}
\end{prop}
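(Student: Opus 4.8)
The plan is to reduce Proposition~\ref{enprop} to a finite noncommutative Gr\"obner computation, as the sentence preceding the statement advertises. First I would apply Proposition~\ref{starprop}(i), which identifies $i_s\Pi_Qi_s$ with $A=\Z\langle x_1,x_2,x_3\rangle/\ldp x_1+x_2+x_3,\,x_1^{d_1+1},\,x_2^{d_2+1},\,x_3^{d_3+1}\rdp$. Eliminating $x_3=-(x_1+x_2)$ presents this as a two-generator algebra
\[
A\;\cong\;\Z\langle x,y\rangle\big/\ldp x^{d+1},\ y^{d_2+1},\ (x+y)^{d_3+1}\rdp,\qquad d=d_1\ge d_2\ge d_3,
\]
where $(d,d_2,d_3)=(2,2,2),(3,3,1),(5,2,1)$ for $Q=\tilde E_6,\tilde E_7,\tilde E_8$ respectively. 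Part (i) then becomes the claim that the listed words form a $\Z$-basis of this algebra.

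Next I would fix a graded lexicographic order on $\Z\langle x,y\rangle$ (say with $x>y$, so the leading monomials of the three relations are $x^{d+1}$, $y^{d_2+1}$, $x^{d_3+1}$) and compute a Gr\"obner generating set for the defining ideal by the procedure of Appendix~\ref{gbs}/Proposition~\ref{gbp}: begin with the three defining relations and repeatedly adjoin the reductions coming from their overlaps until the set is self-reducing. The two things to check are that this completion terminates --- the list of new leading monomials is finite, and one treats $\tilde E_6,\tilde E_7,\tilde E_8$ one at a time since its shape depends on $(d,d_2,d_3)$ --- and that the resulting normal-form monomials (those divisible by no leading monomial) are exactly the words displayed in (i). The latter is a finite combinatorial verification: one reads off from the leading-monomial ideal that every irreducible word has the asserted shape $x^{\ell_1}(yx^{d})^{\ell_2}(yx^{d-1})^{\ell_3}Y$ with $Y$ an initial segment of $yx^{d-2}y$, and conversely that each such word is irreducible. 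As an independent consistency check one can match the Hilbert series of $A$ with the one coming from the Morita equivalence $\Pi_Q\otimes\C\cong\C[x,y]\rtimes\Gamma$ of \S\ref{gpsec}.

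For part (ii) I would feed (i) into Proposition~\ref{starprop}(ii). Since $i_0$ is the far endpoint of a longest branch we have $d_{i_s,i_0}=d_1=d$, so that proposition gives $i_0\Pi_Qi_0\cong A/(xA+Ax)$ (the kernel exponent $d_1-d_{i_s,i_0}+1$ equals $1$). Using the reduction system from (i) one then shows that, after left-multiplying by $p_{i_0 i_s}$ and right-multiplying by $p_{i_s i_0}$, every reduced word of $A$ collapses in $A/(xA+Ax)$ onto an integral combination of the images of $i_0$, $(yx^d)^{\ell}y$, and $(yx^d)^{\ell_1}(yx^{d-1})^{\ell_2}yx^{d-2}y$, and that these images are $\Z$-linearly independent --- cross-checking, if desired, against the Molien series of $\C[x,y]^\Gamma$, which is $h(i_0\Pi_Qi_0;t)$ by the same Morita equivalence. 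This gives the basis in (ii).

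The main obstacle is the Gr\"obner completion in the second step: the three defining relations are not themselves a Gr\"obner generating set (for instance $x^{d_3+1}=\mathrm{LT}\big((x+y)^{d_3+1}\big)$ divides $x^{d+1}$, which produces overlap reductions, and these produce further ones), so the completion has to be carried out and its termination and output verified by hand in each of the three cases. A secondary subtlety is keeping track, in part (ii), of exactly which reduced words of $A$ die in the two-sided quotient by $x$ --- e.g. $yx^{d-1}y$ already lies in $xA+Ax$ via the relation $(x+y)^{d_3+1}=0$, which is why the second family of basis elements in (ii) ends in $yx^{d-2}y$ rather than in a bare $y$.
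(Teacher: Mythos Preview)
Your approach is correct and essentially the same as the paper's: reduce part (i) to a case-by-case Gr\"obner computation (the paper carries this out in \S\ref{pie6s}--\S\ref{pie8s}, displaying the explicit Gr\"obner generating sets for each of $\tilde E_6,\tilde E_7,\tilde E_8$), and deduce (ii) from (i) by computing $A/(xA+Ax)$ via Proposition~\ref{starprop}(ii) (equivalently Lemma~\ref{qhatijl}), exactly as the paper does in \S\ref{ss:pf-enprop2}.

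Two small corrections. First, your suggested ordering $x>y$ will not produce the basis stated in the proposition. The paper works in $\Z\langle x,y,z\rangle$ with graded lex and $x\prec y\prec z$; the relation $z+y+x$ then has leading term $z$ (so $z$ is eliminated, as you do by hand), and the remaining leading monomials in $x,y$ are exactly those whose avoidance gives the displayed words $x^{\ell_1}(yx^d)^{\ell_2}(yx^{d-1})^{\ell_3}Y$. With $x>y$ you would get a different normal-form basis. Second, your parenthetical about (ii) is slightly off target: the basis of (ii) \emph{does} contain a family ending in a bare $y$, namely $(yx^d)^\ell y$. What gets killed in the passage from (i) to (ii) are the elements $(yx^d)^{\ell_1}(yx^{d-1})^{\ell_3}y$ with $\ell_3\ge 1$; the paper shows this by observing that for $g=(yx^d)^{\ell_1}(yx^{d-1})^{\ell_2}yx^{d-2}y$, the reduction of $gx$ to normal form produces, besides terms beginning or ending in $x$, precisely such an element, so it lies in $Ax$ modulo the rest.
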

This proposition follows from a computation of Gr\"obner generating
sets that we performed with Magma for the $\tilde E_6, \tilde E_7$,
and $\tilde E_8$ cases separately. We give the details in the
following four subsections.

\subsection{Type $\tilde E_6$} \label{pie6s} We first compute a basis
for the ring $A := \Z\langle x,y,z \rangle / \ldp x^3,y^3,z^3,x+y+z
\rdp \cong i_s \Pi i_s$ (cf.~Proposition \ref{starprop}).  We do this
by computing a Gr\"obner generating set for the ideal $\ldp
x^3,y^3,z^3,x+y+z\rdp \subset \Z\langle x,y,z \rangle$, which we can
do by computer using Buchberger's algorithm (we used Magma).  It is
also straightforward to verify by hand that the given elements form a
Gr\"obner generating set.
\begin{prop}
In the graded lexicographical order with $x \prec y \prec z$,  the Gr\"obner generating set for
the ideal $\ldp x^3,y^3,z^3,x+y+z \rdp \subset \Z\langle x,y,z \rangle$ is
\begin{gather} \label{e6mag}
    yxyx^2 - xyxyx + x^2yxy + x^2yx^2, \\
    y^3, \\
    y^2x + yxy + yx^2 + xy^2 + xyx + x^2y, \\
    x^3, \\
    z + y + x. 
\end{gather}
\end{prop}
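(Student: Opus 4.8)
The plan is to eliminate the variable $z$ and then run Buchberger's algorithm on the resulting two‑variable ideal, tracking exactly which elements get adjoined. Since every element of the proposed set has leading coefficient $\pm 1$ (a unit in $\Z$), this is legitimate over $\Z$ via the Gr\"obner machinery of Appendix \ref{gbs} (Proposition \ref{gbp}): a set of ideal generators whose leading coefficients are units is a Gr\"obner generating set precisely when the associated rewriting system is confluent, i.e.\ when every overlap ambiguity between leading monomials reduces to zero. (This is also the sense in which we get a Gr\"obner \emph{generating} set rather than a minimal basis, and why we need not invert anything.)

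First I would use the relation $z+y+x$ to substitute $z=-x-y$, identifying $\ldp x^3,y^3,z^3,x+y+z\rdp$ with $\ldp x^3,y^3,(x+y)^3\rdp \subseteq \Z\langle x,y\rangle$ together with the (now inert) linear relation. Reducing $(x+y)^3$ modulo $x^3$ and $y^3$ yields $g := (x+y)^3-x^3-y^3 = y^2x+yxy+yx^2+xy^2+xyx+x^2y$, so the ideal is $\ldp x^3,y^3,g\rdp$; in graded lex with $x\prec y$ the leading monomials are $x^3$, $y^3$, and $y^2x$. The only nontrivial overlap among these three is the word $y^2x^3$ (suffix $x$ of $y^2x$ matched with prefix $x$ of $x^3$): one reduction path gives $0$, and the other, after clearing every monomial divisible by $x^3$ or $y^2x$, leaves exactly $\pm\bigl(yxyx^2-xyxyx+x^2yxy+x^2yx^2\bigr)$. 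Thus Buchberger's algorithm is forced to adjoin $g_1 := yxyx^2-xyxyx+x^2yxy+x^2yx^2$, whose leading monomial is $yxyx^2$.

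The heart of the argument is then to show that no further element is produced, i.e.\ that every remaining overlap ambiguity of $\{x^3,y^3,g,g_1\}$ reduces to zero. The self‑overlaps of $x^3$ (words $x^4,x^5$) and of $y^3$ ($y^4,y^5$) are immediate, and one checks that $y^2x$ and $yxyx^2$ have no self‑overlaps and that there are no inclusion ambiguities. What remain are the words $y^3x$ and $y^4x$ (overlaps of $g$ with $y^3$), $y^2xyx^2$ (overlap of $g$ with $g_1$, via the common factor $yx$), $yxyx^3$ and $yxyx^4$ (overlaps of $g_1$ with $x^3$), and $y^3xyx^2$ (overlap of $g_1$ with $y^3$); for each one I would carry out the two competing reductions to a normal form and verify they agree. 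I expect this final verification to be the main obstacle: it is entirely elementary but lengthy, requiring careful bookkeeping of repeated substitutions $y^2x \mapsto -(yxy+yx^2+xy^2+xyx+x^2y)$ and $yxyx^2 \mapsto xyxyx-x^2yxy-x^2yx^2$ together with annihilation of any monomial containing $x^3$ or $y^3$ — precisely the computation delegated here to Magma's Buchberger implementation, and the sense in which ``straightforward to verify by hand'' is meant.

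Finally I would close the loop on ideal equality: the proposed set contains $z+y+x$, $x^3$, $y^3$, and $g$, and $g$ lies in the original ideal because $-z^3 \equiv (x+y)^3 = x^3+y^3+g \bmod \ldp z+y+x\rdp$; conversely $z^3 \equiv -(x^3+y^3+g)$ modulo $z+y+x$, so the two ideals coincide. Being a set of generators of this ideal that is closed under S‑polynomial reduction (by the previous paragraph), it is a Gr\"obner generating set, which is the assertion of the proposition.
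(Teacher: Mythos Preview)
Your proposal is correct and takes essentially the same approach as the paper, which simply states that the Gr\"obner generating set was computed by running Buchberger's algorithm in Magma and that it is ``straightforward to verify by hand''; you have written out that hand verification in detail. One small slip of wording: you say ``the only nontrivial overlap among these three is the word $y^2x^3$'', but in fact $y^3x$ and $y^4x$ are already overlap ambiguities among the leading monomials of $\{x^3,\,y^3,\,g\}$ before $g_1$ is adjoined. This does not affect the argument, since you correctly list and check these two overlaps later (they do reduce to zero, so indeed $y^2x^3$ is the unique overlap that forces a new generator), and your final enumeration of ambiguities for $\{x^3,\,y^3,\,g,\,g_1\}$ is complete.
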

By definition of Gr\"obner generating sets we immediately deduce Proposition
\ref{enprop}.(i).  We will prove Proposition \ref{enprop}.(ii) for
$E_6, E_7$, and $E_8$ simultaneously in \S \ref{ss:pf-enprop2}.

As a result of Proposition \ref{enprop}.(ii), we deduce, where
$\Phi_m(t)$ denotes the cyclotomic polynomial whose roots are the
primitive $m$-th roots of unity:
\begin{cor}
\begin{equation}
h(i_0 \Pi i_0;t) = \frac{\Phi_{24}(t)}{(1-t^4)(1-t^6)}=\frac{1-t^4+t^8}{(1-t^4)(1-t^6)} =
\frac{(1-t^{24})}{(1-t^6)(1-t^8)(1-t^{12})}.
\end{equation}
Furthermore, one has the following partial fraction decomposition:
\begin{equation} \label{pfd}
h(i_0 \Pi i_0) (1-t^2) = 1 + \frac{2t^6}{1-t^6} + \frac{t^4}{1-t^4} - \frac{t^2}{1-t^2}.
\end{equation}
\end{cor}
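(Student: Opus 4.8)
The statement to prove is the Corollary computing $h(i_0 \Pi i_0;t)$ for $\tilde E_6$, together with the partial fraction decomposition \eqref{pfd}. The plan is to extract the generating function directly from the explicit $\Z$-basis of $i_0 \Pi i_0$ provided by Proposition \ref{enprop}.(ii), specializing to $\tilde E_6$ where $d=d_1=2$ (since all three branch lengths equal $2$ for $E_6$, so $d_1 = d_2 = d_3 = 2$) and where the straight-line path $p_{i_0 i_s}$ has length equal to the length $d_{i_0,i_s}$ of the branch, contributing a fixed shift to the degree. First I would tabulate the degrees of the three families of basis elements from Proposition \ref{enprop}.(ii): the single element $i_0$ in degree $0$; the elements $p_{i_0 i_s}((yx^d)^\ell y)p_{i_s i_0}$ whose degree is $2 d_{i_0,i_s} + \ell(d+1) + 1 = 2d_{i_0,i_s} + 3\ell + 1$ for $\ell \geq 0$ (a word in $x,y$ of length $\ell(d+1)+1$, with $d=2$); and the elements $p_{i_0 i_s}((yx^d)^{\ell_1}(yx^{d-1})^{\ell_2} yx^{d-2}y)p_{i_s i_0}$ of degree $2d_{i_0,i_s} + 3\ell_1 + 2\ell_2 + 2$ for $\ell_1,\ell_2 \geq 0$ (using $d-2=0$, so $yx^{d-2}y = y^2$ has length $2$). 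Summing the resulting geometric series gives, up to the overall factor $t^{2 d_{i_0,i_s}}$,
\[
1 + \frac{t^{?}}{1-t^6} + \cdots,
\]
and one then reconciles the degree shift with the normalization that $h(i_0\Pi i_0;t)$ starts with $1$ in degree $0$ — here one uses the Morita/grading conventions, or simply rescales so that the formula is expressed in the intrinsic grading of $i_0 \Pi i_0$.

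The cleanest route, which I would actually carry out, is to recognize that the three families contribute, in the intrinsic grading of $i_0 \Pi i_0$ (where the element $i_0$ has degree $0$, the first non-trivial family starts producing elements, and degrees are read off directly), the series $1$, then $\sum_{\ell \geq 0} t^{6\ell + a}$ and $\sum_{\ell_1,\ell_2 \geq 0} t^{6\ell_1 + 4\ell_2 + b}$ for appropriate small constants $a,b$; summing yields
\[
h(i_0 \Pi i_0;t) = 1 + \frac{t^{a}}{1-t^6} + \frac{t^{b}}{(1-t^6)(1-t^4)}.
\]
Then I would clear denominators over the common denominator $(1-t^4)(1-t^6)$, collect the numerator as a polynomial, and check it equals $1 - t^4 + t^8 = \Phi_{24}(t)$. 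The identity $\frac{1-t^4+t^8}{(1-t^4)(1-t^6)} = \frac{1-t^{24}}{(1-t^6)(1-t^8)(1-t^{12})}$ is then a purely formal check: multiply both sides by the denominators and verify $(1-t^4+t^8)(1-t^8)(1-t^{12}) = (1-t^{24})(1-t^4)$, i.e. use $(1-t^4+t^8)(1+t^4) = 1+t^{12}$ and $(1+t^{12})(1-t^{12}) = 1-t^{24}$, so the left side is $(1-t^{24})(1-t^4)$ after dividing by $(1+t^4)$ — all elementary polynomial algebra.

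For the partial fraction decomposition \eqref{pfd}, I would multiply the established formula $h(i_0 \Pi i_0;t) = \frac{1-t^4+t^8}{(1-t^4)(1-t^6)}$ by $(1-t^2)$ and decompose
\[
\frac{(1-t^2)(1-t^4+t^8)}{(1-t^4)(1-t^6)}
\]
into a sum of the form $1 + \frac{c_1 t^6}{1-t^6} + \frac{c_2 t^4}{1-t^4} + \frac{c_3 t^2}{1-t^2}$; since $\frac{1-t^2}{1-t^4} = \frac{1}{1+t^2}$ and $\frac{1-t^2}{1-t^6} = \frac{1}{1+t^2+t^4}$, it is cleanest to instead verify directly that $1 + \frac{2t^6}{1-t^6} + \frac{t^4}{1-t^4} - \frac{t^2}{1-t^2}$ has common denominator $(1-t^2)(1-t^4)(1-t^6)/\gcd$ and, after multiplying through, equals $(1-t^2)h(i_0\Pi i_0;t)$; this is again a finite polynomial identity once denominators are cleared. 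The main obstacle in this argument is not any of the algebra — which is entirely routine — but the bookkeeping of the \emph{degree shifts}: correctly accounting for the lengths $d_{i_0,i_s}$ of the branch connecting $i_0$ to the special vertex $i_s$ in $\tilde E_6$ (which is $2$), and checking that the grading conventions make $h(i_0\Pi i_0;t)$ begin with $1$, so that the constants $a, b$ in the geometric series come out to give exactly the numerator $1 - t^4 + t^8$. I would double-check this shift against the known characteristic-zero formula $h(i_0 \Pi_Q i_0;t) = \phi(C;t)_{i_0 i_0}$ from \S \ref{ss:edc} (equation \eqref{edfla}) as a consistency test before finalizing.
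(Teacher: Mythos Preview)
Your approach is essentially identical to the paper's: read off the Hilbert series from the basis in Proposition~\ref{enprop}.(ii), sum the geometric series, clear denominators to find the numerator $1-t^4+t^8$, and then verify the partial fraction identity directly. The only slip is in your degree count: the generators $x,y,z$ are the length-two cycles $e_i e_i^*$ and hence have degree $2$, not $1$, so (with $d_{i_0,i_s}=2$) the two nontrivial families contribute $\frac{t^6}{1-t^6}$ and $\frac{t^8}{(1-t^4)(1-t^6)}$, i.e.\ $a=6$, $b=8$, exactly as the paper records.
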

\begin{proof} For the first part, we note that our basis above shows
  that $h(i_0
  \Pi i_0;t) = 1 + \frac{t^6}{(1-t^6)} +
  \frac{t^8}{(1-t^6)(1-t^4)}.$ Putting this over the common
  denominator $(1-t^4)(1-t^6)$,
  we get a numerator of $1-t^6-t^4+t^6-t^{10}+t^{10}+t^8=1-t^4+t^8
  = \Phi_{24}(t)$.

  For the second part, one may explicitly verify the identity.  Note
  that, since $(1-t^6)(1-t^4) =
  (1+t^2+t^4)(1+t^2)(1-t^2)^2$ is a decomposition into relatively
  prime factors, one sees that $h(i_0 \Pi i_0)
  (1-t^2)$ must have a partial fraction decomposition with
  denominators $1-t^6, 1-t^4, 1-t^2$, and the above is one such.
\end{proof}

The meaning of the partial-fraction decomposition \eqref{pfd} is
again the identity \eqref{egid} (cf.~\S \ref{ean}): 
setting $h(i_0 \Pi i_0) = \sum a_m t^m$, 
\begin{equation} \label{pfide6}
a_m - a_{m-2} = 2 [6 \mid m] + [4 \mid m] - [2 \mid m], \quad m \geq 2.
\end{equation}
This bears similarity to the determinant of the $t$-analogue of the Cartan matrix:
\begin{equation}
\text{det}(\1 - t \cdot C + t^2 \cdot \1) = \frac{(1-t^6)^2(1-t^4)}{1-t^2}.
\end{equation}
Indeed, \eqref{pfide6} says that
\begin{equation}
\prod_{m \geq 1} \frac{1}{(1-t^{m})^{a_m-a_{m-2}}} = \prod_{m \geq 1} \frac{1-t^2}{(1-t^6)^2(1-t^4)},
\end{equation}
Then, as in \S \ref{ean}, we verify \eqref{egid} in this case.

\subsection{Type $\tilde E_7$} \label{pie7s}
We first compute a Gr\"obner generating set for the ideal $\ldp x^4,y^4,z^2,x+y+z \rdp
\subset \Z\langle x,y,z
\rangle$ (cf.~Proposition \ref{starprop}), which we can do with Magma.  It is also
straightforward to verify by hand that the given elements form a Gr\"obner generating set. 
\begin{prop}
In the graded lexicographical order with $a \prec b \prec c$,  the Gr\"obner generating set for
the ideal $\ldp x^4,y^4,z^2,x+y+z \rdp \subset \Z\langle x,y,z \rangle$ is
\begin{gather} \label{e7mag}
    yx^2yx^3 - xyx^2yx^2 + x^2yx^2yx - x^3yx^2y, \\
    yxyx + yx^2y + yx^3 + xyxy + x^3y, \\
    x^4, \\
    y^2 + yx + xy + y^2, \\
    z + y + x.
\end{gather}
\end{prop}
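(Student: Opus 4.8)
The statement asserts that the five listed elements form a (necessarily reduced, as one sees from their leading monomials) noncommutative Gr\"obner generating set for $\ldp x^4, y^4, z^2, x+y+z \rdp$ in the graded lexicographic order with $x \prec y \prec z$. Since each generator has leading coefficient a unit of $\Z$ (namely $+1$), we are exactly in the setting of Bergman's Diamond Lemma, and the version of it in Appendix \ref{dla} (Proposition \ref{dl3}) applies directly over the coefficient ring $\Z$: it suffices to check that every overlap ambiguity among the leading monomials is resolvable. Once that is done, all five elements automatically lie in the ideal, they generate it, they form a Gr\"obner generating set, and the monomials not divisible by any of the leading monomials $z$, $y^2$, $yxyx$, $x^4$, $yx^2yx^3$ form a $\Z$-basis of the quotient, which by Proposition \ref{starprop}.(i) is $i_s\Pi_Q i_s$. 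This simultaneously proves Proposition \ref{enprop}.(i) for type $\tilde E_7$.

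The plan is then to organize the verification as Buchberger's procedure run by hand. First eliminate $z$ using the fifth generator, i.e.\ the rewriting rule $z \mapsto -x-y$; this reduces everything to $\Z\langle x,y\rangle$ and the ideal $\ldp x^4,\,(x+y)^2\rdp$. The remaining four rewriting rules, with leading monomials on the left, are $x^4 \mapsto 0$ (this is $g_3$); $y^2 \mapsto -yx-xy-x^2$ (this is $z^2=(x+y)^2$); $yxyx \mapsto -yx^2y-yx^3-xyxy-x^3y$; and $yx^2yx^3 \mapsto xyx^2yx^2 - x^2yx^2yx + x^3yx^2y$. Two of these require a remark: the third, $g_2$, is simply the original relation $y^4$ rewritten modulo $x^4$ and $(x+y)^2$ (indeed one computes $y^4 = yxyx + yx^2y + yx^3 + xyxy + x^3y$ using $y^2 \equiv -(yx+xy+x^2)$), and the fourth, $g_1$, is \emph{not} arbitrary: it is precisely $\pm$ the resolution of the overlap ambiguity of $yxyx$ with $x^4$ on the word $yxyx^4$, one reduction of which is $0$ (apply $x^4\mapsto 0$ at the tail) while the other, reducing repeatedly by the $yxyx$-rule, terminates at $-(yx^2yx^3 - xyx^2yx^2 + x^2yx^2yx - x^3yx^2y)$. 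So $g_1$ is forced into the set. It remains to enumerate and resolve the finitely many remaining overlap ambiguities among $\{x^4,\,y^2,\,yxyx,\,yx^2yx^3\}$: the self-overlaps of $y^2$ (the word $y^3$), of $yxyx$ (e.g.\ $yxyxyx$), and of $yx^2yx^3$; the overlaps of $y^2$ with $yxyx$ and with $yx^2yx^3$; the overlaps of $yx^2yx^3$ with $yxyx$; and the overlaps of $yx^2yx^3$ with $x^4$ (words $yx^2yx^4$, $yx^2yx^5$). For each one reduces the ambiguity word in both ways and checks the normal forms agree; termination of every reduction is automatic, since in graded lex each rule strictly lowers the leading monomial.

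The main obstacle is purely the bulk of these reductions, not any conceptual point: the longest leading monomial has length $7$, so the heaviest ambiguities --- the self-overlap $yx^2yx^3$ with $yx^2yx^3$, and the overlaps with $yxyx$ --- have ambiguity words of length up to roughly $12$, whose two reductions branch into many monomials that must be driven to normal form before comparing. Each individual step is mechanical, so the check is "straightforward by hand," but in practice it is best done by machine, which is why it was carried out with Magma via Buchberger's algorithm; by hand one verifies the low-degree ambiguities directly and relies on the machine for the rest. As an independent cross-check on the output, one compares the Hilbert series of the $\Z$-span of the resulting normal-form monomials against the known structure of $i_s\Pi_{\tilde E_7}i_s$ --- via Proposition \ref{starprop} and the extended Dynkin data, or equivalently against the graded dimension of the indecomposable projective of $\Pi_{\tilde E_7}$ at the trivalent vertex --- which is exactly the computation carried out in the corollary following this proposition.
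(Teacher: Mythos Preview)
Your approach is essentially the paper's: both compute the set via Buchberger's algorithm in Magma and remark that the overlap ambiguities can be checked by hand, with your write-up simply supplying more of the organizational detail (deriving $g_2$ from $y^4$ and $g_1$ from the $yxyx$--$x^4$ overlap, then enumerating the remaining ambiguities). One small slip: after eliminating $z$ the ideal in $\Z\langle x,y\rangle$ is $\ldp x^4,\,y^4,\,(x+y)^2\rdp$, not $\ldp x^4,\,(x+y)^2\rdp$ --- you clearly know this, since you immediately use $y^4$ to produce $g_2$, but the sentence as written is incorrect.
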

We immediately deduce Proposition \ref{enprop}.(i) for $E_7$. See the
next subsection for the proof of (ii).  As a result of (ii) we deduce the
\begin{cor}
One has the formula
\begin{equation}
h(i_0\Pi_{\tilde E_7}i_0;t) = \frac{\Phi_{36}(t)=1-t^{6}+t^{12}}{(1-t^6)(1-t^8)} = \frac{1-t^{36}}{(1-t^8)(1-t^{12})(1-t^{18})}.
\end{equation}
Additionally, one has the partial fraction decomposition
\begin{equation}
h(i_0 \Pi_{\tilde E_7}i_0;t) (1-t^2) = 
1 + \frac{t^4}{1-t^4} + \frac{t^6}{1-t^6} + \frac{t^8}{1-t^8} - \frac{t^2}{1-t^2}.
\end{equation}
Since, letting $C$ be the adjacency matrix of $\overline{\tilde E_7}$, one has the formula
\begin{equation}
\det (1 - t \cdot C + t^2 \cdot \1) = \frac{(1-t^4)(1-t^6)(1-t^8)}{1-t^2},
\end{equation}
the identity \eqref{egid} is verified.
\end{cor}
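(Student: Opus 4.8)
The statement has three components: the closed form for $h(i_0\Pi_{\tilde E_7}i_0;t)$, the partial fraction decomposition, and the verification of \eqref{egid}. The plan is to obtain the Hilbert series directly from the explicit $\Z$-basis of $i_0\Pi_{\tilde E_7}i_0$ recorded in Proposition \ref{enprop}(ii) (granted, since it is stated earlier and proved in the following subsection), and then to carry out the remaining two parts by elementary manipulations of rational functions, paralleling the $\tilde A_{n-1}$ treatment in \S\ref{ean} and the $\tilde E_6$ treatment in \S\ref{pie6s}.

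First I would fix the numerology of $\tilde E_7$ viewed as a star-shaped quiver in the sense of Proposition \ref{starprop}: its branch lengths are $d_1 = d_2 = 3$ and $d_3 = 1$ (so $d := d_1 = 3$, consistent with the Gr\"obner computation for $\ldp x^4, y^4, z^2, x+y+z\rdp$ in \S\ref{pie7s}), and the extending vertex $i_0$ lies at the far end of one of the two length-$3$ branches, so that $d_{i_0,i_s} = 3$ and conjugation by $p_{i_0 i_s}$ contributes $2\cdot 3 = 6$ to the path-length grading. Since each generator $x_k = e_k e_k^*$ of $i_s\Pi i_s$ has path-degree $2$, the three families of basis elements of Proposition \ref{enprop}(ii) carry path-degrees $0$ (the element $i_0$), $8\ell + 8$ (the elements $p_{i_0 i_s}((yx^3)^\ell y)p_{i_s i_0}$, $\ell\ge0$), and $8\ell_1 + 6\ell_2 + 12$ (the elements $p_{i_0 i_s}((yx^3)^{\ell_1}(yx^2)^{\ell_2}yxy)p_{i_s i_0}$, $\ell_1,\ell_2\ge0$). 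Summing the corresponding geometric series gives
\begin{equation}
h(i_0\Pi_{\tilde E_7}i_0;t) = 1 + \frac{t^8}{1-t^8} + \frac{t^{12}}{(1-t^6)(1-t^8)} = \frac{1-t^6+t^{12}}{(1-t^6)(1-t^8)},
\end{equation}
the last equality being a one-line computation over the common denominator. One then identifies $1-t^6+t^{12} = \Phi_6(t^6) = \Phi_{36}(t)$, and checks the asserted second form $\frac{1-t^{36}}{(1-t^8)(1-t^{12})(1-t^{18})}$ by clearing denominators, i.e.\ by verifying the polynomial identity $(1-t^6+t^{12})(1-t^{12})(1-t^{18}) = (1-t^6)(1-t^{36})$.

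For the partial fraction decomposition, the cleanest route is to compare Taylor coefficients: writing $h(i_0\Pi_{\tilde E_7}i_0;t) = \sum_{m\ge 0} a_m t^m$, multiplication by $1-t^2$ produces $\sum_m (a_m - a_{m-2})t^m$, and from the explicit form above one reads off $a_m - a_{m-2} = [4\mid m] + [6\mid m] + [8\mid m] - [2\mid m]$ for all $m\ge 1$ (the constant term being handled separately), which is exactly the coefficient sequence of $1 + \frac{t^4}{1-t^4} + \frac{t^6}{1-t^6} + \frac{t^8}{1-t^8} - \frac{t^2}{1-t^2}$. Finally, \eqref{egid} is deduced as in \S\ref{ean} and \S\ref{pie6s}: since $\Pi_{\tilde E_7}$ is an NCCI, the extended-Dynkin case of \eqref{egfla} (see \S\ref{ss:edc}) gives $h(i_0\Pi_{\tilde E_7}i_0;t) = \phi(C;t)_{i_0 i_0}$, whence $a_m = \phi_m(C)_{i_0 i_0}$ and $\varphi_m(C)_{i_0 i_0} = a_m - a_{m-2}$; substituting $t\mapsto t^m$ in the supplied determinant identity $\det(\1 - tC + t^2\1) = \frac{(1-t^4)(1-t^6)(1-t^8)}{1-t^2}$ and taking the product over $m\ge 1$ matches the two sides of \eqref{egid} against the partial fraction decomposition just obtained.

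The only step requiring genuine care is the grading bookkeeping in the second paragraph: correctly locating $i_0$ at combinatorial distance $d_1 = 3$ from the central vertex $i_s$, and tracking the factor $2$ coming from $|x_k| = |e_k e_k^*| = 2$, so that the three basis families really land in the degrees $0$, $8\ell+8$, and $8\ell_1 + 6\ell_2 + 12$. Everything downstream — the rational-function simplification, the cyclotomic identification, and the matching that yields \eqref{egid} — is formal and identical in shape to the $\tilde A_{n-1}$ and $\tilde E_6$ cases already treated.
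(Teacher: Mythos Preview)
Your proposal is correct and follows essentially the same approach as the paper: the paper gives no explicit proof for the $\tilde E_7$ corollary, merely stating it ``as a result of (ii)'', and your argument mirrors exactly the proof given for the analogous $\tilde E_6$ corollary in \S\ref{pie6s} (summing over the three basis families of Proposition \ref{enprop}(ii), simplifying the rational function, then reading off $a_m - a_{m-2}$ to match against the determinant formula). Your degree bookkeeping ($d=3$, $d_{i_0,i_s}=3$, yielding degrees $0$, $8\ell+8$, $8\ell_1+6\ell_2+12$) is correct.
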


\subsection{Type $\tilde E_8$} \label{pie8s}
We first compute a Gr\"obner generating set for the ideal $\ldp x^6,y^3,z^2,x+y+z \rdp
\subset \Z\langle x,y,z
\rangle$ (cf.~Proposition \ref{starprop}), which we can do with Magma.  It is also
straightforward to verify by hand that the given elements form a Gr\"obner generating set. 
\begin{prop}
In the graded lexicographical order with $x \prec y \prec z$,  the Gr\"obner generating set for
the ideal $\ldp x^6,y^3,z^2,x+y+z \rdp \subset \Z\langle x,y,z \rangle$ is
\begin{gather} \label{e8mag}
    yx^4yx^5 - xyx^4yx^4 + x^2yx^4yx^3 - x^3yx^4yx^2 + x^4yx^4yx - x^5yx^4y, \\
    yx^3yx + yx^4y + yx^5 + xyx^3y - x^2yx^3 - x^3yx^2 + x^5y, \\
    x^6, \\
    yx^2y + yx^3 + xyx^2 + x^2yx + x^3y + 2x^4, \\
    yxy - xyx - x^3, \\
    y^2 + yx + xy + x^2, \\
    z + y + x.
\end{gather}
\end{prop}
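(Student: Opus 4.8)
The plan is to verify directly that the seven listed polynomials form a (noncommutative) Gr\"obner generating set for $I := \ldp x^6, y^3, z^2, x+y+z \rdp \subset \Z\langle x,y,z\rangle$ with respect to the graded lexicographic order $x \prec y \prec z$, i.e.\ to confirm the Magma/Buchberger output by hand. Writing $g_1,\dots,g_7$ for the displayed elements in order, one first records their leading monomials, which are $yx^4yx^5$, $yx^3yx$, $x^6$, $yx^2y$, $yxy$, $y^2$, and $z$ respectively; since every $g_i$ has leading coefficient $\pm 1$, there are no coefficient obstructions and the $\Z$-coefficient Diamond Lemma of Appendix \ref{dla} coincides with the classical Bergman criterion. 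So it suffices to check that (a) $G := \{g_1,\dots,g_7\}$ generates $I$, and (b) the rewriting system defined by $G$ is confluent, equivalently that every overlap ambiguity among the seven leading monomials is resolvable.

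For (a): the relation $g_7 = z+x+y$ eliminates $z$ and identifies the problem with the ideal $\ldp x^6, y^3, (x+y)^2 \rdp \subset \Z\langle x,y\rangle$; there $g_6 = (x+y)^2$ and $g_3 = x^6$ literally, and one checks $g_1,g_2,g_4,g_5 \in \ldp x^6, y^3, (x+y)^2\rdp$ by explicit rewriting, while conversely $y^3 = y\cdot g_6$ minus a correction by $g_5$ lies in $\langle g_5,g_6\rangle$. Hence $\langle G\rangle = I$. This part is routine.

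For (b): one enumerates the overlap ambiguities of $z, y^2, yxy, yx^2y, yx^3yx, x^6, yx^4yx^5$ (the monomial $yx^4yx^5$ is unbordered, so it has no self-overlap). These come in manageable number --- self-overlaps $zz$, $y^3$, $yxyxy$, $yx^2yx^2y$, $x^7$, together with the overlaps of $yx^4yx^5$ with $yx^3yx$ and with $x^6$, and the cross overlaps of $y^2$ against each of $yxy,yx^2y,yx^3yx,yx^4yx^5$, of $yxy$ against $yx^2y$ and $yx^3yx$, of $yx^2y$ against $yx^3yx$, and of $x^6$ against $yx^3yx$ and $yx^4yx^5$ --- and for each one reduces the two ways of simplifying to a common normal form, necessarily $0$ since $\langle G\rangle = I$. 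By the Diamond Lemma this confluence is equivalent to the normal monomials (the words divisible by no leading term) forming a $\Z$-basis of $A$, which is precisely the list in Proposition \ref{enprop}.(i); so as an independent cross-check one may instead verify that the generating function of that list of normal monomials equals $h(i_s\Pi_{\tilde E_8}i_s;t)$, known over $\Q$ from the graded Morita equivalence $\Pi_{\tilde E_8}\otimes\C \simeq \fsf(\C[x,y]\rtimes\Gamma)\fsf$ of \S\S \ref{gpsec} (with $\Gamma$ the binary icosahedral group) together with torsion-freeness of $\Pi_{\tilde E_8}$ (Proposition \ref{prepncci}); matching dimensions forces $\Q$-linear independence of the normal monomials and hence confluence.

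The main obstacle is simply the size and length of step (b): the degree-$11$ element $g_1$ has leading monomial $yx^4yx^5$, and the ambiguities involving it unwind into long reductions passing through $g_2$ and $g_3$, so although nothing is conceptually difficult, organizing the finite list of critical pairs and carrying out each reduction carefully (or, equivalently, doing the Hilbert-series bookkeeping) is where the real work lies. The $\tilde E_6$ and $\tilde E_7$ cases of \S\S \ref{pie6s}--\ref{pie7s} are treated identically but are shorter, their top Gr\"obner elements having degrees $5$ and $6$ rather than $11$.
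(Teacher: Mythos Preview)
Your approach is exactly what the paper intends by ``straightforward to verify by hand'' --- the paper itself gives no proof beyond citing the Magma computation, so you are simply spelling out the verification. A few corrections to your overlap list in (b): the monomial $yx^3yx$ \emph{does} self-overlap (its suffix $yx$ equals its prefix $yx$, giving the ambiguity $yx^3yx^3yx$), and you have omitted the overlaps of $yxy$ and of $yx^2y$ against $yx^4yx^5$ (each ends in $y$, the first letter of $yx^4yx^5$); conversely, $zz$ is not a genuine overlap ambiguity in Bergman's sense. These are bookkeeping slips, not a flaw in the method.

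Your Hilbert-series cross-check is the cleaner route and is what makes the verification tractable by hand. One remark: you do not need torsion-freeness of $\Pi_{\tilde E_8}$ for it. Once part (a) gives $\langle G\rangle = I$, the Morita equivalence over $\C$ already determines $h((A/I)\otimes\Q;t)$; if the normal-monomial count matches this degreewise, then since the normal monomials span $A/I$ over $\Z$ (leading coefficients are units) and are $\Q$-linearly independent, they form a $\Z$-basis, which is confluence. Invoking Proposition~\ref{prepncci} is both unnecessary and mildly circular within the paper, since its proof for extended Dynkin quivers appeals to precisely these bases.
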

We immediately deduce Proposition \ref{enprop}.(i) for the $E_8$ case,
which completes the proof of that part. 

\begin{cor}
The Hilbert series of $i_0 \Pi_{\tilde E_8} i_0$ is given by
\begin{equation}
h(i_0 \Pi_{\tilde E_8} i_0) = \frac{\Phi_{12}(t)\Phi_{60}(t) = 1 - t^{10} + t^{20}}{(1-t^{10})(1-t^{12})}=
\frac{(1-t^{60})}{(1-t^{12})(1-t^{20})(1-t^{30})}.
\end{equation}
Additionally, one has the partial fraction decomposition
\begin{equation}
h(i_0 \Pi_{\tilde E_8} i_0) (1-t^2) = 
1 + \frac{t^4}{1-t^4} + \frac{t^6}{1-t^6} + \frac{t^{10}}{1-t^{10}} - \frac{t^2}{1-t^2}.
\end{equation}
Since, letting $C$ be the adjacency matrix for $\overline{\tilde E_8}$, one has the formula
\begin{equation}
\det (1 - t \cdot C + t^2 \cdot \1) = \frac{(1-t^4)(1-t^6)(1-t^{10})}{1-t^2},
\end{equation}
the identity \eqref{egid} is verified.
\end{cor}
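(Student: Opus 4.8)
The plan is to read everything off the explicit $\Z$-basis of $i_0\Pi_{\tilde E_8}i_0$ furnished by Proposition~\ref{enprop}.(ii), which rests on the $\tilde E_8$ Gr\"obner generating set of \S\ref{pie8s} (so that $d:=d_1=5$, $d_2=2$, $d_3=1$) together with Proposition~\ref{starprop}. First I would record the path-length degrees of the three families of basis elements. Each $x_i=e_ie_i^*$ has path-length degree $2$, and since $i_0$ is the endpoint of the long arm one has $d_{i_0,i_s}=5$, so $p_{i_0i_s}$ and $p_{i_si_0}$ each have degree $5$. Hence $i_0$ lies in degree $0$; the element $p_{i_0i_s}\bigl((yx^{5})^{\ell}y\bigr)p_{i_si_0}$ lies in degree $5+2(6\ell+1)+5=12(\ell+1)$; and $p_{i_0i_s}\bigl((yx^{5})^{\ell_1}(yx^{4})^{\ell_2}yx^{3}y\bigr)p_{i_si_0}$ lies in degree $5+2(6\ell_1+5\ell_2+5)+5=20+12\ell_1+10\ell_2$. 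Summing the resulting geometric series gives
\[
h(i_0\Pi_{\tilde E_8}i_0;t)=1+\frac{t^{12}}{1-t^{12}}+\frac{t^{20}}{(1-t^{12})(1-t^{10})},
\]
and over the common denominator $(1-t^{10})(1-t^{12})$ the numerator collapses to $1-t^{10}+t^{20}$, which is the first asserted form.

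Next I would rewrite this cyclotomically. The roots of $t^{20}-t^{10}+1$ are exactly the $t$ for which $t^{10}$ is a primitive sixth root of unity, i.e.\ the primitive $12$th and $60$th roots of unity (a quick order computation), so $1-t^{10}+t^{20}=\Phi_{12}(t)\Phi_{60}(t)$; and expanding both sides of the polynomial identity $(1-t^{60})(1-t^{10})=(1-t^{10}+t^{20})(1-t^{20})(1-t^{30})$ converts the displayed rational function into $\frac{1-t^{60}}{(1-t^{12})(1-t^{20})(1-t^{30})}$, giving the remaining two forms. For the partial-fraction decomposition I would observe that after cancelling the factor $\Phi_{12}$ from numerator and denominator and multiplying through by $1-t^2=\Phi_1\Phi_2$, one has $h(i_0\Pi_{\tilde E_8}i_0;t)(1-t^2)=\Phi_{60}(t)\big/\bigl(\Phi_1\Phi_2\Phi_3\Phi_4\Phi_5\Phi_6\Phi_{10}\bigr)$, whose denominator is precisely $\operatorname{lcm}(1-t^2,1-t^4,1-t^6,1-t^{10})$ with each prime cyclotomic factor to the first power; hence a decomposition of the shape $1+\frac{A}{1-t^{4}}+\frac{B}{1-t^{6}}+\frac{C}{1-t^{10}}+\frac{D}{1-t^{2}}$ with polynomial $A,B,C,D$ exists, and clearing denominators (or comparing a handful of coefficients) identifies it with the asserted
\[
h(i_0\Pi_{\tilde E_8}i_0;t)(1-t^2)=1+\frac{t^{4}}{1-t^{4}}+\frac{t^{6}}{1-t^{6}}+\frac{t^{10}}{1-t^{10}}-\frac{t^{2}}{1-t^{2}}.
\]

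Finally, to verify \eqref{egid} I would follow the mechanism of \S\ref{ean}. Write $h(i_0\Pi_{\tilde E_8}i_0;t)=1+\sum_{m\geq1}a_mt^m$; using the graded isomorphism $i_0\Pi_{\tilde E_8}i_0\otimes\Q\cong\bigl((\Lambda_{\tilde E_8})_+\otimes\Q\bigr)\oplus\Q$ (cf.\ \eqref{edisos}), the $a_m$ are the Hilbert-series coefficients of $(\Lambda_{\tilde E_8})_+$, so $a_0=0$ and $a_2=0$, while for $m\geq3$ the partial-fraction decomposition above reads off $a_m-a_{m-2}=[4\mid m]+[6\mid m]+[10\mid m]-[2\mid m]$. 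After the same bookkeeping as in \S\ref{ean} (the missing $m=2$ term accounting for the overall factor $\tfrac{1}{1-t^2}$), this yields
\[
\prod_{m\geq1}\frac{1}{(1-t^m)^{a_m-a_{m-2}}}=\frac{1}{1-t^2}\prod_{k\geq1}\frac{1-t^{2k}}{(1-t^{4k})(1-t^{6k})(1-t^{10k})},
\]
and this is exactly the right-hand side of \eqref{egid} once one knows $\det(\1-t\cdot C+t^2\cdot\1)=\frac{(1-t^4)(1-t^6)(1-t^{10})}{1-t^2}$ for the adjacency matrix $C$ of $\overline{\tilde E_8}$. I expect the only genuinely non-bookkeeping step to be this last determinant: I would obtain it from the standard recursion for the tree polynomial $\det(\1-t\cdot C+t^2\cdot\1)$ along the three arms of the $T_{2,3,6}$ star (exactly as in the analogous $\tilde E_6$ and $\tilde E_7$ computations, cf.\ \S\S\ref{pie6s}--\ref{pie7s}), or by invoking the known spectrum of the affine $\widetilde E_8$ adjacency matrix. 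Everything else reduces to finite verifications of rational-function identities, so I anticipate no real obstacle; the substantive input, Proposition~\ref{enprop}.(ii), has already been established.
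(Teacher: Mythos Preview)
Your proposal is correct and follows essentially the same approach as the paper: the paper gives a detailed proof only in the $\tilde E_6$ case and leaves the $\tilde E_7$ and $\tilde E_8$ corollaries as direct consequences of Proposition~\ref{enprop}.(ii) along the same template, which is precisely what you carry out. Your degree computations from the basis, the reduction to $1+\frac{t^{12}}{1-t^{12}}+\frac{t^{20}}{(1-t^{12})(1-t^{10})}$, the cyclotomic and partial-fraction manipulations, and the verification of \eqref{egid} via $a_m-a_{m-2}$ all match the paper's pattern from \S\S\ref{ean}, \ref{pie6s}.
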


\subsection{Proof of Proposition \ref{enprop}.(ii)}\label{ss:pf-enprop2}
We prove this proposition simultaneously for all cases $\tilde E_6, \tilde E_7$, and $\tilde E_8$.

By Lemma \ref{qhatijl}, the map $i_s \Pi
i_s \rightarrow i_0 \Pi i_0, f \mapsto p_{i_0 i_s} f p_{i_s i_0}$ has
image $(i_0 \Pi i_0)_+$ and kernel integrally spanned by elements of
the form $xg, gx$, $g \in i_s \Pi i_s$.  First, the basis elements
that begin or end in $x$ are killed. Conversely, for any basis element
$g$, expressing $xg$ in terms of basis elements consists only of terms
beginning with $x$.  On the other hand, $gx$ might not consist only of
terms beginning or ending with $x$: this can happen in the case that
$g = (yx^d)^{\ell_1} (yx^{d-1})^{\ell_2} yx^{d-2} y$.  In this case,
$gx$ is a sum of basis elements beginning or ending in $x$, and the
element $(yx^d)^{\ell_1} (yx^{d-1})^{\ell_2+1}$.  Hence, we can simply
eliminate the latter element from our basis, and we obtain the stated
result and hence the proposition.

\subsection{Poisson structure on $i_0 \Pi_Q i_0$} \label{eans}
As in \SS \ref{lans} and \ref{dans}, Theorem \ref{dedz} (which does
not rely on any results of this subsection) implies that
$i_0 \Pi_Q i_0$ can be given a Poisson algebra structure for $Q$ of
types $\tilde E_6$, $\tilde E_7$, or $\tilde E_8$, by
\eqref{e:i0pq-br}.  Here we give a description of this Poisson
structure for
$Q = \tilde E_n$ over $\Z$ using the bases of
Proposition \ref{enprop}.  Note that, by Remark \ref{r:rkp}, this also
entirely describes the Lie structure on $\Lambda_Q$.
\begin{prop} \label{epbp} 
The Poisson structure of $i_0 \Pi_Q i_0$ is given as follows for $Q = \tilde E_n$.
Let us use the notation of Proposition \ref{enprop}, and define
\begin{equation}\label{e:XYZ-en}
X := p_{i_0 i_s} y p_{i_s i_0}, \quad Y :=  p_{i_0 i_s} yx^{d-2} y p_{i_s i_0}, \quad Z := p_{i_0 i_s}
y x^{d-1} y x^{d-2} y p_{i_s i_0}.
\end{equation}
Finally, let us assume that all arrows of the quiver $Q$ are oriented towards the special
vertex.
\begin{enumerate}
\item[(i)] For $Q = \tilde E_6$,  $i_0 \Pi_Q i_0 \liso \Z[X,Y,Z] / \ldp Z^2 + Y^3 + ZX^2 \rdp$, and
\begin{equation} \label{e6pb}
\{X,Y\} = -2Z - X^2, \quad \{X,Z\} = 3Y^2, \quad \{Y,Z\} = -2XZ.
\end{equation}
\item[(ii)] For $Q = \tilde E_7$,  $i_0 \Pi_Q i_0 \liso \Z[X,Y,Z] / \ldp Z^2-X^3 Y + Y^3 \rdp$, and
\begin{equation}
\{X,Y\} = -2 Z, \quad \{X,Z\} = 3Y^2 - X^3, \quad \{Y,Z\} = 3 X^2 Y.
\end{equation}
\item[(iii)] For $Q = \tilde E_8$,  $i_0 \Pi_Q i_0 \liso \Z[X,Y,Z] / \ldp Z^2 + X^5 + Y^3 \rdp$, and
\begin{equation}
\{X, Y\} = - 2 Z, \quad \{X,Z\} = 3 Y^2, \quad \{Y,Z\} = - 5 X^4.
\end{equation}
\end{enumerate}
In particular, the homogeneous elements $X, Y, Z$ generate $i_0 \Pi_Q i_0$ as a graded algebra, and after tensoring
over any commutative ring containing $\frac{1}{2}$, $X, Y$ generate
as a Poisson algebra.  The degrees are $|X| = 2(d+1), |Y| = 4d$, and $|Z| = 6d$.
\end{prop}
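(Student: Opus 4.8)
The plan is to treat the algebra presentation and the Poisson bracket separately, in both cases leveraging the explicit $\Z$-bases of $i_0\Pi_Q i_0$ furnished by Proposition \ref{enprop}.(ii); throughout, $i_0\Pi_Q i_0$ is a free $\Z$-module (Proposition \ref{enprop}) and carries the Poisson structure \eqref{e:i0pq-br} (Theorem \ref{dedz}).

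For the presentation, take $X,Y,Z$ as in \eqref{e:XYZ-en}; these are the three lowest-degree basis elements of Proposition \ref{enprop}.(ii). First I would check, for each of $\tilde E_6,\tilde E_7,\tilde E_8$ in turn, that the asserted relation $f(X,Y,Z)=0$ holds in $i_0\Pi_Q i_0$, by lifting $f(X,Y,Z)$ to a path in $i_0 P_{\dq} i_0$ and reducing it to zero using the Gr\"obner generating sets of \SS\ref{pie6s}--\ref{pie8s} (equivalently, by expanding $f(X,Y,Z)$ in the basis of Proposition \ref{enprop}.(ii) and observing that all coefficients vanish). Next I would check that $X,Y,Z$ generate $i_0\Pi_Q i_0$ as an algebra, by rewriting each basis element of Proposition \ref{enprop}.(ii) as a monomial in $X,Y,Z$ using the same reduction rules together with the commutativity of $i_0\Pi_Q i_0$. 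These two facts produce a graded surjection $\Z[X,Y,Z]/\ldp f\rdp\onto i_0\Pi_Q i_0$ with $|X|=2(d+1)$, $|Y|=4d$, $|Z|=6d$ (recall $|x_i|=2$), and $|f|=2|Z|=12d$ since $f$ contains the term $Z^2$. As $f$ is a nonzerodivisor in the polynomial ring, the left-hand side has Hilbert series $(1-t^{|f|})\big/\bigl((1-t^{|X|})(1-t^{|Y|})(1-t^{|Z|})\bigr)$, which agrees with the Hilbert series of $i_0\Pi_Q i_0$ computed in the corollaries of \SS\ref{pie6s}--\ref{pie8s}. Hence the surjection is a degreewise surjection of free $\Z$-modules of equal finite rank in each degree, so it is an isomorphism; this simultaneously yields the stated degrees and that $X,Y,Z$ generate as a graded algebra.

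For the bracket, since $i_0\Pi_Q i_0$ is torsion-free it suffices to determine $\{X,Y\},\{X,Z\},\{Y,Z\}$ after tensoring with $\Q$. The bracket is homogeneous of degree $-2$ by \eqref{laf}, so each of these three brackets lies in a single prescribed homogeneous component; a count of monomials in $X,Y,Z$ of that degree (using the presentation) shows these components are small, at most two-dimensional. Moreover $f$ is Poisson-central, since $f=0$ in $i_0\Pi_Q i_0$, so $\partial_Yf\cdot\{X,Y\}+\partial_Zf\cdot\{X,Z\}=0$ and its two cyclic analogues hold in $i_0\Pi_Q i_0$; combined with the degree restriction these ``Casimir'' identities cut the space of possible brackets down to the one-parameter family $\{X,Y\}=c\,\partial_Zf$, $\{X,Z\}=-c\,\partial_Yf$, $\{Y,Z\}=c\,\partial_Xf$, i.e.\ the Jacobian bracket of the hypersurface $\{f=0\}$. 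To fix the scalar I would compute $\{X,Y\}$ directly from the necklace formula \eqref{laf} applied to the explicit cyclic words representing $X$ and $Y$ --- it is enough to extract the coefficient of the leading basis monomial, which arises from a single pair of mutually reverse arrows --- obtaining $c=-1$. This produces the displayed formulas over $\Q$, and hence over $\Z$.

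The last assertions then follow at once: the degrees were recorded above, and in each case $\{X,Y\}$ contains the summand $-2Z$ with all other summands polynomial in $X$, so after inverting $2$ one recovers $Z$ from $X,Y$ by a single bracket, whence $X,Y$ Poisson-generate $i_0\Pi_Q i_0\o\Z[\tfrac12]$ because $X,Y,Z$ already generate it as an algebra. I expect the main obstacle to be the explicit necklace-bracket computation pinning down $c$: the cyclic words for $X,Y$ have length up to $12$ (for $\tilde E_8$), and one must track carefully which pairings in \eqref{laf} survive and then reduce the result to the basis of Proposition \ref{enprop}.(ii). The Casimir-plus-degree reduction to a one-parameter family is conceptually routine, although for $\tilde E_6$ (where $f$ carries the mixed term $ZX^2$) one must additionally note that the relevant graded pieces of $\Z[X,Y,Z]/\ldp f\rdp$ admit no hidden relations, which is immediate from the absence of low-degree elements.
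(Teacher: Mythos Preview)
Your approach is correct. For the algebra presentation you follow the paper's line exactly: verify the relation, then compare Hilbert series to conclude the graded surjection $\Z[X,Y,Z]/\ldp f\rdp \onto i_0\Pi_Q i_0$ is an isomorphism. For the Poisson bracket, however, you take a different and more conceptual route. The paper simply computes each of $\{X,Y\},\{X,Z\},\{Y,Z\}$ directly (with Magma assistance), obtaining three separate formulas. You instead observe that, since $f=0$ in the quotient, Leibniz forces the Casimir identities $\partial_Y f\cdot\{X,Y\}+\partial_Z f\cdot\{X,Z\}=0$ and its cyclic analogues, which together with the degree~$-2$ constraint on the bracket cut the possibilities down to the one-parameter family of Jacobian brackets of the hypersurface $\{f=0\}$; a single necklace computation then fixes the scalar as $c=-1$. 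This explains uniformly why the bracket in all three $\tilde E_n$ cases is minus the Jacobian bracket, and it minimizes the amount of direct computation. The cost is that one must check, case by case, that the Casimir equations really do cut to a one-dimensional solution space in the relevant graded pieces of $\Q[X,Y,Z]/\ldp f\rdp$ --- this is routine, and your caveat about the mixed term $ZX^2$ in $\tilde E_6$ is well placed. Your claim that the coefficient fixing $c$ ``arises from a single pair of mutually reverse arrows'' is a bit optimistic and would need more care in practice, but the computation of one bracket (and indeed of one coefficient of it) is certainly tractable by hand.
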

\begin{proof}
  This can all be verified by an explicit computation. To see that the
  given relation (e.g., $Z^2 + Y^3 + ZX^2$ for $\tilde E_6$) is the
  only relation, we can also use the Hilbert series computed in
  \cite{EG} (cf.~the first formula of \eqref{egfla}, which holds for
  extended Dynkin quivers). To compute the relation and the necklace
  bracket formulas, we used Magma, but it is also tractable by hand
  (and only requires computations in fairly low degrees).
\end{proof}
We note that, in the above, the presentation in (i) of $\tilde E_6$ is somewhat more inconvenient
than the presentations of $\tilde E_7, \tilde E_8$ since the latter have the form $Z^2 = P(X,Y)$ for
polynomials $P$ in $X,Y$ (i.e., $i_0 \Pi_Q i_0$ is a direct sum of the part with odd degree in $Z$
and the part with even degree).  Over $\Z[\frac{1}{2}]$, one can fix this:
\begin{gather}
Z' := Z + \frac{1}{2} X^2, \quad i_0 \Pi_{\tilde E_6} i_0[\frac{1}{2}] \liso \Z[\frac{1}{2}][X,Y,Z'] / \ldp 
(Z')^2 + Y^3 + \frac{1}{4} X^4 \rdp, \\
\{X,Z'\} = 3 Y^2, \quad \{Y,Z'\} = X^3, \quad \{X,Y\} = -2 Z'.
\end{gather}

Finally, we explicitly compute the zeroth Poisson homology of $i_0 \Pi_{Q^0} i_0 \o \F_p$ where
$p$ is a bad prime for $Q^0$, which will be needed to finish the proof of the main theorem. Let $M(m)$ denote the shift of a graded $\Z$-module $M$ by degree $m$, i.e., $M(m)_n=M_{n+m}$.
\begin{prop} \label{ezphp} 
Let $(Q^0,p)$ be one of the seven exceptional cases $(\tilde E_6, 2)$, $(\tilde E_6, 3)$,
$(\tilde E_7, 2)$, $(\tilde E_7, 3)$, $(\tilde E_8, 2)$, $(\tilde E_8, 3)$, $(\tilde E_8, 5)$.  
Let $A := i_0 \Pi_{Q^0} i_0$ be
the Poisson algebra, and set $A_p := A \o \F_p$ and $A_\Q := A \o \Q$. 
If $(Q^0, p) \neq (\tilde E_8, 2)$, then the zeroth Poisson homology
$HP_0(A_p) = A_p / \{A_p, A_p\}$ is given, as a graded $A_p^{(p)} := \langle f^p \rangle_{f \in A_p}$-module, by
\begin{equation}
HP_0(A_p)  \cong (A_p^{(p)})_+(2-2p) \oplus HP_0(A_p)',
\end{equation}
where $HP_0(A_p)'$ is an $A_p$-module of the form $U \otimes \F_p$,
where $\F_p$ is the augmentation module, and $U$ is a graded
$\F_p$-module with Hilbert series $\leq h(HP_0(A_\Q);t)$.  Moreover, a
basis of $HP_0(A_p)'$ can be taken to be the image of a subset of
$A$ which projects to linearly independent classes in $HP_0(A_\Q)$.

If $(Q^0, p) = (\tilde E_8, 2)$, then we have a correction, related to
the torsion class in $(\Lambda_{Q^0})_{28}$:
\begin{equation}
HP_0(A_2) \cong (A_2^{(2)})_+/\langle X^2 \rangle
\oplus HP_0(A_2)', \quad X = p_{i_0 i_s} y p_{i_s i_0}.
\end{equation}
\end{prop}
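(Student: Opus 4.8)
The plan is to reduce the computation of $HP_0(A_p) = A_p/\{A_p,A_p\}$ to a finite, explicit linear-algebra problem in low degrees, exploiting the fact that $A_p$ is generated by the three elements $X,Y,Z$ of Proposition \ref{epbp}, with a single known relation of the form $Z^2 = P(X,Y)$ (up to the $\tilde E_6$ twist, which over $\F_2$ we handle separately or absorb into a change of variable as indicated after Proposition \ref{epbp}). First I would record the Leibniz rule: for $f,g,h \in A_p$ one has $\{f,gh\} = g\{f,h\} + h\{f,g\}$, so the bracket-quotient $HP_0(A_p)$ is spanned by the images of monomials $X^aY^bZ^c$ (with $c \in \{0,1\}$ using the relation), and $\{A_p,A_p\}$ is spanned by the three families $\{X, X^aY^bZ^c\}$, $\{Y, X^aY^bZ^c\}$, $\{Z, X^aY^bZ^c\}$. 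Using the explicit formulas for $\{X,Y\}$, $\{X,Z\}$, $\{Y,Z\}$ from Proposition \ref{epbp} and the Leibniz rule, each such bracket expands to an explicit $\F_p$-combination of monomials, so in each graded degree $HP_0(A_p)_m$ is the cokernel of an explicit matrix over $\F_p$. The $p$-th power subalgebra $A_p^{(p)}$ acts, and since $\{f^p, g\} = p f^{p-1}\{f,g\} = 0$ in characteristic $p$, multiplication by $A_p^{(p)}$ descends to $HP_0(A_p)$; this is what makes $HP_0(A_p)$ finitely generated over $A_p^{(p)}$, and gives the module structure in the statement.

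Next I would isolate the ``new'' summand $(A_p^{(p)})_+(2-2p)$. The key observation is that $\{X, Y^p\} = 0$ trivially, but more importantly the bracket $\{X,Y\} = -2Z - X^2$ (for $\tilde E_6$) or $-2Z$ (for $\tilde E_7,\tilde E_8$) lets one solve for $Z$ modulo $X^2$ and modulo $p$ when $p=2$; in the odd-prime cases $p \in \{3,5\}$, the relevant phenomenon instead comes from one of $\{X,Z\}$ or $\{Y,Z\}$ whose leading term ($3Y^2$, $-5X^4$, etc.) vanishes mod $p$. In each of the seven cases this produces exactly one ``extra'' class: a degree-$2p$ class $[w]$ with $w$ essentially $(\text{generator})^p$-like (e.g. for $(\tilde E_8,2)$, the class $[X^2]$ in degree $28$, since $|X| = 2(d+1) = 14$ there), which is annihilated by $\{A_p,A_p\}$ in characteristic $p$ but not in characteristic $0$. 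Multiplying by $A_p^{(p)}$ spreads this to a free rank-one $A_p^{(p)}$-module shifted appropriately; the shift $2-2p$ is bookkeeping on the degree of the generator relative to $(A_p^{(p)})_+$. For $(\tilde E_8,2)$ the relation $\{X,Z\}$-type identities further kill $X^2$ inside this summand, which is precisely the stated correction $(A_2^{(2)})_+/\langle X^2\rangle$; I would verify this by direct computation of $\{X, X^2 Z\}$ and companions in degrees $28, 28+14, \dots$, tying it to the known torsion class in $(\Lambda_{Q^0})_{28}$ via Theorem \ref{dedz}.

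The remaining complementary summand $HP_0(A_p)'$ I would analyze by base change: $HP_0$ does not commute with $\otimes$, but there is always a natural surjection $HP_0(A)\otimes \F_p \onto HP_0(A_p)$ is \emph{not} what we want; rather, I would use that $A$ is free over $\Z$ with the explicit basis of Proposition \ref{enprop}.(ii), and that $\{A,A\} \subseteq A$ is a graded submodule whose saturation $\overline{\{A,A\}}$ has $\overline{\{A,A\}}/\{A,A\}$ concentrated in the degrees dictated by the torsion of $\Lambda_{Q^0}$ (Theorem \ref{dedz}). Then $HP_0(A_p)$ fits in an exact sequence relating $HP_0(A_\Q)$, the torsion, and the new $p$-power classes; comparing Hilbert series degree by degree shows $h(HP_0(A_p)';t) \leq h(HP_0(A_\Q);t)$ with equality of supports, and that one can pick as a basis the $\F_p$-reductions of a $\Q$-linearly-independent subset of $A$ mapping to $HP_0(A_\Q)$. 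Finally, the $A_p$-module structure on $HP_0(A_p)'$ is visibly an augmentation module $U\otimes \F_p$ because $X,Y,Z$ all act by zero on any class of maximal degree in its component (every product $X\cdot[u]$ etc. already lies in $\{A_p,A_p\}$, by the Leibniz identities), so the generators act trivially.

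The main obstacle I expect is the $(\tilde E_8,2)$ case: there the polynomial $Z^2 = X^5 + Y^3$ has degree-$28$ ``accident'' that must be matched precisely with the torsion class in degree $28$, and the quotient $(A_2^{(2)})_+/\langle X^2\rangle$ has to come out exactly right, including checking that $X^2$ is \emph{not} further in the image of brackets in higher degrees (which would collapse the module further) — this requires a careful finite computation in degrees up to roughly $56$, the degree where the second $p$-power class would appear, and is the one place where the ``generic'' reasoning about leading terms of brackets fails and one genuinely needs the full relation $Z^2 = X^5+Y^3$ together with the bracket $\{Y,Z\} = -5X^4 \equiv -X^4 \pmod 2$. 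I would do this computation explicitly (by hand or verifying by machine as the authors did for the Gröbner computations), and present it as the heart of the proof, with the other six cases following the same template but without the correction term.
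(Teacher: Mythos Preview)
Your approach is essentially the paper's: reduce via the Leibniz rule to brackets $\{X,-\},\{Y,-\},\{Z,-\}$ against the monomial basis, compute these explicitly over $\F_p$ using Proposition~\ref{epbp}, and read off the cokernel. The paper carries this out in full only for $(\tilde E_6,2)$ and declares the other six similar, so your template is right.

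Two points worth tightening. First, a numerical slip: for $\tilde E_8$ one has $d=5$ (the relations are $x^6,y^3,z^2$), so $|X|=12$, not $14$. The link between the correction term $\langle X^2\rangle$ and the degree-$28$ torsion class is therefore not the degree of $X^2$ itself but comes out of the explicit bracket computation in that degree range; your heuristic about why $X^2$ gets killed needs to be replaced by the actual calculation. Second, your paragraphs on isolating the $(A_p^{(p)})_+$ summand and on $HP_0(A_p)'$ are more elaborate than necessary and also vaguer than a proof. The mechanism producing the $(A_p^{(p)})_+$ summand is simply that the coefficients $a,b$ appearing in formulas like $\{Y,X^aY^b\}=aX^{a+1}Y^b$ vanish in $\F_p$ when $p\mid a$, so monomials of the shape (odd thing)$\cdot$($p$-th power) survive; the paper identifies the resulting submodule directly from the list of surviving monomials, not from any a priori ``degree-$2p$ extra class'' reasoning. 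Likewise, the comparison with $HP_0(A_\Q)$ in the paper is by inspection of the explicit basis, not by a base-change or exact-sequence argument; your saturation/torsion framing could be made rigorous but would cost more than the direct computation it is meant to avoid.
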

\begin{proof}
This is done on a case-by-case basis. In all cases, we can assume that the quiver is oriented
as in Proposition \ref{epbp}, because for any $a \in Q_1^0$, letting $(Q^0)_a$ be the quiver with the
arrow $a$ reversed, there is an isomorphism
$\Pi_{Q^0} \iso \Pi_{(Q^0)_a}$ sending $a$ to $-a$ and fixing all other arrows and all 
vertices, which induces a Poisson isomorphism $i_0 \Pi_{Q^0} i_0 \iso i_0 \Pi_{(Q^0)_a}$.

Now, we show how to prove the proposition for the case $(\tilde E_6, 2)$, and omit the other six 
cases (which are all similar). Note first  that, by Proposition \ref{epbp}.(i), a basis of $i_0 \Pi_{Q^0} i_0$
is given by $\{ X^a Y^b,  X^a Y^b Z\}_{a, b \geq 0}$. We wish to compute the image of the Poisson
bracket explicitly.  As in the commutator case (in Lemma \ref{mcl}), we may use the formula
\begin{equation}
\{fg, h\} = f\{g, h\} + g\{f, h\} = \{f, gh\} + \{g, fh\}
\end{equation}
to reduce to computing Poisson brackets of the form $\{X, f\}, \{Y, f\},$ and $\{Z, f\}$. 
Next, by \eqref{e6pb}, we may compute, now over
$\F_2$:
\begin{gather}
\{x, x^a y^b\} = b x^{a+2} y^{b-1}, \quad \{y, x^a y^b\} = a x^{a+1} y^b, \quad \{z, x^a y^b\} = a x^{a-1} y^{b+2}, \\
\{x, x^a y^b z\} = b x^{a+2} y^{b-1} z + x^a y^{b+2}, \quad \{y, x^a y^b z\} = a x^{a+1} y^b z, \quad
\{z, x^a y^b z\} = a x^{a-1} y^{b+2} z.
\end{gather}
Now, it follows from this that the image of the Poisson bracket is integrally spanned by the elements
\begin{gather}
x^a y^b, x^a y^b z, \quad \text{where $a$ or $b$ is even, and $a \geq 2, b \geq 0$}, \\
x^{2a+1} y^{2b} (y^3 + x^2 z), \quad a,b \geq 0,  
x y^{2b+2}, y^{2b+2}, y^{2b+2} z, \quad a,b \geq 0.
\end{gather}
In other words, the following gives a basis of $HP_0(A_2)$:
\begin{equation}
xy(x^{2a} y^{2b}), xyz(x^{2a} y^{2b}), xz(y^{2b}), yz, z, y, x, 1.
\end{equation}
It is easy to see that the last five elements are killed by multiplication by $A_2^2$. The
first three elements form a $A_2^2$-submodule isomorphic to $((A_2)_+)^2(-2)$, which can
be seen by formally adjoining an element $r'$ in degree two and setting $r'x^2=xy, r'y^2=xz,$ and
$r'z^2 = xyz$.

Finally, we note that $HP_0(A_\Q) \cong \langle xy,yz,z,y,x,1
\rangle$, which is known and can be computed directly.\footnote{As
  pointed out by P.~Etingof, by a spectral sequence argument, whenever
  $V$ is a finite-dimensional complex symplectic vector space and $G$
  a finite subgroup of symplectic automorphisms, the dimension of
  $\mathit{HP}_{0}(\mathbb{C}[V]^G)$ is at least the dimension of the
  zeroth Hochschild homology of the Weyl algebra of $V$ smashed
  with~$G$, which by~\cite{AFLS} is the number of conjugacy classes of
  $G$ which do not have $1$ as an eigenvalue. In our case, this is one
  less than the number of vertices of the quiver, and one in fact has
  equality. The equality does not hold in general, but in the appendix
  to~\cite{BEG}, $\mathit{HP}_{0}(\mathbb{C}[V]^G)$ is at least shown to always
  be finite-dimensional.}
\end{proof}

\section{Proof of Theorem \ref{wdescpt} and hence Theorem \ref{mt}}\label{s:mt-fp}
In this section we complete the proof of the main results.  First we
prove a result, Theorem \ref{dedz}, which explicitly describes the
torsion of $\Lambda_Q$ in the Dynkin and extended Dynkin cases,
which is interesting in its own right.  Note that, in the Dynkin case,
$\Lambda_Q$ is entirely torsion (and finite), so this actually
describes all of $\Lambda_Q$ in that case.

\subsection{The torsion of
$\Lambda_Q$ in the extended Dynkin and Dynkin cases}
This subsection is devoted to the following result:
\begin{thm}\label{dedz} For any Dynkin quiver $Q$ with
corresponding extended Dynkin quiver $\tilde Q \supset Q$, with
extending vertex $i_0 \in \tilde Q_0$, one
has the split exact sequence
\begin{equation}\label{des}
i_0 \Pi_{\tilde Q} i_0 \into \Lambda_{\tilde Q} \onto \Lambda_Q, \quad
\Lambda_{\tilde Q} \cong i_0 \Pi_{\tilde Q} i_0 \oplus \Lambda_Q,
\end{equation}
using the natural maps.  Furthermore, $i_0 \Pi_{\tilde Q} i_0$ is a free $\Z$-module, and $\Lambda_Q$ is finite, given as follows:
\begin{enumerate}
\item $(\Lambda_{A_n})_+ = 0$.
\item $(\Lambda_{D_n})_m = 0$ for all $m \geq 1$ except when
  $4 \mid m$ and $m \leq 2(n-2)$, in which case
  $\Lambda_m \cong \Z/2$, and is integrally spanned by the class
  $[i_s(xy)^{m/4}]$, where $i_s$ is the special vertex and $x,y$ are
  the length-two cycles along the branches of length one (to the
  endpoint and back). This class equals $[r^{(m/2)}]$ when $m$ is a
  power of $2$.

  In $\Lambda_{\tilde D_n}$, the above class lifts to the torsion
  class $[i_{RU}(LR)^{m/2}] + [i_{LU}(RL)^{m/2}]$, which may also be
  described as
  $X^{m/4-1} \cup r^{(2)} := \Bigl[X^{m/4-1}
  \widetilde{r^{(2)}}\Bigr]$
  for $X \in HH^0(\Pi_{\tilde D_n})$ as defined in Proposition
  \ref{dpbp}, and $\widetilde{r^{(2)}} \in \Pi_{\tilde D_n}$ any
  class such that $\Bigl[\widetilde{r^{(2)}}\Bigr] = r^{(2)}$
  (cf.~Proposition \ref{epbp}).
\item  $(\Lambda_{E_6})_m = 0$ for all $m \geq 1$ except for $m \in \{4,6\}$,
where $(\Lambda_{E_6})_4 \cong \Z/2$ and $(\Lambda_{E_6})_6 \cong \Z/3$, 
generated by the classes
$r^{(2)}$ and $r^{(3)}$.
\item $(\Lambda)_{E_7})_{m} = 0$ for all $m \geq 1$ except when $m$ is
  $4,6,8$, or $16$, where we get $\Z/2, \Z/3, \Z/2$, and $\Z/2$,
  respectively, generated by the classes $r^{(2)}, r^{(3)}, r^{(4)},$
  and $r^{(8)}$.
\item $(\Lambda_{E_8})_{m} = 0$ for all $m \geq 1$ except when $m$ is
  $4,6,8,10$, $16$, $18$, or $28$, where we get $\Z/2,\Z/3,\Z/2,\Z/5,$
  $\Z/2$, $\Z/3$, and $\Z/2$, respectively, generated by the classes
  $r^{(2)}, r^{(3)}$, $r^{(4)}, r^{(5)},$ $r^{(8)}$, $r^{(9)}$, and
  $[i_s x^4 y x^4 y x^3 y]$, where $x$ is a length-two cycle in the
  direction of the longest branch, and $y$ is a length-two cycle in
  the direction of the second-longest branch, and $i_s$ is the special
  vertex.

  In $(\Lambda_{\tilde E_8})_{28}$, the last class above lifts to the
  class $X^2 \cup r^{(2)} = \Bigl[X^2 \widetilde{r^{(2)}}\Bigr]$,
  where $X$ is defined in \eqref{e:XYZ-en}.
\end{enumerate}
The classes above also lift to classes which generate the torsion
of $\Lambda_{\tilde Q}$, which is isomorphic to $\Lambda_Q$  under the second
map of \eqref{des}.   Explicitly, these classes are the
corresponding $r^{(p^\ell)}$, and in the $\Lambda_{\tilde D_n}$ and 
$(\Lambda_{\tilde E_8})_{28}$ cases, they are as noted above.
\end{thm}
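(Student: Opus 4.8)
The plan is to reduce the statement to type-by-type explicit computations by way of a single short exact sequence. Let $i_0 \in \tilde Q_0$ be the extending vertex. Killing the idempotent $i_0$ identifies $\Pi_{\tilde Q}/\ldp i_0 \rdp \cong \Pi_Q$, so the quotient $\Pi_{\tilde Q} \onto \Pi_Q$ induces the surjection $\Lambda_{\tilde Q} \onto \Lambda_Q$ of \eqref{des}, whose kernel is the image of the ideal $\Pi_{\tilde Q} i_0 \Pi_{\tilde Q}$ in $\Lambda_{\tilde Q}$; splitting a cyclic word at an occurrence of $i_0$ shows this kernel equals the image of the natural map $\iota\colon i_0 \Pi_{\tilde Q} i_0 \to \Pi_{\tilde Q} \onto \Lambda_{\tilde Q}$. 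Now $i_0 \Pi_{\tilde Q} i_0$ is a free $\Z$-module (Theorem \ref{ant}.(i)--(ii) in type $A$, Theorem \ref{dnt}.(i) in type $D$, Proposition \ref{enprop}.(ii) in type $E$), and $\iota \otimes \Q$ is the injective map of \eqref{edisos}; since a map out of a free module which is injective after $\otimes \Q$ is injective, \eqref{des} is short exact. Comparing ranks via \eqref{edisos} over $\Q$ gives $\rk(\Lambda_{\tilde Q})_m = \rk(i_0 \Pi_{\tilde Q} i_0)_m$ in every degree, so $\Lambda_Q = \operatorname{coker}\iota$ has rank $0$ in each degree, i.e.\ $(\Lambda_Q)_+$ is finite.

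For the splitting, write the graded $\Z$-module $\Lambda_{\tilde Q}$ as a free part --- of Hilbert series $h(i_0 \Pi_{\tilde Q} i_0)$, by the rank count --- plus its torsion $\mathcal T$. The composite $\mathcal T \into \Lambda_{\tilde Q} \onto \Lambda_Q$ is injective, since its kernel lies in the torsion-free submodule $\operatorname{im}\iota$; it is an isomorphism precisely when $|\mathcal T_m| = |(\Lambda_Q)_m|$ in every degree, and then $\iota$ identifies $i_0 \Pi_{\tilde Q} i_0$ with the free part, $\mathcal T$ with $\Lambda_Q$, and \eqref{des} splits, the natural section being the inverse of $\mathcal T \iso \Lambda_Q$. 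So it remains to (a) compute $\operatorname{torsion}(\Lambda_{\tilde Q})$ and $\Lambda_Q$ explicitly, and (b) display generators of the torsion matching under the projection.

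The computations are by type. In type $A$ both $\operatorname{torsion}(\Lambda_{\tilde A_{n-1}})$ and $(\Lambda_{A_{n-1}})_+$ vanish (Theorem \ref{ant}, pushed through $\Pi_{\tilde A_{n-1}} \onto \Pi_{A_{n-1}}$). In type $D$, $\operatorname{torsion}(\Lambda_{\tilde D_n})$ is found by computing $HH_0$ directly from the bases of Theorem \ref{dnt}.(i), as in \S \ref{dns}, yielding the $\Z/2$'s in degrees $4k$, $1 \le k \le \lfloor (n-2)/2 \rfloor$ (cf.\ \eqref{htqf}), and $\Lambda_{D_n}$ follows by pushing these bases forward. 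For Dynkin $E_n$, $\Pi_{E_n}$ is finite-dimensional, so a Gr\"obner generating set (from Propositions \ref{starprop}, \ref{enprop}, or \SS \ref{pie6s}--\ref{pie8s}) gives an explicit $\Z$-basis and $HH_0(\Pi_{E_n}) = \Pi_{E_n}/[\Pi_{E_n},\Pi_{E_n}]$ is a finite integer linear-algebra computation, giving parts (3)--(5). For extended Dynkin $\tilde E_n$, $\Pi_{\tilde E_n}$ is infinite-dimensional, but $\Lambda_{\tilde E_n}$ is finitely generated over $HH^0(\Pi_{\tilde E_n}) \cong i_0 \Pi_{\tilde E_n} i_0$, which is $\Z[X,Y,Z]$ modulo one relation by Proposition \ref{epbp} (a two-dimensional graded ring), over which $\Lambda_{\tilde E_n}$ agrees with $HH_0$ away from the unique singular point (by generic Azumaya-ness / the Calabi--Yau property from the introduction); hence its $\Z$-torsion, being finitely generated and supported at the origin, lives in finitely many degrees, which are then resolved by a direct finite computation, with Proposition \ref{ezphp} fixing $\dim_{\F_p}(\Lambda_{\tilde E_n} \otimes \F_p)_m$ and the nonvanishing of the relevant $r^{(p^\ell)}$ (Proposition \ref{rpnz}, or \S \ref{hrcsec}) forcing the torsion to be exactly $\Z/p$.

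For the generators: $P_{\overline{\tilde Q}} \onto P_{\overline Q}$ sends $r$ for $\tilde Q$ to $r$ for $Q$, hence $\frac1p \pi(r^{p^\ell})$ to $\frac1p \pi(r_Q^{p^\ell})$, so $r^{(p^\ell)}_{\tilde Q}$ projects to $r^{(p^\ell)}_Q$; thus the listed $r^{(p^\ell)}$'s generate the corresponding torsion of $\Lambda_{\tilde Q}$ and match under $\mathcal T \to \Lambda_Q$. For the two exceptional families --- degrees $4k$, $1 \le k \le \lfloor (n-2)/2\rfloor$, in $\tilde D_n$, and degree $28$ in $\tilde E_8$ --- the torsion class is not an $r^{(p^\ell)}$; one uses the module structure of $HH_0(\Pi_{\tilde Q})$ over $HH^0(\Pi_{\tilde Q})$: for the central element $X$ of Proposition \ref{dpbp} (resp.\ \eqref{e:XYZ-en}) and any lift $\widetilde{r^{(2)}} \in \Pi_{\tilde Q}$ of $r^{(2)}$, the class $X^j \cup r^{(2)} := [X^j \widetilde{r^{(2)}}]$ (with $j$ the appropriate power) is well defined by centrality and $2$-torsion, and the direct computation identifies it with $[i_{RU}(LR)^{2k}] + [i_{LU}(RL)^{2k}]$ (resp.\ $[i_s x^4 y x^4 y x^3 y]$), shows it nonzero in the stated degree, and shows it generates there. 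With (a) and (b) established, $\mathcal T \to \Lambda_Q$ is a degreewise isomorphism, so \eqref{des} splits. The main obstacle is the extended Dynkin type-$E$ step: as $\Pi_{\tilde E_n}$ is infinite-dimensional, one must first confine the torsion to finitely many degrees (module-finiteness over the two-dimensional center, together with Proposition \ref{ezphp}), after which the finite Gr\"obner computations close the argument; the remaining cases and all bookkeeping are routine by comparison.
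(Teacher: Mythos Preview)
Your overall skeleton---the short exact sequence, the injectivity of $\iota$ via freeness of $i_0\Pi_{\tilde Q}i_0$ plus rational injectivity, and the observation that $\mathcal T \hookrightarrow \Lambda_Q$---is correct and is exactly how the paper begins. The Dynkin computations for $A_n$, $D_n$, $E_n$ are also as in the paper (finite linear algebra over $\Z$, aided by Gr\"obner generating sets and Magma in type $E$).

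The genuine gap is your treatment of the extended $\tilde E_n$ case. Two of the tools you invoke there do not do what you claim. Proposition~\ref{ezphp} computes the zeroth \emph{Poisson} homology $HP_0(i_0\Pi_{Q^0}i_0\otimes\F_p)$, i.e.\ the quotient of the commutative ring $i_0\Pi_{Q^0}i_0\otimes\F_p$ by the image of its Poisson bracket; it does not compute $\dim_{\F_p}(\Lambda_{\tilde E_n}\otimes\F_p)_m$. And Proposition~\ref{rpnz} asserts $r^{(p^\ell)}\neq 0$ only for quivers $Q\supsetneq Q^0$ \emph{strictly} containing an extended Dynkin subquiver; for $Q=Q^0=\tilde E_n$ most of the $r^{(p^\ell)}$ are in fact zero (that is precisely the content of the torsion table you are trying to establish). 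The Azumaya/module-finiteness digression is also unnecessary: you already proved $\mathcal T\hookrightarrow\Lambda_Q$ with $\Lambda_Q$ finite, so finiteness of $\mathcal T$ is automatic.

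The fix is what the paper does, and in fact what your own final paragraph already sets up: you never need to compute $\operatorname{torsion}(\Lambda_{\tilde E_n})$ independently. Since every graded piece of $(\Lambda_{E_n})_+$ is cyclic of prime order $p$, it suffices, for each generator, to exhibit a $p$-torsion lift in $\Lambda_{\tilde E_n}$ that projects back to it. For the classes $r^{(p^\ell)}$ this is tautological (the class $r^{(p^\ell)}\in\Lambda_{\tilde Q}$ is $p$-torsion by construction and maps to $r^{(p^\ell)}\in\Lambda_Q$; nonvanishing in $\Lambda_{\tilde Q}$ then follows because its image in $\Lambda_Q$ is nonzero). For the remaining classes in $\tilde D_n$ one checks $2$-torsion of $[i_{RU}(LR)^{m/2}]+[i_{LU}(RL)^{m/2}]$ directly from \eqref{wc0rel}; for the degree-$28$ class in $\tilde E_8$ and the cup-product descriptions, the paper verifies $2$-torsion in $\Lambda_{\tilde Q}$ by a short Magma check. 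These lifts assemble into a section $\Lambda_Q\to\mathcal T$, and composed with your injection $\mathcal T\hookrightarrow\Lambda_Q$ give the identity, so $\mathcal T\cong\Lambda_Q$ and the sequence splits. Delete the $\tilde E_n$ paragraph and let the lifting argument carry the full weight.
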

\begin{proof}[Proof of Theorem \ref{dedz}]
  Since the cokernel of \eqref{des}, $\Lambda_Q$, is always torsion
  (i.e., $\Lambda_Q \o \Q = 0$ \cite{MOV}, which is also easy to check
  using Gr\"obner generating sets), and the free rank of the first two terms in
  \eqref{des}, $i_0 \Pi_{\tilde Q} i_0$ and $\Lambda_{\tilde Q}$, are
  the same in each degree \eqref{edisos}, the map $i_0 \Pi_{\tilde Q}
  i_0 \rightarrow \Lambda_{\tilde Q}$ is always a monomorphism.  It
  remains only to compute $\Lambda_Q$ where $Q$ is Dynkin, and check
  that the above descriptions indeed give a splitting $\Lambda_Q \into
  \Lambda_{\tilde Q}$.  We first explain the second step (the
  splitting).  We define the splitting on the torsion classes
  $r^{(p^\ell)}$ by $r^{(p^\ell)} \mapsto r^{(p^\ell)}$. This
  obviously is well-defined.  Next, in the $D_n$ case, it is easy to check
  that, in $\Lambda_{\tilde Q}$, and hence also in $\Lambda_{Q}$, ($[i_{RU}(LR)^{m/2}] +
  [i_{LU}(RL)^{m/2}]$) is two-torsion using \eqref{wc0rel} for $c =
  \frac{m}{2}$. Thus we can define the splitting to send the corresponding class in
 $\Lambda_Q$ to that in $\Lambda_{\tilde Q}$.
  For both the $D_n$ and $E_8$ cases, the cup-product
  formula for the remaining elements of $\Lambda_Q$ can be checked using Magma; the same formula defines
  two-torsion elements of $\Lambda_{\tilde Q}$, so we can define the splitting similarly to send the corresponding
  element of $\Lambda_Q$ to that of $\Lambda_{\tilde Q}$.

  It remains to verify the formulas for the torsion of $\Lambda_Q$
  where $Q$ is Dynkin.  This is a finite computation, since $\Pi_Q$ is
  finite-dimensional. We do this directly as follows: (1) We note that
  here $i \Pi_{A_n} i$ is one-dimensional for each $i$, corresponding
  to the path of length zero. Thus $\Lambda_{A_n}$ is torsion-free
  (and isomorphic to $i \Pi_{A_n} i$ as a graded $\Z$-module). 

  (2) Consider the quiver $D_n$ obtained from $\tilde D_n$ by cutting
  the vertex $i_{LD}$.  In this case, $\Lambda_{D_n}$ is generated by
  closed paths that sit entirely on the two rightmost external arrows
  (we can apply relations of $\Pi$ and cyclic rotations to move any
  cycle of length two there).  Let $i$ be the rightmost internal
  vertex. Then, such classes are zero if they are not a power of the
  length-four cycle of the form $iR_U L R_D L$.  Furthermore, the
  additional relations that one obtains are: (i) any closed path that
  does not touch the rightmost external arrows is zero; (ii) $0 =
  [i(RL)^m] = [i((R_U+R_D)L)^m]$ for all $i$ and $m$; and (iii)
  $i(RL)^{n-3} = 0$ (the latter is an algebra relation from
  $\Pi_{D_n}$). The last condition says that any closed cycle along
  the rightmost external arrows of length $\geq 2(n-2)$ is zero.  The
  second condition says that $[(R_U L R_D L)^m] + [(R_D L R_U L)^m] =
  0$, which combined with the fact that these two are equal (which was
  already true in $\Lambda_{\tilde D_n}$), shows that $[(L_U R L_D
  R)^m]$ generates a copy of $\Z/2$.  Finally, it is straightforward
  to check that the class $r^{(2m)}$ reduces to this class when
  $m$ is a power of two.

  (3),(4),(5) Using Proposition \ref{starprop}, we may give an
  explicit presentation of $i_s \Pi_{E_n} i_s$ and $\Lambda_{E_n}$ for
  $n \in \{6,7,8\}$; this immediately cuts off the degree at $\leq 10$
  for $\tilde E_6$, $\leq 16$ for $\tilde E_7$, and $\leq 28$ for
  $\tilde E_8$ (these numbers may also be computed using the Coxeter
  numbers of the corresponding root systems: see e.g.~\cite{MOV,
    EE2}).  We completed the remaining finite computation by hand with
  the help of Magma: we computed the torsion abstractly by hand using
  Gr\"obner generating sets, and checked with Magma that the given
  classes generate the torsion. (We also double-checked the Hilbert
  series over finite fields including $\F_2, \F_3, \F_5$ in Magma.)
\end{proof}
Theorem \ref{dedz}, together with the explicit bases we already gave for
$i_0 \Pi i_0$ of an extended Dynkin quiver, gives an explicit basis of
$\Lambda$ for all extended Dynkin quivers.

\subsection{Proof of Theorem \ref{wdescpt} (completing the proof of Theorem
\ref{mt})} \label{wdescptpfs}
\begin{proof}[Proof of Theorem \ref{wdescpt}] Recall that we are working over $\Z_{\ldp p \rdp}$; e.g.,
when we write $\Pi_Q$ we mean $\Pi_Q \otimes \Z_{\ldp p \rdp}$.

First, in the cases that $(Q^0, p)$ is a good pair ($p \nmid |\Gamma|$
where $\Gamma \subset SL_2(\C)$ is the finite group associated to
$Q^0$), the result follows from Theorem \ref{mclgam}: it remains only
to verify the statements concerning $p$-th powers and image of Poisson
bracket.  The statements about $p$-th powers follow from
the form of the relations $W_{a,b}$ appearing in Theorem~\ref{mclgam} (these elements should not be confused with the submodules of $W$ appearing Theorem~\ref{wdescpt}); in particular, it follows from \eqref{wabdefn1}--\eqref{wabdefn3} that:
\begin{equation}
W_{pa,pb} \equiv W_{a,b}^p \pmod p, \quad \frac{1}{p}W_{p^\ell,p^\ell} \equiv (\frac{1}{p}
W_{p,p})^{p^{\ell-1}} \pmod p.
\end{equation}
The final statement about Poisson bracket (iv) then follows
immediately from Proposition \ref{ubrp}.

Next, in the cases that $Q^0 = \tilde A_n$ or $Q^0 = \tilde D_n$, the results follow from
Theorems \ref{ant} and \ref{dnt}, by a similar analysis of the given relations.

In the seven remaining cases $\{(\tilde E_6,2),(\tilde E_6,3),(\tilde E_7,2),(\tilde E_7,3),
(\tilde E_8,2),(\tilde E_8,3),(\tilde E_8,5)\}$, we make use of the following claim, which
follows from Proposition \ref{ezphp} (as we will explain):
\begin{claim}
If $M_p$ is the image of the Poisson bracket on $i_0 \Pi_Q i_0 \otimes \F_p$, then
\begin{equation}
t^2 h(i_0 \Pi_Q i_0; t) = \sum_{\ell \geq 0} t^{2p^\ell} (h(M_p;t^{p^\ell}) + F(t^{p^\ell})),
\end{equation}
where $F(t)= h(HP_0(i_0 \Pi_Q i_0 \o \F_p)'; t) \leq h(HP_0(i_0 \Pi_Q i_0 \o \Q);t)$.   
\end{claim}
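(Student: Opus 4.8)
The plan is to obtain the identity by unwinding Proposition~\ref{ezphp} into a functional equation for the Hilbert series of $A := i_0 \Pi_Q i_0$ and then iterating it. Write $A_p := A \otimes \F_p$ and $a(t) := h(A_p;t)$; since $\Pi_Q$, hence $i_0 \Pi_Q i_0$, is torsion-free (Proposition~\ref{prepncci}), we have $a(t) = h(i_0 \Pi_Q i_0;t)$. By definition $M_p = \{A_p,A_p\}$ and $HP_0(A_p) = A_p/M_p$, so $a(t) = h(M_p;t) + h(HP_0(A_p);t)$. The first input I would record is that the Frobenius endomorphism $f \mapsto f^p$ of $A_p$ is injective, so that $h(A_p^{(p)};t) = a(t^p)$: this is immediate from the explicit free $\Z$-basis $\{X^a Y^b,\ X^a Y^b Z\}_{a,b\ge 0}$ of $i_0\Pi_Q i_0$ furnished by Propositions~\ref{enprop}.(ii) and~\ref{epbp}, since the $p$-th powers of distinct basis elements remain $\F_p$-linearly independent (their leading terms with respect to that basis are distinct).

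Combining this with the decomposition of Proposition~\ref{ezphp} — namely $HP_0(A_p) \cong (A_p^{(p)})_+(2-2p) \oplus HP_0(A_p)'$, with $(A_2^{(2)})_+$ replaced by $(A_2^{(2)})_+/\langle X^2\rangle$ in the single exceptional case $(Q^0,p) = (\tilde E_8,2)$, a finite correction reflecting the exceptional torsion class of $\Lambda_{\tilde E_8}$ identified in Theorem~\ref{dedz} — one gets $h(HP_0(A_p);t) = t^{2p-2}\bigl(a(t^p)-1\bigr) + F(t)$ (plus the finite $(\tilde E_8,2)$-correction), and hence a functional equation of the shape $a(t) = h(M_p;t) + F(t) + t^{2p-2}\bigl(a(t^p)-1\bigr)$. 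Substituting the same equation for $a(t^p)$ and iterating telescopes this, using $\sum_{k=0}^{\ell-1}(2p-2)p^k = 2(p^\ell-1)$, into a sum over $\ell \ge 0$ of terms $t^{2p^\ell-2}$ times $h(M_p;t^{p^\ell})$ plus the corresponding shift of $F(t^{p^\ell})$; multiplying through by $t^2$ yields the stated identity. The sum is finite in each fixed degree, since $t^{2p^\ell-2}$ contributes nothing below degree $2p^\ell-2$, so there are no convergence issues; the underlying self-similar tower $A_p \supset A_p^{(p)} \supset A_p^{(p^2)} \supset \cdots$, each step isomorphic to $A_p$ with degrees rescaled by a power of $p$, is precisely what makes the telescoping work.

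The step I expect to be the main obstacle is purely bookkeeping: tracking the degree-zero (augmentation) line of $A_p^{(p)}$ and of $HP_0(A_p)'$ correctly through the iteration — the shift $(2-2p)$ and the ``$+$'' in $(A_p^{(p)})_+$ must be reconciled so that the unit of $A_p$ enters at $\ell=0$ but not at $\ell\ge 1$ — and, in the pair $(\tilde E_8,2)$, carrying the extra $X^2$-correction (itself pinned down by the computation of the relevant graded piece of $\Lambda_{\tilde E_8}$ in Theorem~\ref{dedz}) consistently through every level of the tower. Once this is done, the bound $F(t) \le h(HP_0(A_p\otimes\Q);t)$ from Proposition~\ref{ezphp}, fed into this identity, is exactly what will control the size of $U_p$, and thereby the $p$-torsion of $\Lambda_Q$, in the seven exceptional $(Q^0,p)$ when completing the proof of Theorem~\ref{wdescpt}.
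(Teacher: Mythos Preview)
Your approach is essentially the same as the paper's. The paper's proof is a single sentence: it invokes the general equivalence $G(t) = \sum_{\ell \geq 0} H(t^{p^\ell}) \Longleftrightarrow H(t) = G(t) - G(t^p)$ and says this, together with Proposition~\ref{ezphp}, yields the claim. Your proposal unpacks this implication explicitly---deriving the functional equation $a(t) = h(M_p;t) + F(t) + t^{2p-2}(a(t^p)-1)$ from Proposition~\ref{ezphp} and then iterating---and also makes explicit the injectivity of Frobenius on $A_p$ (so that $h(A_p^{(p)};t)=a(t^p)$), which the paper leaves implicit. The bookkeeping of the constant term that you flag as the main obstacle is exactly the point where the ``$+$'' and the $(\tilde E_8,2)$ correction need care, and the paper glosses over this; your awareness of it is appropriate.
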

Let us first use this claim. By Proposition \ref{ubrp}, 
$r' \cdot M$ 
and
$[HH^0(\Pi_{Q^0}), i_0 \Pi_{Q^0} i_0] = [i_0 \Pi_{Q^0} i_0, i_0
\Pi_{Q^0} i_0]$ 
are equivalent mod $[\ldp \langle \dq_1 \setminus \dzqo \rangle \rdp^3]
+ [\ldp \langle \dq_1 \setminus \dzqo \rangle \rdp, \Pi_{Q}] + \ldp p \rdp$. Moreover,
this image lies in the image of $r' HH^0(\Pi_{Q^0})$, which we call
$[r' HH^0(\Pi_{Q^0})] \otimes \F_p$.
So, there exists a saturated free submodule
$\tilde M \subset W'$ which projects 
to $[r' M]$.  We will find a (saturated) $Z_{\ldp p \rdp}$-submodule $U'_p \subset
HH^0(\Pi_{Q^0})$ with Hilbert series $h(U'_p;t) = F(t) - t^{2p^m-2}$
(where $m$ is the smallest positive integer such that $r^{(p^m)}$ is
zero in $\Pi_{Q^0}$), such that the image of $U'_p$ and $M$ in
$HH^0(\Pi_{Q^0}) \otimes \F_p$ are linearly independent (over
$\F_p$).  We will then find a submodule $\tilde U'_p \subset W'$ which
projects isomorphically mod $[\ldp \langle \dq_1 \setminus \dzqo \rangle
\rdp^3] + [\ldp \langle \dq_1 \setminus \dzqo \rangle \rdp, \Pi_{Q}]$ to
$[r' U'_p]$.  In view of the observations before the statement of the
theorem (given any element of $W \otimes \F_p$, its $p$-th power must
also be in $W \otimes \F_p$), it then follows that $W' \otimes \F_p$
contains the direct sum of all $p$-th powers of $[r' U'_p] \otimes
\F_p$ and $[r' M] \otimes \F_p$.  We can set $W'_s := \tilde M \oplus \tilde U'_p$,
which by the claim  satisfies the conditions of (iv).

Once we also
describe $W_0$ and $W_r$ as claimed, the sum $U:=W_0\oplus (W_s' \oplus W_r')$
is
a graded
submodule $U \subseteq W$ which has the correct Hilbert series, i.e.,
such that $W/U$ is torsion.  Moreover, we obtain that $V/U$ is torsion
and generated by the classes $r^{(p^\ell)}$.  Since we know that these
classes are nonzero by Proposition \ref{rpnz} (the easy direction of
Theorem \ref{mt}), we conclude that $U = W$, which proves the theorem.

It remains to find the submodules $U'_p$ and $W_r$, and to describe $W_0$.
We begin with the description of $W_0$. This is easy: using Theorem
\ref{dedz}, it already has
the desired form, except in the case of $(\tilde E_8, 2)$, where it is
not immediately clear that $(W_0)_{28}$ is saturated.  However, this can
be verified explicitly (with the help of Magma): $[x^4 y, x^4 y x^3
y]$ projects to $2 x^4 y x^4 y x^3 y - x^5 y x^5 y = 2 x^4 y x^4 y x^3
y - [p_{i_s i_0} r' p_{i_0 i_s} y p_{i_s i_0} r' p_{i_0 i_s} y]$ in
$V$, and this is not a multiple of $2$ (working over $\Z$, the above
relation is in fact not a multiple of any positive integer).

Next, we find $U'_p$. For Hilbert series reasons, $U'_p$ should be
$\Z_{\ldp p \rdp}$-linearly spanned by elements of $i_0 \Pi_Q i_0$
which project to a subset of the generators of the zeroth Poisson
homology of $i_0 \Pi_Q i_0 \o \Q$ (note that $i_0 \Pi_Q i_0 \o \C
\cong \C[x,y]^\Gamma$).  The degrees of elements of $U'_p$ are $\leq
20$ for $\tilde E_6$, $\leq 32$ for $\tilde E_7$, and $\leq 56$ for
$\tilde E_8$. For these low degrees, the needed $\tilde U'_p \subset
W'$ can be found (or its existence verified) using Magma.

Finally, let us find $W_r$.  For each $\ell \geq 1$, define $f_\ell =
p \cdot \bar f_\ell$, such that $\bar f_\ell$ is any noncommutative
polynomial in $x,y$ which projects, modulo commutators, to
$\frac{1}{p} [(x+y)^{p^\ell} - x^{p^\ell} - y^{p^\ell}]$.  Our choice
of $f_\ell = p \bar f_\ell$ makes it clear that, in fact,
$r^{(p^\ell)}$ and $\bar f_\ell$ have the same image in $V/W$.  In
particular, $p [\bar f_\ell] = [f_\ell] \in W$ and $[f_\ell] \in W'$
for $\ell \geq m$. Thus, it suffices to show that $\bar f_\ell$ is
independent of $W_s' \o \F_p$ for all $\ell \geq m$; to do this it
suffices to show it for $\ell = m$.  In these cases, $W'_{|\bar f_m|}
\o \Q \cong HH^0(\Pi_{Q^0} \o \Q)(2)_{|\bar f_m|}$ is one-dimensional;
thus, it suffices to show that $\bar f_m \notin pV + [\ldp r'
\rdp^2]$.  This we can explicitly verify with Magma.

It remains only to prove the claim.  In general, $G(t) = \sum_{\ell
  \geq 0} H(t^{p^\ell})$ if and only if $H(t) = G(t) - G(t^p)$.  This
observation, together with Proposition \ref{ezphp}, proves the desired
result.
\end{proof}

\appendix
\section{The Diamond Lemma for modules} \label{dla} In this section,
we prove a generalization of the Diamond Lemma that applies to free
modules, rather than to free algebras (as in \cite{B}).  As a
consequence, we deduce results on Gr\"obner generating sets for free
algebras (which are probably known).  In the case of fields, one
recovers usual Gr\"obner bases and the Diamond Lemma of \cite{B}.  The
arguments used are essentially the same.

We first recall Gr\"obner generating sets, since they are simpler,
then proceed with the generalized Diamond Lemma,
which implies the result on Gr\"obner generating sets.

\subsection{Gr\"obner generating sets} \label{gbs}
First, suppose that $F = \k \langle x_1, \ldots, x_n \rangle$ is the free
noncommutative algebra over the commutative ring $\k$ generated by indeterminates
$x_1, \ldots, x_n$.  Consider the
graded lexicographical ordering, which means
that $M_1 \prec M_2$ if either $|M_1| < |M_2|$ (where $|M|$ denotes the
length of a monomial $M$), or $|M_1| = |M_2|$ and $M_1 \ll M_2$ with
respect to the lexicographical ordering $\ll$ on $x_1, \ldots, x_n$ where
$x_1 < x_2 < \ldots < x_n$.\footnote{We can generalize this to replace
lexicographical ordering by any total ordering on monomials of a given
degree, and generalize the degree $|\ |$ to a weighted degree where each $x_i$
is assigned a positive integer, not necessarily $1$.}

Given a set of elements $P_i \in F$, another polynomial $P$ is said to
be reducible with respect to the $P_i$ if the leading monomial (with
respect to $\prec$) $LM(P)$ of $P$ contains as a subword the leading
monomial of one of the $P_i$'s.  Otherwise, $P$ is said to be
irreducible with respect to the $P_i$'s.  If $P$ is reducible, then a
reduction of $P$ is an element of the form $P - \lambda X P_i Y$ where
$X$ and $Y$ are monomials, $\lambda \in \k$, and $\lambda X \cdot
LM(P_i) \cdot Y$ is the leading term of $P$ (i.e., $X \cdot
LM(P_i)\cdot Y$ is the leading monomial of $P$, which appears with
coefficient $\lambda$).

Let us call an \textbf{ideal} generating set $\{P_i\}$ for an ideal $I
= \ldp P_i \rdp$
a \textbf{Gr\"obner generating set} if 
the leading monomial of each $P_i$ has coefficient which is a unit in
$\k$, 
and 
any polynomial
has a unique reduction to an irreducible element with respect to the $P_i$.  In other
words, the irreducible monomials form a basis of the
quotient $F/I$ as a free $\k$-module.

The following criterion is well-known (and is the basis for the
Buchberger algorithm for computing Gr\"obner generating sets):
\begin{prop}\label{gbp} A set $(P_i)$ forms a Gr\"obner generating set for
  $I = \ldp P_i \rdp$ if and only if, for all elements $P_i$ and $P_j$
  with leading terms $\lambda_i M_i$ and $\lambda_j M_j$, and all
  monomials $M$ such that $M = M_i X = Y M_j$ for some monomials $X$
  and $Y$, 
  one can reduce the element
  $\lambda_j P_i X - \lambda_i Y P_j$ to zero, meaning that there is a
  sequence of reductions, using the $P_{\ell}$, taking this element to
  zero.
\end{prop}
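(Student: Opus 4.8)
\textbf{Proof plan for Proposition \ref{gbp} (the Diamond Lemma criterion for Gr\"obner generating sets).}

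The plan is to derive this as a corollary of the generalized Diamond Lemma for free modules proved in the body of the appendix (Propositions \ref{dl1}--\ref{dl3}), exactly as in Bergman's original treatment \cite{B}, adapted to the setting of modules over a commutative ring $\k$. First I would set up the reduction system: to each pair $(i, X, Y)$ with $M_i X = Y M_j$ one does \emph{not} directly associate a reduction; rather, to each element $P_\ell$ with leading term $\lambda_\ell M_\ell$ (with $\lambda_\ell \in \k^\times$) and each pair of monomials $(X, Y)$ one associates the reduction sending $X M_\ell Y \mapsto X M_\ell Y - \lambda_\ell^{-1} X P_\ell Y$, which replaces the leading monomial $X M_\ell Y$ by a $\k$-linear combination of strictly smaller monomials in the graded lexicographic order. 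Since every leading coefficient is a unit, each of these reductions has the form required by the module Diamond Lemma, and the partial order on monomials is the graded lexicographic order, which is well-founded (each graded piece is finite). The irreducible monomials are precisely those not containing any $M_\ell$ as a subword, and the claim ``unique reduction to an irreducible element'' is exactly the assertion that this reduction system is confluent.

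The main step is then to check that the ambiguities of this reduction system are exactly the overlap ambiguities $M_i X = Y M_j$ described in the proposition, together with inclusion ambiguities $M_i = Y M_j Z$; and to show that resolvability of all ambiguities is equivalent to the confluence hypothesis of the module Diamond Lemma. For an overlap ambiguity $M = M_i X = Y M_j$, the two one-step reductions of $M$ give $M - \lambda_i^{-1} P_i X$ and $M - \lambda_j^{-1} Y P_j$; their difference is $\lambda_i^{-1} P_i X - \lambda_j^{-1} Y P_j$, which (after multiplying by the unit $\lambda_i \lambda_j$) is $\lambda_j P_i X - \lambda_i Y P_j$, the element whose reducibility to zero is hypothesized. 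By the module Diamond Lemma, it suffices that all such differences reduce to zero, and conversely if the reduction system is confluent then in particular these differences must reduce to zero (being differences of two reductions of a common element, hence in the kernel of the projection to $F/I$, and reducing to an irreducible element which must be $0$). Inclusion ambiguities are handled identically and in fact are a degenerate case of the overlap discussion; one can also reduce them to overlap ambiguities by the standard argument that a reducible $P_i$ can be replaced by its reduction without changing $I$ or the irreducible monomials, so I would note that one may assume the $P_i$ are mutually irreducible, eliminating inclusion ambiguities.

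Finally, I would record the identification of $F/I$ with the free $\k$-module on irreducible monomials: confluence of the reduction system yields, via Proposition \ref{dl3}, that the irreducible monomials form a $\k$-basis of $F/\langle \text{reductions}\rangle$, and one checks that the $\k$-submodule generated by the reductions (i.e., by all $X P_\ell Y$) equals the two-sided ideal $\ldp P_i \rdp = I$, since each leading coefficient $\lambda_\ell$ is a unit so $X P_\ell Y$ and $\lambda_\ell^{-1} X P_\ell Y$ generate the same submodule, and $\k$-span of $\{X P_\ell Y\}$ is exactly the ideal. The only genuine obstacle is purely bookkeeping: making sure that the passage from ``$P$ reducible means $LM(P)$ contains some $LM(P_i)$ as a subword'' in the statement matches the reduction-system formalism of the appendix, and that the graded lexicographic order (or any weighted-degree variant, as in the footnote) is compatible with the reductions; neither is hard, but it requires stating the correspondence carefully. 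I expect no serious difficulty, since the argument is a transcription of the classical one with ``field'' replaced by ``commutative ring with unit leading coefficients.''
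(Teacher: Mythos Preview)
Your proposal is correct and is precisely the approach the paper takes: the paper's own proof is the single sentence ``This follows from the Diamond Lemma in the next subsection,'' and what you have written is a careful unpacking of exactly that derivation from Propositions \ref{dl1}--\ref{dl3}. Your added remark about inclusion ambiguities (and reducing to a mutually irreducible generating set) is a useful technical point that the paper leaves implicit.
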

\begin{proof} 
This follows from the Diamond Lemma in the next subsection. 
\end{proof}


\subsection{The Diamond Lemma for modules} \label{dlss} Here, we
formulate and prove a version of the Diamond Lemma for free modules
over a arbitrary commutative ring $\k$, which we haven't seen in the
literature.  We then specialize to the free algebra case.

Let $\k$ be a commutative ring.  Given any free module $V$ over $\k$
with a fixed basis $(v_i)_{i \in I}$ labeled by a partially ordered
set $(I,\prec)$ which satisfies the descending chain condition
(meaning every strictly descending sequence in $I$ is finite), and
given any submodule $W \subseteq V$, the Diamond Lemma gives a
criterion for a set $S = (w_j)_{j \in J} \subseteq W$ (for some index
set $J$) to be a \emph{confluent spanning set}, which is a
generalization of the notion of Gr\"obner generating set, essentially meaning
that applying reductions in any order yields the same result.  In the
case $S$ is confluent, every element of $V$ has a unique reduction to
certain $\k$-linear combinations of the irreducible monomials
$(v_i)_{i \in I' \subseteq I}$ for a certain subset $I' \subseteq I$
determined by $S$. More precisely, for each $i \in I'$, there exists a
subset $R_i \subseteq \k$ so that every monomial has a unique
reduction to a finite sum of the form $\sum_{i \in I'} \lambda_i v_i$,
with $\lambda_i \in R_i$ for all $i$. (The subset $R_i \subseteq \k$
will be an arbitrary choice of representatives of the quotient $\k /
Y_i$ for some ideal $Y_i$.)

Let $i\preceq i'$ mean $i = i'$ or $i \prec i'$.  An element $w_j \in V$
defines a ``reduction'' if, writing
$w_j = \sum_{i \in I} \lambda_{ji} v_i$ (all but finitely many
$\lambda_{ji}$ are nonzero for each $j$), there exists a unique
$\psi(j) \in I$ such that $\lambda_{j \psi(j)} \neq 0$ and
$\lambda_{j i} \neq 0$ implies $i \preceq \psi(j)$.  Then, the
``reduction'' associated to $w_j$ sends
$\lambda_{j \psi(j)} v_{\psi(j)}$ to
$-\sum_{i \prec \psi(j)} \lambda_{ji} v_i$.

For every collection $S = (w_j)_{j \in J} \subseteq W$ of elements
defining reductions
and every $i \in I$, let $S_i := \{w_j \mid j \in J, \psi(j) = i\}$ be
the set of $w_j$ which reduce scalar multiples of $v_i$ (i.e.,
elements of the form $\lambda v_i$ for $\lambda \in \k$).
Also, let $V_{\prec i} := \Span_{\ell \prec i}(v_\ell)$, where here and below
span means the $\k$-linear span.

\begin{defn} A \emph{confluent set} is a collection $S = (w_j)_{j \in
    J}$ of elements which define reductions such that, for all $i \in
  I$, $\Span(S_i) \cap V_{\prec i} \subseteq \Span(S_\ell)_{\ell \prec
    i}$.  A \emph{confluent spanning set} for $W$ is a confluent set
  $S \subseteq W$ which $\k$-linearly spans $W$.
\end{defn}
\begin{rem}
  Unlike the definition of Gr\"obner generating sets, we have included no
  minimality condition in the definition of confluent spanning sets.
  For example, if $S$ is a confluent spanning set, then any superset
  of $S$ obtained by adjoining scalar multiples of some elements of $S$
  with the same leading monomials (this last condition would be
  automatic if $\k$ were an integral domain) is still a confluent set.
  We have no need for minimality (this would be useful if we wanted minimal
  confluent spanning sets to be unique, but we do not need this).
\end{rem}
Given $S = (w_j)_{j \in J}$, define for all $i \in I$
the ideal $Y_i \subseteq \k$ generated by all $\lambda \in \k$ such that
there is an element $w_j \in S$ of the form
  $w_j = \lambda v_i + \sum_{i' \prec i} \mu_{i'} v_{i'}$ (in particular, when
$w_j$ defines a reduction,  $\psi(j)=i$).

\begin{prop}\label{dl1} (The Diamond Lemma I) For any confluent set
  $(w_j)$, and
  any choices of representatives $R_i \subseteq \k$ of $\k/Y_i$
  (i.e.~$R_i$ maps bijectively to $\k/Y_i$) such that $0 \in
  R_i$ for all (but finitely many) $i$, 
 every class in $V/W$ has a
  unique expression as a linear combination $\sum_i \mu_i v_i$ where
  $\mu_i \in R_i$.
 \end{prop}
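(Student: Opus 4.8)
The plan is to prove Proposition~\ref{dl1} by a Noetherian induction argument on the partial order $(I,\prec)$, exploiting the descending chain condition. The statement splits naturally into two halves: \emph{existence} of a reduction of every element of $V$ to a canonical form $\sum_i \mu_i v_i$ with $\mu_i \in R_i$, and \emph{uniqueness} (i.e., that two elements of $V$ differing by an element of $W$ have the same canonical form, equivalently that the only canonical form lying in $W$ is $0$). For existence, given $v = \sum_i \lambda_i v_i \in V$ (a finite sum), I would let $i$ be a maximal index with $\lambda_i \notin R_i$; then $\lambda_i - \mu \in Y_i$ for the unique $\mu \in R_i$ representing $\lambda_i \bmod Y_i$, so $\lambda_i - \mu$ is a $\k$-linear combination of scalars $\nu$ each arising as the leading coefficient of some $w_j \in S_i$. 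Subtracting the corresponding multiples of these $w_j$ from $v$ replaces the $v_i$-coefficient by $\mu \in R_i$ at the cost of modifying only coefficients of $v_\ell$ with $\ell \prec i$. Iterating, and using the DCC to see the process terminates (each step either strictly decreases the set of ``bad'' maximal indices or pushes the activity strictly downward in $\prec$), yields a canonical form congruent to $v$ modulo $W$.

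The substance is in uniqueness. Suppose $w = \sum_i \mu_i v_i \in W$ with $\mu_i \in R_i$ for all $i$, and suppose $w \neq 0$. Let $i_0$ be the maximal index (which exists since the support is finite and $\prec$ satisfies DCC — actually finiteness suffices) with $\mu_{i_0} \neq 0$. Since $S$ spans $W$, write $w = \sum_j c_j w_j$ as a finite $\k$-combination. The leading terms of the $w_j$ appearing must combine to produce $\mu_{i_0} v_{i_0}$: more precisely, collecting those $w_j$ with $\psi(j) = i_0$ we get an element of $\Span(S_{i_0})$ whose $v_{i_0}$-coefficient is $\mu_{i_0}$ (modulo contributions from $w_j$ with $\psi(j) \succ i_0$, which must cancel among themselves since $w$ has no component above $i_0$ — here one needs a small sub-induction peeling off indices from the top). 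Hence $\mu_{i_0} \in Y_{i_0}$; but $\mu_{i_0} \in R_{i_0}$ and $R_{i_0}$ maps bijectively to $\k/Y_{i_0}$ with $0 \in R_{i_0}$, forcing $\mu_{i_0} = 0$, a contradiction. The confluence hypothesis $\Span(S_{i_0}) \cap V_{\prec i_0} \subseteq \Span(S_\ell)_{\ell \prec i_0}$ is exactly what lets us replace the top-degree contribution by lower-degree reductions without introducing new obstructions, so the induction goes through cleanly.

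The main obstacle I anticipate is bookkeeping the ``cancellation from above'': when $w = \sum_j c_j w_j$, there may be $w_j$ with $\psi(j)$ strictly above $i_0$, and their leading terms must cancel. Handling this rigorously requires an auxiliary induction (or a well-ordering argument on the multiset of leading indices $\{\psi(j)\}$ appearing in a chosen expression, minimized over all expressions of $w$), to reduce to the case where every $w_j$ in the chosen combination has $\psi(j) \preceq i_0$. This is the standard but slightly delicate heart of Bergman's argument, transported to the module setting; once it is set up, invoking confluence at $i_0$ and the bijectivity of $R_{i_0} \to \k/Y_{i_0}$ finishes things. I would present the argument so that it specializes transparently to the free-algebra case (Proposition~\ref{gbp}), where $I$ is the set of monomials, $\psi$ picks out leading monomials, and the $Y_i$ are generated by leading coefficients — recovering the classical Diamond Lemma and Gr\"obner generating set criterion when $\k$ is a field.
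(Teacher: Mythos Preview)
Your proposal is correct and follows essentially the same line as the paper's proof: both arguments hinge on the claim that if $\sum_i \mu_i v_i \in W$ with maximal index $i_1$ in its support, then $\mu_{i_1} \in Y_{i_1}$, and both use confluence to ``push down'' the contributions from $S_\ell$ with $\ell$ above or incomparable to the index of interest. The paper organizes this by first reducing to finite $I$ (as a union of finite downward-closed subsets) and then inducting on $|I|$, peeling off one maximal element $i_0$ at a time and strengthening the inductive hypothesis to include the key claim; this packaging absorbs cleanly the ``cancellation from above'' bookkeeping you flag as the main obstacle, as well as the termination of your existence procedure, which is not quite immediate from DCC on $I$ alone (it needs a multiset-type order or precisely this reduction to finite $I$).
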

\begin{prop}\label{dl2}
  (The Diamond Lemma II) For any confluent spanning set $(w_j)$, one
  may obtain another confluent spanning set $(u_{i,r})$ for $W$ by
  choosing, for each $i \in I$ such that $Y_i \neq (0)$, arbitrary
  elements
  $u_{i,r} \in \Span\ W_i$ such that
  $u_{i,r} = \lambda_{i,r} v_i + \sum_{\ell \prec i} \mu_{i,\ell,r}
  v_\ell$
  such that the $\lambda_{i,r}$ satisfy
  $\ldp \lambda_{i,r} \rdp_{r} = Y_i$.
\end{prop}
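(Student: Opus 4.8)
The plan is to verify that $(u_{i,r})$ meets the two requirements for a confluent spanning set. The first (that each $u_{i,r}$ defines a reduction) is immediate, since $u_{i,r} = \lambda_{i,r} v_i + \sum_{\ell \prec i}\mu_{i,\ell,r} v_\ell$ has $i$ as the unique maximal element of its support, so $\psi = i$; note also that the ideal attached to $i$ by the new set is $\ldp \lambda_{i,r}\rdp_r = Y_i$, unchanged. Everything else reduces to the following claim: for every $j \in J$, the original generator $w_j$ lies in the $\k$-span of $\{u_{\ell,r} : \ell \preceq \psi(j)\}$.

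I would prove the claim by Noetherian induction on $\psi(j)$, which is legitimate because $(I,\prec)$ satisfies the descending chain condition. Fix $j$, set $k := \psi(j)$, and let $\mu$ be the coefficient of $v_k$ in $w_j$. By the definition of $Y_k$ we have $\mu \in Y_k$, hence $Y_k \neq (0)$, the elements $u_{k,r}$ exist, and $\mu = \sum_r c_r \lambda_{k,r}$ for some $c_r \in \k$ (almost all zero). Put $w_j' := w_j - \sum_r c_r u_{k,r}$. Since both $w_j$ and every $u_{k,r}$ are supported on $\{k\} \cup \{\ell : \ell \prec k\}$, and the coefficient of $v_k$ in $w_j'$ is $\mu - \sum_r c_r\lambda_{k,r} = 0$, we get $w_j' \in V_{\prec k}$; moreover $w_j' \in \Span(S_k)$, because $w_j \in S_k$ and each $u_{k,r}$ was chosen inside $\Span(S_k)$. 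The confluence of $S$ at $k$ now yields $w_j' \in \Span(S_\ell)_{\ell \prec k} = \Span\{w_{j'} : \psi(j') \prec k\}$, and the induction hypothesis applied to each such $w_{j'}$ places $w_j'$ in $\Span\{u_{\ell,r} : \ell \prec k\}$. Therefore $w_j = w_j' + \sum_r c_r u_{k,r} \in \Span\{u_{\ell,r} : \ell \preceq k\}$, which proves the claim (the minimal case of the induction is simply the case $w_j' = 0$).

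Granting the claim, the remaining two points are short. For spanning: $\Span\{u_{i,r}\} \subseteq W$ since each $u_{i,r} \in \Span(S_i) \subseteq W$, while conversely the $w_j$ span $W$ and each lies in $\Span\{u_{i,r}\}$ by the claim. For confluence at a given $i$: if $x \in \Span\{u_{i,r}\}_r \cap V_{\prec i}$, write $x = \sum_r c_r u_{i,r}$; then $x \in \Span(S_i)$ and $x \in V_{\prec i}$, so confluence of $S$ gives $x \in \Span\{w_j : \psi(j) \prec i\}$, and applying the claim to each such $w_j$ shows $x \in \Span\{u_{\ell,r} : \ell \prec i\}$, exactly the required containment. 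The step I expect to be most delicate is the one in the induction where $w_j'$ must simultaneously have support strictly below $k$ and remain in $\Span(S_k)$: it is precisely because the $u_{k,r}$ were taken in $\Span(S_k)$, rather than merely in $W$, that the confluence hypothesis for $S$ can be applied to $w_j'$, and the support/leading-coefficient bookkeeping must be carried out carefully since $\prec$ is only a partial order. (Alternatively, once confluence of $(u_{i,r})$ is established one could recover spanning from Proposition \ref{dl1}, by noting that $V/\Span\{u_{i,r}\} \onto V/W$ is then a bijection on the canonical representatives attached to the common ideals $Y_i$; but the route through the claim above is more direct.)
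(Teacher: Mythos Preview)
Your proof is correct. The paper does not actually give a proof of this proposition: after stating Propositions \ref{dl1}--\ref{dl3}, it says only that it will prove the first version, ``since the other two follow fairly easily.'' Your Noetherian induction on $\psi(j)$ supplies exactly the details the paper omits, and the key observation---that the $u_{k,r}$ were chosen in $\Span(S_k)$ so that $w_j' = w_j - \sum_r c_r u_{k,r}$ lands in $\Span(S_k) \cap V_{\prec k}$ and the confluence hypothesis for $S$ applies---is precisely the right one.

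One very minor point: when you say that $u_{i,r}$ has $i$ as the unique maximal element of its support, you are using $\lambda_{i,r} \neq 0$. This is the intended reading (otherwise $u_{i,r}$ would not define a reduction with $\psi = i$), but it does no harm to say so explicitly; in any case, an element $u_{i,r}$ with $\lambda_{i,r} = 0$ would lie in $\Span(S_i) \cap V_{\prec i}$ and hence, by the confluence of $S$ and your claim, already in the span of the remaining $u_{\ell,r}$ with $\ell \prec i$.
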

\begin{prop} \label{dl3} (The Diamond Lemma III) Again suppose
  $S = (w_j)_{j \in J}$ is a confluent spanning set. Let
  $I_{\neq 1} \subseteq I$ be the subset such that $i \in I_{\neq 1}$ if
  and only if $Y_i \neq (1)$.  Let $I_{\neq 1,0}$ be the subset such
  that $i \in I_{\neq 1,0}$ if and only if $Y_i \neq (1)$ and
  $Y_i \neq (0)$.  Then $V/W$ may be presented as
  $\Span_{i \in I_{\neq 1}}(v_i) / \Span_{i \in I_{\neq 1,0}}(T_i)$,
  where $T_i = \{u_{i,r}\}$ is a set of elements as in Proposition
  \ref{dl2}.
\end{prop}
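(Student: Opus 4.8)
The plan is to derive Proposition~\ref{dl3} from Propositions~\ref{dl1} and~\ref{dl2}. First I would invoke Proposition~\ref{dl2} to replace $S$ by a confluent spanning set of the form $(u_{i,r})$, indexed by those $i$ with $Y_i\neq(0)$ together with, for each such $i$, indices $r$ for which $\ldp\lambda_{i,r}\rdp_{r}=Y_i$. For $i$ with $Y_i=(1)$ some $\lambda_{i,r}$ is a unit, so $u_{i,r}$ furnishes a reduction of $v_i$ into strictly smaller basis vectors; applying such reductions repeatedly (the descending chain condition on $I$ guarantees termination) I would arrange that each $u_{i,r}$ with $i\in I_{\neq 1,0}$ is replaced by an element $\bar u_{i,r}$ with the same leading term $\lambda_{i,r}v_i$ but supported on $I_{\neq 1}$, i.e.\ lying in $V':=\Span_{i\in I_{\neq 1}}(v_i)$; set $T_i:=\{\bar u_{i,r}\}$. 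This is the natural reading of the statement and is harmless: each $\bar u_{i,r}$ differs from $u_{i,r}\in W$ by a $\k$-combination of elements of $W$, so $T_i\subseteq V'\cap W$.

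Next I would examine the composite $\phi\colon V'\into V\onto V/W$. Surjectivity of $\phi$ is immediate from Proposition~\ref{dl1}: every class of $V/W$ has a normal form $\sum_i\mu_iv_i$ with $\mu_i\in R_i$, and $R_i=\{0\}$ whenever $Y_i=(1)$ (since then $\k/Y_i=0$), so every class is represented by an element of $V'$. Hence $\phi$ induces an isomorphism $V'/(V'\cap W)\iso V/W$, and the proposition reduces to the identity $V'\cap W=\Span_{i\in I_{\neq 1,0}}(T_i)$, whose inclusion $\supseteq$ was noted above. For $\subseteq$ I would argue by induction, using the descending chain condition, on the $\prec$-maximal basis vector occurring in a given $0\neq w\in V'\cap W$: if $v_i$ is that vector, with coefficient $\lambda\neq0$, then $i\in I_{\neq 1}$ because $w\in V'$, and the crucial point is that $\lambda\in Y_i$.

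To prove $\lambda\in Y_i$ I would run the standard confluence argument: write $w$ as a finite $\k$-combination of the $u_{j,s}$, group the summands by leading monomial, and for any group whose leading monomial $v_k$ is not $\preceq v_i$ observe that its $v_k$-coefficient vanishes (by maximality of $i$ in the support of $w$), so this group lies in $\Span(S_k)\cap V_{\prec k}\subseteq\Span(S_\ell)_{\ell\prec k}$ by confluence and may be rewritten with smaller leading monomials; iterating and using the descending chain condition, $w$ becomes a combination of $u_{j,s}$ all with leading monomial $\preceq v_i$, whence $\lambda$ is a $\k$-combination of the $\lambda_{i,s}$, i.e.\ $\lambda\in Y_i$. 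In particular $Y_i\neq(0)$, so $i\in I_{\neq 1,0}$ and $\ldp\lambda_{i,r}\rdp_{r}=Y_i$; subtracting a suitable $\k$-combination of the $\bar u_{i,r}\in T_i$ from $w$ strictly lowers its maximal support while keeping it in $V'\cap W$, and the induction closes, giving $w\in\Span_{i\in I_{\neq 1,0}}(T_i)$. I expect the ``$\lambda\in Y_i$'' step to be the main obstacle: it uses the confluence hypothesis in full and is essentially a rerun of the proof of Proposition~\ref{dl1}; the only other delicate point is the bookkeeping in the first paragraph ensuring the $T_i$ can be taken inside $V'$.
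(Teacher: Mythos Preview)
Your argument is correct and matches what the paper intends: the paper gives no proof of Proposition~\ref{dl3} beyond the remark that it (and Proposition~\ref{dl2}) ``follow fairly easily'' from Proposition~\ref{dl1}, and your derivation from Propositions~\ref{dl1} and~\ref{dl2} is exactly the natural way to cash this out. One small economy: the step you flag as the main obstacle, that the leading coefficient $\lambda$ of an element of $W$ lies in $Y_i$, is precisely the ``claim'' isolated and proved inside the paper's proof of Proposition~\ref{dl1}, so you can simply cite it rather than rerun the confluence argument. Your bookkeeping (reducing the $u_{i,r}$ to $\bar u_{i,r}\in V'$ so that the quotient $\Span_{i\in I_{\neq 1}}(v_i)/\Span(T_i)$ is literally well-defined) is the right reading of the statement and is handled cleanly.
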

We only prove the first version, since the other two follow fairly
easily.
\begin{proof}[Proof of Proposition \ref{dl1}]
  We claim that it is enough to assume that $I$ is finite. In general,
  $I$ is an inductive limit of finite subsets $I' \subseteq I$ which are
  downward-closed, i.e., if $i < i'$, then $i' \in I'$ implies
  $i \in I'$. For such $I' \subseteq I$, 
  $W_{I'} := \Span_{i \in I'}(S_{i}) \subseteq V_{I'} := \Span_{i \in
    I'}(v_i)$,
  and then $V$ is the union of such $V_{I'}$, and $W$ is the union of
  such $W_{I'}$.  Hence, it is enough to prove the statement for every
  $(V_{I'}, W_{I'})$.

  So, assume $I$ is finite. We prove the statement by induction on the
  size of $I$. If $|I|=1$, the statement is clear.  Inductively,
  suppose that $i_0 \in I$ is a maximal element and that the result is
  true for the submodules $V' \subsetneq V, W' \subseteq W$ where $V' =
  \Span_{i \neq i_0}(v_i)$ and $W' = \Span_{i \prec i_0}(S_{i})$.
  Then, the result also holds for the pair $(V, W')$.  Next, write
  $V/W = (V/W') / \Span(S_{i_0})$. It is evident that every element of
  $V/W$ is the class of an element of the form $\sum_{i \in I} \mu_i
  v_i$ (for $\mu_i \in R_i$), since for every $y \in Y_{i_0}$, $y
  \cdot v_{i_0} \in (V' + W)$.  It remains to show that every class is
  uniquely represented in this way.  For this, it suffices to prove
  the following claim: If $\sum_{i \in I} \mu_i v_i \in W$ for some
  $\mu_i$, then if $i_1 \in I$ is maximal such that $\mu_{i_1} \neq
  0$, then $\mu_{i_1} \in Y_{i_1}$.  Let us prove the claim by
  induction on $|I|$: that is, let us strengthen the inductive
  hypothesis to include this (it is clear when $|I|=1$).

  The claim is evident for $i_1 = i_0$, since for every element of
  $W$, the coefficient of $v_{i_0}$ lies in $Y_{i_0}$.  More
  generally, if $i_1 \not \prec i_0$, then this follows by the
  inductive hypothesis, since any $\k$-linear combination of elements
  of $S_{i_0}$ has nonzero coefficients only on $v_{i}$ with $i
  \preceq i_0$. It remains to prove the claim for $i_1 \prec i_0$.
  Given any element $\sum_{i \in I} \mu_i v_i$, we may subtract, for
  every maximal $i'$ with $i' \not \prec i_0$, some element of
  $\Span(S_{i'})$, so that the new element $\sum_{i \in I} \mu'_{i}
  v_i$ has $\mu'_{i'} = 0$ for all such $i'$, by the case $i_1 \not
  \prec i_0$ of the claim above. This will not change the set of
  maximal $i_1$ such that $\mu_{i_1} \neq 0$ which instead have the
  property $i_1 \prec i_0$.  Thus, it suffices to assume that, for all
  maximal $i_1$ with $\mu_{i_1} \neq 0$, $i_1 \prec i_0$.  In this
  case, $\sum_{i \in I} \mu_i v_i \in W \cap V_{\prec i_0}$, and by
  the confluence condition, it follows that $\sum_{i \in I} \mu_i v_i
  \in \Span(S_{\prec i_0})$. Now, the claim follows by induction.
\end{proof}

Note that we did \textbf{not} actually describe the abstract module
structure of $V/W$ above: for example, for $\k = \Z$, one could have
$V = \langle v_1, v_2 \rangle$ and
$W = \langle 3v_1 - v_2, 3v_2 \rangle$, where Proposition \ref{dl1}
says that the quotient is set-theoretically the same as sums
$\lambda_1 v_1 + \lambda_2 v_2$ for $\lambda_i \in \{0,1,2\}$. The
abstract $\Z$-module structure, however, is $\Z/9$.

In the situation of Gr\"obner generating sets, as we will explain below, $W$ as
above could be taken to be multiples of a Gr\"obner generating set by monomials
on either side; in particular, $W$ is much larger than an actual
Gr\"obner generating set.

\begin{rem} In the case that $\k$ is a field, then the Diamond Lemma
  says that a basis of $V/W$ is given by $(\bar v_i)_{i \in I'}$ where
  $I' = I_{\neq 1}$ is the set of indices such that $v_i$ does
  not appear as the leading term of any of the relations $(w_j)$ in
  the confluent set.
\end{rem}

\begin{rem} We could have instead taken $\prec$ to be a well-ordering
  (e.g.~a labeling by $\N$ if the module is countably generated) with
  no loss of generality. This is because we may convert any partial
  ordering satisfying the descending chain condition into an arbitrary
  well ordering that preserves all relations $x \prec y$ from the
  partial order (i.e.~such that there is a map of partially ordered
  sets from the original set to the totally ordered one), without
  changing any of the above objects.  (Note that this requires the
  Axiom of Choice in general, and it works because the descending
  chain condition is equivalent to saying that any subset has a
  minimal element.) This would yield exactly the same results and
  proof. We state it in the generality of partially ordered sets
  because that setting is sometimes more convenient.
\end{rem}

Now, let us specialize to the free algebra case.  Let $A = \k\langle
x_1, \ldots, x_m \rangle$ be a free noncommutative algebra generated
by indeterminates $x_i$.  Let $\prec$ be a partial order on the
monomials in the $x_i$'s satisfying the descending chain condition,
such that $f \prec g$ implies $h_1 f h_2 \prec h_1 g h_2$ for any
monomials $h_1, h_2$ (for instance, this is satisfied by the graded
lexicographical ordering). Suppose $B \subseteq A$ is an ideal.  Then we
can define a set $(b_i)$ to be a confluent ideal generating set of $B$
if the elements $(f b_i g)$, for $f,g$ ranging over all monomials in
the $x_i$, form a confluent spanning set for $B$ as an $\k$-module
(with basis the monomials and partial order $\prec$).  To understand
what this means, call the ``leading monomial'' $LM(P)$ of an element
$P \in A$ the highest monomial which appears with nonzero coefficient
with respect to $\prec$, if such a monomial exists and exceeds all
other monomials which have nonzero coefficient.  In order to be
confluent, we first require that each $b_i$ have a leading monomial.
Then, elements of the form $f b_i g$ define reductions, which reduce
certain multiples of $f \cdot LM(b_i) \cdot g$ into linear
combinations of smaller monomials. Then, the confluence condition
says: if $h \in A$ admits multiple reductions, then every linear
combination of elements defining such reductions which has a lower
leading monomial than $h$ is itself a linear combination of elements
defining reductions whose leading monomials are all less than $h$.

Then, in this case, one concludes all of the Diamond Lemma versions. In
the case $\k$ is a field, for example, one finds that $A/B$ has a
basis, as a vector space, given by those monomials in ``normal form'',
which means that they do not contain the leading monomial of any of the
$b_i$'s as a subset (as a word).

Letting $\prec$ be the graded lexicographical ordering (or a variant
as discussed in \S \ref{gbs}), we find that a minimal confluent ideal
generating set (with leading monomials having coefficient one) is a
Gr\"obner generating set for $B$, and recover Proposition \ref{gbp}.  In fact,
at the cost of relinquishing uniqueness, we recover a version of
Gr\"obner generating sets over arbitrary commutative rings (which 
should perhaps be called Gr\"obner generating sets).

\small{
\bibliography{theorem}
\bibliographystyle{amsalpha}
}
\end{document}